\numberwithin{equation}{section}
\newtheorem{theorem}[equation]{Theorem}
\newtheorem{lemma}[equation]{Lemma}
\newtheorem{proposition}[equation]{Proposition}
\newtheorem{corollary}[equation]{Corollary}
\theoremstyle{definition}
\newtheorem{definition}[equation]{Definition}
\theoremstyle{remark}
\newtheorem{remark}[equation]{Remark}
\newtheorem{example}[equation]{Example}
\DeclarePairedDelimiter\abs{\lvert}{\rvert}
\DeclarePairedDelimiter\norm{\lVert}{\rVert}
\DeclarePairedDelimiterX\Set[1]\{\}{%

#1
}
\newcommand*{\N}{\mathbb{N}}
\newcommand*{\Z}{\mathbb{Z}}
\newcommand*{\R}{\mathbb{R}}
\newcommand*{\dif}{\mathrm{d}}
\newcommand*{\inv}{^{-1}}
\newcommand*{\dom}{\mathrm{dom}\,}
\newcommand*{\E}{\mathbb{E}}
\newcommand*{\from}{\colon}
\def\<{\left\langle}
\def\>{\right\rangle}
\newcommand*{\Fin}{\mathcal{F}}
\newcommand*{\Fine}{\Fin_{\emptyset}}
\newcommand*{\gb}{g_{\bullet}}
\newcommand*{\hb}{h_{\bullet}}
\newcommand*{\Gb}[1][]{G_{\bullet #1}}
\newcommand*{\Fb}[1][]{F_{\bullet #1}}
\newcommand*{\Wb}[1][]{W_{\bullet #1}}
\newcommand*{\poly}[1][\Gb]{\mathrm{P}(\Gamma,#1)}
\newcommand*{\polyn}[1][\Gb]{\mathrm{VIP}(#1)}
\newcommand*{\PE}[2]{{#1}^{\otimes #2}}
\newcommand*{\FE}{\mathrm{FE}}
\newcommand*{\AP}{\mathrm{AP}}
\DeclareMathOperator*{\IPlim}{IP-lim}
\DeclareMathOperator*{\wIPlim}{w-IP-lim}
\DeclareMathOperator{\im}{im}
\DeclareMathOperator{\fix}{fix}
\DeclareMathOperator{\rank}{rank}
\DeclareMathOperator{\lin}{lin}
\newcommand*{\sD}{\tilde D} % symmetric derivative
\newcommand*{\rD}{\hat D}
\DeclarePairedDelimiter\nint{\lfloor}{\rceil}
\DeclarePairedDelimiter\dint{\lVert}{\rVert}
\DeclarePairedDelimiter\floor{\lfloor}{\rfloor}
\newcommand{\calA}{\mathcal{A}}
\newcommand{\calB}{\mathcal{B}}
\newcommand{\calC}{\mathcal{C}}
\newcommand{\calG}{\mathcal{G}}
\newcommand{\calT}{\mathcal{T}}
\begin{document}
\title[Nilpotent IP polynomial multiple recurrence]{A nilpotent IP polynomial\\ multiple recurrence theorem}
\author{Pavel Zorin-Kranich}
\address{University of Amsterdam\\
Korteweg-de Vries Institute for Mathematics}
\subjclass[2010]{Primary 37A30, secondary 05D10 37B20}
\begin{abstract}
We generalize the IP-polynomial Szemer\'edi theorem due to Bergelson and McCutcheon and the nilpotent Szemer\'edi theorem due to Leibman.
Important tools in our proof include a generalization of Leibman's result that polynomial mappings into a nilpotent group form a group and a multiparameter version of the nilpotent Hales--Jewett theorem due to Bergelson and Leibman.
\end{abstract}
\maketitle

\section{Introduction}
Furstenberg's ergodic theoretic proof \cite{MR0498471} of Szemer\'edi's theorem on arithmetic progressions \cite{MR0369312} has led to various generalizations of the latter.
Recall that Furstenberg's original multiple recurrence theorem provides a syndetic set of return times.
The \emph{IP} recurrence theorem of Furstenberg and Katznelson \cite{MR833409}, among other things, improves this to an IP* set.
The idea to consider the limit behavior of a multicorrelation sequence not along a F\o{}lner sequence but along an IP-ring has proved to be very fruitful and allowed them to obtain the density Hales--Jewett theorem \cite{MR1191743}.

In a different direction, Bergelson and Leibman \cite{MR1325795} have proved a \emph{polynomial} multiple recurrence theorem.
That result has been extended from commutative to nilpotent groups of transformations by Leibman \cite{MR1650102}.
Many of the additional difficulties involved in the nilpotent extension were algebraic in nature and have led Leibman to develop a general theory of polynomial mappings into nilpotent groups \cite{MR1910931}.
An important aspect of the proofs of these polynomial recurrence theorems, being present in all later extensions including the present article, is that the induction process involves ``multiparameter'' recurrence even if one is ultimately only interested in the ``one-parameter'' case.

More recently an effort has been undertaken to combine these two directions.
Building on their earlier joint work with Furstenberg \cite{MR1417769}, Bergelson and McCutcheon \cite{MR1692634} have shown the set of return times in the polynomial multiple recurrence theorem is IP*.
Joint extensions of their result and the IP recurrence theorem of Furstenberg and Katznelson have been obtained by Bergelson, H{\aa}land Knutson and McCutcheon for single recurrence \cite{MR2246589} and McCutcheon for multiple recurrence \cite{MR2151599}.
The results of the last two papers also provide multiple recurrence along \emph{admissible generalized polynomials} (Definition~\ref{def:generalized-poly}), a class that includes ordinary polynomials that vanish at the origin, and, more generally, along FVIP-systems (Definition~\ref{def:fvip}), a generalization of IP-systems.

In this article we continue this line of investigation.
Our main result, Theorem~\ref{thm:SZ-general}, generalizes McCutcheon's IP polynomial multiple recurrence theorem to the nilpotent setting.
The content of Theorem~\ref{thm:SZ-general} is best illustrated by the following generalization of Leibman's nilpotent multiple recurrence theorem (here and throughout the article group actions on topological spaces and measure spaces are on the right and on function spaces on the left.).
\begin{theorem}
\label{thm:SZ-intro}
Let $T_{1},\dots,T_{t}$ be invertible measure-preserving transformations on a probability space $(X,\calA,\mu)$ that generate a nilpotent group.
Then for every $A\in\calA$ with $\mu(A)>0$, every $m\in\N$, and any admissible generalized polynomials $p_{i,j}:\Z^{m}\to\Z$, $i=1,\dots,t$, $j=1,\dots,s$, the set
\begin{equation}
\label{eq:SZ-intro}
\Big\{ \vec n\in\Z^{m} : \mu\big( \bigcap_{j=1}^{s} A \big(\prod_{i=1}^{t}T_{i}^{p_{i,j}(\vec n)} \big)\inv \big) > 0 \Big\}
\end{equation}
is FVIP* in $\Z^{m}$, that is, it has nontrivial intersection with every FVIP-system in $\Z^{m}$.
\end{theorem}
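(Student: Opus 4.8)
The plan is to deduce this statement from the general recurrence theorem (Theorem~\ref{thm:SZ-general}) by recognizing each family $\prod_{i=1}^{t}T_{i}^{p_{i,j}(\vec n)}$ as a polynomial mapping into the nilpotent group $G=\<T_{1},\dots,T_{t}\>$ and then exploiting the closure properties of such mappings. Fix an arbitrary FVIP-system $(\vec n_{\alpha})_{\alpha\in\Fin}$ in $\Z^{m}$; to establish FVIP*-ness I must produce a nonempty $\alpha$ for which $\vec n_{\alpha}$ lies in the set \eqref{eq:SZ-intro}. Substituting the FVIP-system into the generalized polynomials yields, for each $i,j$, a map $\alpha\mapsto p_{i,j}(\vec n_{\alpha})$ from $\Fin$ to $\Z$. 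The first step is to verify that these are VIP-systems (polynomial mappings in the sense developed in the paper) vanishing at $\emptyset$. This rests on two facts: that the class of \emph{admissible} generalized polynomials is stable under composition with FVIP-systems, and---crucially---on the generalization of Leibman's theorem advertised in the abstract, that polynomial mappings into a nilpotent group form a group. The latter lets me assemble the scalar polynomial maps into a single polynomial mapping $g_{\bullet,j}\from\Fin\to G$, $g_{\alpha,j}=\prod_{i=1}^{t}T_{i}^{p_{i,j}(\vec n_{\alpha})}$, with $g_{\emptyset,j}=e$.

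With this reduction in hand, the task becomes the measure-theoretic statement: for any finite family of polynomial mappings $g_{\bullet,1},\dots,g_{\bullet,s}\from\Fin\to G$ with $g_{\emptyset,j}=e$, and any $A$ with $\mu(A)>0$, there is a sub-IP-ring $\Fin^{(1)}$ along which
\[
\IPlim_{\alpha\in\Fin^{(1)}} \mu\Big(\bigcap_{j=1}^{s} A\, g_{\alpha,j}\inv\Big)
\]
exists and is positive. Positivity of this limit forces infinitely many nonempty $\alpha$ with positive multicorrelation, hence at least one nonempty $\alpha$ for which $\vec n_{\alpha}$ belongs to the set \eqref{eq:SZ-intro}, which is precisely the required nontrivial intersection with the chosen FVIP-system.

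The core recurrence I would prove by a nilpotent, multiparameter adaptation of the PET induction. Assign to each such system a well-founded complexity recording the degrees of the mappings $g_{\bullet,j}$ and the number of distinct leading behaviors. An IP van der Corput lemma in Hilbert space reduces positivity for the original system to positivity for a differenced system whose mappings are built from expressions such as $g_{\alpha\cup\beta,j}\, g_{\beta,j}\inv$; by the group structure and the behavior of commutators along the lower central series of $G$, these differenced mappings are again polynomial mappings, of strictly smaller complexity once the leading term is split off and absorbed. Iterating drives the complexity down to a trivial system, whose base case is supplied by the multiparameter version of the Bergelson--Leibman nilpotent Hales--Jewett theorem; this furnishes the combinatorial return times and the positivity that is then propagated back up the induction.

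The hard part will be the interaction of non-commutativity with the IP-polynomial differencing. One must ensure that each van der Corput step keeps the differenced mappings inside the class of polynomial mappings into $G$ and genuinely lowers the complexity, despite the commutator terms generated by the nilpotent structure; this is exactly where the full strength of the group-theoretic structure theorem is needed. Equally delicate is the bookkeeping of IP-ring refinements: each differencing and each appeal to the combinatorial input passes to a further sub-IP-ring, and these must be arranged coherently---by a diagonalization over a decreasing sequence of IP-rings---so that a single sub-IP-ring serves the entire induction. Finally, running the argument in a genuinely multiparameter setting, even though the statement sought concerns a single $\vec n$, is forced both by the differencing step, which consumes parameters, and by the multiparameter form of the Hales--Jewett input.
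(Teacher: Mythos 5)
Your opening reduction is exactly the paper's proof of Theorem~\ref{thm:SZ-intro}: fix an FVIP-system $(\vec n_{\alpha})_{\alpha}$ in $\Z^{m}$, substitute it into the $p_{i,j}$, assemble the maps $g_{\bullet,j}\from\alpha\mapsto\prod_{i=1}^{t}T_{i}^{p_{i,j}(\vec n_{\alpha})}$ using the group structure of polynomial maps, apply Theorem~\ref{thm:SZ-general} (with one-variable polynomial expressions and $S_{0}\equiv 1_{G}$), and extract a nonempty $\alpha$ with $\vec n_{\alpha}$ in the set \eqref{eq:SZ-intro}. However, as stated your intermediate claim --- recurrence ``for any finite family of polynomial mappings $g_{\bullet,j}$ with $g_{\emptyset,j}=e$'' --- is strictly stronger than Theorem~\ref{thm:SZ-general} and is not available: that theorem requires the maps to lie in an \emph{FVIP} group, i.e.\ a \emph{finitely generated} VIP group, and the paper stresses after Theorem~\ref{thm:fvip-proj} that finite generation cannot be omitted (by the counterexample cited there, the weak IP-limits need not even be projections otherwise, so the whole structure theory collapses). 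Your reduction only certifies that the $g_{\bullet,j}$ are VIP. The missing step is precisely Lemma~\ref{lem:poly-fvip} (resting on Theorem~\ref{thm:gen-poly-FVIP} and Lemma~\ref{lem:fvip-plus-fvip}): after passing to a sub-IP-ring, each $(T_{i}^{p_{i,j}(\vec n_{\alpha})})_{\alpha}$ is an FVIP system with respect to the filtration \eqref{eq:scalar-poly-filtration-2}, and all the $g_{\bullet,j}$ then lie in a single FVIP group. Note this holds only wlog, so the sub-IP-ring bookkeeping begins already at this stage.

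The more serious gap is your proposed proof of the core recurrence. The van der Corput lemma (Lemma~\ref{lem:vdC}) converts smallness of differenced correlations into weak convergence to zero; it cannot ``reduce positivity for the original system to positivity for a differenced system'', and measure-theoretic positivity cannot be ``propagated back up'' a PET induction from a topological Hales--Jewett base case: a topological recurrence theorem produces points returning close to themselves, never lower bounds on measures of intersections (this is the same obstruction that prevents deducing Szemer\'edi from van der Waerden). In the paper, PET induction and van der Corput serve only to prove \emph{multiple mixing} on a primitive extension (Theorems~\ref{thm:wm} and~\ref{thm:mwm}), i.e.\ to show certain differences vanish; positivity comes from the Furstenberg--Katznelson structure theory: a maximal SZ factor (Lemma~\ref{lem:maximal-SZ-factor}), a $K$-primitive extension of it (Theorem~\ref{thm:primitive-extension}, whose existence requires Theorem~\ref{thm:fvip-proj}, hence finite generation), density of almost periodic functions (Lemma~\ref{lem:ap-dense}), and the multiparameter nilpotent Hales--Jewett theorem entering through the coloring statement Corollary~\ref{cor:pair-color} applied to the \emph{almost periodic} part, while the mixing directions are absorbed by Theorem~\ref{thm:mwm}. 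If your intention was merely to quote Theorem~\ref{thm:SZ-general} as a black box, then your proof of Theorem~\ref{thm:SZ-intro} is complete once the FVIP gap above is repaired; but the recurrence argument you sketch would not prove Theorem~\ref{thm:SZ-general}.
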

In particular, the set \eqref{eq:SZ-intro} is IP*, so that it is syndetic \cite[Lemma 9.2]{MR603625}.

\tableofcontents

\section{Polynomial mappings into nilpotent groups}
\label{sec:poly}
In this section we set up the algebraic framework for dealing with IP-polynomials in several variables with values in a nilpotent group.

We begin with a generalization of Leibman's result that polynomial mappings into a nilpotent group form a group under pointwise operations \cite[Proposition 3.7 and erratum]{MR1910931}.
Following an idea from the proof of that result by Green and Tao \cite[Proposition 6.5]{MR2877065} we encode the information that is contained in Leibman's vector degree in a prefiltration indexed by $\N=\{0,1,\dots\}$ (see \cite[Appendix B]{MR2950773} for related results regarding prefiltrations indexed by more general partially ordered semigroups).

A \emph{prefiltration} $\Gb$ is a sequence of nested groups
\begin{equation}
\label{eq:prefiltration}
G_{0}\geq G_{1} \geq G_{2} \geq \dots
\quad\text{such that}\quad
[G_{i},G_{j}]\subset G_{i+j}
\quad\text{for every }i,j\in\N.
\end{equation}
A \emph{filtration} (on a group $G$) is a prefiltration in which $G_{0}=G_{1}$ (and $G_{0}=G$).
We will frequently write $G$ instead of $G_{0}$.
Conversely, most groups $G$ that we consider in this article are endowed with a prefiltration $\Gb$ such that $G_{0}=G$.
A group may admit several prefiltrations, and we usually fix one of them even if we do not refer to it explicitly.

A prefiltration is said to have \emph{length} $d\in\N$ if $G_{d+1}$ is the trivial group and length $-\infty$ if $G_{0}$ is the trivial group.
Arithmetic for lengths is defined in the same way as conventionally done for degrees of polynomials, i.e.\ $d-t=-\infty$ if $d<t$.

Let $\Gb$ be a prefiltration of length $d$ and let $t\in\N$ be arbitrary.
We denote by $\Gb[+t]$ the prefiltration of length $d-t$ given by $(\Gb[+t])_{i}=G_{i+t}$ and by $\Gb[/t]$ the prefiltration of length $\min(d,t-1)$ given by $G_{i/t}=G_{i}/G_{t}$ (this is understood to be the trivial group for $i\geq t$; note that $G_{t}$ is normal in each $G_{i}$ for $i\leq t$ by \eqref{eq:prefiltration}).
These two operations on prefiltrations can be combined: we denote by $\Gb[/t+s]$ the prefiltration given by $G_{i/t+s}=G_{i+s}/G_{t}$, it can be obtained applying first the operation $/t$ and then the operation $+s$ (hence the notation).

If $G$ is a nilpotent group then the lower central series is a filtration.
More generally, if $G_{1}\leq G_{0}$ is a normal subgroup then $G_{i+1}=[G_{i},G_{1}]$ defines a prefiltration (that has finite length if and only if $G_{1}$ is nilpotent), see for example \cite[Theorem 5.3]{MR0207802}.
If $\Gb$ is a prefiltration and $\bar d = (d_{i})_{i\in\N} \subset\N$ is a superadditive sequence (i.e.\ $d_{i+j}\geq d_{i}+d_{j}$ for all $i,j\in\N$; by convention $d_{-1}=-\infty$) then $\Gb^{\bar d}$, defined by
\begin{equation}
\label{eq:Gbard}
G^{\bar d}_{i} = G_{j} \quad \text{whenever} \quad d_{j-1} < i \leq d_{j},
\end{equation}
is again a prefiltration.

We define $\Gb$-polynomial maps by induction on the length of the prefiltration.
\begin{definition}
\label{def:polynomial}
Let $\Gamma$ be any set and $\calT$ be a set of partially defined maps $T:\Gamma \supset \dom(T) \to \Gamma$.
Let $\Gb$ be a prefiltration of length $d\in \{-\infty\}\cup\N$.
A map $g\from\Gamma\to G_{0}$ is called \emph{$\Gb$-polynomial} (with respect to $\calT$) if either $d=-\infty$ (so that $g$ identically equals the identity) or for every $T\in\calT$ there exists a $\Gb[+1]$-polynomial map $D_{T}g$ such that
\begin{equation}
D_{T}g = g\inv Tg := g\inv (g\circ T)
\quad\text{on}\quad
\dom T.
\end{equation}
We write $\poly$ for the set of $\Gb$-polynomial maps, usually suppressing any reference to the set of maps $\calT$ that will remain fixed for most of the article.
\end{definition}
Informally, a map $g:\Gamma\to G_{0}$ is polynomial if every discrete derivative $D_{T}g$ is polynomial ``of lower degree'' (the ``degree'' of a $\Gb$-polynomial map would be the length of the prefiltration $\Gb$, but we prefer not to use this notion since it is necessary to keep track of the prefiltration $\Gb$ anyway).
The connection with Leibman's notion of vector degree is provided by \eqref{eq:Gbard}: a map has vector degree $\bar d$ with respect to a prefiltration $\Gb$ if and only if it is $\Gb^{\bar d}$-polynomial.

Note that if a map $g$ is $\Gb$-polynomial, then the map $gG_{t}$ is $\Gb[/t]$-polynomial for any $t\in\N$ (but the converse is false).
We abuse the notation by saying that $g$ is $\Gb[/t]$-polynomial if $gG_{t}$ is $\Gb[/t]$-polynomial.
In assertions that hold for all $T\in\calT$ we omit the subscript in $D_{T}$.

The next theorem is the basic result about $\Gb$-polynomials.
\begin{theorem}
\label{thm:poly-group}
For every prefiltration $\Gb$ of length $d\in\{-\infty\}\cup\N$ the following holds.
\begin{enumerate}
\item\label{poly-group:commutator}
Let $t_{i}\in\N$ and $g_{i} \from \Gamma \to G$ be maps such that $g_{i}$ is $\Gb[/(d+1-t_{1-i})+t_{i}]$-polynomial for $i=0,1$.
Then the commutator $[g_{0},g_{1}]$ is $\Gb[+t_{0}+t_{1}]$-polynomial.
\item\label{poly-group:product}
Let $g_{0},g_{1} \from \Gamma \to G$ be $\Gb$-polynomial maps.
Then the product $g_{0}g_{1}$ is also $\Gb$-polynomial.
\item\label{poly-group:inverse}
Let $g \from \Gamma \to G$ be a $\Gb$-polynomial map.
Then its pointwise inverse $g\inv$ is also $\Gb$-polynomial.
\end{enumerate}
\end{theorem}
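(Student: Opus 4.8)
The plan is to prove the three assertions together by induction on the length $d$, treating the commutator assertion~\ref{poly-group:commutator} as the core and deducing \ref{poly-group:product} and \ref{poly-group:inverse} from it. Throughout I unwind Definition~\ref{def:polynomial}: to certify that a map is $\Gb[+k]$-polynomial it suffices to show that for each $T\in\mathcal{T}$ its discrete derivative is $\Gb[+k+1]$-polynomial, and the derivatives of the given maps are $\Gb[+1]$-polynomial by hypothesis. The mechanism for \ref{poly-group:product} and \ref{poly-group:inverse} is the pair of identities
\[
D_{T}(g_{0}g_{1}) = (D_{T}g_{0})\,[D_{T}g_{0},g_{1}]\,(D_{T}g_{1}), \qquad D_{T}(g\inv) = g\,(D_{T}g)\inv g\inv,
\]
(commutators written as $[a,b]=a\inv b\inv ab$), obtained by direct manipulation of $g\inv(g\circ T)$. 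In each identity every factor is, after invoking the commutator assertion at the current length together with the product/inverse assertions for the strictly shorter prefiltration $\Gb[+1]$, seen to be $\Gb[+1]$-polynomial; assembling the factors then gives the claim. For the inverse I also use that $g\inv$ is $\Gb[/d]$-polynomial, which already follows from the inverse assertion applied to the shorter prefiltration $\Gb[/d]$.

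The heart of the matter is assertion~\ref{poly-group:commutator}, which I would prove by a nested induction: an outer induction on $d$ and, for fixed $d$, an inner induction \emph{downward} on $t_{0}+t_{1}$. The downward direction has a trivial base, since once $t_{0}+t_{1}>d$ the target group $G_{t_{0}+t_{1}}$ is trivial and $[g_{0},g_{1}]$ is the identity map. For the inductive step I expand the derivative using the commutator calculus identities $[xy,z]=[x,z]^{y}[y,z]$ and $[x,yz]=[x,z][x,y]^{z}$, writing $g_{i}\circ T=g_{i}(D_{T}g_{i})$, to obtain $D_{T}[g_{0},g_{1}]$ as an explicit product of conjugates of the commutators $[g_{0},D_{T}g_{1}]$, $[D_{T}g_{0},g_{1}]$, $[D_{T}g_{0},D_{T}g_{1}]$, together with correction terms of the form $[[g_{0},g_{1}],\cdot]$ produced by the conjugations. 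The shallow leading factor $[g_{0},g_{1}]\inv$ cancels against the conjugate $[g_{0},g_{1}]^{D_{T}g_{0}\,D_{T}g_{1}}$ up to such correction terms, so that every surviving factor should land in $G_{t_{0}+t_{1}+1}$.

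It then remains to verify that each factor is $\Gb[+t_{0}+t_{1}+1]$-polynomial. Here I use that differentiating once improves the degree condition by one: if $g_{0}$ is $\Gb[/(d+1-t_{1})+t_{0}]$-polynomial then $D_{T}g_{0}$ is $\Gb[/(d+1-t_{1})+t_{0}+1]$-polynomial, and symmetrically for $g_{1}$. Consequently each cross commutator is an instance of assertion~\ref{poly-group:commutator} at the \emph{same} length $d$ but with parameter sum $t_{0}+t_{1}+1$ (larger still for the correction terms), which is available by the inner hypothesis; to match the required quotient conditions exactly I invoke the monotonicity of the polynomial property under the further quotients $\Gb[/t]\to\Gb[/t']$ for $t'<t$. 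The conjugations are absorbed via $a^{b}=a[a,b]$ and the commutator assertion at larger parameter sum, and the resulting products of $\Gb[+t_{0}+t_{1}+1]$-polynomial maps are recombined using the product assertion for the shorter prefiltration $\Gb[+t_{0}+t_{1}+1]$ (outer hypothesis).

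The main obstacle is precisely this degree bookkeeping. The subtle point is that one cannot simply pass to $\Gb[+1]$ and invoke the commutator assertion there, because $\Gb[+1]$ forgets that commutators descend one further step in $\Gb$ (one has $[G_{i+1},G_{j+1}]\subset G_{i+j+2}$ rather than merely $G_{i+j+1}$); the factor $[D_{T}g_{0},g_{1}]$ would then only be certified to lie in $G_{t_{0}+t_{1}}$ instead of the required $G_{t_{0}+t_{1}+1}$. This is exactly why the parameters must be tracked relative to $\Gb$ and the induction run downward on $t_{0}+t_{1}$, and why assertion~\ref{poly-group:commutator} is formulated with the two-sided shift $\Gb[/(d+1-t_{1-i})+t_{i}]$: the quotient by $G_{d+1-t_{1-i}}$ records that $[g_{0},g_{1}]$ depends on each $g_{i}$ only modulo the subgroup that the opposite variable pushes into triviality, which is what makes the inner induction close.
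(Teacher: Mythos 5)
Your proposal is correct and follows essentially the same route as the paper: an outer induction on the length $d$, the commutator assertion proved by an inner \emph{descending} induction on $t_{0}+t_{1}$ (with the trivial base $t_{0}+t_{1}>d$), the product and inverse assertions deduced via the identities $D(g_{0}g_{1})=Dg_{0}\,[Dg_{0},g_{1}]\,Dg_{1}$ and $D(g\inv)=[g\inv,Dg](Dg)\inv$, and the derivative of the commutator expanded into cross commutators handled by the inner hypothesis (larger parameter sum) combined with the outer hypothesis applied to the quotient prefiltrations $\Gb[/t]$, which is exactly the paper's mechanism. Your closing observation about why one must track parameters relative to $\Gb$ rather than pass to $\Gb[+1]$ is precisely the point of the paper's two-sided shift formulation.
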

\begin{proof}
We induct on $d$.
If $d=-\infty$ then the group $G_{0}$ is trivial and the conclusion hold trivially.
Let $d\geq 0$ and assume that the conclusion holds for all smaller values of $d$.

We prove part \eqref{poly-group:commutator} using descending induction on $t=t_{0}+t_{1}$.
We clearly have $[g_{0},g_{1}]\subset G_{t}$.
If $t\geq d+1$ there is nothing left to show.
Otherwise it remains to show that $D[g_{0},g_{1}]$ is $\Gb[+t+1]$-polynomial.
To this end we use the commutator identity
\begin{multline}
\label{eq:derivative-of-commutator}
D[g_0,g_1]
=
[g_0,D g_1] \cdot [[g_0,D g_1],[g_0,g_1]] \\
\cdot [[g_0,g_1], Dg_1]
\cdot [[g_{0},g_{1}Dg_{1}],Dg_{0}] \cdot [Dg_{0},g_{1}Dg_{1}].
\end{multline}
We will show that the second to last term is $\Gb[+t+1]$-polynomial, the argument for the other terms is similar.
Note that $Dg_{0}$ is $\Gb[/(d+1-t_{1})+t_{0}+1]$-polynomial.
By inner induction hypothesis it suffices to show that $[g_{0},g_{1}Dg_{1}]$ is $\Gb[/(d-t_{0})+t_{1}]$-polynomial.
But the prefiltration $\Gb[/(d-t_{0})]$ has smaller length than $\Gb$, and by the outer induction hypothesis we can conclude that $g_{1}Dg_{1}$ is $\Gb[/(d-t_{0})+t_{1}]$-polynomial.
Moreover, $g_{0}$ is clearly $\Gb[/(d-t_{0}-t_{1})]$-polynomial, and by the outer induction hypothesis its commutator with $g_{1}Dg_{1}$ is $\Gb[/(d-t_{0})+t_{1}]$-polynomial as required.

Provided that each multiplicand in \eqref{eq:derivative-of-commutator} is $\Gb[+t+1]$-polynomial we can conclude that $D[g_{0},g_{1}]$ is $\Gb[+t+1]$-polynomial by the outer induction hypothesis.

Part \eqref{poly-group:product} follows immediately by the Leibniz rule
\begin{equation}
\label{eq:leibniz}
D(g_{0}g_{1}) = Dg_{0} [Dg_{0},g_{1}] Dg_{1}
\end{equation}
from \eqref{poly-group:commutator} with $t_{0}=1$, $t_{1}=0$ and the induction hypothesis.

To prove part \eqref{poly-group:inverse} notice that
\begin{equation}
D (g\inv)
=
g (D g)\inv g\inv
=
[g\inv,Dg] (Dg)\inv.
\end{equation}
By the induction hypothesis the map $g\inv$ is $\Gb[/d]$-polynomial, the map $Dg$ is $\Gb[+1]$-polynomial and the map $(Dg)\inv$ is $\Gb[+1]$-polynomial.
Thus also $D(g\inv)$ is $\Gb[+1]$-polynomial by \eqref{poly-group:commutator} and the induction hypothesis.
\end{proof}
Discarding some technical information that was necessary for the inductive proof we can write the above theorem succinctly as follows.
\begin{corollary}
\label{cor:poly-group}
Let $\Gb$ be a prefiltration of length $d$.
Then the set $\poly$ of $\Gb$-polynomials on $\Gamma$ is a group under pointwise operations and admits a canonical prefiltration of length $d$ given by
\[
\poly \geq \poly[{\Gb[+1]}] \geq \dots \geq \poly[{\Gb[+d+1]}].
\]
\end{corollary}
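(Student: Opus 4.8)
The plan is to read every assertion off Theorem~\ref{thm:poly-group}, which carries all the content; the corollary merely repackages it. I would first dispose of the group structure of $\poly$. The map sending every point of $\Gamma$ to the identity of $G_{0}$ is $\Gb$-polynomial, since its derivative is the constant map into the identity of $G_{1}$ and hence $\Gb[+1]$-polynomial by induction on the length; thus $\poly$ has an identity element. Closure under pointwise products is part~\eqref{poly-group:product}, closure under pointwise inverses is part~\eqref{poly-group:inverse}, and associativity is inherited pointwise from $G_{0}$, so $\poly$ is a group.

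Next I would show that the sequence $\poly \geq \poly[{\Gb[+1]}] \geq \dots$ is genuinely nested. The key is the monotonicity statement $\poly[{\Gb[+s]}] \subseteq \poly[{\Gb[+r]}]$ for $r \leq s$, which I would prove by induction on the length: a $\Gb[+s]$-polynomial map takes values in $G_{s} \subseteq G_{r}$, while its derivative is $\Gb[+s+1]$-polynomial and hence $\Gb[+r+1]$-polynomial by the inductive hypothesis applied to the shorter prefiltration $\Gb[+r+1]$, so the map is $\Gb[+r]$-polynomial. Taking $r=0$ shows each $\poly[{\Gb[+k]}]$ sits inside $\poly$, and each is a group by parts~\eqref{poly-group:product} and~\eqref{poly-group:inverse} applied to the prefiltration $\Gb[+k]$ (Theorem~\ref{thm:poly-group} holds for every prefiltration). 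This gives the descending chain of subgroups, of length $d$ because $\Gb[+d+1]$ has length $-\infty$ and so $\poly[{\Gb[+d+1]}]$ is trivial.

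It remains to verify the defining commutator relation $[\poly[{\Gb[+i]}],\poly[{\Gb[+j]}]] \subseteq \poly[{\Gb[+i+j]}]$, which is where the substance of Theorem~\ref{thm:poly-group} enters and which I regard as the main point. Fix $g_{0} \in \poly[{\Gb[+i]}]$ and $g_{1} \in \poly[{\Gb[+j]}]$; I may assume $i+j \leq d$, since otherwise $G_{i+j}$ is trivial and $[g_{0},g_{1}]$ is the constant identity. Put $t_{0}=i$, $t_{1}=j$ in part~\eqref{poly-group:commutator}, whose hypotheses demand that $g_{0}$ be $\Gb[/(d+1-j)+i]$-polynomial and $g_{1}$ be $\Gb[/(d+1-i)+j]$-polynomial. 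The crux is to obtain these from the quotient observation following Definition~\ref{def:polynomial}: setting $m=d+1-i-j\geq 1$, that observation turns the $\Gb[+i]$-polynomial map $g_{0}$ into a $\Gb[/(m+i)+i]$-polynomial map, and $m+i=d+1-j$, which is exactly the prefiltration required; symmetrically $m+j=d+1-i$ handles $g_{1}$. Part~\eqref{poly-group:commutator} then yields $[g_{0},g_{1}]\in\poly[{\Gb[+i+j]}]$, completing the verification that the chain is a prefiltration.

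The only genuine obstacle is the bookkeeping in this last step: recognizing that the bare hypothesis ``$g_{i}$ is $\Gb[+t_{i}]$-polynomial'' supplies, via the quotient observation with the single common level $m=d+1-i-j$, precisely the quotient-twisted hypotheses in the statement of part~\eqref{poly-group:commutator}. Once this identification is made, the corollary is a direct assembly of the three parts of Theorem~\ref{thm:poly-group} together with the elementary monotonicity lemma.
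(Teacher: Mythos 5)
Your proposal is correct and follows essentially the same route as the paper, which states the corollary without proof as a direct repackaging of Theorem~\ref{thm:poly-group}: group structure from parts~\eqref{poly-group:product} and~\eqref{poly-group:inverse}, nesting of the chain, and the commutator condition from part~\eqref{poly-group:commutator}. Your bookkeeping step --- recovering the quotient-twisted hypotheses of part~\eqref{poly-group:commutator} from plain $\Gb[+i]$- and $\Gb[+j]$-polynomiality via the quotient observation at the common level $d+1-i-j$ --- is exactly the detail the paper leaves implicit, and you carry it out correctly.
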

Clearly, every subgroup $F\leq\poly$ admits a canonical prefiltration $\Fb$ given by
\begin{equation}
\label{eq:F-prefiltration}
F_{i} := F \cap \poly[{\Gb[+i]}].
\end{equation}
\begin{remark}
If $\Gamma$ is a group, then we recover \cite[Proposition 3.7]{MR1910931} setting
\begin{equation}
\calT = \{T_{b}:n\mapsto nb,\dom(T_{b})=\Gamma, \text{ where } b\in\Gamma\}.
\end{equation}
\end{remark}
\begin{example}
If $\Gamma$ is a group and
\begin{equation}
\label{eq:T}
\calT = \{T_{a,b}:\Gamma\to\Gamma, n\mapsto anb,\text{ where } a,b\in\Gamma\},
\end{equation}
then every group homomorphism $g\from\Gamma\to G_{1}$ is polynomial.
In particular, every homomorphism to a nilpotent group is polynomial with respect to the lower central series.

This can be seen by induction on the length $d$ of the prefiltration $\Gb$ as follows.
If $d=-\infty$, then there is nothing to show.
Otherwise write
\begin{equation}
D_{T_{a,b}}g(n)=g(n)\inv g(anb)=[g(n),g(a)\inv] g(ab).
\end{equation}
By the induction hypothesis $gG_{d}$ is $\Gb[/d]$-polynomial and the constant maps $g(a)\inv$, $g(ab)$ are $\Gb[+1]$-polynomial since they take values in $G_{1}$.
Hence $D_{T_{a,b}}g$ is $\Gb[+1]$-polynomial by Theorem~\ref{thm:poly-group}.
\end{example}
We will encounter further concrete examples of polynomials in Proposition~\ref{prop:mono-poly} and Lemma~\ref{lem:poly-fvip}.

\subsection{IP-polynomials}
\renewcommand*{\poly}[1][\Gb]{P(\Fine,#1)}
In this article we are interested in the case $\Gamma=\Fine$, where $\Fine$ is the partial semigroup\footnote{A partial semigroup \cite{MR1262304} is a set $\Gamma$ together with a partially defined operation $*\from\Gamma\times\Gamma\to\Gamma$ that is associative in the sense that $(a*b)*c=a*(b*c)$ whenever both sides are defined.} of finite subsets of $\N$ with the operation $\alpha*\beta=\alpha\cup\beta$ that is only defined if $\alpha$ and $\beta$ are disjoint.
It is partially ordered by the relation
\[
\alpha<\beta :\iff \max\alpha < \min\beta.
\]
Note that in particular $\emptyset<\alpha$ and $\alpha<\emptyset$ for any $\alpha\in\Fine$.

The set $\calT$ is then given by
\begin{equation}
\calT = \{T_{\alpha}:\beta\mapsto \alpha*\beta,\dom(T_{\alpha})=\{\beta:\alpha\cap\beta=\emptyset\}, \text{ where } \alpha\in\Gamma\}.
\end{equation}
If $T=T_{\alpha}$ then we also write $D_{\alpha}$ instead of $D_{T_{\alpha}}$.
We write $\polyn \leq \poly$ for the subgroup of polynomials that vanish at $\emptyset$ and call its members \emph{VIP systems}.
For every $g\in\polyn$ and $\beta\in\Fine$ we have
\begin{equation}
\label{eq:polyn-g1}
g(\beta) = g(\emptyset) D_{\beta}g(\emptyset) \in G_{1}.
\end{equation}
Therefore the symmetric derivative $\sD$, defined by
\begin{equation}
\label{eq:symm-der}
\sD_{\beta}g(\alpha) := D_{\beta}g(\alpha)g(\beta)\inv = g(\alpha)\inv g(\alpha\cup\beta) g(\beta)\inv,
\end{equation}
maps $\polyn$ into $\polyn[{\Gb[+1]}]$.
Moreover, $\polyn$ admits the canonical prefiltration of length $d-1$ given by
\[
\polyn \geq \polyn[{\Gb[+1]}] \geq \dots \geq \polyn[{\Gb[+d]}].
\]
There is clearly no need to keep track of values of VIP systems at $\emptyset$, so we consider them as functions on $\Fin:=\Fine\setminus\{\emptyset\}$.

The group $\polyn$ can be alternatively characterized by $\polyn=\{1_{G}\}$ for prefiltrations $\Gb$ of length $d=-\infty,0$ and
\[
g\in\polyn \iff g:\Fin\to G_{1} \text{ and } \forall\beta\in\Fin\, \sD_{\beta}g\in\polyn[{\Gb[+1]}].
\]
This characterization shows that if $G$ is an abelian group with the standard filtration $G_{0}=G_{1}=G$, $G_{2}=\{1_{G}\}$, then $\polyn$ is just the set of IP systems in $G$.

\subsection{IP-polynomials in several variables}
As we have already mentioned in the introduction, the inductive procedure that has been so far utilized in all polynomial extensions of Szemer\'edi's theorem inherently relies on polynomials in several variables.
We find it more convenient to define polynomials in $m$ variables not on $\Fin^{m}$, but rather on the subset $\Fin^{m}_{<} \subset \Fin^{m}$ that consists of \emph{ordered} tuples, that is,
\[
\Fin^{m}_{<} = \{ (\alpha_{1},\dots,\alpha_{m}) \in \Fin^{m} : \alpha_{1}<\dots<\alpha_{m} \}.
\]
Analogously, $\Fin^{\omega}_{<}$ is the set of infinite increasing sequences in $\Fin$.
We will frequently denote elements of $\Fin^{m}_{<}$ or $\Fin^{\omega}_{<}$ by $\vec\alpha=(\alpha_{1},\alpha_{2},\dots)$.
\begin{definition}
Let $\Gb$ be a prefiltration and $F\leq\polyn$ a subgroup.
We define the set $\PE{F}{m}$ of \emph{polynomial expressions} in $m$ variables by induction on $m$ as follows.
We set $\PE{F}{0}=\{1_{G}\}$ and we let $\PE{F}{m+1}$ be the set of functions $g \from \Fin^{m+1}_{<} \to G_{0}$ such that
\[
g(\alpha_{1},\dots,\alpha_{m+1}) = W^{\alpha_{1},\dots,\alpha_{m}}(\alpha_{m+1})S(\alpha_{1},\dots,\alpha_{m}),
\]
where $S\in\PE{F}{m}$ and $W^{\alpha_{1},\dots,\alpha_{m}}\in F$ for every $\alpha_{1}<\dots<\alpha_{m}$.
\end{definition}
Note that $\PE{F}{1}=F$.
\begin{lemma}
Suppose that $F$ is invariant under conjugation by constant functions.
Then, for every $m$, the set $\PE{F}{m}$ is a group under pointwise operations and admits a canonical prefiltration given by $(\PE{F}{m})_{i}=\PE{(F_{i})}{m}$.

If $K\leq F$ is a subgroup that is invariant under conjugation by constant functions then $\PE{K}{m} \leq \PE{F}{m}$ is also a subgroup.
\end{lemma}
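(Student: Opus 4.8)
The plan is to establish all three assertions at once by induction on $m$. The base cases $m=0$ ($\PE{F}{0}=\{1_{G}\}$) and $m=1$ ($\PE{F}{1}=F$) are immediate; in the latter the required commutator relation $[F_{i},F_{j}]\subseteq F_{i+j}$ is exactly the statement that \eqref{eq:F-prefiltration} defines a prefiltration, which in turn follows from $[\polyn[{\Gb[+i]}],\polyn[{\Gb[+j]}]]\subseteq\polyn[{\Gb[+i+j]}]$. For the inductive step I first prove the group structure. For $g=W^{\vec\alpha}(\alpha_{m+1})S(\vec\alpha)$ and $g'=W'^{\vec\alpha}(\alpha_{m+1})S'(\vec\alpha)$ in $\PE{F}{m+1}$ I rewrite
\[
gg'=W^{\vec\alpha}(\alpha_{m+1})\bigl(S(\vec\alpha)\,W'^{\vec\alpha}(\alpha_{m+1})\,S(\vec\alpha)\inv\bigr)\,S(\vec\alpha)S'(\vec\alpha),
\]
and treat $g\inv$ analogously. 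The conjugate $S(\vec\alpha)W'^{\vec\alpha}S(\vec\alpha)\inv$ lies in $F$ because $F$ is invariant under conjugation by the constant $S(\vec\alpha)$; hence the leading factor again lies in $F$, while $SS'\in\PE{F}{m}$ by the inductive hypothesis, so $gg',g\inv\in\PE{F}{m+1}$. The identical computation with $W$'s in $K$ and $S$'s in $\PE{K}{m}$ proves closure for $K$, and since $\PE{K}{m}\subseteq\PE{F}{m}$ by unwinding the definition, the subgroup assertion follows; applied to $F_{i+1}\leq F_{i}$ it also yields the monotonicity $\PE{(F_{i+1})}{m}\leq\PE{(F_{i})}{m}$ used below.

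Before turning to the prefiltration I record two preliminaries. Each $F_{i}=F\cap\polyn[{\Gb[+i]}]$ is again invariant under conjugation by constants, since $\sD_{\beta}(c\inv Wc)=c\inv(\sD_{\beta}W)c$ shows that conjugation by a constant preserves $\polyn[{\Gb[+i]}]$ (the values remain in $G_{i+1}$, which is normal in $G_{0}$ by \eqref{eq:prefiltration}); thus the group structure above applies to every $F_{i}$, and moreover $\PE{(F_{i})}{m}$ takes values in $G_{i+1}$. The structural device is the split exact sequence
\[
1\longrightarrow N\longrightarrow\PE{F}{m+1}\xrightarrow{\ \pi\ }\PE{F}{m}\longrightarrow 1,
\]
where $\pi(g)=S$, the kernel $N$ consists of the maps $(\vec\alpha,\alpha_{m+1})\mapsto W^{\vec\alpha}(\alpha_{m+1})$ with $W^{\vec\alpha}\in F$, and $\iota(S)(\vec\alpha,\alpha_{m+1}):=S(\vec\alpha)$ is a splitting. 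That $\pi$ is a well-defined homomorphism follows from the product formula above together with the fact that the only constant VIP system is trivial (if $W\equiv c$ then $\sD_{\beta}W\equiv c\inv$ must vanish at $\emptyset$). Pointwise multiplication identifies $N$ with the maps $\Fin^{m}_{<}\to F$, carrying the pointwise prefiltration $N_{i}=\{W^{\vec\alpha}\in F_{i}\}$, and one checks $\PE{(F_{i})}{m+1}=N_{i}\cdot\iota(\PE{(F_{i})}{m})$; nesting is the monotonicity just noted.

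It remains to verify the commutator relation $[\PE{(F_{i})}{m+1},\PE{(F_{j})}{m+1}]\subseteq\PE{(F_{i+j})}{m+1}$, which is the crux. Writing $g=n\,\iota(S)$ and $g'=n'\,\iota(S')$ with $n\in N_{i},S\in\PE{(F_{i})}{m}$ and $n'\in N_{j},S'\in\PE{(F_{j})}{m}$, the standard commutator identities expand $[g,g']$ into a product of $N$-conjugates of the four basic commutators $[n,n']$, $[\iota S,\iota S']$, $[\iota S,n']$ and $[n,\iota S']$. Here $[n,n']\in N_{i+j}$ because $N$ carries the pointwise prefiltration of $F$ and $[F_{i},F_{j}]\subseteq F_{i+j}$, while $[\iota S,\iota S']=\iota([S,S'])$ with $[S,S']\in\PE{(F_{i+j})}{m}$ by the inductive hypothesis. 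For the mixed commutator $[\iota S,n']$, whose value is $[S(\vec\alpha),W'^{\vec\alpha}(\alpha_{m+1})]$, conjugation invariance keeps it in $F$, while Theorem~\ref{thm:poly-group}\eqref{poly-group:commutator}---applied to the constant $S(\vec\alpha)\in G_{i+1}$, a $\Gb[+i+1]$-polynomial, and to $W'^{\vec\alpha}\in\polyn[{\Gb[+j]}]$---places it in $\polyn[{\Gb[+i+j]}]$, so that $[\iota S,n']\in N_{i+j}$, and symmetrically $[n,\iota S']\in N_{i+j}$. As $N_{i+j}$ is normal in $N$, the $N$-conjugations do not leave $N_{i+j}$, and the only $\iota$-contribution is $\iota([S,S'])$; collecting terms gives $[g,g']\in N_{i+j}\cdot\iota(\PE{(F_{i+j})}{m})=\PE{(F_{i+j})}{m+1}$, as required. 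The main obstacle is precisely this step: the mixed commutators must be controlled simultaneously in two registers---remaining inside $F$ via conjugation invariance and descending to depth $\Gb[+i+j]$ via Theorem~\ref{thm:poly-group}---and the full commutator must then be reassembled from the basic pieces without loss of that depth.
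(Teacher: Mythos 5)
Your proof is correct, and its substance coincides with the paper's: the group law is verified by exactly the same rearrangement (conjugate $W'$ by the constant $S(\vec\alpha)$ and use invariance of $F$ under conjugation by constants), the subgroup claim for $K$ is the same observation, and the prefiltration property rests on the same ingredients the paper uses tacitly --- the relation $[F_{i},F_{j}]\subseteq F_{i+j}$, control of the mixed commutators via Theorem~\ref{thm:poly-group}\eqref{poly-group:commutator}, and conjugation invariance. The difference is organizational: the paper expands $[W_{0}S_{0},W_{1}S_{1}]$ in one stroke by an eight-term commutator identity and asserts the claim ``follows from the identity'', whereas you factor $\PE{F}{m+1}=N\cdot\iota(\PE{F}{m})$ as a semidirect product and reduce to four basic commutators. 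Your packaging makes explicit what the paper suppresses: that each $F_{i}$ is again invariant under conjugation by constants, that $\PE{(F_{i})}{m}$ takes values in $G_{i+1}$, and that $[\iota(S),n']\in N_{i+j}$ by Theorem~\ref{thm:poly-group}\eqref{poly-group:commutator}; these checks are needed for the paper's one-line argument too, so your write-up is, if anything, more complete.

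Two slips, both repairable with facts you already have. First, the well-definedness of $\pi$ (uniqueness of the $S$-component): your parenthetical argument conflates the value of a VIP system at $\emptyset$ with its constant value on the tail $\{\alpha_{m+1}:\alpha_{m+1}>\alpha_{m}\}$, which is where two representations of the same $g$ are compared. The correct statement --- an element of $\polyn$ that is constant on a tail is trivial there --- does hold, but needs a short induction on the length of $\Gb$ (the symmetric derivative $\sD_{\beta}$ of such a map is again constant on a tail and lies in $\polyn[{\Gb[+1]}]$, and VIP groups of length $\leq 0$ are trivial). Better, you can delete the exact sequence altogether: your commutator computation uses only the \emph{existence} of representations $g=n\,\iota(S)$, never their uniqueness, which is why the paper never needs this fact. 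Second, in the assembly step the conjugators that actually occur, e.g.\ in
\[
[n\,\iota(S),\,n'\iota(S')]
=[n,\iota(S')]^{\iota(S)}\;\iota([S,S'])\;[n,n']^{\iota(S)\iota(S')}\;[\iota(S),n']^{\iota(S')}
\]
(here $[a,b]=a\inv b\inv ab$ and $a^{b}=b\inv ab$), are images under $\iota$, not elements of $N$, so ``$N_{i+j}$ is normal in $N$'' is the wrong citation; what keeps these conjugates inside $N_{i+j}$ is precisely the invariance of $F_{i+j}$ under conjugation by constants, which you proved in your preliminaries but do not invoke at this point. With those two corrections the argument is complete.
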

\begin{proof}
We induct on $m$.
For $m=0$ there is nothing to show.
Let
\[
R_{j}\in \PE{(F_{t_{j}})}{m+1}:(\alpha_{1},\dots,\alpha_{m+1})\mapsto W_{j}^{\alpha_{1},\dots,\alpha_{m}}(\alpha_{m+1})S_{j}(\alpha_{1},\dots,\alpha_{m}),
\quad j=0,1
\]
be polynomial expressions in $m+1$ variables.
Suppressing the variables $\alpha_{1},\dots,\alpha_{m}$ we have
\[
R_{0}R_{1}\inv(\alpha_{m+1})
=
W_{0}(\alpha_{m+1})
\underbrace{\left(S_{0}S_{1}\inv W_{1}\inv S_{1} S_{0}\inv\right)}_{\in F}(\alpha_{m+1})
S_{0}S_{1}\inv,
\]
so that $R_{0}R_{1}\inv \in \PE{F}{m+1}$.
Hence $\PE{F}{m+1}$ is a group.

In order to show that $\PE{(\Fb)}{m+1}$ is indeed a prefiltration we have to verify that
\begin{align*}
[R_{0},R_{1}]
&=
[W_{0}S_{0},W_{1}S_{1}]
\in (F_{t_{0}+t_{1}})^{m+1}.
\end{align*}
This follows from the identity
\begin{align*}
[xy,uv]
&=
[x,u] [x,v] [[x,v],[x,u]] [[x,u],v]\\
&\quad\cdot [[x,v] [x,u] [[x,u],v],y]\\
&\quad\cdot [y,v] [y,u] [[y,u],v].
\end{align*}
It is clear that $\PE{K}{m}\leq \PE{F}{m}$ is a subgroup provided that both sets are groups.
\end{proof}
For every $m\in\N$ there is a canonical embedding $\PE{F}{m}\leq \PE{F}{m+1}$ that forgets the last variable.
Thus we can talk about
\[
\PE{F}{\omega}:=\injlim_{m\in\N} \PE{F}{m} = \bigcup_{m\in\N} \PE{F}{m},
\]
this is a group of maps defined on $\Fin^{\omega}_{<}$ with prefiltration $(\PE{F}{\omega})_{i}=\PE{(F_{i})}{\omega}$.

\subsection{Polynomial-valued polynomials}
\begin{definition}
Let $\Gb$ be a filtration of length $d$.
A \emph{VIP group} is a subgroup $F\leq\polyn$ that is closed under conjugation by constant functions and under $\sD$ in the sense that for every $g\in F$ and $\alpha\in\Fin$ the symmetric derivative $\sD_{\alpha}g$ lies in $F_{1}$ (defined in \eqref{eq:F-prefiltration}).
\end{definition}
In particular, the group $\polyn$ itself is VIP.
\begin{proposition}
\label{prop:substitution-poly}
Let $F\leq \polyn$ be a VIP group.
Then for every $g\in \PE{F}{m}$ the substitution map
\begin{equation}
\label{eq:substitution}
h:\vec\beta=(\beta_{1},\dots,\beta_{m}) \in \Fin^{m}_{<}
\mapsto
(g[\vec\beta] : \vec\alpha\in\Fin^{\omega}_{<} \mapsto g(\cup_{i\in\beta_{1}}\alpha_{i},\dots,\cup_{i\in\beta_{m}}\alpha_{i}))
\end{equation}
lies in $\PE{\polyn[\PE{F}{\omega}]}{m}$.
\end{proposition}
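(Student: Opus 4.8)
The plan is to argue by induction on the number of variables $m$, the case $m=0$ being trivial since $\PE{F}{0}=\{1_{G}\}$. Assuming the assertion for $m$, let $g\in\PE{F}{m+1}$ and use the defining factorization
\[
g(\alpha_{1},\dots,\alpha_{m+1}) = W^{\alpha_{1},\dots,\alpha_{m}}(\alpha_{m+1})\,S(\alpha_{1},\dots,\alpha_{m}),
\qquad S\in\PE{F}{m},\quad W^{\alpha_{1},\dots,\alpha_{m}}\in F.
\]
Abbreviating the substituted blocks by $B_{j}=\cup_{i\in\beta_{j}}\alpha_{i}$, which are nonempty and satisfy $B_{1}<\dots<B_{m+1}$ whenever $\vec\beta\in\Fin^{m+1}_{<}$ and $\vec\alpha\in\Fin^{\omega}_{<}$ (so that $g$ is indeed evaluated on a legitimate ordered tuple), the substitution map factors pointwise as
\[
g[\vec\beta](\vec\alpha) = W^{B_{1},\dots,B_{m}}(B_{m+1})\,S(B_{1},\dots,B_{m}).
\]
My goal is to recognize the right-hand side as a polynomial expression in the $m+1$ variables $\vec\beta$ with values in $\polyn[\PE{F}{\omega}]$.

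The second factor, read as a function of $(\beta_{1},\dots,\beta_{m})$, is precisely the substitution map attached to $S$ and therefore lies in $\PE{\polyn[\PE{F}{\omega}]}{m}$ by the induction hypothesis. In view of the definition of $\PE{\polyn[\PE{F}{\omega}]}{m+1}$ it then remains only to show that, for each fixed $\beta_{1}<\dots<\beta_{m}$, the leading factor $\beta_{m+1}\mapsto(\vec\alpha\mapsto W^{B_{1},\dots,B_{m}}(B_{m+1}))$ is a VIP system valued in $\PE{F}{\omega}$, i.e.\ lies in $\polyn[\PE{F}{\omega}]$; the product of the two factors already has the shape prescribed by that definition (and $\polyn[\PE{F}{\omega}]$ is invariant under conjugation by constant functions, so the preceding lemma makes the relevant sets into groups).

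The heart of the matter is thus a one-variable statement: for $W\in F$ the assignment $\beta\mapsto W[\beta]=(\vec\alpha\mapsto W(\cup_{i\in\beta}\alpha_{i}))$ is a VIP system valued in $\PE{F}{\omega}$. I would establish this by an inner induction on the length $d$ of $\Gb$, through the recursive characterization of VIP systems. That each value $W[\beta]$ really lies in $\PE{F}{\omega}$ I would obtain by building up the union one coordinate at a time from the identity
\[
W(\delta\cup\gamma)=W(\delta)\,(\sD_{\gamma}W)(\delta)\,W(\gamma),
\]
whose right-hand side combines $W$ on smaller sets with the symmetric derivative $\sD_{\gamma}W\in F_{1}$; closure of $F$ under $\sD$ and under conjugation by constants, together with the group structure and prefiltration $(\PE{F}{\omega})_{i}=\PE{(F_{i})}{\omega}$ of $\PE{F}{\omega}$, lets me reassemble these pieces into an element of the correct degree. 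For the derivative condition a direct computation gives, for disjoint $\beta,\gamma$,
\[
\sD_{\gamma}(W[\cdot])(\beta)(\vec\alpha)=\big(\sD_{\cup_{i\in\gamma}\alpha_{i}}W\big)\big(\cup_{i\in\beta}\alpha_{i}\big),
\]
and since $\sD_{\gamma}W\in F_{1}\leq\polyn[{\Gb[+1]}]$ the inner induction hypothesis, applied to the shorter filtration $\Gb[+1]$, identifies this derivative as a VIP system valued in $\PE{(F_{1})}{\omega}=(\PE{F}{\omega})_{1}$, exactly as the recursive characterization demands.

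The step I expect to be the main obstacle is disentangling the dependence of the leading coefficient $W^{B_{1},\dots,B_{m}}$ on the substituted lower blocks. Because $B_{1},\dots,B_{m}$ depend on $\vec\alpha$, this coefficient is not a single fixed element of $F$ but an $F$-valued function of the context $(\alpha_{i})_{i\leq\max\beta_{m}}$; one must verify that the variable $\beta_{m+1}$ sees this context as frozen (so that the one-variable argument above applies verbatim, with $W$ replaced by $W^{B_{1},\dots,B_{m}}\in F$) while the residual context-dependence is absorbed into the lower-order factor coming from $S$. Keeping the prefiltration indices aligned throughout this disentangling --- so that each symmetric derivative drops exactly one degree and lands in $\PE{(F_{1})}{\omega}$ --- is where the bookkeeping is heaviest, and since the ambient groups are nonabelian the naive additivity available in the abelian IP case must be replaced everywhere by the commutator and Leibniz identities underlying Theorem~\ref{thm:poly-group} and the group law on $\PE{F}{\omega}$.
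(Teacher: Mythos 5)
Your outline follows the paper's proof step for step: induction on $m$, the defining factorization $g(\alpha_{1},\dots,\alpha_{m+1})=W^{\alpha_{1},\dots,\alpha_{m}}(\alpha_{m+1})S(\alpha_{1},\dots,\alpha_{m})$, the induction hypothesis disposing of $S[\beta_{1},\dots,\beta_{m}]$, and the reduction to showing that the substituted top coefficient is a VIP system with values in $\PE{F}{\omega}$, checked through its values and its symmetric derivative --- even the two identities you quote are the ones the paper uses. The problem is that you then state the crucial lemma only for a \emph{fixed} $W\in F$, and in that form the inner induction cannot close. Your own formula
\[
\sD_{\gamma}\bigl(W[\cdot]\bigr)(\beta)(\vec\alpha)=\bigl(\sD_{\cup_{i\in\gamma}\alpha_{i}}W\bigr)\bigl(\cup_{i\in\beta}\alpha_{i}\bigr)
\]
already exhibits the obstruction: the coefficient $\sD_{\cup_{i\in\gamma}\alpha_{i}}W$ of the derivative depends on $\vec\alpha$ (through the coordinates indexed by $\gamma$), so $\sD_{\gamma}(W[\cdot])$ is \emph{not} of the form $V[\cdot]$ for any single $V\in F_{1}$, and the induction hypothesis you invoke --- which covers only substitution maps of fixed elements --- does not apply to it. The same phenomenon is present from the outset in the application, where the coefficient is $W^{B_{1},\dots,B_{m}}$ with $\vec\alpha$-dependent superscripts.

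You flag this as the main obstacle, but the repair you sketch does not work. Treating the context $(\alpha_{i})_{i\in\beta_{1}\cup\dots\cup\beta_{m}}$ (or, after differentiating, $i\in\gamma$) as frozen and running the fixed-$W$ argument pointwise only yields information for each fixed value of those coordinates, whereas membership in $\PE{F}{\omega}$, and a fortiori in $\polyn[\PE{F}{\omega}]$, is a condition on the joint dependence on all of $\vec\alpha$; and the context dependence of the top coefficient cannot be ``absorbed into the lower-order factor coming from $S$'', because $S[\beta_{1},\dots,\beta_{m}]$ is already a fixed element of $\PE{\polyn[\PE{F}{\omega}]}{m}$ produced by the induction hypothesis and the product decomposition leaves no room to move it there. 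What is actually needed --- and what the paper does --- is to prove the one-variable claim for the whole class of maps $\tilde h:\beta\mapsto\bigl(\vec\alpha\mapsto W^{\cup_{i\in\beta_{1}}\alpha_{i},\dots,\cup_{i\in\beta_{m}}\alpha_{i}}[\beta](\vec\alpha)\bigr)$ whose coefficient is allowed to depend on the earlier coordinates of $\vec\alpha$. That class \emph{is} closed under $\sD_{\gamma}$ (differentiation merely adds dependence on the coordinates indexed by $\gamma$ and lowers the coefficient into $F_{1}$), and its values lie in $\PE{F}{\omega}$ by induction on $|\beta|$ via the identity $W[\beta\cup\{b\}](\vec\alpha)=W(\alpha_{b})\,\sD_{\cup_{i\in\beta}\alpha_{i}}W(\alpha_{b})\,W[\beta](\vec\alpha)$ (superscripts suppressed), precisely because the recursive definition of $\PE{F}{\omega}$ permits the coefficient of the last variable $\alpha_{b}$ to be an arbitrary $F$-valued (respectively $F_{1}$-valued) function of the earlier ones. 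Once the induction statement is strengthened in this way, no freezing or absorption is required, and your argument goes through exactly as in the paper.
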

\begin{proof}
We proceed by induction on $m$.
In case $m=0$ there is nothing to show, so suppose that the assertion is known for $m$ and consider $g\in\PE{F}{m+1}$.
By definition we have
\[
g(\alpha_{1},\dots,\alpha_{m+1}) = W^{\alpha_{1},\dots,\alpha_{m}}(\alpha_{m+1})S(\alpha_{1},\dots,\alpha_{m})
\]
and
\[
h(\beta_{1},\dots,\beta_{m+1})(\vec\alpha) = W^{\cup_{i\in\beta_{1}}\alpha_{i},\dots,\cup_{i\in\beta_{m}}\alpha_{i}}[\beta_{m+1}](\vec\alpha) S[\beta_{1},\dots,\beta_{m}](\vec\alpha).
\]
In view of the induction hypothesis it remains to verify that the map
\[
\tilde h : \beta\mapsto(\vec\alpha \mapsto W^{\cup_{i\in\beta_{1}}\alpha_{i},\dots,\cup_{i\in\beta_{m}}\alpha_{i}}[\beta](\vec\alpha)),
\quad \beta>\beta_{m}>\dots>\beta_{1},
\]
is in $\polyn[\PE{F}{\omega}]$.
The fact that $\tilde h(\beta)\in\PE{F}{\omega}$ for all $\beta$ follows by induction on $\abs{\beta}$ using the identity
\begin{multline*}
W^{\cup_{i\in\beta_{1}}\alpha_{i},\dots,\cup_{i\in\beta_{m}}\alpha_{i}}[\beta\cup\{b\}](\vec\alpha)
=\\
W^{\cup_{i\in\beta_{1}}\alpha_{i},\dots,\cup_{i\in\beta_{m}}\alpha_{i}}(\alpha_{b}) \sD_{\cup_{i\in\beta} \alpha_{i}}W^{\cup_{i\in\beta_{1}}\alpha_{i},\dots,\cup_{i\in\beta_{m}}\alpha_{i}}(\alpha_{b}) W^{\cup_{i\in\beta_{1}}\alpha_{i},\dots,\cup_{i\in\beta_{m}}\alpha_{i}}[\beta](\vec\alpha)
\end{multline*}
that holds whenever $b>\beta>\beta_{m}>\dots>\beta_{1}$.
In order to see that $\tilde h$ is polynomial in $\beta$ observe that
\[
\sD_{\gamma}\tilde h(\beta) : \vec\alpha \mapsto \sD_{\cup_{i\in\gamma}\alpha_{i}}W^{\cup_{i\in\beta_{1}}\alpha_{i},\dots,\cup_{i\in\beta_{m}}\alpha_{i}}(\cup_{i\in\beta}\alpha_{i}),
\quad
\beta>\gamma>\beta_{m}>\dots>\beta_{1}.
\qedhere
\]
\end{proof}

\subsection{Monomial mappings}
In this section we verify that monomial mappings into nilpotent groups in the sense of Bergelson and Leibman \cite[\textsection 1.3]{MR1972634} are polynomial in the sense of Definition~\ref{def:polynomial}.

For a sequence of finite sets $R=(R_{0},R_{1},\dots)$ only finitely many of which are non-empty and a set $\alpha$ write
\[
R[\alpha] := \alpha^{0}\times R_{0} \uplus \alpha^{1}\times R_{1} \uplus \dots
\]
Here the symbol $\uplus$ denotes disjoint union and $\alpha^{i}$ are powers of the set $\alpha$ (note that $\alpha^{0}$ consists of one element, the empty tuple).
\begin{proposition}
\label{prop:mono-poly}
Let $\Gb$ be a prefiltration of length $d$ and $N\subset \N$ any subset.
Let $\gb \from R[N] \to G$, $x\mapsto g_{x}$ be a mapping such that $\gb(N^{i}\times R_{i}) \subset G_{i}$ for every $i\in\N$ and $\prec$ be any linear ordering on $R[N]$.
Then the map
\[
g \from \Fin(N) \to G,
\quad
\alpha \mapsto \prod_{j\in R[\alpha]}^{\prec} g_{j}
\]
is $\Gb$-polynomial on the partial semigroup $\Fin(N)$ (here the symbol $\prec$ on top of $\prod$ indicates the order of factors in the product).
\end{proposition}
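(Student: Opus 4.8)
The plan is to argue by induction on the length $d$ of the prefiltration $\Gb$. When $d=-\infty$ the group $G_{0}$ is trivial and there is nothing to prove, so I assume $d\geq 0$ and that the proposition holds for every prefiltration of smaller length, for arbitrary monomial data. By Definition~\ref{def:polynomial} it suffices to show that for each $\gamma\in\Fin(N)$ the derivative $D_{\gamma}g$, given by $D_{\gamma}g(\alpha)=g(\alpha)\inv g(\alpha\cup\gamma)$ on $\{\alpha:\alpha\cap\gamma=\emptyset\}$, is $\Gb[+1]$-polynomial. Since $\Gb[+1]$ has length $d-1$, this will follow from the induction hypothesis once I exhibit $D_{\gamma}g$ as a monomial map of the type considered in the proposition, but relative to the shifted prefiltration $\Gb[+1]$.

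For the key computation fix $\gamma$ disjoint from $\alpha$. Using $\alpha\cap\gamma=\emptyset$, each index set splits as $R[\alpha\cup\gamma]_{i}=(\alpha\cup\gamma)^{i}\times R_{i}=\biguplus_{S\subseteq\{1,\dots,i\}}(\text{tuples whose }\gamma\text{-coordinates are exactly in the positions }S)\times R_{i}$. The summand $S=\emptyset$ recovers $R[\alpha]_{i}$, so the factors of $g(\alpha\cup\gamma)$ separate into \emph{pure} factors, indexed by $R[\alpha]$, and \emph{mixed} factors, those having at least one coordinate in $\gamma$. A mixed factor carrying $p\geq 1$ coordinates from $\gamma$ and $q=i-p$ coordinates from $\alpha$ takes a value in $G_{p+q}\leq G_{q+1}=(\Gb[+1])_{q}$; fixing the positions and $\gamma$-values of its $\gamma$-coordinates together with the element $r\in R_{i}$, and letting its $q$ remaining coordinates range over $\alpha$, it is precisely a degree-$q$ monomial factor for $\Gb[+1]$. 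Thus the mixed factors already form monomial data for $\Gb[+1]$.

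It remains to separate the pure factors from the mixed ones. Writing $g(\alpha\cup\gamma)=\prod_{j\in R[\alpha\cup\gamma]}^{\prec}g_{j}$ and moving every mixed factor to the right past the pure factors --- repeatedly using $MP=PM[M,P]$ --- leaves the pure factors in their original relative order, so they reassemble to $g(\alpha)$, while the mixed factors together with the accrued commutators form a map $h$ with $g(\alpha\cup\gamma)=g(\alpha)h(\alpha)$, that is, $h=D_{\gamma}g$. Every commutator produced carries at least one argument descending from a mixed factor, hence has $\gamma$-degree $\geq 1$; a commutator of factors of $\alpha$-degrees $q_{1},q_{2}$ and $\gamma$-degrees $p_{1},p_{2}$ lies in $G_{(q_{1}+q_{2})+(p_{1}+p_{2})}\leq G_{(q_{1}+q_{2})+1}=(\Gb[+1])_{q_{1}+q_{2}}$, so it too is a legitimate degree-$(q_{1}+q_{2})$ monomial factor for $\Gb[+1]$. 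Since the nilpotency class $d$ bounds the weight of nonvanishing iterated commutators, only finitely many factor types occur and only $\alpha$-degrees $\leq d-1$ are relevant; collecting them yields monomial data for $\Gb[+1]$ whose ordered product is $D_{\gamma}g$, and the induction hypothesis finishes the step.

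The main obstacle is the bookkeeping of this last paragraph: one must verify that the mixed factors and all iterated commutators can be organized into a single map $\prod_{j\in R'[\alpha]}^{\prec'}g'_{j}$ for some \emph{fixed} linear order $\prec'$ on the new index set $R'[N]$, uniformly in $\alpha$. This is exactly where the commutator identities underlying Theorem~\ref{thm:poly-group} enter; any further reordering required to realize a global $\prec'$ only produces commutators of still higher weight (and $\gamma$-degree $\geq 2$), which remain valid monomial factors for $\Gb[+1]$ and terminate because $\Gb$ has finite length.
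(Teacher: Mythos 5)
Your proposal is correct and follows essentially the same route as the paper's proof: induction on the length of $\Gb$, splitting the factors of $g(\alpha\cup\gamma)$ into those indexed by $R[\alpha]$ and those with at least one $\gamma$-coordinate, and recognizing $D_{\gamma}g$ as a monomial map for $\Gb[+1]$ via the degree count coming from $[G_{i},G_{j}]\subset G_{i+j}$. The bookkeeping you flag as ``the main obstacle'' is resolved in the paper by the explicit identity \eqref{eq:der}, which pulls the $R[\alpha]$-indexed factors out to the left and indexes every resulting iterated commutator by a pair in $(R[\alpha\cup\beta]\setminus R[\alpha])\times R[\alpha]^{\leq d}$, declaring the value to be $1$ whenever the required $\prec$-ordering of its components fails --- this convention is what makes the new index set, the ordering $\prec'$ and the value map manifestly uniform in $\alpha$, and it also avoids your second reordering pass (no mixed--mixed commutators ever arise).
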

\begin{proof}
We induct on the length of the prefiltration $\Gb$.
If $d=-\infty$ then there is nothing to prove.
Otherwise let $\beta\in\Fin(N)$.
We have to show that $D_{\beta}g$ is $\Gb[+1]$-polynomial.

Let $B \subset R[N]$ be a finite set and $A\subset B$.
By induction on the length of an initial segment of $A$ (that proceeds by pulling the terms $g_{j}$, $j\in A$, out of the double product one by one, leaving commutators behind) we see that
\begin{equation}
\label{eq:der}
\prod_{j\in B}^{\prec} g_{j}
= \prod_{j\in A}^{\prec} g_{j}
\prod_{j\in B\setminus A}^{\prec} \prod_{k\in A^{\leq d}}^{\succ-\mathrm{lexicographic}}g_{j,k},
\end{equation}
where $A^{\leq d}$ is the set of all tuples of elements of $A$ with at most $d$ coordinates in $N$ and
\[
g_{j,\emptyset}=g_{j},
\quad
g_{j,(k_{0},\dots,k_{i})} =
\begin{cases}
[g_{j,(k_{0},\dots,k_{i-1})},g_{k_{i}}] & \text{if } j\prec k_{0} \prec \dots \prec k_{i},\\
1 & \text{otherwise}.
\end{cases}
\]

Let $\alpha\in\Fin(N)$ be disjoint from $\beta$.
Applying \eqref{eq:der} with $A:=R[\alpha]$ and $B:=R[\alpha\cup\beta]$ we obtain
\[
D_{\beta}g(\alpha) = \prod_{j\in R[\alpha\cup\beta]\setminus R[\alpha]}^{\prec} \prod_{k\in R[\alpha]^{\leq d}}^{\succ-\mathrm{lexicographic}}g_{j,k},
\]
where $g_{j,(k_{0},\dots,k_{i})} \in G_{l+l_{0}+\dots+l_{i}}$ provided that $j\in \alpha^{l}\times R_{l}$ and $k_{0}\in \alpha^{l_{0}}\times R_{l_{0}}, \dots, k_{i}\in \alpha^{l_{i}}\times R_{l_{i}}$.

The double product can be rewritten as $\prod_{l\in S[\alpha]}^{\prec'}h_{l}$ for some sequence of finite sets $S$, an ordering $\prec'$ on $S[N']$, where $N'=N\setminus\beta$, and $\hb : S[N']\to G$.
The sequence of sets $S$ is obtained by the requirement
\[
(R[\alpha\cup\beta]\setminus R[\alpha]) \times R[\alpha]^{\leq d} = S[\alpha]
\]
for every $\alpha\subset N'$.
The lexicographic ordering on $(R[N]\setminus R[N']) \times R[N']^{\leq d}$ induces an ordering $\prec'$ on $S[N']$.
Define $h_{z}=g_{j,k}$ if $(j,k)$ corresponds to $z\in S[N']$.

By construction we have $\hb((N')^{i}\times S_{i}) \subset G_{i+1}$ since each element of $R[N]\setminus R[N']$ has at least one coordinate in $N$ but not $N'$.
Thus $D_{\beta}g$ is $\Gb[+1]$-polynomial by the induction hypothesis.
\end{proof}
\begin{corollary}
\label{cor:set-monomials}
Let $G$ be a nilpotent group with lower central series
\[
G=G_{0}=G_{1}\geq\dots\geq G_{s}\geq G_{s+1}=\{1_{G}\},
\]
let $\gb : N^{d}\to G$ be an arbitrary mapping and $\prec$ be any linear ordering on $N^{d}$.
Then the map
\[
g\from\Fin(N)\to G,
\quad
\alpha \mapsto \prod_{j\in \alpha^{d}}^{\prec} g_{j}
\]
is polynomial on the partial semigroup $\Fin(N)$ with respect to the filtration
\begin{equation}
\label{eq:scalar-poly-filtration-2}
G_{0} \geq
\underbrace{G_{1} \geq \dots \geq G_{1}}_{d \text{ times}} \geq \dots \geq
\underbrace{G_{s} \geq \dots \geq G_{s}}_{d \text{ times}} \geq G_{s+1}.
\end{equation}
\end{corollary}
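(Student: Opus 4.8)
The plan is to read off the corollary as a direct specialization of Proposition~\ref{prop:mono-poly}, the only work being the correct choice of the sequence of index sets and the verification that \eqref{eq:scalar-poly-filtration-2} is a bona fide prefiltration. First I would take the sequence $R=(R_{0},R_{1},\dots)$ to have exactly one nonempty term, namely a one-point set $R_{d}=\{\ast\}$ in position $d$, with $R_{i}=\emptyset$ for all $i\neq d$. With this choice $R[\alpha]=\alpha^{d}\times\{\ast\}$, which I identify with $\alpha^{d}$, so that $\prod_{j\in R[\alpha]}^{\prec}g_{j}=\prod_{j\in\alpha^{d}}^{\prec}g_{j}=g(\alpha)$ reproduces the map in the statement; the given linear order $\prec$ on $N^{d}$ serves as the order on $R[N]=N^{d}$, and $\gb$ is identified with the given mapping $N^{d}\to G$.

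Next I would exhibit \eqref{eq:scalar-poly-filtration-2} as a legitimate prefiltration by recognizing it as $\Gb^{\bar d}$ in the sense of \eqref{eq:Gbard}, where $\Gb$ is the lower central series of $G$ and $\bar d=(d_{i})_{i\in\N}$ is the superadditive sequence $d_{i}=id$. Superadditivity is immediate since $d_{i+j}=(i+j)d=d_{i}+d_{j}$, so $\Gb^{\bar d}$ is a prefiltration by the construction recalled after \eqref{eq:prefiltration}. Unwinding \eqref{eq:Gbard}, its $i$-th term equals $G_{j}$ precisely when $(j-1)d<i\leq jd$, which is exactly the block structure displayed in \eqref{eq:scalar-poly-filtration-2}; in particular $G^{\bar d}_{i}=\{1_{G}\}$ for $i>sd$, so the length of this prefiltration is $sd$.

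It then remains only to check the single hypothesis of Proposition~\ref{prop:mono-poly}, namely that $\gb(N^{i}\times R_{i})\subset G^{\bar d}_{i}$ for every $i\in\N$. For $i\neq d$ this is vacuous because $R_{i}=\emptyset$. For $i=d$ it reads $\gb(N^{d})\subset G^{\bar d}_{d}$, and since \eqref{eq:Gbard} gives $G^{\bar d}_{d}=G_{1}=G$, the inclusion holds automatically as $\gb$ takes values in $G$. Applying Proposition~\ref{prop:mono-poly} with the prefiltration $\Gb^{\bar d}$ of length $sd$ now yields that $g$ is $\Gb^{\bar d}$-polynomial on $\Fin(N)$, which is precisely the assertion.

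I do not expect a genuine obstacle here, as the corollary is a clean specialization; the only point demanding care is the bookkeeping of degrees. The crux is to place the single generator in position $d$ so that the degree condition lands in $G^{\bar d}_{d}=G_{1}=G$ rather than in a proper subgroup---this is exactly what renders the hypothesis on the otherwise arbitrary mapping $\gb$ free of constraints, and what forces the stretched filtration \eqref{eq:scalar-poly-filtration-2} (each $G_{j}$ repeated $d$ times) rather than the lower central series itself.
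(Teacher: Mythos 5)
Your proposal is correct and matches the paper's (implicit) argument: the corollary is stated without proof precisely because it is the specialization of Proposition~\ref{prop:mono-poly} you describe, taking $R_{d}=\{\ast\}$, $R_{i}=\emptyset$ for $i\neq d$, and recognizing \eqref{eq:scalar-poly-filtration-2} as $\Gb^{\bar d}$ from \eqref{eq:Gbard} with the superadditive sequence $d_{i}=id$, so that the only degree condition lands in $G^{\bar d}_{d}=G_{1}=G$ and is vacuous. Your bookkeeping of the block structure and of the length $sd$ is accurate, so there is nothing to add.
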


\section{Topological multiple recurrence}
\label{sec:hales-jewett}
In this section we refine the nilpotent Hales--Jewett theorem due to Bergelson and Leibman \cite[Theorem 0.19]{MR1972634} using the induction scheme from \cite[Theorem 3.4]{MR1715320}.
This allows us to deduce a \emph{multiparameter} nilpotent Hales--Jewett theorem that will be ultimately applied to polynomial-valued polynomials mappings.

\subsection{PET induction}
First we describe the PET (polynomial exhaustion technique) induction scheme \cite{MR912373}.
For a polynomial $g\in\polyn$ define its \emph{level} $l(g)$ as the greatest integer $l$ such that $g\in\polyn[{\Gb[+l]}]$.
We define an equivalence relation on the set of non-zero $\Gb$-polynomials by $g\sim h$ if and only if $l(g)=l(h)<l(g\inv h)$.
Transitivity and symmetry of $\sim$ follow from Theorem~\ref{thm:poly-group}.

\begin{definition}
A \emph{system} is a finite subset $A\subset\polyn$.
The \emph{weight vector} of a system $A$ is the function
\[
l\mapsto\text{the number of equivalence classes modulo }\sim\text{ of level }l\text{ in }A.
\]
The lexicographic ordering is a well-ordering on the set of weight vectors and the \emph{PET induction} is induction with respect to this ordering.
\end{definition}
\begin{proposition}
\label{prop:PET}
Let $A$ be a system, $h\in A$ be a mapping of maximal level and $B\subset G_{1}$, $M\subset\Fin$ be finite sets.
Then the weight vector of the system
\[
A'' = \{ b h\inv g \sD_{\alpha}g b\inv, \quad g\in A, \alpha\in M, b\in B \} \setminus \{1_{G}\}
\]
precedes the weight vector of $A$.
\end{proposition}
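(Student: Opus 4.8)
The plan is to compute, for each generator $w = b\,h\inv g\,\sD_\alpha g\,b\inv$ of $A''$, its level and its $\sim$-class, sorting the computation by the size of $l(g)$ relative to the maximal level $L := l(h)$. Three facts do the work. (i) The symmetric derivative raises the level by at least one: $\sD_\alpha$ maps $\polyn[{\Gb[+j]}]$ into $\polyn[{\Gb[+j+1]}]$, so $l(\sD_\alpha g) \geq l(g) + 1$. (ii) Conjugation by a constant $b \in G_1$ preserves both level and $\sim$-class: feeding the constant map $b$ and a level-$l$ map into Theorem~\ref{thm:poly-group}\eqref{poly-group:commutator} shows $[b,\cdot]$ lands in $\polyn[{\Gb[+l+1]}]$, so $b(\cdot)b\inv$ differs from $(\cdot)$ by a factor of level $> l$. (iii) On level-$L$ maps, left multiplication by $h\inv$ respects $\sim$: since $(h\inv g)\inv (h\inv g') = g\inv g'$, one has $h\inv g \sim h\inv g'$ iff $g \sim g'$.

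With these I would run the case analysis. If $l(g) < L$, then $h\inv$ has higher level and cannot lower the level of $g$, so $l(h\inv g) = l(g)$; moreover $h\inv g \sim g$ because the two differ by a conjugate of $h^{\pm 1}$, which has level at least $L > l(g)$. Appending $\sD_\alpha g$ (fact (i)) and conjugating by $b$ (fact (ii)) do not change the $\sim$-class, so $w \sim g$ at level $l(g)$. If $l(g) = L$ and $g \not\sim h$, the same reasoning gives $l(h\inv g) = L$ and $w \sim h\inv g$ at level $L$. Finally, if $g \sim h$ (in particular if $g = h$), then $l(h\inv g) > L$ while $l(\sD_\alpha g) \geq L+1$, so $w$ has level strictly above $L$. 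Here maximality of $L$ is crucial: no $g \in A$ has level exceeding $L$, so there are no maps $h\inv g$ of level $< L$ that would spawn spurious level-$L$ classes.

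Reading off the weight vectors: the level-$(<L)$ classes of $A''$ arise only from the first case and, via $w \sim g$, reproduce (and cannot augment) those of $A$; the level-$L$ classes of $A''$ are exactly $\{[h\inv g] : g \in A,\, l(g) = L,\, g \not\sim h\}$, which by the bijection of fact (iii) number one fewer than the level-$L$ classes of $A$, the class $[h]$ having been pushed above level $L$; and all genuinely new classes sit at levels $> L$. Comparing weight vectors lexicographically with the lowest level most significant --- a well-ordering because the finite length of the prefiltration bounds every level --- the first level at which $A''$ and $A$ differ is either some level $< L$, where $A''$ is no larger, or level $L$, where it is strictly smaller. Either way the weight vector of $A''$ precedes that of $A$.

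The step I expect to be the main obstacle is the clean justification of facts (ii) and (iii): one must check that neither the extra factor $\sD_\alpha g$ nor conjugation by $b \in G_1$ perturbs the $\sim$-class, which amounts to confining the error terms $\sD_\alpha g$ and $[b, h\inv g]$ to $\polyn[{\Gb[+L+1]}]$. This is precisely what Theorem~\ref{thm:poly-group} delivers once the constant map $b$ and the level-$L$ map $h\inv g$ are supplied as inputs; the remaining case bookkeeping is routine.
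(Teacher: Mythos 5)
Your proof is correct and takes essentially the same approach as the paper's: the identical three-case analysis ($l(g)<L$; $l(g)=L$ with $g\not\sim h$; $g\sim h$), using that $\sD_{\alpha}$ raises level, that conjugation by constants in $G_{1}$ preserves $\sim$-classes via Theorem~\ref{thm:poly-group}\eqref{poly-group:commutator}, and that left multiplication by $h\inv$ removes exactly the class of $h$ at the top level. The only cosmetic difference is that the paper first handles $A'=\{h\inv g\sD_{\alpha}g\}$ and then absorbs the conjugation by $B$ in a separate second step, whereas you treat the conjugation inline within each case.
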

\begin{proof}
We claim first that the weight vector of the system
\[
A' = \{ h\inv g \sD_{\alpha}g, \quad \alpha\in M, g\in A \} \setminus \{1_{G}\}
\]
precedes the weight vector of $A$.
Indeed, if $l(g)<l(h)$, then $g\sim h\inv g \sD_{\alpha}g$.
If $l(g)=l(\tilde g)=l(h)$ and $g\sim \tilde g \not\sim h$, then $h\inv g \sD_{\alpha}g \sim h\inv \tilde g \sD_{\tilde\alpha}\tilde g$.
Finally, if $g\sim h$, then $l(h\inv g \sD_{\alpha}g)>l(h)$.
Thus the weight vector of $A'$ does not differ from the weight of vector of $A$ before the $l(h)$-th position and is strictly smaller at the $l(h)$-th position, as required.

We now claim that the weight vector of the system
\[
A'' = \{ b gb\inv, \quad g\in A', b\in B \}
\]
coincides with the weight vector of the system $A'$.
Indeed, this follows directly from
\[
bgb\inv = g[g,b\inv] \sim g.
\qedhere
\]
\end{proof}

\subsection{Nilpotent Hales--Jewett}
The following refined version of the nilpotent IP polynomial topological mutiple recurrence theorem due to Bergelson and Leibman \cite[Theorem 0.19]{MR1972634} does not only guarantee the existence of a ``recurrent'' point, but also allows one to choose it from a finite subset $Sx$ of any given orbit.
\begin{theorem}[Nilpotent Hales--Jewett]
\label{thm:nil-hj}
Assume that $G$ acts on the right on a compact metric space $(X,\rho)$ by homeomorphisms.
For every system $A$, every $\epsilon>0$ and every $H\in\Fin$ there exists $N\in\Fin$, $N>H$, and a finite set $S\subset G$ such that for every $x\in X$ there exist a non-empty $\alpha\subset N$ and $s\in S$ such that $\rho(xsg(\alpha),xs)<\epsilon$ for every $g\in A$.
\end{theorem}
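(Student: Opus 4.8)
The plan is to prove Theorem~\ref{thm:nil-hj} by \emph{PET induction} on the weight vector of the system $A$, using Proposition~\ref{prop:PET} to produce, at each step, a derived system of strictly smaller weight whose recurrence drives the argument. In the base case $A\subseteq\{1_G\}$ the weight vector is minimal and the conclusion is immediate: take $S=\{1_G\}$ and a singleton $N=\{\max H+1\}>H$, since then $g(\alpha)=1_G$ and $\rho(xsg(\alpha),xs)=0$ for every $g\in A$. For the inductive step I would fix a polynomial $h\in A$ of maximal level and let $A''$ be the derived system of Proposition~\ref{prop:PET}, formed with finite sets $B\subset G_1$ and $M\subset\Fin$ chosen in the course of the argument; by that proposition the weight vector of $A''$ strictly precedes that of $A$, so the induction hypothesis is available for $A''$ with any prescribed tolerance and any lower bound.

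The engine of the inductive step is a color-focusing (van der Waerden--type) construction. First I would use compactness of $X$ to fix a finite cover by $r=r(\epsilon)$ sets of diameter $<\epsilon$, and use that the finitely many relevant elements of $G$ act by uniformly continuous homeomorphisms to fix the auxiliary tolerances by which $\delta$-closeness propagates to $\epsilon$-closeness under right multiplication. Then, after choosing a chain length $L>r$, I would apply the induction hypothesis for $A''$ repeatedly---once per link, each time taking the lower bound above all previously chosen sets---to construct pairwise disjoint ordered blocks $\alpha_1<\dots<\alpha_L$ inside a common $N>H$, together with the finitely many selection elements supplied by the hypothesis at each step. The blocks are chosen uniformly in $x$, while the selection elements depend on $x$ but range over a finite set; their accumulated products over the $\le L$ links (together with the finitely many values $h(\alpha_1\cup\dots\cup\alpha_k)$) form the finite set $S$. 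For each $x$ this yields chain points $z_k=x\,h(\alpha_1\cup\dots\cup\alpha_k)$, adjusted by the selection elements, and the pigeonhole principle forces two of them, say $z_i$ and $z_j$ with $i<j$, into a single cover set, so that $\rho(z_i,z_j)<\epsilon$.

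The final step combines the collision with the per-link recurrences. Writing $\gamma_k=\alpha_1\cup\dots\cup\alpha_k$ and $\beta=\alpha_{i+1}\cup\dots\cup\alpha_j$, the factorization $g(\gamma_j)=g(\gamma_i)\,\sD_\beta g(\gamma_i)\,g(\beta)$ coming from the symmetric derivative \eqref{eq:symm-der} expresses the difference between the two colliding points through $h$ and its symmetric derivative; the collision $\rho(z_i,z_j)<\epsilon$ controls this distinguished $h$-combination, while the recurrences for the terms $h\inv g\,\sD_\alpha g$ and $b\,(\cdot)\,b\inv$ built into $A''$ absorb the derivative factors and pass from $h$ to an arbitrary $g\in A$. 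The upshot is $\rho(xs\,g(\beta),xs)<\epsilon$ simultaneously for every $g\in A$, with $xs=z_i$ for the appropriate accumulated selection $s\in S$ and with nonempty $\beta\subseteq N$, as required. This is precisely where the refinement to a finite orbit piece $Sx$ is needed: the focusing applies the recurrence at the shifted points $xs$, so a single recurrent orbit would not suffice to close the induction.

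I expect the main obstacle to be exactly this combination: the noncommutative bookkeeping showing that the per-link $A''$-recurrences, together with one collision of the $h$-chain, upgrade to \emph{simultaneous} recurrence for all $g\in A$ at a single point and a single block $\beta$. Handling the resulting commutators (via the identities underlying Theorem~\ref{thm:poly-group}), fixing the quantifiers in the correct order---the chain length $L$ from $r=r(\epsilon)$, then the per-link tolerance, then the blocks and the sets $B,M$---and maintaining throughout that the blocks stay pairwise disjoint and above $H$ while $S$ remains finite, is where the real work lies; the PET reduction itself is routine once Proposition~\ref{prop:PET} is in hand.
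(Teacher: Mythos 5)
Your proposal follows essentially the same route as the paper's proof: PET induction via Proposition~\ref{prop:PET}, per-link applications of the induction hypothesis to derived systems with accumulating conjugators, a focusing chain of points collided by pigeonhole (compactness), and the symmetric-derivative factorization that upgrades the collision plus the per-link recurrences to simultaneous recurrence for all $g\in A$ at a shifted point $xs$ — which is exactly why the statement must allow a finite set $S$ rather than a single recurrent point. The only deviations are sketch-level: in the paper both the per-link blocks $n_i\subset N_i$ and the selections $s_i\in S_i$ depend on $x$ (only the ambient sets $N_i$ and $S_i$ are uniform, so $S$ must contain all values $h(\gamma)\inv$ for $\gamma\subset N_i$, still a finite set), and the tolerances must be split so the chain errors sum to $\epsilon/2$ while the collision contributes the other $\epsilon/2$; neither changes the argument.
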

Here we follow Bergelson and Leibman and use ``Hales--Jewett'' as a shorthand for ``IP topological multiple recurrence'', although Theorem~\ref{thm:nil-hj} does not imply the classical Hales--Jewett theorem on monochrome combinatorial lines.
The fact that Theorem~\ref{thm:nil-hj} does indeed generalize \cite[Theorem 0.19]{MR1972634} follows from Corollary~\ref{cor:set-monomials} that substitutes \cite[\textsection 1 and \textsection 2]{MR1972634}.

The reason that Theorem~\ref{thm:nil-hj} does not imply the classical Hales--Jewett theorem is that it does not apply to semigroups.
However, it is stronger than van der Waerden-type topological recurrence results, since it makes no finite generation assumptions.
We refer to \cite[\textsection 5.5]{MR1972634} and \cite[\textsection 3.3]{MR1715320} for a discussion of these issues.
It would be interesting to extend Theorem~\ref{thm:nil-hj} to nilpotent semigroups (note that nilpotency of a group can be characterized purely in terms of semigroup relations).
\begin{proof}
We use PET induction on the weight vector $w(A)$.
If $w(A)$ vanishes identically then $A$ is the empty system and there is nothing to show.
Assume that the conclusion is known for every system whose weight vector precedes $w(A)$.
Let $h\in A$ be an element of maximal level, without loss of generality we may assume $h\not\equiv 1_{G}$.
Let $k$ be such that every $k$-tuple of elements of $X$ contains a pair of elements at distance $<\epsilon/2$.

We define finite sets $H_{i}\in\Fin$, finite sets $B_{i},\tilde B_{i}\subset G$, systems $A_{i}$ whose weight vector precedes $w(A)$, positive numbers $\epsilon_{i}$, and finite sets $N_{i}\in\Fin$ by induction on $i$ as follows.
Begin with $H_{0}:=H$ and $B_{0}=\tilde B_{0}=\{1_{G}\}$.
The weight vector $w(A_{i})$ of the system
\[
A_{i} := \{b h\inv g \sD_{m}g b\inv,
\quad g\in A,m\subset N_{0}\cup\dots\cup N_{i-1},b\in B_{i}\}
\]
precedes $w(A)$ by Proposition~\ref{prop:PET}.
By uniform continuity we can choose $\epsilon_{i}$ such that
\[
\rho(x,y)< \epsilon_{i} \implies \forall \tilde b\in \tilde B_{i} \quad \rho(x \tilde b,y \tilde b)<\frac{\epsilon}{2k}.
\]
By the induction hypothesis there exists $N_{i}\in\Fin$, $N_{i}>H_{i}$, and a finite set $S_{i}\subset G$ such that
\begin{equation}
\label{eq:phj-ind}
\forall x\in X \quad \exists n_{i}\subset N_{i}, s_{i}\in S_{i} \quad \forall g\in A_{i}
\quad \rho(xs_{i}g(n_{i}),xs_{i}) < \epsilon_{i}.
\end{equation}
Finally, let $H_{i+1}:=H_{i} \cup N_{i}$ and
\[
B_{i+1} := \{ s_{i}b h(\alpha_{i})\inv,
\quad \alpha_{i}\subset N_{i},s_{i}\in S_{i},b\in B_{i}\} \subset G,
\]
\[
\tilde B_{i+1} := \{ bg(m),
\quad g\in A,m \subset N_{0}\cup\dots\cup N_{i},b\in B_{i+1}\} \subset G.
\]
This completes the inductive definition.
Now fix $x\in X$.
We define a sequence of points $y_{i}$ by descending induction on $i$.
Begin with $y_{k}:=x$.
Assume that $y_{i}$ has been chosen and choose $n_{i}\subset N_{i}$ and $s_{i}\in S_{i}$ as in \eqref{eq:phj-ind}, then set $y_{i-1}:=y_{i}s_{i}$.

Finally, let $x_{0}:=y_{0}s_{0}h(n_{0})\inv$ and $x_{i+1}:=x_{i}h(n_{i+1})\inv$.
We claim that for every $g\in A$ and any $0\leq i\leq j\leq k$ we have
\begin{equation}
\label{eq:phj-inner-ind}
\rho(x_{j}g(n_{i+1}\cup\dots\cup n_{j}),x_{i}) < \frac{\epsilon}{2k}(j-i).
\end{equation}
This can be seen by ascending induction on $j$.
Let $i$ be fixed, the claim is trivially true for $j=i$.
Assume that the claim holds for $j-1$ and let $g$ be given.
Consider
\[
b:=s_{j-1}\dots s_{0}h(n_{0})\inv \dots h(n_{j-1})\inv \in B_{j}
\quad \text{and}
\]
\[
\tilde b:=b g(n_{i+1}\cup\dots\cup n_{j-1}) \in \tilde B_{j}.
\]
By choice of $n_{j}$ and $s_{j}$ we have
\[
\rho(y_{j}s_{j}b h(n_{j})\inv g(n_{i+1}\cup\dots\cup n_{j}) g(n_{i+1}\cup\dots\cup n_{j-1})\inv b\inv, y_{j}s_{j} ) < \epsilon_{j}.
\]
By definition of $\epsilon_{j}$ this implies
\[
\rho(y_{j}s_{j}b h(n_{j})\inv g(n_{i+1}\cup\dots\cup n_{j}) g(n_{i+1}\cup\dots\cup n_{j-1})\inv b\inv \tilde b, y_{j}s_{j} \tilde b ) < \frac{\epsilon}{2k}.
\]
Plugging in the definitions we obtain
\[
\rho(x_{j}g(n_{i+1}\cup\dots\cup n_{j}), x_{j-1}g(n_{i+1}\cup\dots\cup n_{j-1}) ) < \frac{\epsilon}{2k}.
\]
The induction hypothesis then yields
\begin{multline*}
\rho(x_{j}g(n_{i+1}\cup\dots\cup n_{j}),x_{i})\\
\leq
\rho(x_{j}g(n_{i+1}\cup\dots\cup n_{j}), x_{j-1}g(n_{i+1}\cup\dots\cup n_{j-1}))
+
\rho(x_{j-1}g(n_{i+1}\cup\dots\cup n_{j-1}),x_{i})\\
<
\frac{\epsilon}{2k}
+
\frac{\epsilon}{2k}((j-1)-i)
=
\frac{\epsilon}{2k}(j-i)
\end{multline*}
as required.

Recall now that by definition of $k$ there exist $0\leq i<j\leq k$ such that $\rho(x_{i},x_{j})<\frac{\epsilon}{2}$.
By \eqref{eq:phj-inner-ind} we have
\[
\rho(x_{j}g(n_{i+1}\cup\dots\cup n_{j}),x_{j})
\leq
\rho(x_{j}g(n_{i+1}\cup\dots\cup n_{j}),x_{i})+\rho(x_{i},x_{j})
<
\epsilon
\]
for every $g\in A$.
But $x_{j}=xs$ for some
\[
s \in S := S_{k}\dots S_{0}h(\Fin(N_{0}))\inv \dots h(\Fin(N_{k}))\inv,
\]
and we obtain the conclusion with $N=N_{0}\cup\dots\cup N_{k}$ and $S$ as above.
\end{proof}
We remark that \cite[Theorem 3.4]{MR1715320} provides a slightly different set $S$ that can be recovered substituting $y_{k}:=xh(N_{k})$ and $y_{i-1}:=y_{i}s_{i}h(N_{i-1})$ in the above proof and making the corresponding adjustments to the choices of $B_{i}$, $b$ and $S$.

\subsection{Multiparameter nilpotent Hales--Jewett}
We will now prove a version of the nilpotent Hales--Jewett theorem in which the polynomial configurations may depend on multiple parameters $\alpha_{1},\dots,\alpha_{m}$.
\begin{theorem}[Multiparameter nilpotent Hales--Jewett]
\label{thm:multi-nil-hj}
Assume that $G$ acts on the right on a compact metric space $(X,\rho)$ by homeomorphisms and let $m\in\N$.
For every finite set $A\subset\PE{\polyn}{m}$, every $\epsilon>0$ and every $H\in\Fin$ there exists a finite set $N\in\Fin$, $N>H$, and a finite set $S\subset G$ such that for every $x\in X$ there exists $s\in S$ and non-empty subsets $\alpha_{1}<\dots<\alpha_{m}\subset N$ such that $\rho(xsg(\alpha_{1},\dots,\alpha_{m}),xs)<\epsilon$ for every $g\in A$.
\end{theorem}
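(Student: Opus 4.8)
The plan is to argue by induction on the number of variables $m$. The base case $m=1$ is exactly Theorem~\ref{thm:nil-hj}, since $\PE{\polyn}{1}=\polyn$ and a finite subset of $\polyn$ is precisely a system in the sense used there. For the inductive step I would exploit the recursive structure of polynomial expressions: every $g\in\PE{\polyn}{m}$ has the form $g(\alpha_{1},\dots,\alpha_{m})=W_{g}^{\alpha_{1},\dots,\alpha_{m-1}}(\alpha_{m})S_{g}(\alpha_{1},\dots,\alpha_{m-1})$ with $S_{g}\in\PE{\polyn}{m-1}$ and $W_{g}^{\vec\alpha'}\in\polyn$. The idea is to locate the last block $\alpha_{m}$ by a single-variable argument built on Theorem~\ref{thm:nil-hj} applied to the family $\{W_{g}^{\vec\alpha'}\}$, and the first $m-1$ blocks $\alpha_{1}<\dots<\alpha_{m-1}$ by the induction hypothesis applied to $\{S_{g}\}$, arranging the two selections so that the product $W_{g}^{\vec\alpha'}(\alpha_{m})S_{g}(\vec\alpha')$ is driven close to the identity simultaneously.

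The difficulty in this decoupling is that the single-variable systems $W_{g}^{\vec\alpha'}$ depend on the not-yet-chosen coordinates $\vec\alpha'=(\alpha_{1},\dots,\alpha_{m-1})$, so a naive application of Theorem~\ref{thm:nil-hj} would produce data $N$ and $S$ depending on $\vec\alpha'$, in conflict with the required order of quantifiers. This is exactly where Proposition~\ref{prop:substitution-poly} enters. I would fix a sufficiently long master increasing sequence $\vec\alpha\in\Fin^{\omega}_{<}$ with blocks beyond $H$ and realize the desired tuples as $\gamma_{j}=\cup_{i\in\beta_{j}}\alpha_{i}$ for index sets $\beta_{1}<\dots<\beta_{m}$, using that such unions are automatically ordered. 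By Proposition~\ref{prop:substitution-poly} the substituted map $\vec\beta\mapsto g[\vec\beta]$ lies in $\PE{\polyn[\PE{\polyn}{\omega}]}{m}$; in particular the dependence of the last-block systems on the earlier index blocks is packaged into a single VIP group valued in $\PE{\polyn}{\omega}$. Evaluation at the fixed master sequence is a prefiltration-preserving homomorphism into $G$, so it turns these into $G$-valued VIP systems of complexity bounded independently of the choice of $\beta_{1}<\dots<\beta_{m-1}$, which restores the uniformity needed to invoke Theorem~\ref{thm:nil-hj} once, with data not depending on the earlier coordinates.

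With uniformity in hand, I would carry out the gluing by reproducing the color-focusing scheme from the proof of Theorem~\ref{thm:nil-hj}. One fixes $k$ large enough that any $k$ points of $X$ contain a pair at distance $<\epsilon/2$, builds auxiliary points $y_{i}$ by descending induction and the accompanying shifted points $x_{i}$, and telescopes an estimate of the form $\rho(x_{j}g(\dots),x_{i})<\tfrac{\epsilon}{2k}(j-i)$ before applying the pigeonhole principle. In the present setting each step of this scheme feeds the last-block recurrence (Theorem~\ref{thm:nil-hj}, via substitution) into the first-$(m-1)$-block recurrence (the induction hypothesis), so that the two are focused on the same pair of indices; the common shift set $S$ is assembled from the shift sets of both recurrences together with finitely many group elements of the form $g(\alpha)\inv$, exactly as in the single-variable proof.

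The main obstacle I anticipate is precisely this bookkeeping of quantifiers and shifts: one must choose the common pool $N$ and the common finite shift set $S$ before the point $x$ is presented, yet have them work simultaneously for every $g\in A$ and for all $m$ coordinates at once. The substitution lemma is the device that makes this possible, since it certifies that the objects produced at each stage remain genuine $\Gb$-prefiltered polynomials, so that both Theorem~\ref{thm:nil-hj} and the PET weight bookkeeping underlying it stay applicable after the reduction. Verifying that the errors accumulated in the nested telescoping remain below $\epsilon$, and that the finitely many index configurations returned by the single-variable step can be simultaneously accommodated inside the structure returned by the $(m-1)$-variable step, is where the bulk of the technical work lies.
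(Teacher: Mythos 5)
Your skeleton---induction on $m$, the recursive decomposition $g(\alpha_{1},\dots,\alpha_{m+1})=g_{2}^{\vec\alpha}(\alpha_{m+1})\,g_{1}(\vec\alpha)$ with $g_{1}\in\PE{\polyn}{m}$ and $g_{2}^{\vec\alpha}\in\polyn$, Theorem~\ref{thm:nil-hj} for the last variable and the induction hypothesis for the first $m$---matches the paper, and you correctly isolate the central difficulty: the single-variable systems $g_{2}^{\vec\alpha}$ depend on the not-yet-chosen $\vec\alpha$. But your proposed resolution of that difficulty does not work. Substituting via Proposition~\ref{prop:substitution-poly} and evaluating at a fixed master sequence repackages the dependence without removing it: for each choice of the earlier index blocks $\beta_{1}<\dots<\beta_{m}$ you still get a \emph{different} single-variable VIP system, so the relevant family is infinite, while Theorem~\ref{thm:nil-hj} applies only to a single \emph{finite} system fixed in advance of the point $x$. ``Complexity bounded independently of the earlier blocks'' is not a substitute for finiteness---there is no version of Theorem~\ref{thm:nil-hj} quantified over an infinite family of systems of bounded weight. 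Worse, the substituted-and-evaluated maps $\vec\beta\mapsto g[\vec\beta](\vec\alpha_{\mathrm{master}})$ are again elements of $\PE{\polyn}{m}$, so treating their recurrence as available is circular; the only non-circular escape would be to re-develop the PET/color-focusing machinery directly for multiparameter expressions, which the paper never does and your proposal does not supply. (In the paper the logical order is the reverse of yours: substitution is used \emph{after} Theorem~\ref{thm:multi-nil-hj} is proved, in Corollary~\ref{cor:pair-color}, not as an ingredient of its proof.)

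The missing idea is a quantifier-order trick that makes the problem finite, after which no substitution and no new color-focusing are needed. Apply the induction hypothesis to $\{g_{1}:g\in A\}$ with $\epsilon/2$ \emph{first}, obtaining a finite pool $N>H$ and a finite shift set $S\subset G$. Because $N$ is finite, the set
\[
\{\,s\,g_{2}^{\vec\alpha}\,s\inv \;:\; s\in S,\ \vec\alpha\subset N,\ g\in A\,\}
\]
is a \emph{finite} subset of $\polyn$ (conjugation by constants preserves polynomiality), and a single application of Theorem~\ref{thm:nil-hj} to it, with an auxiliary $\epsilon'$ chosen by uniform continuity so that $\rho(u,v)<\epsilon'$ forces $\rho(u\,sg_{1}(\vec\alpha),v\,sg_{1}(\vec\alpha))<\epsilon/2$ for all $s\in S$, $\vec\alpha\subset N$, $g\in A$, yields $N'>N$ and $S'$. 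Then, given $x$, one first finds $s'\in S'$ and $\alpha\subset N'$ good for \emph{all} $s,\vec\alpha,g$ simultaneously, and only then invokes the inductive conclusion at the shifted point $xs'$ to find $s\in S$ and $\vec\alpha\subset N$; the triangle inequality gives $\rho(xs'sg(\vec\alpha,\alpha),xs's)<\epsilon$, and one finishes with the pool $N\cup N'$ and shifts $S'S$. Note also the role of conjugation here: the recurrence for the last variable must take place at the point $xs's$, which is exactly what feeding $sg_{2}^{\vec\alpha}s\inv$ (rather than $g_{2}^{\vec\alpha}$) into Theorem~\ref{thm:nil-hj} plus the uniform-continuity bridge accomplishes; your proposal's vague assembly of ``the shift sets of both recurrences'' does not account for this compatibility, and the pigeonhole/telescoping scheme you plan to reproduce is unnecessary at this level---it is already encapsulated inside Theorem~\ref{thm:nil-hj}.
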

\begin{proof}
We induct on $m$.
The base case $m=0$ is trivial.
Assume that the conclusion is known for some $m$, we prove it for $m+1$.

Let $A\subset\PE{\polyn}{m+1}$ and $H$ be given.
For convenience we write $\vec\alpha=(\alpha_{1},\dots,\alpha_{m})$ and $\alpha=\alpha_{m+1}$.
By definition each $g\in A$ can be written in the form
\[
g(\alpha_{1},\dots,\alpha_{m+1}) = g_{2}^{\vec\alpha}(\alpha) g_{1}(\vec\alpha)
\]
with $g_{1}\in\PE{\polyn}{m}$ and $g_{2}^{\vec\alpha} \in \polyn$.

We apply the induction hypothesis with the system $\{g_{1}, g\in A\}$ and $\epsilon/2$, thereby obtaining a finite set $N\in\Fin$, $N>H$, and a finite set $S\subset G$.
We write ``$\vec\alpha\subset N$'' instead of ``$\alpha_{1}<\dots<\alpha_{m}\subset N$''.

By uniform continuity there exists $\epsilon'$ such that
\[
\rho(x,y)<\epsilon' \implies
\forall s\in S, \vec\alpha\subset N,g\in A \quad
\rho(xsg_{1}(\vec\alpha),ysg_{1}(\vec\alpha))<\epsilon/2.
\]

We invoke Theorem~\ref{thm:nil-hj} with the system $\{ s g_{2}^{\vec\alpha} s\inv, s\in S, \vec\alpha\subset N, g\in A\}$ and $\epsilon'$, this gives us a finite set $N'\in\Fin$, $N'>N$, and a finite set $S'\subset G$ with the following property:
for every $x\in X$ there exist $s'\in S'$ and $\alpha\subset N'$ such that
\[
\forall s\in S, \vec\alpha\subset N, g\in A \quad
\rho(xs's g_{2}^{\vec\alpha}(\alpha)s\inv,xs') < \epsilon'.
\]
By choice of $\epsilon'$ this implies
\[
\forall s\in S, \vec\alpha\subset N, g\in A \quad
\rho(xs'sg_{2}^{\vec\alpha}(\alpha) g_{1}(\vec\alpha), xs'sg_{1}(\vec\alpha)) < \epsilon/2.
\]
By choice of $N$ and $S$, considering the point $xs'$, we can find $\vec\alpha\subset N$ and $s\in S$ such that
\[
\forall g\in A \quad \rho(xs'sg_{1}(\vec\alpha),xs's) < \epsilon/2.
\]
Combining the last two inequalities we obtain
\[
\forall g\in A \quad \rho(xs'sg(\vec\alpha,\alpha),xs's) < \epsilon.
\]
This yields the conclusion with finite sets $N\cup N'$ and $S'S$.
\end{proof}

The combinatorial version is derived using the product space construction of Furstenberg and Weiss \cite{MR531271}.
\begin{corollary}
\label{cor:color-multi-nil-hj}
Let $\Gb$ be a filtration on a countable nilpotent group $G$, $m\in\N$, $A\subset\PE{\polyn}{m}$ a finite set, and $l\in\N_{>0}$.
Then there exists $N\in\N$ and finite sets $S,T\subset G$ such that for every $l$-coloring of $T$ there exist $\alpha_{1}<\dots<\alpha_{m}\subset N$ and $s\in S$ such that the set $\{sg(\vec\alpha), g\in A\}$ is monochrome (and in particular contained in $T$).
\end{corollary}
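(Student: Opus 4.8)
The plan is to transfer the topological statement of Theorem~\ref{thm:multi-nil-hj} into the combinatorial one by means of the shift action on a sequence space, following Furstenberg and Weiss. Let $\Lambda=\{1,\dots,l\}$ be the set of colors and consider the space $\Omega=\Lambda^{G}$, which is compact and metrizable since $\Lambda$ is finite and $G$ is countable. Enumerate $G=\{h_{1},h_{2},\dots\}$ with $h_{1}=1_{G}$ and equip $\Omega$ with the metric $\rho(\omega,\omega')=\sum_{n}2^{-n}\delta(\omega(h_{n}),\omega'(h_{n}))$, where $\delta$ is the $\{0,1\}$-valued discrete metric on $\Lambda$; then $\rho(\omega,\omega')<1/2$ already forces $\omega(1_{G})=\omega'(1_{G})$. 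Let $G$ act on $\Omega$ on the right by $(\omega\cdot g)(y)=\omega(gy)$. One checks this is indeed a right action (since $(\omega\cdot g)\cdot g'=\omega\cdot(gg')$) by homeomorphisms, and that $(\omega\cdot g)(1_{G})=\omega(g)$, so that the coordinate at $1_{G}$ of a shifted point reads off a color.

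First I would apply Theorem~\ref{thm:multi-nil-hj} to this action, with the given finite set $A\subset\PE{\polyn}{m}$, with $\epsilon=1/2$, and with $H=\emptyset$. This produces a finite set $N\subset\N$ (which we may take to be an initial segment $\{1,\dots,N\}$, justifying the notation $N\in\N$) and a finite set $S\subset G$, both fixed \emph{before} any coloring is chosen, such that for every $x\in\Omega$ there are $s\in S$ and nonempty $\alpha_{1}<\dots<\alpha_{m}\subset N$ with $\rho(xsg(\alpha_{1},\dots,\alpha_{m}),xs)<\epsilon$ for all $g\in A$. I would then set
\[
T:=\{sg(\alpha_{1},\dots,\alpha_{m}):s\in S,\ \alpha_{1}<\dots<\alpha_{m}\subset N,\ g\in A\}\subset G,
\]
a finite set containing every element that can occur in the conclusion.

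Given an arbitrary $l$-coloring $\chi$ of $T$, I would extend it arbitrarily to a point $x:=\chi\in\Omega$ and feed this single point into the recurrence just obtained, producing $s\in S$ and $\vec\alpha=(\alpha_{1},\dots,\alpha_{m})\subset N$. The decoding is the only substantive step: the value of $xsg(\vec\alpha)=x\cdot(sg(\vec\alpha))$ at the coordinate $1_{G}$ is $\chi(sg(\vec\alpha))$, while that of $xs$ is $\chi(s)$, so the inequality $\rho(xsg(\vec\alpha),xs)<\epsilon<1/2$ forces $\chi(sg(\vec\alpha))=\chi(s)$ for every $g\in A$. Hence all elements of $\{sg(\vec\alpha):g\in A\}$, each of which lies in $T$ by construction, receive the common color $\chi(s)$; that is, the set is monochrome and contained in $T$, as desired.

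I expect no deep obstacle: the entire content sits in the topological Theorem~\ref{thm:multi-nil-hj}, and the corollary is a routine correspondence argument. The two points requiring care are the bookkeeping with the right-action convention (so that the recurrence $xsg(\vec\alpha)\approx xs$ decodes to the product $sg(\vec\alpha)$ in the stated order rather than to an inverse or a transposed product), and the observation that applying the theorem to the \emph{full} shift $\Lambda^{G}$, rather than to the orbit closure of a single coloring, automatically yields an $N$ and $S$ uniform over all $l$-colorings of $T$. This uniformity is precisely what the universal quantifier ``for every $x\in X$'' in Theorem~\ref{thm:multi-nil-hj} supplies, and it is what makes it legitimate to fix $N$, $S$, and hence $T$, before the coloring is revealed.
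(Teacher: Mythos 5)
Your proposal is correct and follows essentially the same route as the paper: both apply Theorem~\ref{thm:multi-nil-hj} to the full shift $l^{G}$ with the right action $xg(h)=x(gh)$, with $\epsilon$ small enough that $\rho$-closeness forces agreement of the color at the identity, and then take $T$ to be the finite set $\bigcup_{g\in A}Sg(\Fin(N)^{m}_{<})$ of all group elements that can occur in the conclusion. The only cosmetic difference is that you extend a coloring of $T$ to all of $G$, whereas the paper works with colorings of $G$ and observes afterwards that the property only involves $T$.
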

\begin{proof}
Let $X:=l^{G}$ be the compact metrizable space of all $l$-colorings of $G$ with the right $G$-action $xg(h)=x(gh)$.
We apply Theorem~\ref{thm:multi-nil-hj} to this space, the system $A$, the set $H=\emptyset$, and an $\epsilon>0$ that is sufficiently small to ensure that $\rho(x,x')<\epsilon$ implies $x(e_{G})=x'(e_{G})$.

This yields certain $N\in\N$ and $S\subset G$ that enjoy the following property:
for every coloring $x\in X$ there exist $\alpha_{1}<\dots<\alpha_{m}\subset N$ and $s\in S$ such that $\{sg(\vec\alpha),g\in A\}$ is monochrome.
Observe that this property only involves a finite subset $T=\cup_{g\in A}Sg(\Fin(N)^{m}_{<})\subset G$.
\end{proof}
In the proof of our measurable recurrence result we will apply this combinatorial result to polynomial-valued polynomial mappings.
We encode all the required information in the next corollary.
\begin{corollary}
\label{cor:pair-color}
Let $m\in\N$, $K\leq F\leq \polyn$ be VIP groups, and $\FE\leq\PE{\polyn}{\omega}$ be a countable subgroup that is closed under substitutions $g\mapsto g[\vec\beta]$ (recall \eqref{eq:substitution}).

Then for any finite subsets $(R_{i})_{i=0}^{t}\subset\PE{K}{m}\cap\FE$ and $(W_{k})_{k=0}^{v-1}\subset\PE{F}{m}\cap\FE$ there exist $N,w\in\N$ and $(L_{i},M_{i})_{i=1}^{w}\subset(\PE{K}{N}\cap\FE)\times(\PE{F}{N}\cap\FE)$ such that for every $l$-coloring of the latter set there exists an index $a$ and sets $\beta_{1}<\dots<\beta_{m}\subset N$ such that the set $(L_{a}R_{i}[\vec\beta],M_{a}W_{k}[\vec\beta]L_{a}\inv)_{i,k}$ is monochrome (and in particular contained in the set $(L_{i},M_{i})_{i=1}^{w}$).
We may assume $L_{1}\equiv 1_{G}$.
\end{corollary}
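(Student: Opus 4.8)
The plan is to deduce this purely combinatorial statement from the coloring form of the multiparameter nilpotent Hales--Jewett theorem (Corollary~\ref{cor:color-multi-nil-hj}), applied in an auxiliary group of \emph{pairs} whose multiplication builds in the conjugation by $L_{a}\inv$. Concretely, I would set $P:=\PE{F}{\omega}\cap\FE$ and $Q:=\PE{K}{\omega}\cap\FE$. Since $K\leq F$ we have $Q\leq P$, and I let $Q$ act on $P$ by conjugation through this inclusion, forming the countable semidirect product $G:=P\rtimes Q$. Writing elements as pairs $(M,L)$ with $M\in P$, $L\in Q$ and multiplication
\[
(M,L)(M',L')=(M\,LM'L\inv,\,LL'),
\]
one computes
\[
(M_{a}L_{a}\inv,\,L_{a})\,(W_{k}[\vec\beta],\,R_{i}[\vec\beta])=(M_{a}W_{k}[\vec\beta]L_{a}\inv,\,L_{a}R_{i}[\vec\beta]),
\]
so that left multiplication by $s:=(M_{a}L_{a}\inv,L_{a})$ produces exactly the desired configuration once the two coordinates are read in the swapped order $(L,M)$. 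I equip $G$ with the prefiltration $\Gamma_{0}:=G$ and $\Gamma_{i}:=P_{i}\rtimes Q_{i}$ for $i\geq 1$, where $P_{i}=\PE{(F_{i})}{\omega}$ and $Q_{i}=\PE{(K_{i})}{\omega}$; it has finite length because the prefiltrations of $P$ and $Q$ do.

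Next I would verify that for every pair $(i,k)$ the pair-substitution map $g_{i,k}\colon\vec\beta\mapsto(W_{k}[\vec\beta],R_{i}[\vec\beta])\in G$ is a polynomial expression, i.e.\ $g_{i,k}\in\PE{(\polyn[G])}{m}$. By Proposition~\ref{prop:substitution-poly} the substitution maps $\vec\beta\mapsto W_{k}[\vec\beta]$ and $\vec\beta\mapsto R_{i}[\vec\beta]$ lie in $\PE{(\polyn[\PE{F}{\omega}])}{m}$ and $\PE{(\polyn[\PE{K}{\omega}])}{m}$ respectively. The coordinate inclusions $P\hookrightarrow G$ and $Q\hookrightarrow G$ respect $\Gamma_{\bullet}$ and therefore push these maps forward into $\PE{(\polyn[G])}{m}$; since $g_{i,k}$ is their pointwise product and $\PE{(\polyn[G])}{m}$ is a group, the claim follows.

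I would then apply Corollary~\ref{cor:color-multi-nil-hj} to $G$, the dimension $m$, the finite family $A:=\{g_{i,k}\}_{i,k}$, and $l$ colors (its proof, resting on Theorems~\ref{thm:nil-hj} and~\ref{thm:multi-nil-hj}, uses only that $\Gamma_{\bullet}$ is a finite-length prefiltration on a countable group). This yields $N\in\N$ and finite sets $S,T\subset G$ such that every $l$-coloring of $T$ admits $\beta_{1}<\dots<\beta_{m}\subset N$ and $s\in S$ with $\{s\,g_{i,k}(\vec\beta)\}_{i,k}$ monochrome in $T$. Let $(L_{i},M_{i})_{i=1}^{w}$ enumerate the coordinate-swaps of $S\cup T$, so that both the shifts and the shifted points appear; these lie in $(\PE{K}{N}\cap\FE)\times(\PE{F}{N}\cap\FE)$, and adjoining $s=1_{G}$ we may list $(L_{1},M_{1})$ with $L_{1}\equiv 1_{G}$ first. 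A coloring of this list restricts to $T$, and by the second displayed identity the monochrome set becomes $(L_{a}R_{i}[\vec\beta],M_{a}W_{k}[\vec\beta]L_{a}\inv)_{i,k}$, where $s=(M_{a}L_{a}\inv,L_{a})$ recovers the pair $(L_{a},M_{a})$; this is the required conclusion.

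The main obstacle is the algebra hidden in the second paragraph: confirming that $\Gamma_{\bullet}$ is indeed a prefiltration, i.e.\ $[\Gamma_{i},\Gamma_{j}]\subseteq\Gamma_{i+j}$, which relies on $[P_{i},Q_{j}]\subseteq[P_{i},P_{j}]\subseteq P_{i+j}$ (using $Q\leq P$) together with $[Q_{i},Q_{j}]\subseteq Q_{i+j}$, and that the two coordinate inclusions are filtered. It is precisely here that the interaction between the conjugation action and the filtration degrees must be kept under control so that each $g_{i,k}$ is genuinely $\Gamma_{\bullet}$-polynomial; everything afterwards is a translation of the output of Corollary~\ref{cor:color-multi-nil-hj} back through the coordinate swap.
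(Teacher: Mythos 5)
Your proof is correct and is essentially the paper's proof in different packaging: both arguments turn the substitution maps into polynomial expressions over a group of pairs via Proposition~\ref{prop:substitution-poly} and then feed the resulting finite family into Corollary~\ref{cor:color-multi-nil-hj}. The only difference is where the conjugation twist is implemented --- the paper keeps the direct product $(\PE{K}{\omega}\cap\FE)\times(\PE{F}{\omega}\cap\FE)$, uses the pair maps $\vec\beta\mapsto(R_{i}[\vec\beta],W_{k}[\vec\beta]R_{i}[\vec\beta])$, and recolors by $\tilde\chi(g,h)=\chi(g,hg\inv)$, while you bake the twist into the multiplication of $P\rtimes Q$; the isomorphism $(M,L)\mapsto(ML,L)$ of your semidirect product onto the direct product $P\times Q$ carries your prefiltration $\Gamma_{i}=P_{i}\rtimes Q_{i}$ to the product prefiltration $P_{i}\times Q_{i}$ and your pair maps $(W_{k}[\vec\beta],R_{i}[\vec\beta])$ to the paper's pairs (with coordinates swapped), so the two proofs coincide step by step, including the bookkeeping issues they leave implicit.
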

\begin{proof}
By Proposition~\ref{prop:substitution-poly} the maps $\vec\beta \mapsto (R_{i}[\vec\beta],W_{k}[\vec\beta]R_{i}[\vec\beta])$ are polynomial expressions with values in $\PE{K}{\omega}\times\PE{F}{\omega}$.
By the assumption they also take values in $\FE\times\FE$.
Given an $l$-coloring $\chi$ of $(\PE{K}{\omega}\cap\FE)\times(\PE{F}{\omega}\cap\FE)$ we pass to the $l$-coloring $\tilde\chi(g,h)=\chi(g,hg\inv)$.
Corollary~\ref{cor:color-multi-nil-hj} then provides the desired $N$ and $(L_{i},M_{i})_{i=1}^{w}=T\cup S$.
\end{proof}

\section{FVIP groups}
\label{sec:fvip}
In this section we consider a class of IP-polynomials that IP-converge to orthogonal projections.
\begin{definition}
\label{def:fvip}
An \emph{FVIP group} is a finitely generated VIP group.
An \emph{FVIP system} is a member of some FVIP group.
\end{definition}
The main result about FVIP groups is the following nilpotent version of \cite[Theorem 1.8]{MR1417769} and \cite[Theorem 1.9]{MR2246589} that will be used to construct ``primitive extensions'' (we will recall the definitions of a primitive extension and an IP-limit in due time).
\begin{theorem}
\label{thm:fvip-proj}
Let $\Gb$ be a prefiltration of finite length and $F\leq\polyn$ be an FVIP group.
Suppose that $G_{0}$ acts on a Hilbert space $H$ by unitary operators and that for each $(g_{\alpha})_{\alpha}\in F$ the weak limit $P_{g}=\wIPlim_{\alpha\in\Fin} g_{\alpha}$ exists.
Then
\begin{enumerate}
\item each $P_{g}$ is an orthogonal projection and
\item these projections commute pairwise.
\end{enumerate}
\end{theorem}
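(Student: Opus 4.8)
The plan is to exploit the defining structure of an FVIP group: it is finitely generated and closed under the symmetric derivative $\sD$, so the levels (in the prefiltration $\Fb$) of the generators can only decrease under differentiation. This suggests an induction on the length of the prefiltration $\Gb$, or equivalently a PET-style descending induction organized by the weight vector of the generating system, combined with the standard van der Corput argument for weak IP-limits.

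First I would establish the existence and key algebraic identity for the weak IP-limits using a van der Corput lemma adapted to the partial semigroup $\Fin$. The point is that for $g = (g_\alpha)_\alpha \in F$, the operators $g_\alpha$ are unitary, so $\|g_\alpha v\| = \|v\|$ for all $v$; if $P_g = \wIPlim_\alpha g_\alpha$ exists weakly, then to show $P_g$ is an orthogonal projection it suffices to show $P_g^* P_g = P_g$, i.e.\ that $P_g$ is a self-adjoint idempotent. The crucial observation is that the symmetric derivative governs how $g_{\alpha\cup\beta}$ factors: from \eqref{eq:symm-der} we have $g_{\alpha\cup\beta} = g_\alpha (\sD_\beta g)_\alpha\, g_\beta$ whenever $\alpha<\beta$ (reading these as unitary operators). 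I would feed this into a van der Corput estimate: to analyze $\langle P_g v, P_g v\rangle$ one studies $\IPlim_\beta \IPlim_\alpha \langle g_{\alpha\cup\beta} v, g_\alpha v\rangle$, and the factorization reduces the inner correlation to a limit involving $(\sD_\beta g)$, a system lying in the strictly lower-length prefiltration $F_1 = F\cap\polyn[{\Gb[+1]}]$. By the induction hypothesis applied to $\Gb[+1]$, the limits $P_{\sD_\beta g}$ exist and are commuting orthogonal projections, which lets one close the computation and conclude that $P_g$ is itself an orthogonal projection.

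For the commutation statement, I would argue that for $g, h \in F$ the product $P_g P_h$ and $P_h P_g$ both equal the ``joint'' IP-limit obtained by iterating the limit in the two (disjointly supported) blocks of indices; since the partial-semigroup order lets $\alpha$ and $\beta$ range over disjoint intervals independently, a two-variable IP-limit $\IPlim_\alpha \IPlim_\beta g_\alpha h_\beta$ is symmetric in the order of taking the two limits, up to correction terms that again live in $F_1$ and are controlled by the induction hypothesis. Here it is essential that $F$ is closed under $\sD$ and under conjugation by constants, so that all the commutator and derivative terms produced stay inside the FVIP group and drop in level. Finite generation enters to guarantee that only finitely many generators, hence finitely many weak limits, need to be controlled at each inductive stage, keeping the PET weight vector argument well-founded.

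I expect the main obstacle to be the nilpotent non-commutativity: in the abelian IP theory of \cite{MR1417769} the van der Corput computation produces a clean reduction, but here the factorization $g_{\alpha\cup\beta} = g_\alpha (\sD_\beta g)_\alpha\, g_\beta$ carries the extra middle factor $(\sD_\beta g)_\alpha$ that does not commute past $g_\alpha$. Controlling the resulting commutators—showing they either vanish in the limit or reduce to lower-level FVIP systems already handled by induction—is the delicate part, and it is exactly where Theorem~\ref{thm:poly-group} (that polynomial maps form a group and that commutators raise the prefiltration index) together with the $\sD$-closure of the FVIP group must be used to guarantee that every auxiliary system appearing in the van der Corput and symmetrization arguments genuinely has a strictly smaller weight vector.
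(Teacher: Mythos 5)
Your outline correctly identifies the inductive structure (induction on the length of $\Gb$), the role of the factorization $g_{\alpha\cup\beta}=g_{\alpha}\,(\sD_{\beta}g)_{\alpha}\,g_{\beta}$, and the van der Corput lemma, but there is a genuine gap at the step where you claim the induction hypothesis ``lets one close the computation.'' Knowing that the limits $P_{\sD_{\beta}g}$ are commuting orthogonal projections does \emph{not} make the van der Corput correlation vanish: the inner limit produces terms bounded by $\|P_{\rD_{\alpha}g}f\|$, and the vdC hypothesis requires $\IPlim_{\alpha}\|P_{\rD_{\alpha}g}f\|=0$, which is simply false for a general $f$ (for instance when the derivatives act trivially on $f$, so that $P_{\rD_{\alpha}g}f=f$). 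Van der Corput can only ever show that $P_{g}f=0$; it cannot establish idempotence on the part of $H$ where $P_{g}\neq 0$. The paper's proof resolves exactly this by a dichotomy that is absent from your proposal: using Lemma~\ref{lem:sfi} (proved via Hirsch length), one finds $l$ and $W\leq F_{1}$ such that any $\rD_{\alpha_{1}}g,\dots,\rD_{\alpha_{l}}g$ with $\alpha_{1}<\dots<\alpha_{l}$ generate a finite-index subgroup of $W$, and then splits $H=H_{0}\oplus H_{1}$, where $H_{0}=\bigcap_{V\leq W}\ker P_{V}$ and $H_{1}$ is spanned by the ranges of the $P_{V}$, $V$ running over finite-index subgroups. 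On $H_{0}$ the vdC argument applies (via Lemmas~\ref{lem:gp} and~\ref{lem:comm-proj}) and gives $P_{g}f=0$; on $H_{1}$ one instead uses Lemma~\ref{lem:deri-poly} to pick $\alpha$ with $\rD_{\alpha}g$ agreeing with an element of $V$ on a tail, so that $P_{\rD_{\alpha}g}f=f$, and a direct $3\epsilon$-approximation yields $P_{g}f=P_{g}^{2}f$. This splitting, and the finite-index machinery behind it, is the heart of the proof; it is also where finite generation is genuinely used (through Hirsch length and Corollary~\ref{cor:fin-ext}), not, as you suggest, to keep a PET weight-vector argument well-founded --- indeed PET induction plays no role in this theorem, and the paper notes the finite generation hypothesis cannot be dropped because of a counterexample in \cite{MR1417769}.

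Your commutativity argument has the same defect in sharper form: you assert that the iterated limit $\IPlim_{\alpha}\IPlim_{\beta}g_{\alpha}h_{\beta}$ is ``symmetric in the order of taking the two limits, up to correction terms,'' but iterated IP-limits are not interchangeable in general, and nothing in your sketch controls the discrepancy. The paper instead runs the same splitting machinery on the maps $\alpha\mapsto[g_{\alpha},g']$ (which lie in $\polyn[\Fb]$ and take values in $F_{1}$ because $F$ is a VIP group): in Case 0 an orthogonality argument gives $P_{g}P_{g'}f=P_{g'}P_{g}f=0$, and in Case 1 one may assume wlog $[g_{\alpha},g']\in V$ with $P_{V}f=f$, so the commutator correction acts trivially on $f$ in norm and $g_{\alpha}P_{g'}f=P_{g'}g_{\alpha}f$ follows, whence the projections commute. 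Without the finite-index dichotomy, neither half of your proposed argument can be completed.
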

The finite generation assumption cannot be omitted in view of a counterexample in \cite{MR1417769}.
We begin with some algebraic preliminaries.

\subsection{Hirsch length}
We use Hirsch length of a group as a substitute for the concept of the rank of a free $\Z$-module.
Recall that a subnormal series in a group is called \emph{polycyclic} if the quotients of consecutive subgroups in this series are cyclic and a group is called \emph{polycyclic} if it admits a polycyclic series.
\begin{definition}
The \emph{Hirsch length} $h(G)$ of a polycyclic group $G$ is the number of infinite quotients of consecutive subgroups in a polycyclic series of $G$.
\end{definition}
Recall that the Hirsch length is well-defined by the Schreier refinement theorem, see e.g.\ \cite[Theorem 5.11]{MR1307623}.
For a finitely generated nilpotent group $G$ with a filtration $\Gb$ one has
\[
h(G) = \sum_{i} \rank G_{i}/G_{i+1}.
\]
\begin{lemma}
\label{lem:fin-ind-hirsch}
Let $G$ be a finitely generated nilpotent group.
Then for every subgroup $V\leq G$ we have that $h(V)=h(G)$ if and only if $[G:V]<\infty$.
\end{lemma}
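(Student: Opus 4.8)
The plan is to prove both implications, handling the backward direction by an intersection argument and the forward direction (the substantial one) by induction on $h(G)$ after reducing to a surjection onto $\Z$. Throughout I use that subgroups of $G$ are again finitely generated nilpotent (since $G$ is polycyclic, hence Noetherian) and that Hirsch length is additive along short exact sequences, which follows from the Schreier refinement theorem just as the well-definedness used above does.

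For the implication $[G:V]<\infty \Rightarrow h(V)=h(G)$, I would fix a polycyclic series $G=G_{0}\geq G_{1}\geq\dots\geq G_{n}=\{1_{G}\}$ and intersect it with $V$. The consecutive quotients $(V\cap G_{i})/(V\cap G_{i+1})$ embed into the cyclic groups $G_{i}/G_{i+1}$, hence are cyclic, so this yields a polycyclic series for $V$; moreover $(V\cap G_{i})/(V\cap G_{i+1})$ is infinite if and only if it has finite index in $G_{i}/G_{i+1}$, and since $[G_{i}:V\cap G_{i}]\leq[G:V]<\infty$ this happens exactly when $G_{i}/G_{i+1}$ is infinite. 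Counting infinite quotients then gives $h(V)=h(G)$. The same embedding argument, run without the finite-index hypothesis, shows that $h$ is monotone, i.e.\ $h(V)\leq h(G)$ for every subgroup $V$.

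For the converse I argue the contrapositive, $[G:V]=\infty \Rightarrow h(V)<h(G)$, by induction on $h(G)$. When $h(G)=0$ the group $G$ is finite, no subgroup of infinite index exists, and there is nothing to prove. For the inductive step the one genuinely nilpotent input is that an infinite finitely generated nilpotent group admits a surjection $\phi\from G\to\Z$: the abelianization $G/[G,G]$ is finitely generated abelian, and were it finite then every lower central quotient $\gamma_{i}/\gamma_{i+1}$ (a quotient of an iterated tensor power of $G/[G,G]$) would be finite and hence $G$ finite, contradicting $h(G)\geq 1$; thus $G/[G,G]$ has positive rank and maps onto $\Z$. Setting $N:=\ker\phi$, additivity gives $h(N)=h(G)-1$. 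If $\phi(V)=0$ then $V\leq N$ and monotonicity yields $h(V)\leq h(N)<h(G)$. Otherwise $\phi(V)=d\Z$ with $d\geq 1$, so $h(V)=h(V\cap N)+1$ from $1\to V\cap N\to V\to\phi(V)\to 1$, while $[G:V]=[G:VN]\,[N:V\cap N]=d\,[N:V\cap N]$ forces $[N:V\cap N]=\infty$; the induction hypothesis applied to $V\cap N\leq N$ then gives $h(V\cap N)<h(N)=h(G)-1$, whence $h(V)<h(G)$.

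The main obstacle is the forward direction, but once the surjection onto $\Z$ and the additivity of Hirsch length are in hand the index bookkeeping is entirely routine; the real content is the standard fact that infiniteness of a finitely generated nilpotent group forces an infinite abelianization, which is exactly where the hypotheses beyond polycyclicity are used.
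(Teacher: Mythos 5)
Your proof is correct, but it follows a genuinely different route from the paper's. For the easy direction, the paper passes to a finite-index subgroup of $V$ that is normal in $G$ and invokes the Schreier refinement theorem, whereas you intersect a fixed polycyclic series with $V$ and count quotients directly, which also gives you monotonicity $h(V)\leq h(G)$ as a free byproduct. For the substantial direction, the paper fixes the lower central series $\Gb$, inducts on the index $i$ for which $G_{i}\leq V$, uses the rank formula $h(G)=\sum_{j}\rank G_{j}/G_{j+1}$ to see that $V_{i}/G_{i+1}$ has finite index in $G_{i}/G_{i+1}$, and then enlarges $V$ by finitely many coset representatives to a finite-index extension $KV\supseteq G_{i}$, climbing up the series; you instead argue the contrapositive by induction on $h(G)$, producing a surjection $\phi\from G\to\Z$ (via the fact that an infinite finitely generated nilpotent group has infinite abelianization), and splitting into cases according to $\phi(V)$, using additivity of Hirsch length along $1\to N\to G\to\Z\to 1$. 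The paper's argument stays inside the filtration-and-rank framework it has just set up and needs neither additivity of $h$ nor the abelianization fact; yours is more modular and closer to the textbook theory of polycyclic groups --- indeed, if you replaced the abelianization step by inducting along a polycyclic series (treating a finite cyclic top quotient as a separate trivial case), your induction would prove the statement for all polycyclic groups, whereas the paper's proof is tied to nilpotency. One phrasing slip worth fixing: the claim that $(V\cap G_{i})/(V\cap G_{i+1})$ ``is infinite if and only if it has finite index in $G_{i}/G_{i+1}$'' is literally false when $G_{i}/G_{i+1}$ is finite (the image then has finite index but is finite); what your argument actually uses, and what is true, is that the image always has finite index, and a finite-index subgroup of a cyclic group is infinite exactly when the ambient group is --- so your count of infinite quotients, and hence the conclusion, is unaffected.
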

\begin{proof}
If $[G:V]<\infty$, then we can find a finite index subgroup $W\leq V$ that is normal in $G$, and the equality $h(G)=h(W)=h(V)$ follows from the Schreier refinement theorem.

Let now $\Gb$ be the lower central series of $G$.
Let $V\leq G$ be a subgroup with $h(V)=h(G)$ and assume in addition that $G_{i}\leq V$ for some $i=1,\dots,d+1$.
We show that $[G:V]<\infty$ by induction on $i$.
For $i=1$ the claim is trivial and for $i=d+1$ it provides the desired equivalence.

Assume that the claim holds for some $i$.
Let $V_{i} := V\cap G_{i}$ be the filtration on $V$ induced by $\Gb$ and assume $V_{i+1}=G_{i+1}$.
By the assumption we have
\[
\sum_{j=1}^{d} \rank G_{j}/G_{j+1} = h(G) = h(V) = \sum_{j=1}^{d} \rank V_{j}/V_{j+1},
\]
and, since $V_{j}/V_{j+1}\cong V_{j}G_{j+1}/G_{j+1} \leq G_{j}/G_{j+1}$ for every $j$, this implies that $V_{i}/G_{i+1} \leq G_{i}/G_{i+1}$ is a finite index subgroup.
Let $K\subset G_{i}$ be a finite set such that $KV_{i}/G_{i+1}=G_{i}/G_{i+1}$.
Then $KV\leq G$ is a subgroup and a finite index extension of $V$.
Moreover, we have $KV\supseteq G_{i}$, and by the first part of the lemma we obtain $h(KV)=h(V)$.

By the induction hypothesis $KV$ has finite index in $G$, so the index of $V$ is also finite.
\end{proof}

\begin{lemma}
\label{lem:fin-ext}
Let $G$ be a finitely generated nilpotent group with a filtration $\Gb$ of length $d$ and let $V\leq G$ a subgroup.
Then for every $j=1,\dots,d+1$ and every $g\in G$ there exist at most finitely many finite index extensions of $V$ of the form $\<V,gc\>$ with $c\in G_{j}$.
\end{lemma}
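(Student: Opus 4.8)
The plan is to argue by downward induction on $j$, reducing the increment $c\in G_{j}$ to the next level $G_{j+1}$ by passing to the central quotient $\bar G:=G/G_{j+1}$. Throughout I write $\overline{(\cdot)}$ for images in $\bar G$ and set $A:=G_{j}/G_{j+1}$; since $[G,G_{j}]\subseteq G_{j+1}$ by the filtration axiom, $A$ is a finitely generated abelian subgroup that is central in $\bar G$. (Recall that by Lemma~\ref{lem:fin-ind-hirsch} the condition $[\langle V,gc\rangle:V]<\infty$ is detected by Hirsch length; I only need the elementary fact that $[W:V]<\infty$ implies $[\overline W:\overline V]<\infty$.) The base case $j=d+1$ is trivial, because $G_{d+1}$ is the trivial group, so $\langle V,gc\rangle=\langle V,g\rangle$ is the only extension of the required form. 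For the inductive step I assume the assertion at level $j+1$ (for every element of $G$ in place of $g$) and prove it at level $j$. First I will show that the finite-index-over-$V$ extensions $W=\langle V,gc\rangle$ have only finitely many images $\overline W=\langle\overline V,\overline g\,\overline c\rangle$ in $\bar G$, and then, for each such image, I will bound the number of $W$ lying over it using the induction hypothesis. It therefore suffices to control the finite-index-over-$\overline V$ extensions of the displayed form.

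The key point is that $\overline c$ is central, so $\overline W A=\overline V A\langle\overline g\rangle$ and its image $Q:=\overline W A/A\le\bar G/A$ are both independent of $c$; only $B:=\overline W\cap A$ and the way $\overline W$ sits over $Q$ can vary. Using the index identity $[\overline W:\overline V]=[\,\overline W\cap A:\overline V\cap A\,]\cdot[Q:\overline VA/A]$, finiteness of $[\overline W:\overline V]$ forces both factors finite. Hence $B$ is a subgroup of $A$ containing $\overline V\cap A$ with finite index, and such $B$ correspond to finite subgroups of the finitely generated abelian group $A/(\overline V\cap A)$, of which there are finitely many since its torsion subgroup is finite. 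For each fixed $B$, the subgroups $\overline W$ satisfying $\overline W\cap A=B$, $\overline WA/A=Q$ and $\overline W\supseteq\overline V$ are exactly the complements of the central subgroup $A/B$ inside $\pi^{-1}(Q)/B$ (with $\pi\from\bar G\to\bar G/A$) that contain $\overline VB/B$; these form a torsor under $\mathrm{Hom}(Q/N,A/B)$, where $N$ is the normal closure of $\overline VA/A$ in $Q$. Because $[Q:\overline VA/A]<\infty$ the quotient $Q/N$ is finite, so this $\mathrm{Hom}$-group is finite, and therefore there are only finitely many admissible $\overline W$.

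It remains, for each admissible image $\overline W_{0}$, to bound the number of finite-index-over-$V$ extensions $W=\langle V,gc\rangle$ with $\overline W=\overline W_{0}$. Choose one such $W_{0}=\langle V,gc_{0}\rangle$; by assumption $[W_{0}:V]<\infty$, so $V$ has only finitely many left cosets $Vw_{1},\dots,Vw_{p}$ in $W_{0}$. For any $W$ over $\overline W_{0}$ we have $\overline{gc}\in\overline W_{0}=\overline{W_{0}}$, hence $gc=wt$ with $w\in W_{0}$ and $t\in G_{j+1}$; writing $w=vw_{k}$ with $v\in V$ and using $\langle V,vw_{k}t\rangle=\langle V,w_{k}t\rangle$ gives $W=\langle V,w_{k}t\rangle$ with $t\in G_{j+1}$. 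For each fixed $k$ this is precisely an extension of the type controlled by the induction hypothesis at level $j+1$ with base element $w_{k}$, so only finitely many such $W$ occur; summing over the finitely many $k$ and the finitely many $\overline W_{0}$ completes the step. The main obstacle is exactly this lifting across the \emph{non}-central subgroup $G_{j+1}$: the centrality of $A$ is what makes the image count finite, but the passage from $\bar G$ back to $G$ cannot be handled by the same elementary complement computation and genuinely requires the inductive hypothesis, with the reduction to finitely many bases $w_{k}$ resting on $[W_{0}:V]<\infty$.
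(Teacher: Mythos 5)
Your proof is correct, and it shares the paper's overall skeleton: descending induction on $j$ with trivial base case $j=d+1$, passage to $\bar G=G/G_{j+1}$ where $A=G_{j}/G_{j+1}$ is central (both arguments need here that $\Gb$ is a \emph{filtration}, $G_{0}=G_{1}$, not merely a prefiltration), finiteness of torsion in finitely generated abelian groups, and a final reduction to the inductive hypothesis at level $j+1$ over finitely many new base elements. The core finiteness step, however, is carried out by a genuinely different mechanism. The paper compares two increments $c_{0},c_{1}\in G_{j}$ directly: finite index yields $m>0$ with $(gc_{a})^{m}G_{j+1}\in VG_{j+1}/G_{j+1}$, centrality converts this into $(c_{0}^{-1}c_{1})^{m}G_{j+1}\in (VG_{j+1}\cap G_{j})/G_{j+1}$, and torsion finiteness of $G_{j}/G_{j+1}$ modulo this subgroup produces one fixed finite set $K\subset G_{j}$ with $c_{1}\in c_{0}K(VG_{j+1}\cap G_{j})$; after absorbing the $V$-part into the generator, the inductive hypothesis is applied with base points in $gc_{0}K$. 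You instead enumerate the possible images $\overline{W}$ structurally, via the index factorization $[\overline{W}:\overline{V}]=[\overline{W}\cap A:\overline{V}\cap A]\cdot[Q:\overline{V}A/A]$, the finiteness of the set of admissible intersections $B=\overline{W}\cap A$, and the fact that complements of the central subgroup $A/B$ containing $\overline{V}B/B$ form a torsor under the finite group $\mathrm{Hom}(Q/N,A/B)$; your lifting step then uses coset representatives of $V$ in a fixed preimage $W_{0}$ rather than the paper's absorption trick. Your route is longer and invokes more machinery (the index formula, splittings of central extensions), but it buys an explicit parametrization, hence a bound, on the number of admissible images $\overline{W}$, whereas the paper's power-and-torsion argument is shorter and entirely elementary. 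One cosmetic point: the identity $\overline{W}A=\overline{V}A\langle\overline{g}\rangle$ should read $\overline{W}A=\langle\overline{V},\overline{g}\rangle A$ (your right-hand side, as a product of sets, need not be a subgroup); all you actually use is that $\overline{W}A$, and hence $Q$, is independent of $c$, which is correct.
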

\begin{proof}
We use descending induction on $j$.
The case $j=d+1$ is clear, so assume that the conclusion is known for $j+1$ and consider some $g\in G$.

Let $c_{a}$, $a=0,1$ be elements of $G_{j}$ such that $\<V,gc_{a}\>$ are finite index extensions of $V$.
Then also $\<VG_{j+1},gc_{a}\>/G_{j+1}$ is a finite index extension of $VG_{j+1}/G_{j+1}$, so that there exists an $m>0$ such that $(gc_{a}G_{j+1})^{m}\in VG_{j+1}/G_{j+1}$ for $a=0,1$.

Since the elements $c_{a}G_{j+1}$ are central in $G/G_{j+1}$ this implies $(c_{0}\inv c_{1})^{m}G_{j+1}\in (VG_{j+1}\cap G_{j})/G_{j+1}$.
But the latter group is a subgroup of the finitely generated abelian group $G_{j}/G_{j+1}$, so that $c_{0}\inv c_{1} \in K(VG_{j+1}\cap G_{j})$ for some finite set $K\subset G_{j}$ that does not depend on $c_{0},c_{1}$.

Multiplying $c_{1}$ with an element of $V$ we may assume that $c_{1} \in c_{0}KG_{j+1}$.
By the induction hypothesis for each $g'\in gc_{0}K$ there exist at most finitely many finite index extensions of the form $\<V,g'c'\>$ with $c'\in G_{j+1}$, so we have only finitely many extensions of the form $\<V,gc_{1}\>$ as required.
\end{proof}
\begin{corollary}
\label{cor:fin-ext}
Let $G$ be a finitely generated nilpotent group and $V$ be a subgroup.
Then there exist at most finitely many finite index extensions of $V$ of the form $\<V,c\>$.
\end{corollary}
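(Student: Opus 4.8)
The plan is to obtain the corollary immediately as the special case $j=1$ of Lemma~\ref{lem:fin-ext}. First I would fix on $G$ the filtration $\Gb$ given by its lower central series; since $G$ is finitely generated nilpotent this filtration has finite length $d$, and, being a filtration rather than merely a prefiltration, it satisfies $G_{0}=G_{1}=G$. The key observation is that $G_{1}$ is therefore all of $G$, so that every candidate generator $c$ for an extension $\langle V,c\rangle$ automatically lies in $G_{1}$.

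With this in hand I would apply Lemma~\ref{lem:fin-ext} with $j=1$ and $g=1_{G}$. The lemma asserts that there are at most finitely many finite index extensions of $V$ of the form $\langle V,1_{G}c\rangle=\langle V,c\rangle$ with $c\in G_{1}$. Because $G_{1}=G$, the parameter $c$ here ranges over all of $G$, so this family is precisely the collection of all extensions $\langle V,c\rangle$ of the stated form, and the finiteness conclusion of the lemma is exactly the assertion of the corollary.

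I do not expect any genuine obstacle: the entire content resides in Lemma~\ref{lem:fin-ext}, whose descending induction on $j$ already carries out the delicate bookkeeping with central elements of $G_{j}/G_{j+1}$ and the finitely generated abelian quotients. The corollary merely records the bottom of that range of indices, $j=1$, where the filtration has degenerated so that $G_{j}$ is the whole group. The only point that needs (minimal) attention is to invoke the defining property $G_{0}=G_{1}=G$ of a filtration, which is what permits the unconstrained generator $c\in G$ to be absorbed into the hypothesis $c\in G_{j}$ of the lemma.
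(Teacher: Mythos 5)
Your proposal is correct and is essentially identical to the paper's own proof: the paper also deduces the corollary by applying Lemma~\ref{lem:fin-ext} with $j=1$ and $g=1_{G}$ to a filtration (for which $G_{1}=G_{0}=G$ by definition, so $c$ ranges over all of $G$). Your choice of the lower central series is just one admissible instance of the paper's ``any filtration''.
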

\begin{proof}
Consider any filtration $\Gb$ and apply Lemma~\ref{lem:fin-ext} with $j=1$ and $g=1_{G}$.
\end{proof}
The following example shows that Corollary~\ref{cor:fin-ext} may fail for virtually nilpotent groups.
Consider the semidirect product $G=\Z_{2} \ltimes \Z$ that is associated to the inversion action $\pi:\Z_{2} \curvearrowleft \Z$ given by $\pi(\bar a)(b)=(-1)^{a}b$.
Then $G_{2}=[G,G]=2\Z$ is an abelian subgroup of index $4$ and $G_{i+1}=[G,G_{i}]=2^{i}\Z$ for all $i\in\N$, so $G$ is not nilpotent.
Let $V = \{0\} \leq G$ be the trivial subgroup.
Since we have $(\bar{1}a)^{2}=0 \in V$ for any $a\in\Z$, each group of the form $\<V,\bar{1}a\>$ is an extension of $V$ with index $2$.
On the other hand, for every value of $a$ we obtain a different extension.

\subsection{Partition theorems for IP-rings}
An \emph{IP-ring} is a subset of $\Fin$ that consists of all finite unions of a given strictly increasing chain $\alpha_{0}<\alpha_{1}<\dots$ of elements of $\Fin$ \cite[Definition 1.1]{MR833409}.
In particular, $\Fin$ is itself an IP-ring (associated to the chain $\{0\}<\{1\}<\dots$).
Polynomials are generally assumed to be defined on $\Fin$ even if we manipulate them only on some sub-IP-ring of $\Fin$.

Since we will be dealing a lot with assertions about sub-IP-rings we find it convenient to introduce a shorthand notation.
If some statement holds for a certain sub-IP-ring $\Fin'\subset\Fin$ then we say that it holds \emph{without loss of generality} (wlog).
In this case we reuse the symbol $\Fin$ to denote the sub-IP-ring on which the statement holds (in particular this IP-ring may change from use to use).
This is the only sense in which the phrase ``wlog'' will be used in \textsection\ref{sec:fvip} and \textsection\ref{sec:primitive}.
With this convention the basic Ramsey-type theorem about IP-rings reads as follows.
\begin{theorem}[Hindman \cite{MR0349574}]
\label{thm:hindman}
Every finite coloring of $\Fin$ is wlog monochrome.
\end{theorem}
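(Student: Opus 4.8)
The plan is to use the Galvin--Glazer ultrafilter method, which reduces the theorem to the existence of an idempotent in a suitable compact right-topological semigroup. First I would pass to the Stone--Čech compactification $\beta\Fin$, the space of ultrafilters on the discrete set $\Fin$, and isolate the subset
\[
\mathbb{T} = \{ p \in \beta\Fin : \{\beta \in \Fin : \min\beta > n\} \in p \text{ for every } n\in\N \}
\]
of ultrafilters supported on sets of arbitrarily large minimum. On $\mathbb{T}$ I would define a product $p*q$ by declaring $A\in p*q$ if and only if
\[
\{ \alpha\in\Fin : \{ \beta\in\Fin : \alpha<\beta,\ \alpha\cup\beta\in A \} \in q \} \in p;
\]
the constraint $\alpha<\beta$ forces $\alpha$ and $\beta$ to be disjoint, so that $\alpha\cup\beta=\alpha*\beta$ is always defined, and membership of all tails in $q$ guarantees that the inner set is $q$-large for $p$-almost every $\alpha$.

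Next I would verify the structural facts that make Ellis's fixed-point theorem applicable: $\mathbb{T}$ is nonempty and compact (it is a closed subset of the compact space $\beta\Fin$, and it contains any accumulation point of the principal ultrafilters at the singletons $\{n\}$ as $n\to\infty$), it is closed under $*$, and for each fixed $q$ the map $p\mapsto p*q$ is continuous. Associativity of $*$ follows by unwinding the definition together with associativity of $\cup$. Since a nonempty compact right-topological semigroup contains an idempotent, there exists $p\in\mathbb{T}$ with $p*p=p$.

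With such an idempotent in hand the coloring argument is standard. Given a finite coloring $c$ of $\Fin$, exactly one color class $A=c\inv(i)$ lies in $p$. Setting $A^{\star}=\{\alpha\in A : \{\beta : \alpha<\beta,\ \alpha\cup\beta\in A\}\in p\}$, the relation $p=p*p$ gives $A^{\star}\in p$, and a short computation shows that for every $\alpha\in A^{\star}$ one also has $\{\beta : \alpha<\beta,\ \alpha\cup\beta\in A^{\star}\}\in p$. I would then recursively select $\alpha_{0}<\alpha_{1}<\dots$, choosing each $\alpha_{k}\in A^{\star}$ inside the $p$-large set attached to the unions of the previously chosen generators; since all tails lie in $p$, the increasing condition $\alpha_{0}<\alpha_{1}<\dots$ can be maintained throughout. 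By construction every finite union $\alpha_{k_{1}}\cup\dots\cup\alpha_{k_{r}}$ falls into $A^{\star}\subset A$ and thus receives color $i$, so $c$ is constant on the IP-ring generated by $(\alpha_{k})_{k}$, which is the asserted conclusion.

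I expect the main obstacle to be the bookkeeping forced by the \emph{partial} (rather than total) semigroup structure on $\Fin$: one must check at every stage that the relevant tails $\{\beta:\min\beta>n\}$ remain large so that each union written down is genuinely defined, and that passing from $\beta\Fin$ to the subset $\mathbb{T}$ preserves the semigroup axioms required by Ellis's theorem. An alternative, entirely elementary route is Baumgartner's combinatorial induction, which avoids ultrafilters at the cost of a more intricate finitary argument.
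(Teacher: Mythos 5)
Your proof is correct, but it is worth noting that the paper does not prove this statement at all: it is quoted as a black box, with a citation to Hindman's original 1974 paper, whose argument is a long elementary combinatorial induction. What you give instead is the Galvin--Glazer idempotent-ultrafilter proof, correctly adapted to the fact that $\Fin$ is only a \emph{partial} semigroup: restricting to the compact subsemigroup $\mathbb{T}$ of ultrafilters containing every tail $\{\beta:\min\beta>n\}$ is exactly the right move (this is the standard notion of an ``adequate'' partial semigroup), and your observations that $\mathbb{T}$ is nonempty (via accumulation points of the principal ultrafilters at singletons), closed under the convolution, and right-topological are all accurate, as is the computation that for $\alpha\in A^{\star}$ the set $\{\beta:\alpha<\beta,\ \alpha\cup\beta\in A^{\star}\}$ is again $p$-large. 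One point deserves emphasis because it interacts with the paper's conventions: building the order constraint $\alpha<\beta$ into the convolution, and intersecting with tails at each step of the recursion, is what guarantees that the chosen generators form an increasing chain $\alpha_{0}<\alpha_{1}<\dots$, so that the monochrome set is genuinely a sub-IP-ring in the paper's sense; moreover your construction produces the sub-IP-ring \emph{after} the coloring is given, which is exactly the quantifier order the paper insists on in the remark following the theorem (``wlog monochrome'' rather than ``wlog every coloring is monochrome''). The trade-off between the two routes is the usual one: the cited combinatorial proof is elementary but intricate, while your argument is short and conceptual at the cost of invoking the Ellis--Namakura idempotent lemma and the Stone--\v{C}ech machinery.
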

Since ``wlog'' is an existential quantifier, it is important where it appears in a sentence.
For instance, Theorem~\ref{thm:hindman} is not the same as the assertion ``wlog every finite coloring of $\Fin$ is monochrome'', since the latter would mean that there exists a sub-IP-ring on which \emph{every} coloring is monochrome.

As a consequence of Hindman's theorem~\ref{thm:hindman}, a map from $\Fin$ to a compact metric space for every $\epsilon>0$ wlog has values in an $\epsilon$-ball.
As the next lemma shows, for polynomial maps into compact metric groups the ball can actually be chosen to be centered at the identity.
In a metric group we denote the distance to the identity by $\dint{\cdot}$.
\begin{lemma}
\label{lem:near-identity}
Let $\Gb$ be a prefiltration in the category of compact metric groups and $P \in \polyn$.
Then for every $\epsilon>0$ we have wlog $\dint{P} < \epsilon$.
\end{lemma}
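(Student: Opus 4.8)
The plan is to induct on the length $d$ of $\Gb$. The base case $d\leq 0$ is trivial, since then $\polyn$ is the trivial group and $P\equiv 1_{G}$, so $\dint{P}\equiv 0$. For the inductive step with $d\geq 1$, I first pass (wlog) to a sub-IP-ring along which the IP-limit $c:=\IPlim_{\alpha}P(\alpha)$ exists; this is possible by Hindman's Theorem~\ref{thm:hindman} together with compactness of $G_{1}$. The goal then reduces to showing $c=1_{G}$: once $\IPlim_{\alpha}P(\alpha)=1_{G}$, the defining property of the IP-limit gives, for any prescribed $\epsilon>0$, wlog $\dint{P}<\epsilon$.

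The engine is the symmetric-derivative identity \eqref{eq:symm-der}, rewritten as $P(\alpha\cup\beta)=P(\alpha)\sD_{\beta}P(\alpha)P(\beta)$ for disjoint $\alpha,\beta$. For each fixed $\beta$ the map $\sD_{\beta}P$ lies in $\polyn[{\Gb[+1]}]$, a prefiltration in compact metric groups of length $d-1$, so the induction hypothesis applies to it. Since the value of an IP-limit is unchanged when one passes to a sub-IP-ring, and the induction hypothesis furnishes sub-IP-rings on which $\dint{\sD_{\beta}P}$ is arbitrarily small, this forces $\IPlim_{\alpha}\sD_{\beta}P(\alpha)=1_{G}$ along the ambient ring. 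Taking $\IPlim_{\alpha}$ (over $\alpha>\beta$) in the identity, and using that multiplication is continuous so that IP-limits respect products, I obtain for each fixed $\beta$ the relation $\IPlim_{\alpha}P(\alpha\cup\beta)=c\cdot 1_{G}\cdot P(\beta)=cP(\beta)$.

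I now evaluate the iterated limit $L:=\IPlim_{\beta}\IPlim_{\alpha}P(\alpha\cup\beta)$ in two ways. Applying $\IPlim_{\beta}$ to the relation just obtained and using continuity of left multiplication by the fixed element $c$ gives $L=c\cdot\IPlim_{\beta}P(\beta)=c^{2}$. On the other hand, as $\alpha$ and $\beta$ range over the IP-ring with $\alpha>\beta$, the union $\alpha\cup\beta$ ranges over sets of the IP-ring, on which $P$ is uniformly close to $c$; hence the inner limit is close to $c$ for every $\beta$, giving $L=c$. Because the value of $L$ is invariant under refinement of the IP-ring, the two computations must agree, so $c=c^{2}$, and in a group this forces $c=1_{G}$, completing the induction.

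The main obstacle is the bookkeeping of IP-limits: one must guarantee that all of the single and iterated IP-limits in play exist simultaneously along a single sub-IP-ring, which is arranged by repeated appeals to Hindman's Theorem~\ref{thm:hindman} and the standard diagonal construction of sub-IP-rings. I emphasize that for the derivative only the \emph{value} $1_{G}$ is needed, and separately for each fixed $\beta$, so no uniformity in $\beta$ is required there; likewise, no invariance property of the metric is used, since everything is phrased through IP-limits and the continuity of the group operations, and the two evaluations of $L$ are reconciled solely through the invariance of IP-limit values under passing to sub-IP-rings.
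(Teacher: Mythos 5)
Your proof is correct, and while it shares the paper's skeleton (induction on the length of $\Gb$, Hindman's theorem plus compactness to stabilize $P$, and an application of the induction hypothesis to the symmetric derivatives $\sD_{\beta}P$), it closes the argument by a genuinely different mechanism. The paper never forms an IP-limit: by Hindman's theorem and compactness it traps the whole image $P(\Fin)$ (on a sub-IP-ring) inside a single ball $B(g,\delta)$, so that $\sD_{\beta}P(\alpha)=P(\alpha)\inv P(\alpha\cup\beta)P(\beta)\inv$ lies in $B(g\inv,\delta')$ by uniform continuity; the induction hypothesis $\dint{\sD_{\beta}P}<\delta'$ for one fixed $\beta$ then gives $\dint{g\inv}<2\delta'$, hence $\dint{g}<\epsilon/2$ by continuity of inversion, hence $\dint{P}<\epsilon$. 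You instead extract the limit $c=\IPlim_{\alpha}P(\alpha)$, identify $\IPlim_{\alpha}\sD_{\beta}P(\alpha)=1_{G}$ from the induction hypothesis via invariance of limit values under passage to sub-IP-rings, and derive the idempotency $c^{2}=c$ from two evaluations of the iterated limit. What the paper's route buys is brevity and minimal machinery (one application of Hindman, explicit $\delta$--$\delta'$ estimates); what yours buys is that all metric bookkeeping is absorbed into the formalism of IP-limits and continuity of the group operations, at the cost of securing countably many limits by a diagonal argument. One sentence of yours is not literally true: the inner limit $\IPlim_{\alpha}P(\alpha\cup\beta)=cP(\beta)$ is close to $c$ only for $\beta$ beyond the threshold $\alpha_{0}$ witnessing $\IPlim_{\gamma}P(\gamma)=c$ (for a small fixed $\beta$ the sets $\alpha\cup\beta$ never satisfy $\alpha\cup\beta>\alpha_{0}$, since their minimum is $\min\beta$), not ``for every $\beta$'' as written; this is harmless because the outer IP-limit over $\beta$ only sees large $\beta$, so the identity $c^{2}=c$, and with it your conclusion, survives.
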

\begin{proof}
We induct on the length of the prefiltration $\Gb$.
If the prefiltration is trivial, then there is nothing to show, so assume that the conclusion is known for $\Gb[+1]$.

Let $\delta,\delta'>0$ be chosen later.
By compactness and Hindman's theorem~\ref{thm:hindman} we may wlog assume that the image $P(\Fin)$ is contained in some ball $B(g,\delta)$ with radius $\delta$ in $G_{1}$.
By uniform continuity of the group operation we have $\sD_{\beta}P(\alpha) \in B(g\inv,\delta')$ for any $\alpha>\beta\in\Fin$ provided that $\delta$ is small enough depending on $\delta'$.
On the other hand, for a fixed $\beta$, by the induction hypothesis we have wlog $\dint{\sD_{\beta}P} < \delta'$, so that $\dint{g\inv}<2\delta'$.
By continuity of inversion this implies $\dint{g}<\epsilon/2$ provided that $\delta'$ is small enough.
This implies $\dint{P}<\epsilon$ provided that $\delta$ is small enough.
\end{proof}

\begin{corollary}[{\cite[Proposition 1.1]{MR2246589}}]
\label{cor:poly-subg}
Let $\Wb$ be a prefiltration, $A\subset \polyn[\Wb]$ be finite, and $V\leq W$ be a finite index subgroup.
Then wlog for every $g\in A$ we have $g(\Fin) \subset V$.
\end{corollary}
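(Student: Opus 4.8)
The plan is to reduce to a \emph{finite} group carrying the discrete metric, in which Lemma~\ref{lem:near-identity} forces a VIP system to be identically trivial on a suitable sub-IP-ring, and then to invoke that lemma directly.

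First I would replace $V$ by its normal core $V_{0} := \bigcap_{w\in W} wVw\inv$. There are only finitely many distinct conjugates of $V$ (their number is the index of the normalizer of $V$, which is at most $[W:V]$), so $V_{0}$ is an intersection of finitely many finite index subgroups and hence itself of finite index in $W$; moreover $V_{0}\leq V$, so it suffices to prove the assertion with $V_{0}$ in place of $V$. Let $\pi\from W\to Q:=W/V_{0}$ be the quotient homomorphism onto the finite group $Q$ and put $Q_{i}:=\pi(W_{i})$. Since $\pi$ is a homomorphism and $[W_{i},W_{j}]\subset W_{i+j}$, we have $[Q_{i},Q_{j}]\subset Q_{i+j}$, so $Q_{\bullet}$ is a prefiltration; equipping $Q$ with the discrete metric turns $Q_{\bullet}$ into a prefiltration in the category of compact metric groups, each $Q_{i}$ being finite.

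Next I would observe that $\pi\circ g\in\polyn[Q_{\bullet}]$ for every $g\in A$. This follows by induction on the length of $\Wb$ from the characterization of VIP systems together with the identity $\sD_{\beta}(\pi\circ g)=\pi\circ(\sD_{\beta}g)$ (a consequence of $\pi$ being a homomorphism): since $g\in\polyn[\Wb]$ takes values in $W_{1}$, the composite $\pi\circ g$ takes values in $Q_{1}=\pi(W_{1})$, and since $\sD_{\beta}g\in\polyn[{\Wb[+1]}]$ for every $\beta$, the induction hypothesis applied to the shorter prefiltration $\Wb[+1]$ gives $\sD_{\beta}(\pi\circ g)=\pi\circ(\sD_{\beta}g)\in\polyn[Q_{\bullet+1}]$.

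Finally, I would apply Lemma~\ref{lem:near-identity} to each of the finitely many polynomials $\pi\circ g$, $g\in A$, with $\epsilon=1/2$, passing to a further sub-IP-ring for each $g$ in turn (the wlog convention permits this). In the discrete metric any element of $Q$ other than $1_{Q}$ has $\dint{\cdot}=1$, so $\dint{\pi\circ g}<1/2$ forces $\pi(g(\alpha))=1_{Q}$, i.e.\ $g(\alpha)\in V_{0}\subset V$, for every $\alpha$ in the resulting IP-ring; this yields $g(\Fin)\subset V$ for all $g\in A$ simultaneously. The only nonroutine point is the algebraic bookkeeping of the previous paragraph---that $Q_{\bullet}$ is a genuine prefiltration of compact metric groups and that $\pi\circ g$ remains a VIP system with respect to it---after which Lemma~\ref{lem:near-identity} does all the work.
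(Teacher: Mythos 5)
Your proposal is correct and follows essentially the same route as the paper's own proof: the paper likewise passes to a normal finite-index subgroup of $V$, takes the quotient so that $W$ becomes finite with the discrete metric, and then applies Lemma~\ref{lem:near-identity} to conclude $g\equiv 1_{W}$ wlog. Your write-up merely makes explicit the details the paper leaves implicit (the normal core, the induced prefiltration $Q_{\bullet}$, and the fact that $\pi\circ g$ remains a VIP system), all of which check out.
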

\begin{proof}
Let $g\in A$.
Passing to a subgroup we may assume that $V$ is normal.
Taking the quotient by $V$, we may assume that $W$ is finite and $V=\{1_{W}\}$.
By Lemma~\ref{lem:near-identity} with an arbitrary discrete metric we may wlog assume that $g\equiv 1_{W}$.
\end{proof}

In course of proof of Theorem~\ref{thm:fvip-proj} it will be more convenient to use a convention for the symmetric derivative that differs from \eqref{eq:symm-der}, namely
\[
\rD_{\alpha}g(\beta) = g(\alpha)\inv D_{\alpha}g(\beta).
\]
Clearly a VIP group is also closed under $\rD$.
\begin{lemma}
\label{lem:deri-poly}
Let $F$ be a VIP group, $W\leq F$ be a subgroup and $V\leq W$ be a finite index subgroup.
Suppose that $g\in F$ is such that the symmetric derivative $\rD_{\alpha}g\in W$ for all $\alpha$.
Then wlog every symmetric derivative $\rD_{\alpha}g$, $\alpha\in\Fin$, coincides with an element of $V$ on some sub-IP-ring of the form $\{\beta\in\Fin : \beta>\beta_{0}\}$.
\end{lemma}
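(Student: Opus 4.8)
The plan is to recognize the assignment $\alpha\mapsto\rD_{\alpha}g$ as a \emph{single} polynomial map in the variable $\alpha$ taking values in the group $W$, and then to apply Corollary~\ref{cor:poly-subg} in order to force all of its values into the finite index subgroup $V$ at once. The point of this reformulation is exactly the uniformity over the infinitely many $\alpha$: Corollary~\ref{cor:poly-subg} controls all values of a polynomial after passing to a single sub-IP-ring, whereas treating each $\alpha$ in isolation would not.

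First I would record the identity
\[
\rD_{\alpha}g = g(\alpha)\inv (\sD_{\alpha}g) g(\alpha),
\]
which exhibits $\rD_{\alpha}g$ as the conjugate of the symmetric derivative $\sD_{\alpha}g$ by the constant function $g(\alpha)\in G_{1}$. Since $F$ is a VIP group we have $\sD_{\alpha}g\in F_{1}$, and since $F$ is closed under conjugation by constants this re-derives $\rD_{\alpha}g\in F$ (indeed in $F_{1}$); by hypothesis these values in fact lie in $W$. As $\rD_{\emptyset}g=1_{G}$, the map $\rD g\colon\alpha\mapsto\rD_{\alpha}g$ is a candidate element of $\polyn[\Wb]$, where $\Wb$ is the canonical prefiltration \eqref{eq:F-prefiltration} of $W$.

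The technical heart, and the step I expect to be the main obstacle, is to verify that $\rD g$ really is $\Wb$-polynomial in $\alpha$. I would prove this by induction on the length of $\Gb$ using Definition~\ref{def:polynomial}: it suffices to show that for every $\gamma$ the discrete derivative $D_{\gamma}(\rD g)\colon\alpha\mapsto(\rD_{\alpha}g)\inv(\rD_{\gamma\cup\alpha}g)$, the product being taken in the group $W$, is $\Wb[+1]$-polynomial. Expanding via the cocycle identity $D_{\gamma\cup\alpha}g=D_{\alpha}g\cdot(D_{\gamma}g\circ T_{\alpha})$ one finds that $D_{\gamma}(\rD g)$ is assembled from the lower-level object $D_{\gamma}g$ together with conjugations by and commutators with $D_{\alpha}g$; the commutator calculus of Theorem~\ref{thm:poly-group} together with the VIP-group axioms places all of these one level deeper, in $W_{1}$, which feeds the induction. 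This is precisely the assertion that the derivative of a VIP system depends polynomially on the differencing parameter, and its bookkeeping is where the real work lies.

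With $\rD g\in\polyn[\Wb]$ in hand I apply Corollary~\ref{cor:poly-subg} to the singleton $A=\{\rD g\}$ and the finite index subgroup $V\leq W$; it yields, after passing to a sub-IP-ring, that $\rD_{\alpha}g\in V$ for every $\alpha$. It then remains only to phrase the conclusion correctly. For fixed $\alpha$ the map $\rD_{\alpha}g$ is defined only where $\beta\cap\alpha=\emptyset$, in particular on the tail $\{\beta:\beta>\alpha\}$, and there it agrees with the element of $V$ just produced; taking $\beta_{0}=\alpha$ gives the stated coincidence on $\{\beta:\beta>\beta_{0}\}$. This germ-level feature, namely that the values $\rD_{\alpha}g$ are pinned down only on a tail, is also what forces the conclusion to be stated as coincidence with an element of $V$ on $\{\beta>\beta_{0}\}$ rather than as honest equality.
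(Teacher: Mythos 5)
Your two algebraic identities (the conjugation formula $\rD_{\alpha}g=g(\alpha)\inv(\sD_{\alpha}g)\,g(\alpha)$ and the cocycle identity $D_{\gamma\cup\alpha}g=D_{\alpha}g\cdot(D_{\gamma}g\circ T_{\alpha})$) are correct, and your overall strategy is exactly the alternative route that the paper itself flags in the remark following this lemma: Lemma~\ref{lem:deri-poly} \emph{can} be seen as a special case of Corollary~\ref{cor:poly-subg}, \emph{but only} ``by considering the quotient of $\polyn$ by the equivalence relation of equality on IP-rings of the form $\{\alpha:\alpha>\alpha_{0}\}$'', machinery the paper deliberately avoids. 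The genuine gap in your write-up is that you skip this machinery, and without it your central claim does not even typecheck. The symbol $\rD_{\alpha}g$ denotes a function defined only on $\{\beta:\beta\cap\alpha=\emptyset\}$; the hypothesis supplies, for each $\alpha$, \emph{some} element of $W$ agreeing with it there, and such an element is not unique, since two polynomial maps can agree on a tail without being equal. So $\alpha\mapsto\rD_{\alpha}g$ is not a well-defined map into $W$. Worse, membership in $\polyn[\Wb]$ forces the values to lie in $W_{1}=W\cap\polyn[{\Gb[+1]}]$: your conjugation identity produces an element of $F_{1}$ agreeing with $\rD_{\alpha}g$ on its domain, and the hypothesis produces an element of $W$, but these need not coincide, and there need be no single element of $W_{1}$ that works. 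The same ambiguity infects the derivatives: $D_{\gamma}(\rD g)(\alpha)=(\rD_{\alpha}g)\inv\rD_{\gamma\cup\alpha}g$, computed from arbitrary choices, is pinned down only up to functions vanishing on tails, so the exact membership and identity requirements of Definition~\ref{def:polynomial} cannot be arranged inside $W$ itself. Your closing paragraph acknowledges the domain issue only where it is harmless (the phrasing of the conclusion), not where it bites.

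Even granting the quotient setup, the step you yourself call the technical heart --- that (the class of) $\rD g$ is $\Wb$-polynomial --- is asserted rather than proved. Expanding with your cocycle identity gives, pointwise in $\beta$,
\[
D_{\gamma}(\rD g)(\alpha)
=
(D_{\alpha}g)\inv\, c\,(D_{\alpha}g)\cdot\bigl(D_{\gamma}g\circ T_{\alpha}\bigr),
\qquad c=g(\alpha)\,g(\gamma\cup\alpha)\inv ,
\]
and one must show that this is, as a function of $\beta$, tail-equal to an element one level down the prefiltration \emph{and} that its dependence on $\alpha$ is polynomial of lower degree; note that the constant $c$ is not a VIP system, so this bookkeeping happens in $\poly$ rather than $\polyn$, and the bare hypothesis gives no closure of $W$ under any of these operations. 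Carrying this out is real work, comparable to the whole of the paper's proof. The paper's actual argument avoids every one of these issues by never speaking of polynomial maps into $W$: after Hindman's theorem~\ref{thm:hindman} places all $\rD_{\alpha}g$ in a single coset $w\inv V$, it sets $h(\alpha)=w\rD_{\alpha}g$ and computes the iterated derivatives $D_{\alpha_{d}}\dots D_{\alpha_{1}}h(\emptyset)$ in two ways: group-theoretically they lie in $Vw^{(-1)^{d}}V$, while for each \emph{fixed} $\beta$ the scalar map $\alpha\mapsto h(\alpha)(\beta)$ is $\Gb[+1]$-polynomial (ordinary $G$-valued polynomiality, covered by Theorem~\ref{thm:poly-group}), so $d$ derivatives kill it. Hence an element of $Vw^{(-1)^{d}}V$ vanishes on $\{\beta:\beta>\alpha_{d}\}$, which says precisely that $w$, and with it every $\rD_{\alpha}g$, coincides with an element of $V$ on a tail. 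To rescue your route you would first have to construct the quotient group with its prefiltration, transfer Corollary~\ref{cor:poly-subg} to it, and prove the polynomiality claim there; as written, the proposal has a gap at its central step.
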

\begin{proof}
Since $V$ has finite index and by Hindman's theorem~\ref{thm:hindman} we can wlog assume that $\rD_{\alpha}g\in w\inv V$ for some $w\in W$ and all $\alpha$.
Assume that $w\not\in V$.
Let
\[
h(\alpha) := w \rD_{\alpha}g =
\begin{cases}
w, & \alpha=\emptyset\\
v_{\alpha}\in V & \text{otherwise}.
\end{cases}
\]
Let $\alpha_{1}<\dots<\alpha_{d}$ be non-empty, by induction on $d$ we see that $D_{\alpha_{d}}\dots D_{\alpha_{1}}h(\alpha) \in V$ for all $\alpha\neq\emptyset$ and $D_{\alpha_{d}}\dots D_{\alpha_{1}}h(\emptyset) \in V w^{(-1)^{d}}V$.

On the other hand the map $\alpha\mapsto h(\alpha)(\beta)$ is $\Gb[+1]$-polynomial on $\{\alpha : \alpha\cap\beta=\emptyset\}$ for fixed $\beta$.
Therefore $D_{\alpha_{d}}\dots D_{\alpha_{1}}h(\emptyset)$ vanishes at all $\beta > \alpha_{d}$, that is, $w$ coincides with an element of $V$ on $\{\beta : \beta>\alpha_{d}\}$.
\end{proof}
It is possible to see Lemma~\ref{lem:deri-poly} (and Lemma~\ref{lem:notinK} later on) as a special case of Corollary~\ref{cor:poly-subg} by considering the quotient of $\polyn$ by the equivalence relation of equality on IP-rings of the form $\{\alpha:\alpha>\alpha_{0}\}$, but we prefer not to set up additional machinery.

In order to apply the above results we need a tool that provides us with finite index subgroups.
To this end recall the following multiparameter version of Hindman's theorem~\ref{thm:hindman}.
\begin{theorem}[Milliken \cite{MR0373906}, Taylor \cite{MR0424571}]
\label{thm:milliken-taylor}
Every finite coloring of $\Fin_{<}^{k}$ is wlog monochrome.
\end{theorem}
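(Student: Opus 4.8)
The plan is to obtain the statement from the existence of idempotent ultrafilters on the partial semigroup $(\Fin,\cup)$, following the Galvin--Glazer method; the case $k=1$ of this argument is exactly the ultrafilter proof of Hindman's Theorem~\ref{thm:hindman}, so one may also read the argument as an induction on $k$ whose base case is Hindman and whose inductive passage is handled uniformly by a single idempotent.

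First I would fix the right compact semigroup. On the countable discrete set $\Fin$ pass to the space $\beta\Fin$ of ultrafilters and consider the closed subset $\mathbb{T}=\bigcap_{n}\overline{\{\alpha\in\Fin:\min\alpha>n\}}$ of ultrafilters concentrating on sets with arbitrarily large minimum; it is nonempty by compactness. For $p,q\in\mathbb{T}$ the partial operation $\cup$ extends to a genuine operation by setting $A\in p\cup q$ iff $\{\alpha:\alpha\inv A\in q\}\in p$, where $\alpha\inv A=\{\beta:\alpha\cup\beta\in A\}$; disjointness is automatic because $q$ concentrates on tails, so the inner set is $q$-large, and one checks that $(\mathbb{T},\cup)$ is a compact right-topological semigroup. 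By the Ellis--Numakura lemma it therefore contains an idempotent $p=p\cup p$. The only facts I will use are the Galvin--Glazer identities: for $A\in p$ the set $A^{\star}=\{\alpha\in A:\alpha\inv A\in p\}$ again lies in $p$, and $\alpha\inv(A^{\star})\in p$ for every $\alpha\in A^{\star}$.

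Next I would read off the target color. Given $c\from\Fin_{<}^{k}\to[r]$ I define color functions $c_{k}=c,c_{k-1},\dots,c_{0}$ by descending induction on the number of coordinates: for fixed $\alpha_{1}<\dots<\alpha_{j}$ the set $\{\alpha_{j+1}:\alpha_{j+1}>\alpha_{j}\}$ equals $\{\alpha_{j+1}:\min\alpha_{j+1}>\max\alpha_{j}\}$ and hence lies in $p$, so the finitely many classes $\alpha_{j+1}\mapsto c_{j+1}(\alpha_{1},\dots,\alpha_{j+1})$ partition a $p$-large set and exactly one of them belongs to $p$; let $c_{j}(\alpha_{1},\dots,\alpha_{j})$ be its color. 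The value $\iota:=c_{0}\in[r]$ is the color the monochrome IP-ring will carry.

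Finally I would extract the chain. Using the $A^{\star}$-machinery I build a chain $\gamma_{0}<\gamma_{1}<\dots$ recursively, maintaining a nested family of $p$-large constraint sets at each coordinate level, so that every increasing $k$-tuple $\beta_{1}<\dots<\beta_{k}$ of pairwise disjoint finite unions of the $\gamma_{i}$ satisfies $c(\beta_{1},\dots,\beta_{k})=\iota$; the IP-ring generated by $(\gamma_{i})$ is then the required sub-IP-ring, and the statement holds wlog on it. The main obstacle is precisely this extraction: forcing the color $\iota$ on tuples of \emph{generators} is straightforward, but one must guarantee it for tuples of arbitrary finite unions, and this is exactly what the idempotent identities $A^{\star}\in p$ and $\alpha\inv(A^{\star})\in p$ deliver—they ensure that a disjoint union of blocks drawn from the constraint sets acts, with respect to every iterated coloring $c_{j}$, like a single block, so that the passage from generators to finite unions preserves the value $\iota$ coordinate by coordinate.
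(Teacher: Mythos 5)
You cannot really be compared against the paper here, because the paper offers no proof of this statement: it is quoted as a known result of Milliken and Taylor, whose original arguments are combinatorial (induction on $k$, iterating Hindman's theorem together with compactness/Ramsey-type arguments). Judged on its own merits, your sketch is a correct rendition of the now-standard ultrafilter proof of the Milliken--Taylor theorem (cf.\ Hindman--Strauss, \emph{Algebra in the Stone--\v{C}ech compactification}, Ch.~18). All three steps are sound: $\Fin$ with disjoint union is an adequate partial semigroup, so the set $\mathbb{T}$ of ultrafilters concentrating on the tails $\{\alpha:\min\alpha>n\}$ is a nonempty compact right-topological semigroup under the extended operation, and Ellis--Numakura provides an idempotent $p$; the descending definition of the induced colorings $c_{j}$ is the usual iterated reduction, each $c_{j}$ being well defined precisely because $\{\alpha_{j+1}:\alpha_{j+1}>\alpha_{j}\}\in p$ for $p\in\mathbb{T}$; and the extraction works as you claim, by maintaining for every tuple $\vec\beta$ of already-formed unions with $c_{j}(\vec\beta)=\iota$ the $p$-large set $B(\vec\beta)=\{\alpha>\beta_{j}:c_{j+1}(\vec\beta,\alpha)=\iota\}$ and requiring each new generator $\gamma_{n}$ to lie in all sets $B(\vec\beta)^{\star}$ and $(\beta')\inv\bigl(B(\vec\beta)^{\star}\bigr)$ for partial unions $\beta'$ of earlier generators; the Galvin--Glazer identities then propagate the color $\iota$ from generators to arbitrary ordered tuples of finite unions, which is exactly monochromaticity on the sub-IP-ring generated by $(\gamma_{i})$. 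To turn the last step into a complete proof you would need to record the induction invariant explicitly --- in particular, that a tuple whose last coordinate contains the newest generator imposes no condition at the current stage (no chosen generator exceeds it yet) but must join the constraint family, which stays finite, at all later stages --- but this is bookkeeping, not a gap. Relative to the cited combinatorial proofs, your route is uniform in $k$: a single idempotent serves all coordinate levels, with Hindman's theorem~\ref{thm:hindman} appearing as the case $k=1$ rather than as an external black box, at the cost of invoking $\beta\Fin$ and Ellis--Numakura.
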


The next lemma is a substitute for \cite[Lemma 1.6]{MR1417769} in the non-commutative case.
This is the place where the concept of Hirsch length is utilized.
\begin{lemma}
\label{lem:sfi}
Let $G$ be a finitely generated nilpotent group and $g \from\Fin\to G$ be any map.
Then wlog there exist a natural number $l>0$ and a subgroup $W\leq G$ such that for any $\alpha_{1}<\dots<\alpha_{l} \in \Fin$ the elements $g_{\alpha_{1}},\dots,g_{\alpha_{l}}$ generate a finite index subgroup of $W$.
\end{lemma}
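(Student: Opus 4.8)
The plan is to color increasing tuples by the Hirsch length of the subgroup they generate. This is a \emph{finite} coloring, so the partition theorems of the preceding subsection apply, while Lemma~\ref{lem:fin-ind-hirsch} converts equality of Hirsch lengths into finite index and Corollary~\ref{cor:fin-ext} keeps the family of relevant subgroups finite.

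For an increasing tuple $\vec\alpha=(\alpha_{1}<\dots<\alpha_{k})$ write $W(\vec\alpha):=\<g_{\alpha_{1}},\dots,g_{\alpha_{k}}\>\leq G$. Since $G$ is finitely generated nilpotent, so is $W(\vec\alpha)$, and $h(W(\vec\alpha))\leq h(G)=:D$. Adjoining or deleting a coordinate only enlarges, respectively shrinks, the generated subgroup, so $h(W(\cdot))$ is monotone in the tuple. Applying Theorem~\ref{thm:milliken-taylor} to the finite coloring $\vec\alpha\mapsto h(W(\vec\alpha))$ of $\Fin^{k}_{<}$ for each $k\in\{2^{0},2^{1},\dots,2^{D+1}\}$ in turn, and passing to nested sub-IP-rings, I may assume wlog that $h(W(\vec\alpha))$ depends only on the length $k$ of $\vec\alpha$; denote this common value by $h_{k}$. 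The numbers $h_{2^{0}}\leq h_{2^{1}}\leq\dots\leq h_{2^{D+1}}$ all lie in $\{0,\dots,D\}$, so two consecutive of them agree, which yields a dyadic $l=2^{j}$ with $h_{l}=h_{2l}$.

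Fix this $l$ and let $\vec\alpha_{0}$ be the tuple formed by the first $l$ elements of the chain generating the current IP-ring, so that $V:=W(\vec\alpha_{0})$ has $h(V)=h_{l}$. The constancy of $h_{l}$ and $h_{2l}$ is inherited by every sub-IP-ring, so after passing wlog to the sub-IP-ring of the sets lying above $\vec\alpha_{0}$ (in the order $<$), every increasing $l$-tuple $\vec\alpha$ satisfies $\vec\alpha_{0}<\vec\alpha$; hence $(\vec\alpha_{0},\vec\alpha)$ is an increasing $2l$-tuple and $h(W(\vec\alpha_{0},\vec\alpha))=h_{2l}=h_{l}=h(V)$. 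Since $V\leq W(\vec\alpha_{0},\vec\alpha)$, Lemma~\ref{lem:fin-ind-hirsch} shows that $W(\vec\alpha_{0},\vec\alpha)$ is a finite index extension of $V$. Iterating Corollary~\ref{cor:fin-ext}, by adjoining the generators $g_{\alpha_{1}},\dots,g_{\alpha_{l}}$ to $V$ one at a time, there are only finitely many finite index extensions of $V$ of the form $\<V,c_{1},\dots,c_{l}\>$; hence $W(\vec\alpha_{0},\vec\alpha)$ ranges over a fixed finite set of subgroups as $\vec\alpha$ varies. Coloring the increasing $l$-tuples $\vec\alpha$ by the subgroup $W(\vec\alpha_{0},\vec\alpha)$ and invoking Theorem~\ref{thm:milliken-taylor} once more, I may assume wlog that this subgroup equals a constant $\hat W$.

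Setting $W:=\hat W$ then finishes the argument: for every increasing $l$-tuple $\vec\alpha$ in the final IP-ring we have $W(\vec\alpha)\leq W(\vec\alpha_{0},\vec\alpha)=\hat W=W$ and $h(W(\vec\alpha))=h_{l}=h(W)$, so $\<g_{\alpha_{1}},\dots,g_{\alpha_{l}}\>=W(\vec\alpha)$ has finite index in $W$ by Lemma~\ref{lem:fin-ind-hirsch}. The step I expect to be the main obstacle is manufacturing a finite coloring to which the partition theorems can be applied: a priori the subgroups $W(\vec\alpha)$, and still more the subgroup generated by \emph{all} values of $g$, form an uncontrolled family, and for an arbitrary map $g$ the values at overlapping sets bear no relation to one another. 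The device that resolves this is to measure everything against a single reference subgroup $V$ placed below the IP-ring and to invoke Corollary~\ref{cor:fin-ext} to bound the number of finite index extensions of $V$; the dyadic choice of $l$ is what secures $h_{l}=h_{2l}$ after only finitely many applications of Theorem~\ref{thm:milliken-taylor}, rather than forcing me to locate the a priori unbounded index at which the sequence $h_{k}$ stabilizes.
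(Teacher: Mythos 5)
Your proof is correct and follows essentially the same route as the paper: Milliken--Taylor (Theorem~\ref{thm:milliken-taylor}) makes the Hirsch length of the generated subgroup depend only on the tuple length, pigeonhole on the bounded monotone sequence of these lengths produces the stabilization, and Lemma~\ref{lem:fin-ind-hirsch} together with Corollary~\ref{cor:fin-ext} pins all the tuple groups inside a single finite-index extension $W$ of a fixed reference group $V$. The only deviations are minor: the paper stabilizes at consecutive lengths ($h_{l}=h_{l+1}$) and adjoins single elements $g_{\alpha}$, so the one-element Corollary~\ref{cor:fin-ext} plus Hindman's theorem suffice, whereas your doubling trick ($h_{l}=h_{2l}$) adjoins whole $l$-tuples and therefore needs an iterated, $l$-element version of Corollary~\ref{cor:fin-ext} --- a correct but unstated strengthening, whose inductive justification (each intermediate group $\<V,c_{1},\dots,c_{j}\>$ is sandwiched between $V$ and the finite-index extension, hence is itself one of finitely many possibilities) you rightly indicate.
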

\begin{proof}
By the Milliken--Taylor theorem~\ref{thm:milliken-taylor} we may wlog assume that for each $l\leq h(G)+1$ the Hirsch length $h(\<g_{\alpha_{1}},\dots,g_{\alpha_{l}}\>)$ does not depend on $(\alpha_{1},\dots,\alpha_{l})\in\Fin^{l}_{<}$.
Call this value $h_{l}$.
It is an increasing function of $l$ that is bounded by $h(G)$, hence there exists an $l$ such that $h_{l}=h_{l+1}$.
Fix some $(\alpha_{1},\dots,\alpha_{l})\in\Fin^{l}_{<}$ and let $V:=\<g_{\alpha_{1}},\dots,g_{\alpha_{l}}\>$.

Since $h_{l+1}=h_{l}$ and by Lemma~\ref{lem:fin-ind-hirsch}, we see that $\<V,g_{\alpha}\>$ is a finite index extension of $V$ for each $\alpha > \alpha_{l}$.
By Corollary~\ref{cor:fin-ext} and Hindman's Theorem~\ref{thm:hindman} we may wlog assume that each $g_{\alpha}$ lies in one such extension $W$.
By definition of $h_{l}$ this implies that wlog for every $(\alpha_{1},\dots,\alpha_{l})\in\Fin^{l}_{<}$ the Hirsch length of the group $\<g_{\alpha_{1}},\dots,g_{\alpha_{l}}\> \leq W$ is $h(W)$.
Hence each $\<g_{\alpha_{1}},\dots,g_{\alpha_{l}}\> \leq W$ is a finite index subgroup by Lemma~\ref{lem:fin-ind-hirsch}.
\end{proof}

\subsection{IP-limits}
Let $X$ be a topological space, $m\in\N$ and $g:\Fin_{<}^{m}\to X$ be a map.
We call $x\in X$ an IP-limit of $g$, in symbols $\IPlim_{\vec\alpha}g_{\vec\alpha}=x$, if for every neighborhood $U$ of $x$ there exists $\alpha_{0}$ such that for all $\vec\alpha\in\Fin_{<}^{m}$, $\vec\alpha>\alpha_{0}$, one has $g_{\vec\alpha}\in U$.

By the Milliken--Taylor theorem~\ref{thm:milliken-taylor} and a diagonal argument, cf.\ \cite[Lemma 1.4]{MR833409}, we may wlog assume the existence of an IP-limit (even of countably many IP-limits) if $X$ is a compact metric space, see \cite[Theorem 1.5]{MR833409}.

If $X$ is a Hilbert space with the weak topology we write $\wIPlim$ instead of $\IPlim$ to stress the topology.

Following a tradition, we write arguments of maps defined on $\Fin$ as subscripts in this section.
We also use the notation and assumptions of Theorem~\ref{thm:fvip-proj}.

The next lemma follows from the equivalence of the weak and the strong topology on the unit sphere of $H$ and is stated for convenience.
\begin{lemma}
\label{lem:norm}
Assume that $f\in\fix P_{g}$, that is, that $\wIPlim_{\alpha}g_{\alpha}f=f$.
Then also $\IPlim_{\alpha}g_{\alpha}f=f$ (in norm).
\end{lemma}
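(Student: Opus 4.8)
The plan is to exploit the fact that each $g_{\alpha}$ acts as a \emph{unitary} operator (as assumed in Theorem~\ref{thm:fvip-proj}), so that all the vectors $g_{\alpha}f$ lie on the sphere of radius $\|f\|$ centered at the origin. On such a sphere the weak and norm topologies coincide, and this is precisely the mechanism that upgrades the weak hypothesis $f\in\fix P_{g}$ to norm convergence. I would carry this out by the standard polarization computation, checking that the relevant identity passes through the IP-limit.

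First I would record that unitarity gives $\|g_{\alpha}f\|=\|f\|$ for every $\alpha$, so the norms are constant along the net. Next, the hypothesis $\wIPlim_{\alpha}g_{\alpha}f=f$ means in particular that, testing the weak limit against the single fixed vector $f$, one has $\IPlim_{\alpha}\<g_{\alpha}f,f\>=\<f,f\>=\|f\|^{2}$; taking real parts, $\Re\<g_{\alpha}f,f\>$ IP-converges to $\|f\|^{2}$ as well. Then I would expand the squared norm of the difference pointwise in $\alpha$,
\[
\|g_{\alpha}f-f\|^{2} = \|g_{\alpha}f\|^{2} - 2\Re\<g_{\alpha}f,f\> + \|f\|^{2} = 2\|f\|^{2} - 2\Re\<g_{\alpha}f,f\>,
\]
and observe that by the previous step the right-hand side IP-converges to $2\|f\|^{2}-2\|f\|^{2}=0$. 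Hence $\IPlim_{\alpha}\|g_{\alpha}f-f\|=0$, which is exactly the assertion $\IPlim_{\alpha}g_{\alpha}f=f$ in norm.

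There is essentially no genuine obstacle: the content is the polarization identity together with the elementary fact that weak convergence on a fixed sphere forces norm convergence. The only role played by the IP-structure is that IP-limits respect the continuous operations used above (addition, scalar multiplication, and evaluation of the inner product against a fixed vector), so the pointwise identity passes to the limit termwise. The one point to be careful about is to invoke inner-product convergence only against the single fixed vector $f$, since that is all a weak IP-limit supplies; constancy of $\|g_{\alpha}f\|$ then takes care of the remaining term without any further appeal to the weak limit.
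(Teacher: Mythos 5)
Your proof is correct and is essentially the paper's argument: the paper simply remarks that the lemma ``follows from the equivalence of the weak and the strong topology on the unit sphere of $H$,'' and your polarization computation $\|g_{\alpha}f-f\|^{2}=2\|f\|^{2}-2\Re\<g_{\alpha}f,f\>$, using unitarity for the constancy of norms and the weak limit tested against $f$ alone, is precisely the standard proof of that equivalence specialized to this setting. Nothing is missing; you have just unpacked what the paper leaves as a citation.
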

For any subgroup $V\leq F$ we write $P_{V}$ for the orthogonal projection onto the space $\bigcap_{g\in V} \fix P_{g}$.
\begin{lemma}
\label{lem:gp}
Assume that $V=\<g_{1},\dots,g_{s}\>$ is a finitely generated group and that $P_{g_{1}},\dots,P_{g_{s}}$ are commuting projections.
Then $P_{V}=\prod_{i=1}^{s} P_{g_{i}}$.
\end{lemma}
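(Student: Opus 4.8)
The plan is to identify the ranges of the two orthogonal projections $P_V$ and $\prod_{i=1}^{s}P_{g_i}$ and to show that they coincide. Since $P_{g_1},\dots,P_{g_s}$ are commuting orthogonal projections, their product is again an orthogonal projection: it is self-adjoint because $(P_{g_1}\cdots P_{g_s})^{*}=P_{g_s}\cdots P_{g_1}=P_{g_1}\cdots P_{g_s}$ by commutativity, and it is idempotent since the factors may be reordered so that $(P_{g_1}\cdots P_{g_s})^2=P_{g_1}^2\cdots P_{g_s}^2=P_{g_1}\cdots P_{g_s}$. Its range is the intersection $\bigcap_{i=1}^{s}\im P_{g_i}=\bigcap_{i=1}^{s}\fix P_{g_i}$, whereas $P_V$ is by definition the orthogonal projection onto $\bigcap_{g\in V}\fix P_g$. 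Thus it suffices to prove the equality of closed subspaces
\[
\bigcap_{g\in V}\fix P_g = \bigcap_{i=1}^{s}\fix P_{g_i}.
\]

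The inclusion $\subseteq$ is immediate, since each generator $g_i$ lies in $V$. The content is the reverse inclusion. Here I would fix $f\in\bigcap_{i}\fix P_{g_i}$ and consider the set $S_f:=\{g\in F : \IPlim_{\alpha}g_{\alpha}f=f\}$, where the limit is taken in norm. By Lemma~\ref{lem:norm} the hypothesis $f\in\fix P_{g_i}$ yields $g_i\in S_f$ for every $i$, so all generators of $V$ lie in $S_f$.

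Next I would show that $S_f$ is a subgroup of $F$; since the generators lie in $S_f$, this forces $V=\<g_1,\dots,g_s\>\subseteq S_f$. Closure under inverses follows from unitarity of the action: if $g_{\alpha}f\to f$ in norm, then $\dint{g_{\alpha}\inv f - f}=\dint{f-g_{\alpha}f}\to 0$. Closure under products is similar: if $a_{\alpha}f\to f$ and $b_{\alpha}f\to f$ in norm, then writing $a_{\alpha}b_{\alpha}f=a_{\alpha}(b_{\alpha}f-f)+a_{\alpha}f$ and using that each $a_{\alpha}$ is unitary gives $\dint{a_{\alpha}b_{\alpha}f-a_{\alpha}f}=\dint{b_{\alpha}f-f}\to 0$, while $a_{\alpha}f\to f$; hence $(ab)_{\alpha}f\to f$ in norm. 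Consequently every $g\in V$ satisfies $\IPlim_{\alpha}g_{\alpha}f=f$ in norm, so in particular $P_gf=\wIPlim_{\alpha}g_{\alpha}f=f$ and $f\in\fix P_g$. As $g\in V$ was arbitrary, $f\in\bigcap_{g\in V}\fix P_g$, establishing the reverse inclusion.

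Having matched the ranges, I would conclude that $P_V$ and $\prod_{i}P_{g_i}$ are orthogonal projections onto the same closed subspace, and therefore equal. The commutativity of the $P_{g_i}$ is used only to guarantee that $\prod_{i}P_{g_i}$ is a projection with the expected range; all the rest is driven by unitarity of the action together with the passage from weak to strong IP-convergence furnished by Lemma~\ref{lem:norm}. I expect that passage to be the one point requiring care, since the defining limits $P_g$ are only weak limits, and it is precisely what lets me argue about norm convergence of the products $g_{\alpha}f$ without knowing a priori that these IP-limits exist.
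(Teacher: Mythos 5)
Your proof is correct and takes essentially the same route as the paper's: both reduce the lemma to showing that any $f$ fixed by each $P_{g_{i}}$ is fixed by $P_{g}$ for every $g\in V$, and both establish this by upgrading weak to norm IP-convergence via Lemma~\ref{lem:norm} and then exploiting unitarity to show that the set of $g$ with $\IPlim_{\alpha}g_{\alpha}f=f$ in norm is closed under the group operations. The only cosmetic difference is that you verify closure under products and inverses separately, while the paper handles both at once via the map $(g,h)\mapsto gh\inv$.
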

\begin{proof}
Clearly we have $P_{V} \leq \prod_{i=1}^{s} P_{g_{i}}$, so we only need to prove that each $f$ that is fixed by $P_{g_{1}},\dots,P_{g_{s}}$ is also fixed by $P_{g}$ for any other $g\in V$.

To this end it suffices to show that if $f$ is fixed by $P_{g}$ and $P_{h}$ for some $g,h\in V$, then it is also fixed by $P_{gh\inv}$.
Lemma~\ref{lem:norm} shows that $\IPlim_{\alpha} g_{\alpha}f=f$ and $\IPlim_{\alpha} h_{\alpha}f=f$.
Since each $h_{\alpha}$ is unitary we obtain $\IPlim_{\alpha} h_{\alpha}\inv f=f$.
Since each $g_{\alpha}$ is isometric, this implies
\[
\wIPlim g_{\alpha}h_{\alpha}\inv f = \IPlim_{\alpha} g_{\alpha}h_{\alpha}\inv f = f
\]
as required.
\end{proof}

The next lemma is the main tool to ensure IP-convergence to zero.
\begin{lemma}[{\cite[Lemma 1.7]{MR1417769}}]
\label{lem:comm-proj}
Let $(P_{\alpha})_{\alpha\in\Fin}$ be a family of commuting orthogonal projections on a Hilbert space $H$ and $f\in H$.
Suppose that, whenever $\alpha_{1}<\dots<\alpha_{l}$, one has $\prod_{i=1}^{l}P_{\alpha_{i}}f=0$.
Then $\IPlim_{\alpha} \norm{P_{\alpha}f} = 0$.
\end{lemma}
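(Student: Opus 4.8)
The plan is to argue by contradiction and reduce everything to a single quantitative estimate about finitely many commuting projections. Suppose the conclusion $\IPlim_\alpha \|P_\alpha f\| = 0$ fails. Then there is an $\epsilon>0$ such that the set $\{\alpha : \|P_\alpha f\| \ge \epsilon\}$ meets every final segment $\{\alpha : \alpha > \alpha_0\}$, and choosing elements greedily produces a chain $\beta_1 < \beta_2 < \cdots$ in $\Fin$ with $\|P_{\beta_i} f\| \ge \epsilon$ for all $i$. Writing $Q_i := P_{\beta_i}$, these are pairwise commuting orthogonal projections, and since $\beta_{i_1} < \cdots < \beta_{i_l}$ whenever $i_1 < \cdots < i_l$, the hypothesis gives $\prod_{i\in I} Q_i f = 0$ for every index set $I$ of size $l$; as the $Q_i$ commute, the same holds for every $|I| \ge l$.

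The heart of the argument is the bound $\sum_{i=1}^n \|Q_i f\|^2 \le (l-1)\|f\|^2$, uniform in $n$. To prove it I would invoke the joint spectral decomposition of the finitely many commuting projections $Q_1,\dots,Q_n$: for $\sigma\in\{0,1\}^n$ set $E_\sigma = \prod_{i=1}^n\big(\sigma_i Q_i + (1-\sigma_i)(I-Q_i)\big)$, a family of mutually orthogonal projections with $\sum_\sigma E_\sigma = I$ and $\sum_i Q_i = \sum_\sigma |\sigma|\,E_\sigma$, where $|\sigma|=\sum_i\sigma_i$. Since $E_\sigma \prod_{i\in I} Q_i = E_\sigma$ whenever $I\subseteq\operatorname{supp}(\sigma)$, the annihilation hypothesis forces $E_\sigma f = 0$ for every $\sigma$ with $|\sigma|\ge l$, so that $f = \sum_{|\sigma|<l} E_\sigma f$. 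Using $\langle Q_i f, f\rangle = \|Q_i f\|^2$ one then computes
\[
\sum_{i=1}^n \|Q_i f\|^2 = \Big\langle \textstyle\sum_i Q_i f, f\Big\rangle = \sum_\sigma |\sigma|\,\|E_\sigma f\|^2 \le (l-1)\|f\|^2 .
\]

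Letting $n\to\infty$ (the $\beta_i$ are pairwise distinct) yields $\sum_{i=1}^\infty \|Q_i f\|^2 \le (l-1)\|f\|^2 < \infty$, whence $\|Q_i f\|\to 0$, contradicting $\|Q_i f\|\ge\epsilon$. Everything outside the displayed estimate is bookkeeping, so I expect the only genuine obstacle to be packaging the joint spectral decomposition cleanly. A convenient alternative is to realize the commutative von Neumann algebra generated by the $Q_i$ on the cyclic subspace of $f$ as a multiplication algebra on $L^2(\Omega,\nu)$, with $Q_i$ corresponding to $1_{A_i}$ and $\|Q_i f\|^2 = \nu(A_i)$; the estimate then becomes the elementary observation that $\sum_i \nu(A_i) \le (l-1)\nu(\Omega)$, because the hypothesis says $\nu(A_{i_1}\cap\cdots\cap A_{i_l})=0$, i.e.\ almost every point of $\Omega$ lies in at most $l-1$ of the sets $A_i$.
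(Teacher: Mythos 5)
Your proposal is correct, but there is nothing in the paper to compare it against: Lemma~\ref{lem:comm-proj} is stated with a citation to Bergelson--Furstenberg--McCutcheon (Lemma~1.7 of \cite{MR1417769}) and no proof is reproduced in the text, so your argument stands as a self-contained replacement for the reference. The reduction is sound: negating the definition of $\IPlim$ gives, for some $\epsilon>0$, an element $\alpha>\alpha_{0}$ with $\|P_{\alpha}f\|\geq\epsilon$ above every $\alpha_{0}$, and the greedy construction of a chain $\beta_{1}<\beta_{2}<\dots$ is exactly what the order relation on $\Fine$ permits. The key estimate also checks out: the $2^{n}$ projections $E_{\sigma}=\prod_{i}\bigl(\sigma_{i}Q_{i}+(1-\sigma_{i})(I-Q_{i})\bigr)$ are well defined by commutativity, mutually orthogonal (a factor $Q_{i}(I-Q_{i})=0$ appears whenever $\sigma\neq\tau$), sum to $I$, and satisfy $E_{\sigma}f=0$ for $|\sigma|\geq l$ since $E_{\sigma}$ factors through $\prod_{i\in I}Q_{i}$ for any $I\subset\operatorname{supp}\sigma$ with $|I|=l$; combined with $\|Q_{i}f\|^{2}=\<Q_{i}f,f\>$ and $\sum_{\sigma}\|E_{\sigma}f\|^{2}=\|f\|^{2}$ this gives $\sum_{i=1}^{n}\|Q_{i}f\|^{2}=\sum_{|\sigma|<l}|\sigma|\,\|E_{\sigma}f\|^{2}\leq(l-1)\|f\|^{2}$ uniformly in $n$, so $\|Q_{i}f\|\to 0$, the desired contradiction. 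Your $L^{2}(\Omega,\nu)$ reformulation is the same estimate in measure-theoretic clothing (almost every point lies in at most $l-1$ of the sets $A_{i}$), and either packaging is fine; this overlap-counting bound is also the idea behind the cited original proof, so you have lost nothing in generality or efficiency by reconstructing it blind.
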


Finally, we also need a van der Corput-type estimate.
\begin{lemma}[{\cite[Lemma 5.3]{MR833409}}]
\label{lem:vdC}
Let $(x_{\alpha})_{\alpha\in\Fin}$ be a bounded family in a Hilbert space $H$.
Suppose that
\[
\IPlim_{\beta} \IPlim_{\alpha} \<x_{\alpha},x_{\alpha\cup\beta}\>=0.
\]
Then wlog we have
\[
\wIPlim_{\alpha} x_{\alpha} = 0.
\]
\end{lemma}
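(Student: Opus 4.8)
The plan is to carry out the classical van der Corput argument, realized in the IP setting by replacing Cesàro averages with finite averages along a carefully chosen nested chain of sets; this reproduces the content of the cited Lemma~5.3. First I would pass to a sub-IP-ring (using the Milliken--Taylor theorem~\ref{thm:milliken-taylor} together with the diagonal argument for existence of IP-limits recalled above) on which the weak limit $x:=\wIPlim_{\alpha}x_{\alpha}$ exists, on which $c_{\beta}:=\IPlim_{\alpha}\<x_{\alpha},x_{\alpha\cup\beta}\>$ exists for every $\beta$, and on which $\IPlim_{\beta}c_{\beta}=0$. After this reduction the whole hypothesis is captured by the single statement that $c_{\beta}$ is small once $\min\beta$ is large, and the goal becomes to show $x=0$.

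Next I would fix $\epsilon>0$ and $k\in\N$ and recursively choose an increasing chain of blocks $\beta_{1}<\beta_{2}<\dots<\beta_{k}$ in $\Fin$, each with minimum large enough in terms of the previously chosen ones, and set $A_{j}:=\beta_{j}\cup\dots\cup\beta_{k}$, so that $A_{1}\supset A_{2}\supset\dots\supset A_{k}$. Requiring $\min\beta_{1}$ to be large guarantees both that $x_{A}$ is within $\epsilon$ of $x$ weakly whenever $\min A\geq\min\beta_{1}$ and that $c_{C}<\epsilon$ for every union $C$ of some of the $\beta_{i}$. The crucial point of the construction is that for $i<j$ we have $A_{i}=A_{j}\cup C$ with $C=\beta_{i}\cup\dots\cup\beta_{j-1}<A_{j}$, so the off-diagonal inner product $\<x_{A_{i}},x_{A_{j}}\>$ is exactly a \emph{superset} correlation $\overline{\<x_{A_{j}},x_{A_{j}\cup C}\>}$; choosing each $\beta_{j}$ with minimum above the (finitely many) thresholds coming from the inner IP-limits $c_{C}$ with $C$ built from $\beta_{1},\dots,\beta_{j-1}$ forces this quantity to be within $\epsilon$ of $\overline{c_{C}}$, hence of modulus at most $2\epsilon$.

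Finally I would expand $\|\tfrac{1}{k}\sum_{j}x_{A_{j}}\|^{2}=\tfrac{1}{k^{2}}\sum_{j}\|x_{A_{j}}\|^{2}+\tfrac{1}{k^{2}}\sum_{i\neq j}\<x_{A_{i}},x_{A_{j}}\>$. The diagonal contributes at most $1/k$ (the family is bounded, say by $1$ after normalization) and the off-diagonal at most $2\epsilon$ by the previous step, so $\|\tfrac{1}{k}\sum_{j}x_{A_{j}}\|^{2}\leq 1/k+2\epsilon$. On the other hand, since each $\min A_{j}\geq\min\beta_{1}$ is large, every $x_{A_{j}}$ is weakly $\epsilon$-close to $x$, and testing the average against $x$ gives $\<\tfrac{1}{k}\sum_{j}x_{A_{j}},x\>\geq\|x\|^{2}-\epsilon$; Cauchy--Schwarz then yields $\|\tfrac{1}{k}\sum_{j}x_{A_{j}}\|\geq(\|x\|^{2}-\epsilon)/\|x\|$ when $x\neq 0$. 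Combining the two bounds and letting $k\to\infty$ and $\epsilon\to 0$ forces $\|x\|^{2}\leq 0$, so $x=0$, which is the desired conclusion on the chosen sub-IP-ring.

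The main obstacle is precisely the coupling that makes van der Corput non-trivial: the inner products $\<x_{\alpha},x_{\alpha\cup\beta}\>$ involve two families that both vary with $\alpha$, so they cannot be evaluated by factoring out the weak limit, and a naive nested chain built by adjoining \emph{higher} blocks would keep the minima bounded and destroy the approximation $x_{A_{j}}\approx x$. The construction resolves this tension by adjoining \emph{lower} blocks to a high ``top'' set $A_{k}$: this simultaneously keeps every minimum large, so that every $x_{A_{j}}$ is weakly close to $x$, and arranges every off-diagonal inner product to be a superset correlation of exactly the form controlled by the hypothesis. Making these two requirements compatible---choosing the thresholds for the successive blocks $\beta_{j}$ so that both the weak approximation of $x$ and the correlation approximation by the $c_{C}$ hold for all the relevant sets at once---is the delicate bookkeeping at the heart of the proof.
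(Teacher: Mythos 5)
Your proof is correct and is essentially the argument behind the cited result: the paper gives no proof of Lemma~\ref{lem:vdC}, deferring to Furstenberg--Katznelson, and your construction---adjoining \emph{lower} blocks to a high top set so that every off-diagonal term $\<x_{A_{i}},x_{A_{j}}\>$ becomes a superset correlation controlled by the hypothesis, then playing the norm bound on the average against the weak lower bound $\<\tfrac{1}{k}\sum_{j}x_{A_{j}},x\>\geq\|x\|^{2}-\epsilon$---is precisely their van der Corput argument. The only points worth polishing are cosmetic: to get metrizability for the wlog existence of weak IP-limits one should restrict to the (separable) closed span of $\{x_{\alpha}\}$, and ``$x_{A}$ within $\epsilon$ of $x$ weakly'' should be stated as closeness in the single weak neighborhood determined by testing against $x$, which is all your final step uses.
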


\begin{proof}[Proof of Theorem~\ref{thm:fvip-proj}]
We proceed by induction on the length of the prefiltration $\Gb$.
If $\Gb$ is trivial there is nothing to prove.
Assume that the conclusion is known for $\Gb[+1]$.

First, we prove that $P_{g}$ is an orthogonal projection for any $g\in F$ (that we now fix).
Since $P_{g}$ is clearly contractive it suffices to show that it is a projection.

By Lemma~\ref{lem:sfi} we may assume that, for some $l>0$ and any $\alpha_{1}<\dots<\alpha_{l}$, the derivatives $\rD_{\alpha_{1}}g,\dots,\rD_{\alpha_{l}}g$ generate a finite index subgroup of some $W\leq F_{1}$ (recall that $F_{1}=F\cap \polyn[{\Gb[+1]}]$).
We split
\begin{equation}
\label{eq:splitting}
H = \bigcap_{V\leq W}\ker P_{V} \oplus \overline{\lin}\Big(\bigcup_{V\leq W}\im P_{V}\Big) =: H_{0} \cup H_{1},
\end{equation}
where $V$ runs over the finite index subgroups of $W$.
It suffices to show $P_{g}f=P_{g}^{2}f$ for each $f$ in one of these subspaces.

\paragraph{Case 0}
Let $f\in H_{0}$ and $\alpha_{1}<\dots<\alpha_{l}$.
By choice of $W$ we know that
\[
V:=\<\rD_{\alpha_{1}}g,\dots,\rD_{\alpha_{l}}g\> \leq W
\]
is a finite index subgroup.
Since the projections $P_{\rD_{\alpha_{i}}g}$ commute by the inductive hypothesis, their product equals $P_{V}$ (Lemma~\ref{lem:gp}), and we have $P_{V}f=0$ by the assumption.

By Lemma~\ref{lem:comm-proj} this implies $\IPlim_{\alpha} \norm{ P_{\rD_{\alpha}g} f }=0$.
Therefore
\[
\IPlim_{\alpha} \abs[\Big]{ \IPlim_{\beta} \< (\rD_{\alpha}g)_{\beta}f, g_{\alpha}\inv f \> }
\leq
\IPlim_{\alpha} \norm{ \wIPlim_{\beta} (\rD_{\alpha}g)_{\beta} f }
=
\IPlim_{\alpha} \norm{ P_{\rD_{\alpha}g} f }
= 0,
\]
so that
\[
\IPlim_{\alpha} \IPlim_{\beta} \< g_{\alpha\cup\beta}f, g_{\beta}f \> = 0.
\]
By Lemma~\ref{lem:vdC} this implies $P_{g}f=0$ (initially only wlog, but we have assumed that the limit exists on the original IP-ring).

\paragraph{Case 1}
Let $V\leq W$ and $f=P_{V}f$, by linearity we may assume $\norm{f}=1$.
Let $\rho$ be a metric for the weak topology on the unit ball of $H$ with $\rho(x,y)\leq\norm{x-y}$.
Let $\epsilon>0$.
By definition of IP-convergence and by uniform continuity of $P_{g}$ there exists $\alpha_{0}$ such that
\[
\forall\alpha>\alpha_{0} \quad
\rho(g_{\alpha}f,P_{g}f) < \epsilon
\text{ and }
\rho(P_{g}g_{\alpha}f,P_{g}^{2}f) < \epsilon.
\]
By Lemma~\ref{lem:deri-poly} we can choose $\alpha>\alpha_{0}$ such that $\rD_{\alpha}g$ coincides with an element of $V$ on some sub-IP-ring, so that in particular $P_{\rD_{\alpha}g}f=f$.
By Lemma~\ref{lem:norm} there exists $\beta_{0}>\alpha$ such that
\[
\forall\beta>\beta_{0}
\quad
\norm{ (\rD_{\alpha}g)_{\beta}f-f } < \epsilon.
\]
Applying $g_{\beta}g_{\alpha}$ to the difference on the left-hand side we obtain
\[
\norm{ g_{\alpha\cup\beta}f - g_{\beta}g_{\alpha}f } < \epsilon,
\text{ so that }
\rho(g_{\alpha\cup\beta}f, g_{\beta}g_{\alpha}f) < \epsilon.
\]
Observe that $\alpha\cup\beta > \alpha_{0}$, so that
\[
\rho(P_{g} f, g_{\beta}g_{\alpha}f) < 2\epsilon.
\]
Taking IP-limit along $\beta$ we obtain
\[
\rho(P_{g} f, P_{g}g_{\alpha}f) \leq 2\epsilon.
\]
A further application of the triangle inequality gives
\[
\rho(P_{g} f, P_{g}^{2}f) < 3\epsilon,
\]
and, since $\epsilon>0$ was arbitrary, we obtain $P_{g}f=P_{g}^{2}f$.

\paragraph{Commutativity of projections}
Let us now prove the second conclusion, namely that $P_{g}$ and $P_{g'}$ commute for any $g,g'\in F$.
Observe that the function $\alpha\mapsto g_{\alpha}$ can be seen as a polynomial-valued function in $\polyn[{\poly}]$ whose values are constant polynomials.
Moreover we can consider the constant function in $\poly[{\Fb}]$ whose value is $g'$.
Taking their commutator we see that
\[
\alpha\mapsto [g_{\alpha},g']
\quad \in \polyn[{\poly}],
\]
and, since $F$ is a VIP group, this map in fact lies in $\polyn[{\Fb}]$.
By \eqref{eq:polyn-g1} it takes values in $F_{1}$.
By Lemma~\ref{lem:sfi} we may assume that for any $\alpha_{1}<\dots<\alpha_{l}$ the maps $[g_{\alpha_{1}},g'],\dots,[g_{\alpha_{l}},g']$ generate a finite index subgroup of some $W\leq F_{1}$.
Interchanging $g$ and $g'$ and repeating this argument we may also wlog assume that for any $\alpha_{1}<\dots<\alpha_{l'}$ the maps $[g'_{\alpha_{1}},g],\dots,[g'_{\alpha_{l'}},g]$ generate a finite index subgroup of some $W'\leq F_{1}$.
Consider the splitting
\begin{equation}
\label{eq:splitting2}
H = \Big(\bigcap_{V\leq W}\ker P_{V} \cap \bigcap_{V'\leq W'}\ker P_{V'}\Big)
\oplus \overline{\lin}\Big(\bigcup_{V\leq W}\im P_{V} \cup \bigcup_{V'\leq W'}\im P_{V'}\Big)
=: H_{0} \cup H_{1}.
\end{equation}

\paragraph{Case 0}
Let $f\in H_{0}$.
As above we have $\IPlim_{\alpha} \norm{P_{[g_{\alpha},g']}f} = 0$, and in particular
\begin{multline*}
0
= \IPlim_{\alpha} \< \wIPlim_{\beta} [g_{\alpha},g'_{\beta}]f, g_{\alpha}\inv P_{g'}f\>\\
= \IPlim_{\alpha} \IPlim_{\beta} \<g_{\alpha}g'_{\beta}f, g'_{\beta} P_{g'}f\>
= \IPlim_{\alpha} \<g_{\alpha} P_{g'}f, P_{g'}f\>,
\end{multline*}
since $\IPlim_{\beta}g'_{\beta} P_{g'}f = P_{g'}f$ by Lemma~\ref{lem:norm}.
Hence $P_{g}P_{g'}f \perp P_{g'}f$, which implies $P_{g}P_{g'}f=0$ since $P_{g}$ is an orthogonal projection.

Interchanging the roles of $g$ and $g'$, we also obtain $P_{g'}P_{g}f=0$.

\paragraph{Case 1}
Let $V\leq W$ and $f=P_{V}f$.
By Corollary~\ref{cor:poly-subg} we may wlog assume that $[g_{\alpha},g']\in V$ for all $\alpha$.
Let $\alpha$ be arbitrary, by Lemma~\ref{lem:norm} the limit
\[
\IPlim_{\beta} [g_{\alpha},g'_{\beta}] f = f
\]
also exists in norm.
Therefore
\[
g_{\alpha} P_{g'} f
= \wIPlim_{\beta} g_{\alpha} g'_{\beta} f
= \wIPlim_{\beta} g'_{\beta} g_{\alpha} [g_{\alpha},g'_{\beta}] f
= \wIPlim_{\beta} g'_{\beta} g_{\alpha} f
= P_{g'} g_{\alpha} f.
\]
Taking IP-limits on both sides we obtain
\[
P_{g} P_{g'} f = P_{g'} P_{g} f.
\]
The case $V'\leq W'$ and $f=P_{V'}f$ can be handled in the same way.
\end{proof}
If the group $G$ acts by measure-preserving transformations then the Hilbert space projections identified in Theorem~\ref{thm:fvip-proj} are in fact conditional expectations as the following folklore lemma shows.
\begin{lemma}
Let $X$ be a probability space and $(T_{\alpha})_{\alpha}$ be a net of operators on $L^{2}(X)$ induced by measure-preserving transformations.
Assume that $T_{\alpha}\to P$ weakly for some projection $P$.
Then $P$ is a conditional expectation.
\end{lemma}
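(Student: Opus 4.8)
The plan is to first extract from the hypotheses the structural properties of $P$ that characterize conditional expectations, and then to identify the relevant sub-$\sigma$-algebra. Since each $T_{\alpha}$ is the Koopman operator of a measure-preserving map, it fixes the constant function, is positivity-preserving, commutes with complex conjugation, and preserves the integral. All of these properties survive the passage to the weak limit: testing against nonnegative $g\in L^{2}$ shows that $P$ is positive, the constancy of $T_{\alpha}1$ gives $P1=1$, and the relation $\<P\bar f,g\>=\overline{\<Pf,\bar g\>}$ shows that $P$ commutes with conjugation. Being an orthogonal projection, $P$ is self-adjoint, whence $\int Pf\,\dif\mu=\<Pf,1\>=\<f,P1\>=\int f\,\dif\mu$; together with positivity and $P1=1$ this also yields $|Pf|\leq P|f|\leq\|f\|_{\infty}$, so $P$ is an $L^{\infty}$-contraction.

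It therefore suffices to prove the purely operator-theoretic statement that a positive, self-adjoint idempotent $P$ on $L^{2}(X)$ with $P1=1$ is the conditional expectation onto the sub-$\sigma$-algebra $\mathcal{B}$ generated by the bounded functions in $\im P$. The key point is that $\im P\cap L^{\infty}$ is a conjugation-closed subalgebra of $L^{\infty}$ containing the constants. Closure under conjugation and the presence of $1$ are already established; the substance is closure under multiplication, and by polarization it is enough to show that if $f\in\im P$ is real and bounded then $f^{2}\in\im P$.

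For this I would use the Kadison--Schwarz inequality: applying positivity to $(f-t)^{2}\geq 0$ for every rational $t$ and using $P1=1$ gives $P(f^{2})-2t\,Pf+t^{2}\geq 0$ almost everywhere, and requiring the discriminant of this quadratic in $t$ to be nonpositive yields $(Pf)^{2}\leq P(f^{2})$. When $Pf=f$ this reads $f^{2}\leq P(f^{2})$; integrating and using that $P$ preserves the integral gives $\int(P(f^{2})-f^{2})\,\dif\mu=0$ with a nonnegative integrand, so $P(f^{2})=f^{2}$ and hence $f^{2}\in\im P$. I expect this to be the main obstacle, the only genuinely creative step; after it the argument is routine.

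It remains to match $\im P$ with $L^{2}(\mathcal{B})$. For any $f\in\im P$ the truncations $f_{n}$ satisfy $Pf_{n}\in\im P\cap L^{\infty}$ and $Pf_{n}\to Pf=f$ in $L^{2}$, so $\im P$ is the $L^{2}$-closure of $\im P\cap L^{\infty}$; by the functional monotone class theorem this closure equals $L^{2}(\mathcal{B})$. Since the conditional expectation $\E(\cdot\,|\,\mathcal{B})$ is precisely the orthogonal projection onto $L^{2}(\mathcal{B})$, and two orthogonal projections with the same range coincide, we conclude $P=\E(\cdot\,|\,\mathcal{B})$.
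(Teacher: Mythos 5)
Your proof is correct, but it follows a genuinely different route from the paper's. The paper exploits the fact that the $T_{\alpha}$ are \emph{Koopman} operators, hence multiplicative: for $f,g\in\im P\cap L^{\infty}$, weak convergence $T_{\alpha}f\to Pf=f$ together with $\|T_{\alpha}f\|_{2}=\|f\|_{2}$ (the $T_{\alpha}$ are isometries) upgrades to norm convergence, and then the identity $T_{\alpha}(fg)=(T_{\alpha}f)(T_{\alpha}g)$ plus the splitting $T_{\alpha}(fg)-fg=(T_{\alpha}f-f)T_{\alpha}g+f(T_{\alpha}g-g)$ gives $P(fg)=fg$ directly. You instead discard multiplicativity entirely and keep only the Markov-operator properties of the $T_{\alpha}$ (positivity, $T_{\alpha}1=1$, preservation of the integral), which survive the weak limit; the algebra property of $\im P\cap L^{\infty}$ then comes from the abstract Kadison--Schwarz inequality $(Pf)^{2}\leq P(f^{2})$ combined with the integration trick $\int(P(f^{2})-f^{2})\,\dif\mu=0$. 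What your approach buys is generality: it shows that \emph{any} projection arising as a weak limit of doubly stochastic operators (indeed, any positive unital self-adjoint idempotent preserving the integral) is a conditional expectation, with no multiplicative structure needed. It also supplies two details the paper glosses over: the density of $\im P\cap L^{\infty}$ in $\im P$ (which the paper asserts in one line, but which genuinely requires the $L^{\infty}$-contractivity you establish --- it fails for arbitrary orthogonal projections) and the final identification of $\im P$ with $L^{2}(\mathcal{B})$ via the monotone class theorem, which the paper compresses into ``the assertion follows.'' What the paper's approach buys is brevity. One small point to tighten: you assert that $P$ is an \emph{orthogonal} projection, whereas the statement only says ``projection''; this is harmless because $P$, being a weak limit of isometries, is a contraction, and a contractive idempotent on a Hilbert space is automatically self-adjoint (alternatively, in the paper's context the projections produced by Theorem~\ref{thm:fvip-proj} are already orthogonal), but a sentence to this effect belongs in the writeup since your argument leans on self-adjointness both for integral preservation and in the final step.
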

\begin{proof}
Note that $\im P \cap L^{\infty}(X)$ is dense in $\im P$.

Let $f,g\in\im P \cap L^{\infty}(X)$.
Since the weak and the norm topology coincide on the unit sphere of $L^{2}(X)$, we have $\norm{T_{\alpha}f-f}_{2} \to 0$ and $\norm{T_{\alpha}g-g}_{2} \to 0$.
Therefore
\begin{multline*}
\norm{P(fg)-fg}_{2}
\leq
\limsup_{\alpha} \norm{T_{\alpha}(fg)-fg}_{2}
=
\limsup_{\alpha} \norm{(T_{\alpha}f-f)T_{\alpha}g+f(T_{\alpha}g-g)}_{2}\\
\leq
\limsup_{\alpha} \norm{T_{\alpha}f-f}_{2} \norm{T_{\alpha}g}_{\infty}+\norm{f}_{\infty} \norm{T_{\alpha}g-g}_{2}
=
0.
\end{multline*}
This shows that $\im P\cap L^{\infty}(X)$ is an algebra, and the assertion follows.
\end{proof}

\subsection{Generalized polynomials and examples of FVIP groups}
In order to obtain some tangible combinatorial applications of our results we will need non-trivial examples of FVIP groups.
The first example somewhat parallels Proposition~\ref{prop:mono-poly}.
\begin{lemma}[\cite{MR2246589}]
\label{lem:fvip-monomial}
Let $(n^{1}_{i})_{i\in\N},\dots,(n^{a}_{i})_{i\in\N} \subset \Z$ be any sequences, $(G,+)$ be a commutative group, $(y_{i})_{i\in\N}\subset G$ be any sequence, and $d\in\N$.
Then the maps of the form
\begin{equation}
\label{eq:fvip-monomial}
v(\alpha) = \sum_{i_{1}<\dots<i_{e}\in\alpha} n^{j_{1}}_{i_{1}}\cdots n^{j_{e-1}}_{i_{e-1}} y_{i_{e}},
\quad e\leq d, 1\leq j_{1},\dots,j_{e-1} \leq a,
\end{equation}
generate an FVIP subgroup $F\leq\polyn$, where the prefiltration $\Gb$ is given by $G_{0}=\dots=G_{d}=G$, $G_{d+1}=\{1_{G}\}$.
\end{lemma}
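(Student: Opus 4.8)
The plan is to unwind the statement ``$F$ is an FVIP group'' into three separate claims and to dispatch them in increasing order of difficulty. Since an FVIP group is by Definition~\ref{def:fvip} a finitely generated VIP group, I must check (i) that each listed map $v$ is a VIP system, i.e.\ lies in $\polyn$; (ii) that the group $F$ generated by these maps is closed under the symmetric derivative $\sD$ in the sense that $\sD_\beta g \in F_1$ for every $g\in F$; and (iii) that $F$ is finitely generated. Claim (iii) is immediate, since the generators are indexed by the finitely many pairs $(e,(j_1,\dots,j_{e-1}))$ with $e\in\{1,\dots,d\}$; and closure under conjugation by constant functions, the other half of the VIP axiom, is vacuous because $G$ is abelian.

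The computational core is a single expansion of $\sD_\beta v$ for a generator $v$. Writing $G$ additively and taking the increment $\beta$ to lie to the left of the argument $\alpha$ (that is $\beta<\alpha$, the regime relevant for the IP-limits into which $F$ is eventually fed), I would split the chains $i_1<\dots<i_e$ appearing in $v(\alpha\cup\beta)$ according to how many of their entries land in $\beta$. Because $\beta<\alpha$, a chain with $k$ entries in $\beta$ has those as its first $k$ entries and the remaining $e-k$ in $\alpha$, so after removing the two pure terms $v(\alpha)$ and $v(\beta)$ one is left with
\[
\sD_\beta v(\alpha) = \sum_{k=1}^{e-1} C_k(\beta)\, v^{(k)}(\alpha),
\qquad
C_k(\beta) := \sum_{i_1<\dots<i_k\in\beta} n^{j_1}_{i_1}\cdots n^{j_k}_{i_k} \in\Z,
\]
where $v^{(k)}$ is exactly the generator of type $e-k$ attached to the truncated parameter list $(j_{k+1},\dots,j_{e-1})$. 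Thus $\sD_\beta v$ is an integer (albeit $\beta$-dependent) linear combination of generators of strictly smaller type.

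Everything then follows by induction on $e$. In the base case $e=1$ one has $w(\alpha)=\sum_{i\in\alpha}y_i$ with $\sD_\beta w\equiv 0$, so $w$ is an IP-system; tracking filtration degrees for the filtration $G_0=\dots=G_d=G$, $G_{d+1}=\{1_G\}$ shows a type-$e$ generator sits at level $d-e$, i.e.\ in $\polyn[{\Gb[+(d-e)]}]$. For the inductive step the right-hand side of the display lies in $\polyn[{\Gb[+1]}]$ by hypothesis (each $v^{(k)}$ has type $\le e-1$, hence level $\ge 1$), and since $v(\emptyset)=0$ and $v$ takes values in $G=G_1$, this proves $v\in\polyn$, establishing claim (i). To upgrade closure under $\sD$ from the generators to all of $F$ I would invoke the additivity of $\sD_\beta$ in the abelian setting: the derivative of a $\Z$-linear combination of generators is the same combination of the individual $\sD_\beta v$, and these all lie in the subgroup $F_1=F\cap\polyn[{\Gb[+1]}]$, giving claim (ii).

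The one genuinely delicate point is the placement of the increment. For a $\beta$ interleaved with $\alpha$ the clean factorisation above fails and $\sD_\beta v$ acquires cross terms whose inner sequences depend on $\beta$ and are not literally generators of $F$; the identity only holds once $\alpha$ is pushed past $\beta$. The resolution, which I would make explicit, is to read the VIP closure axiom modulo equality on tail IP-rings $\{\alpha:\alpha>\alpha_0\}$ exactly as in the remark following Lemma~\ref{lem:deri-poly} --- equivalently, to use only increments to the left, which is the only way these derivatives are ever applied. With the factorisation correctly set up, the remaining verifications (that $F_1$ is a group and that the prefiltration degrees line up) are routine applications of Theorem~\ref{thm:poly-group}.
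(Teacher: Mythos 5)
Your proposal is correct and takes essentially the same route as the paper's own proof: the identical factorization $\sD_{\beta}v(\alpha)=\sum_{k}C_{k}(\beta)\,v^{(k)}(\alpha)$ valid for $\beta<\alpha$, with the truncated generators $v^{(k)}$ recognized as members of $F$, the same induction on the degree of the generator, and the same trivial disposal of finite generation and of conjugation-invariance via commutativity of $G$. If anything you are more careful than the paper, whose proof silently computes only in the regime $\beta<\alpha$ and never mentions interleaved increments; the tail-IP-ring reading you propose for the closure axiom is precisely the convention consistent with how these derivatives are actually used elsewhere in the paper (cf.\ the remark following Lemma~\ref{lem:deri-poly} and the proof of Proposition~\ref{prop:substitution-poly}, where symmetric derivatives are only ever evaluated at arguments beyond the increment).
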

Maps of the form \eqref{eq:fvip-monomial} were originally studied in connection with admissible generalized polynomials (Definition~\ref{def:generalized-poly}).
We will not return to them in the sequel and a proof of the above lemma is included for completeness.
\begin{proof}
The group $F$ is by definition finitely generated and closed under conjugation by constants since $G$ is commutative.
It remains to check that the maps of the form \eqref{eq:fvip-monomial} are polynomial and that the group $F$ is closed under symmetric derivatives.

To this end we induct on $d$.
The cases $d=0,1$ are clear (in the latter case the maps \eqref{eq:fvip-monomial} are IP-systems), so let $d>1$ and consider a map $v$ as in \eqref{eq:fvip-monomial} with $e=d$.
For $\beta<\alpha$ we have
\begin{multline*}
\sD_{\beta} v(\alpha)
=
-v(\alpha)+v(\beta\cup\alpha)-v(\beta)\\
=
\sum_{k=1}^{d-1} \sum_{i_{1}<\dots<i_{k}\in\beta, i_{k+1}<\dots<i_{d}\in\alpha} n^{j_{1}}_{i_{1}}\cdots n^{j_{e-1}}_{i_{d-1}} y_{i_{d}}\\
=
\sum_{k=1}^{d-1} \sum_{i_{1}<\dots<i_{k}\in\beta} n^{j_{1}}_{i_{1}}\cdots n^{j_{k}}_{i_{k}} \underline{\sum_{i_{k+1}<\dots<i_{d}\in\alpha} n^{j_{k+1}}_{i_{k+1}}\cdots n^{j_{e-1}}_{i_{d-1}} y_{i_{d}}}.
\end{multline*}
The underlined expression is $\Gb[+1]$-polynomial by the induction hypothesis and lies in $F$ by definition.
Since this holds for every $\beta$, the map $v$ is $\Gb$-polynomial.
Since the derivatives are in $F$ for every map $v$, the group $F$ is FVIP.
\end{proof}

The following basic property of FVIP groups will be used repeatedly.
\begin{lemma}
\label{lem:fvip-plus-fvip}
Let $F,F' \leq\polyn$ be FVIP groups.
Then the group $F\vee F'$ is also FVIP.
\end{lemma}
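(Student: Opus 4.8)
The plan is to show that the subgroup $F\vee F'$ generated by $F\cup F'$ is \emph{already} a VIP group. Since it is generated by the union of two finite generating sets it is finitely generated, so once it is shown to be VIP it is FVIP by definition. Thus the two properties to verify are closure under conjugation by constant functions and closure under the symmetric derivative $\sD$.

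Closure under conjugation is formal. For a constant function $c$ the map $\phi_{c}\from g\mapsto cgc\inv$ is a group automorphism of $\polyn$ that sends $F$ to $F$ and $F'$ to $F'$ (both are VIP groups), and hence sends $\langle F,F'\rangle=F\vee F'$ to itself. I would also record that $\phi_{c}$ respects the canonical prefiltration: writing $\phi_{c}(g)=g[g,c\inv]$ and applying Theorem~\ref{thm:poly-group}\eqref{poly-group:commutator} (with $t_{1}=0$) shows that $\phi_{c}$ maps each $\polyn[{\Gb[+i]}]$ into itself, so it preserves every $(F\vee F')_{i}:=(F\vee F')\cap\polyn[{\Gb[+i]}]$. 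Note also that $F_{i}\subseteq(F\vee F')_{i}$ and $F'_{i}\subseteq(F\vee F')_{i}$ trivially.

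The real content is closure under $\sD$, and the tool is a Leibniz-type rule. Writing $\sD_{\beta}g=(D_{\beta}g)g(\beta)\inv$ and feeding the Leibniz rule \eqref{eq:leibniz} into this, I obtain, with $c:=g_{0}(\beta)$,
\[
\sD_{\beta}(g_{0}g_{1})=\sD_{\beta}g_{0}\cdot\phi_{c}\big([D_{\beta}g_{0},g_{1}]\big)\cdot\phi_{c}\big(\sD_{\beta}g_{1}\big).
\]
The crux is to check that whenever $g_{0},g_{1}\in F\vee F'$ satisfy $\sD_{\beta}g_{i}\in(F\vee F')_{1}$ for all $\beta$, every factor on the right lies in $(F\vee F')_{1}$. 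The outer factors are handled by the inductive hypothesis together with the conjugation-invariance of $(F\vee F')_{1}$ just established. For the middle factor I expand $D_{\beta}g_{0}=\sD_{\beta}g_{0}\cdot c$ to get $[D_{\beta}g_{0},g_{1}]=[\sD_{\beta}g_{0},g_{1}]^{c}[c,g_{1}]$; both commutators lie in $F\vee F'$ (a group that is closed under conjugation by constants), and each is $\Gb[+1]$-polynomial by Theorem~\ref{thm:poly-group}\eqref{poly-group:commutator} applied with $t_{0}=1,t_{1}=0$ — to $\sD_{\beta}g_{0}\in\polyn[{\Gb[+1]}]$ and $g_{1}\in\polyn$ in the first case, and to the constant $c\in G_{1}$ and $g_{1}$ in the second. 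Hence $[D_{\beta}g_{0},g_{1}]\in(F\vee F')_{1}$, and the whole displayed product lies in $(F\vee F')_{1}$.

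With this product rule in hand the conclusion follows by induction on word length. Since $F$ and $F'$ are groups, $F\cup F'$ is closed under inversion, so every element of $F\vee F'$ is a finite product $h_{1}\cdots h_{n}$ of elements of $F\cup F'$; each generator satisfies $\sD_{\beta}h_{i}\in F_{1}\cup F'_{1}\subseteq(F\vee F')_{1}$, and the product rule propagates this to the whole product. Thus $\sD_{\beta}g\in(F\vee F')_{1}$ for every $g\in F\vee F'$, and $F\vee F'$ is a finitely generated VIP group, i.e.\ FVIP. I expect the only delicate point to be the level bookkeeping for the commutator terms $[D_{\beta}g_{0},g_{1}]$: this is exactly where Theorem~\ref{thm:poly-group}\eqref{poly-group:commutator} is needed to certify that taking these commutators and conjugating by the constant $c=g_{0}(\beta)$ does not leave the level-one subgroup.
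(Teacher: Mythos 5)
Your proof is correct and takes essentially the same route as the paper: the paper's entire argument is that finite generation and conjugation-invariance are immediate, plus the identity $\sD_{m}(gh) = h\inv \sD_{m}g\, g(m)\, h\, \sD_{m}h\, g(m)\inv$ of \eqref{eq:sD-Leibniz}, which is exactly your product rule regrouped (your $\phi_{c}$-conjugates are the paper's explicit $g(m)(\cdot)g(m)\inv$ factors). You merely fill in the word-length induction and the level bookkeeping that the paper leaves implicit.
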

\begin{proof}
The group $F\vee F'$ is clearly finitely generated and invariant under conjugation by constants.
Closedness under $\sD$ follows from the identity
\begin{equation}
\label{eq:sD-Leibniz}
\sD_{m}(gh) = h\inv \sD_{m}g g(m) h \sD_{m}h g(m)\inv.
\qedhere
\end{equation}
\end{proof}

We will now elaborate on the example that motivated Bergelson, H\aa{}land Knutson and McCutcheon to study FVIP systems in the first place \cite{MR2246589}.
They have shown that ranges of generalized polynomials from a certain class necessarily contain FVIP systems.

We begin by recalling the definition of the appropriate class.
We denote the integer part function by $\floor{\cdot}$, the nearest integer function by $\nint{\cdot} = \floor{\cdot+1/2}$ and the distance to nearest integer by $\dint{a} = \abs{a-\nint{a}}$ (this is consistent with the notation for general metric groups applied to $\R/\Z$).
\begin{definition}
\label{def:generalized-poly}
The set $\calG$ of \emph{generalized polynomials} (in $l$ variables) is the smallest $\Z$-algebra of functions $\Z^{l}\to\Z$ that contains $\Z[x_{1},\dots,x_{l}]$ such that for every $p_{1},\dots,p_{t}\in\calG$ and $c_{1},\dots,c_{t}\in\R$ the map $\floor{\sum_{i=1}^{t}c_{i}p_{i}}$ is in $\calG$.
The notion of degree is extended from $\Z[x_{1},\dots,x_{l}]$ to $\calG$ inductively by requiring $\deg p_{0}p_{1}\leq \deg p_{0}+\deg p_{1}$, $\deg (p_{0}+p_{1}) \leq \max(\deg p_{0},\deg p_{1})$, and $\deg \floor{\sum_{i=1}^{t}c_{i}p_{i}} \leq \max_{i}\deg p_{i}$, the degree of each generalized polynomial being the largest number with these properties.

The set of $\calG_{a}$ of \emph{admissible} generalized polynomials is the smallest ideal of $\calG$ that contains the maps $x_{1},\dots,x_{l}$ and is such that for every $p_{1},\dots,p_{t}\in\calG_{a}$, $c_{1},\dots,c_{t}\in\R$, and $0<k<1$ the map $\floor{\sum_{i=1}^{t}c_{i}p_{i}+k}$ is in $\calG_{a}$.
\end{definition}
The construction of FVIP systems in the range of an admissible generalized polynomial in \cite{MR2246589} proceeds by induction on the polynomial and utilizes Lemma~\ref{lem:fvip-monomial} at the end.
We give a softer argument that gives a weaker result in the sense that it does not necessarily yield an FVIP system of the form \eqref{eq:fvip-monomial}, but requires less computation.

For a ring $R$ (with not necessarily commutative multiplication, although we will only consider $R=\Z$ and $R=\R$ in the sequel) and $d\in\N$ we denote by $R_{\bullet}^{d}$ the prefiltration (with respect to the additive group structure) given by $R_{0}=\dots=R_{d}=R$ and $R_{d+1}=\{0_{R}\}$.
\begin{lemma}
\label{lem:fvip-ring-product}
Let $F_{i} \leq \polyn[R_{\bullet}^{d_{i}}]$, $i=0,1$, be FVIP groups.
Then the pointwise products of maps from $F_{0}$ and $F_{1}$ generate an FVIP subgroup of $\polyn[R_{\bullet}^{d_{0}+d_{1}}]$.
\end{lemma}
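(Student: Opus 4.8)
The plan is to exhibit an explicit finite generating set and then verify the two defining properties of a VIP group directly, in close analogy with the computations in Lemma~\ref{lem:fvip-monomial}. Fix finite generating sets $g_{1},\dots,g_{p}$ of $F_{0}$ and $h_{1},\dots,h_{q}$ of $F_{1}$. Since the target $(R,+)$ is abelian, every $f_{0}\in F_{0}$ is a $\Z$-combination of the $g_{i}$ and every $f_{1}\in F_{1}$ a $\Z$-combination of the $h_{j}$; the $\Z$-bilinearity of ring multiplication then shows that every pointwise product $f_{0}f_{1}$ is a $\Z$-combination of the finitely many products $g_{i}h_{j}$. Thus the additive group spanned by all products is already finitely generated, and I let $F$ be the subgroup of $\polyn[R_{\bullet}^{d_{0}+d_{1}}]$ generated by $\{g_{i}h_{j}\}\cup\{g_{i}\}\cup\{h_{j}\}$; note here that $\polyn[R_{\bullet}^{d_{0}}]\subseteq\polyn[R_{\bullet}^{d_{0}+d_{1}}]$ because being polynomial with respect to a termwise smaller prefiltration is the stronger condition, so the $g_{i}$ and $h_{j}$ are legitimate elements of the target group. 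That each $g_{i}h_{j}$ is genuinely $R_{\bullet}^{d_{0}+d_{1}}$-polynomial I would prove by induction on $d_{0}+d_{1}$, reading off the derivative formula below, whose right-hand side consists of terms of strictly smaller total degree.

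The key tool is a Leibniz-type expansion of the symmetric derivative of a \emph{ring} product. Writing $A=e_{0}(\alpha)$, $B=e_{0}(\beta)$, $C=\sD_{\beta}e_{0}(\alpha)$ and $a,b,c$ for the analogous quantities attached to $e_{1}$, and using $e_{i}(\alpha\cup\beta)=e_{i}(\alpha)+e_{i}(\beta)+\sD_{\beta}e_{i}(\alpha)$ together with the fact that $\sD_{\beta}$ is additive on maps into $(R,+)$, I expand $(A+B+C)(a+b+c)-Aa-Bb$ to obtain
\begin{multline*}
\sD_{\beta}(e_{0}e_{1})
= e_{1}(\beta)\,e_{0} + e_{0}(\beta)\,e_{1}
+ e_{0}\,\sD_{\beta}e_{1} + (\sD_{\beta}e_{0})\,e_{1} \\
+ (\sD_{\beta}e_{0})(\sD_{\beta}e_{1})
+ e_{0}(\beta)\,\sD_{\beta}e_{1} + e_{1}(\beta)\,\sD_{\beta}e_{0}.
\end{multline*}
The three bilinear terms $e_{0}\,\sD_{\beta}e_{1}$, $(\sD_{\beta}e_{0})\,e_{1}$, $(\sD_{\beta}e_{0})(\sD_{\beta}e_{1})$ are again pointwise products of elements of $F_{0}$ and $F_{1}$ (recall that both groups are closed under $\sD$), so, expanding the $\sD$-derivatives over the generators, they lie in the $\Z$-span of the $g_{i}h_{j}$ and hence in $F$. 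Because $\sD_{\beta}$ is additive it suffices to check closure on generators, and the displayed identity applied to $g_{i}h_{j}$ does exactly this; the derivatives $\sD_{\beta}g_{i}\in F_{0}$ and $\sD_{\beta}h_{j}\in F_{1}$ are handled by closedness of $F_{0},F_{1}$. Every summand has total degree at most $d_{0}+d_{1}-1$, so the derivative lands in $F\cap\polyn[{R_{\bullet}^{d_{0}+d_{1}}[+1]}]=F_{1}$, as required. Invariance under conjugation by constants is automatic since $(R,+)$ is abelian.

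The one genuinely delicate point — the step I expect to cost the most care — is keeping $F$ finitely generated in the presence of the four scalar terms $e_{1}(\beta)e_{0}$, $e_{0}(\beta)e_{1}$, $e_{0}(\beta)\sD_{\beta}e_{1}$, $e_{1}(\beta)\sD_{\beta}e_{0}$, whose coefficients are the values $e_{i}(\beta)\in R$. When $R=\Z$ these coefficients are integers, so the scalar multiples are ordinary $\Z$-multiples that already lie in the finitely generated groups $F_{0}$ and $F_{1}\subseteq F$, and the argument closes cleanly; moreover $\sD_{\gamma}(r\,e_{0})=r\,\sD_{\gamma}e_{0}$ shows that adjoining a fixed scalar multiple never propagates new scalars, so no runaway generation occurs. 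For a general ring one must ensure that the additive group generated by the occurring values $\{e_{i}(\beta)\}$ is itself finitely generated, which is precisely where finite generation is in danger; I would secure this either by working over the rings actually needed here or by enlarging the generating set once by the finitely many scalar multiples produced and invoking the stability just noted. Granted finite generation, $F$ is a finitely generated subgroup of $\polyn[R_{\bullet}^{d_{0}+d_{1}}]$ that is closed under $\sD$ and under conjugation by constants, hence FVIP, and it contains every pointwise product of a map from $F_{0}$ with a map from $F_{1}$, which is the assertion.
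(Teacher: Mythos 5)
Your proof is correct and is essentially the paper's own: the paper's one-line argument is induction on $d_{0}+d_{1}$ using the identity $\sD_{\beta}(vw)=(v+\sD_{\beta}v+v(\beta))\sD_{\beta}w+(\sD_{\beta}v+v(\beta))w+(\sD_{\beta}v+v)w(\beta)$, which is precisely your seven-term expansion regrouped (your placement of the scalars $e_{1}(\beta)$ on the left tacitly uses commutativity of $R$, harmless for $R=\Z$ or $R=\R$), and your enlargement of the generating set by the $g_{i}$ and $h_{j}$ is indeed necessary, since the group generated by the products alone is not closed under $\sD$. Your worry about the scalar terms is also well founded: the lemma is only ever applied with $R=\Z$ (in the proof of Theorem~\ref{thm:gen-poly-FVIP}), where the values $e_{i}(\beta)$ act as integer multiples landing in $F_{0}$ and $F_{1}$, and for a general ring such as $R=\R$ finite generation can genuinely fail (rationally independent values $e_{i}(\beta)$ produce infinite-rank groups of derivatives), so of your two proposed fixes only the restriction to the rings actually used is viable --- adjoining ``the finitely many scalar multiples produced'' cannot work, because infinitely many distinct values $e_{i}(\beta)$ occur as $\beta$ ranges over $\Fin$.
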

\begin{proof}
This follows by induction on $d_{0}+d_{1}$ using the identity
\[
\sD_{\beta} vw = (v+\sD_{\beta}v+v(\beta)) \sD_{\beta} w + (\sD_{\beta} v+v(\beta))w + (\sD_{\beta}v+v)w(\beta)
\]
for the symmetric derivative of a pointwise product.
\end{proof}

Applying Lemma~\ref{lem:near-identity} to $\R/\Z$ we obtain the following.
\begin{corollary}
\label{cor:near-integer}
Let $P$ be an FVIP system in $\R$.
Then for every $\epsilon>0$ wlog $\dint{P}<\epsilon$.
\end{corollary}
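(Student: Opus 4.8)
The plan is to transport the statement to the \emph{compact} group $\R/\Z$ and then invoke Lemma~\ref{lem:near-identity}. The only reason the latter does not apply on the nose is that $\R$ is not compact, so I cannot feed $P$ into it directly.

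First I would fix $d$ and an FVIP group $F\leq\polyn[\R_{\bullet}^{d}]$ containing $P$; in particular $P\in\polyn[\R_{\bullet}^{d}]$. Let $q\from\R\to\R/\Z$ denote the quotient homomorphism. Because $q$ is a homomorphism it commutes with the discrete derivative, as for every $T\in\mathcal{T}$ one computes
\[
D_{T}(q\circ P)=(q\circ P)\inv\,\bigl((q\circ P)\circ T\bigr)=q\bigl(P\inv\,(P\circ T)\bigr)=q\circ(D_{T}P).
\]
Since $q$ carries the prefiltration $\R_{\bullet}^{d}$ onto $(\R/\Z)_{\bullet}^{d}$, i.e.\ $q(R_{i})=(\R/\Z)_{i}$ for every $i$, a routine induction on the length of the prefiltration shows that $q\circ P$ is $(\R/\Z)_{\bullet}^{d}$-polynomial. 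As $\R/\Z$ is a compact metric group, $(\R/\Z)_{\bullet}^{d}$ is a prefiltration in the category of compact metric groups, which is precisely the setting of Lemma~\ref{lem:near-identity}.

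I would then apply Lemma~\ref{lem:near-identity} to $q\circ P$ and the given $\epsilon>0$ to obtain wlog $\dint{q\circ P}<\epsilon$. By construction the distance of $q(a)$ to the identity of $\R/\Z$ is exactly the distance $\dint{a}$ of $a$ to the nearest integer, so $\dint{q\circ P}=\dint{P}$ pointwise, and the desired conclusion $\dint{P}<\epsilon$ follows wlog. I expect no genuine obstacle here: all of the substance resides in Lemma~\ref{lem:near-identity}, and what remains is only the soft algebraic observation that reduction modulo $\Z$ preserves polynomiality while turning the non-compact target $\R$ into the compact quotient $\R/\Z$.
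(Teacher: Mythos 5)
Your proposal is correct and is precisely the paper's argument: the paper deduces this corollary by ``applying Lemma~\ref{lem:near-identity} to $\R/\Z$'', and you have simply spelled out the routine details (the quotient homomorphism $q\from\R\to\R/\Z$ intertwines discrete derivatives, hence preserves polynomiality with respect to the image prefiltration, and the metric on $\R/\Z$ turns $\dint{q\circ P}$ into the distance of $P$ to the nearest integer). No gap, and no divergence from the paper's route.
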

This allows us to show that we can obtain $\Z$-valued FVIP systems from $\R$-valued FVIP-systems by rounding.
\begin{lemma}
\label{lem:nearest-integer-FVIP}
Let $P \in \polyn[\R_{\bullet}^{d}]$ be an FVIP system.
Then wlog $\nint{P} \in \polyn[\Z_{\bullet}^{d}]$ and $\nint{P}$ is an FVIP system.
\end{lemma}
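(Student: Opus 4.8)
The plan is to induct on the length $d$ of the prefiltration $\R_{\bullet}^{d}$, establishing simultaneously that (after passing to a sub-IP-ring) $\nint{P}$ is $\Z_{\bullet}^{d}$-polynomial and that it generates an FVIP group. The base case $d\le 1$ is essentially done by hand: there $P$ is an IP-system in $\R$, and Corollary~\ref{cor:near-integer} lets us assume wlog $\dint{P}<1/4$, so the additivity relation $P(\alpha\cup\beta)=P(\alpha)+P(\beta)$ survives rounding and $\nint{P}$ is an IP-system in $\Z$, generating a one-generator (hence FVIP) group.

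For the inductive step I fix an FVIP group $F\le\polyn[\R_{\bullet}^{d}]$ containing $P$, with finite generating set $g_{1},\dots,g_{r}$. Applying Corollary~\ref{cor:near-integer} to each generator I assume wlog $\dint{g_{i}}<\epsilon$ for a small $\epsilon$; since $\dint{\cdot}$ is the (subadditive) quotient metric on $\R/\Z$, the formula $\sD_{\beta}g(\alpha)=-g(\alpha)+g(\alpha\cup\beta)-g(\beta)$ yields $\dint{\sD_{\beta}g}<3\epsilon$ uniformly in $\beta$, and more generally every iterated symmetric derivative of every $g_{i}$ stays within a fixed power of $3$ times $\epsilon$ of $\Z$, uniformly on the IP-ring. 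The key computation is then the rounding identity
\[
\sD_{\beta}\nint{g}(\alpha)-\nint{\sD_{\beta}g}(\alpha)=\big(g(\alpha)-\nint{g(\alpha)}\big)-\big(g(\alpha\cup\beta)-\nint{g(\alpha\cup\beta)}\big)+\big(g(\beta)-\nint{g(\beta)}\big)+\big(\sD_{\beta}g(\alpha)-\nint{\sD_{\beta}g(\alpha)}\big),
\]
whose right-hand side has modulus $<6\epsilon$ while its left-hand side is an integer; hence for $\epsilon$ small it vanishes and $\sD_{\beta}\nint{g}=\nint{\sD_{\beta}g}$ for every $g$ which, together with its derivatives, is this close to $\Z$ --- in particular for $g$ any iterated derivative of a generator.

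Each $\sD_{\beta}g_{i}$ lies in the group $F_{1}=F\cap\polyn[{\Gb[+1]}]$, which is a finitely generated abelian group and an FVIP group for the shifted prefiltration $\R_{\bullet}^{d-1}$, and these derivatives are uniformly close to $\Z$. The induction hypothesis applied to $F_{1}$ gives, wlog, that every $\nint{\sD_{\beta}g_{i}}$ is $\Z_{\bullet}^{d-1}$-polynomial and lies in an FVIP group $\overline{F_{1}}\le\polyn[\Z_{\bullet}^{d-1}]$. With the rounding identity this shows $\sD_{\beta}\nint{g_{i}}=\nint{\sD_{\beta}g_{i}}\in\overline{F_{1}}$ for all $\beta$, so $\nint{g_{i}}\in\polyn[\Z_{\bullet}^{d}]$; and shrinking $\epsilon$ for the fixed coefficients in $P=\sum_{i}n_{i}g_{i}$ makes $\nint{P}=\sum_{i}n_{i}\nint{g_{i}}$, giving $\nint{P}\in\polyn[\Z_{\bullet}^{d}]$. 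The candidate FVIP group is $\bar F:=\langle\nint{g_{1}},\dots,\nint{g_{r}}\rangle\vee\overline{F_{1}}$, which is invariant under conjugation by constants (being abelian and $\Z$-valued) and closed under $\sD$ by the rounding identity together with $\sD_{\beta}\nint{g_{i}}\in\overline{F_{1}}$.

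The main obstacle is finite generation of $\bar F$: the roundings $\{\nint{\sD_{\beta}g_{i}}\}_{\beta}$ of the infinitely many derivatives must all lie in a single finitely generated group, and rounding fails to be additive once the integer coefficients grow. Writing $\sD_{\beta}g_{i}=\sum_{s}m_{s}(\beta)D_{s}$ in terms of finite generators $D_{s}$ of $F_{1}$, one gets $\nint{\sD_{\beta}g_{i}}=\sum_{s}m_{s}(\beta)\nint{D_{s}}+c_{\beta}$ with a ``carry'' $c_{\beta}$ that is an integer-valued VIP-system, bounded for each fixed $\beta$ but not uniformly (the $m_{s}(\beta)$ are unbounded). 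For a single $\beta$ the carry is harmless: a bounded integer-valued VIP-system is wlog trivial, since reducing modulo a large $M$ and applying Corollary~\ref{cor:poly-subg} with the finite-index subgroup $\{0\}\le\Z/M$ forces it to vanish. To remove the carries \emph{simultaneously} I would invoke Lemma~\ref{lem:sfi} for the map $\beta\mapsto\sD_{\beta}g_{i}$ into the finitely generated nilpotent group $F_{1}$, placing all derivatives inside one finite-index extension $W\le F_{1}$ and thereby controlling the relevant data modulo finite index; Lemma~\ref{lem:deri-poly} and Corollary~\ref{cor:poly-subg} then let us pass to a sub-IP-ring on which rounding restricted to $W$ agrees with a homomorphism modulo a fixed finite-index subgroup. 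On such an IP-ring $\bar F$ becomes commensurable with $F$, hence finitely generated, which completes the induction. I expect this last step --- reconciling the non-additivity of rounding with finite generation via the Hirsch-length machinery --- to be the genuinely delicate part; everything before it is the bounded-carry bookkeeping governed by the displayed rounding identity.
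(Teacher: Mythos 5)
Your overall induction scheme matches the paper's, and your analysis up to the ``carry'' problem is sound; but the proof has a genuine gap precisely at the step you yourself flag as delicate, and the route you sketch for it does not work. The proposal to invoke Lemma~\ref{lem:sfi} so that ``rounding restricted to $W$ agrees with a homomorphism modulo a fixed finite-index subgroup'' and that ``$\bar F$ becomes commensurable with $F$'' is unsound: passing to a finite-index subgroup $W\leq F_{1}$ does nothing to control the coefficients $m_{s}(\beta)$, which remain unbounded, so the carries $c_{\beta}$ are still unbounded over $\beta$ and rounding is still nowhere near additive on $W$; moreover ``commensurable with $F$'' is not meaningful here, since $\bar F$ consists of $\Z$-valued systems and $F$ of $\R$-valued ones, and their intersection can be trivial. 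So finite generation of $\bar F$ and its closure under $\sD$ are never established.

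The paper resolves exactly this point by a different, and much softer, mechanism: no Hirsch-length machinery at all. Write $\sD_{\beta}P=\sum_{i}c_{i}(\beta)q_{i}$ in the \emph{fixed} finite generating set $q_{1},\dots,q_{a}$ of the lower-degree group containing the derivatives, round only the $q_{i}$ (finitely many applications of the induction hypothesis, combined by Lemma~\ref{lem:fvip-plus-fvip}), and then, \emph{for each fixed $\beta$}, apply Corollary~\ref{cor:near-integer} again with $\epsilon$ chosen relative to the coefficients, i.e.\ wlog $|c_{i}(\beta)|\cdot\dint{q_{i}}<1/(4\cdot 2^{i})$; together with $\dint{P}<1/12$ this makes the carry an integer of modulus $<1$, hence zero, so $\sD_{\beta}\nint{P}=\sum_{i}c_{i}(\beta)\nint{q_{i}}$ exactly. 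Since for a fixed value of $\max\beta$ there are only finitely many $\beta$, and each such identity only constrains $\alpha>\beta$, a diagonal argument over $\max\beta$ (cf.\ Furstenberg--Katznelson, Lemma 1.4 of their IP paper) produces a single sub-IP-ring on which all carries vanish; then $F\vee\<\nint{P}\>$ is finitely generated by construction and closed under $\sD$. Ironically, your per-$\beta$ observation (killing a bounded integer-valued carry by reducing mod a large $M$ and applying Corollary~\ref{cor:poly-subg} with $\{0\}\leq\Z/M\Z$) is a perfectly valid substitute for the paper's coefficient-weighted re-rounding; what is missing from your argument is not that, but the realization that the simultaneous statement follows from the per-$\beta$ one by grouping the $\beta$ according to $\max\beta$ and diagonalizing --- at which point the Hirsch-length detour, which is the part that fails, becomes unnecessary.
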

\begin{proof}
We induct on $d$.
For $d=0$ there is nothing to show, so assume that $d>0$.
By the assumption every symmetric derivative of $(P_{\alpha})$ lies in an FVIP group of polynomials of degree $<d$ that is generated by $q_{1},\dots,q_{a}$, say.
By the induction hypothesis we know that wlog each $\nint{q_{i}}$ is again an FVIP system and by Lemma~\ref{lem:fvip-plus-fvip} they lie in some FVIP group $F$.
By Corollary~\ref{cor:near-integer} we may assume wlog that $\dint{P} < 1/12$.
Let now $\beta$ be given, by the hypothesis we have
\[
\sD_{\beta} P(\alpha) = \sum_{i}c_{i}q_{i}(\alpha)
\quad \text{for } \alpha>\beta
\]
with some $c_{i}\in \Z$.
By Corollary~\ref{cor:near-integer} we may wlog assume that $\abs{c_{i}}\cdot \dint{q_{i}}(\alpha) < 1/(4\cdot 2^{i})$ for all $\alpha>\beta$.
This implies
\[
\abs{\sD_{\beta} \nint{P}(\alpha) - \sum_{i}c_{i}\nint{q_{i}}(\alpha)} < 1/2
\quad \text{for } \alpha>\beta,
\]
so that
\[
\sD_{\beta} \nint{P}(\alpha) = \sum_{i}c_{i}\nint{q_{i}}(\alpha)
\quad \text{for } \alpha>\beta,
\]
since both sides are integer-values functions.
In fact we can do this for all $\beta$ with fixed $\max\beta$ simultaneously.
By a diagonal argument, cf.\ \cite[Lemma 1.4]{MR833409}, we may then assume that for every $\beta$ we have
\[
\sD_{\beta} \nint{P}(\alpha) = \sum_{i}c_{i}\nint{q_{i}}(\alpha)
\quad \text{for } \alpha>\beta
\]
with some $c_{i}\in\Z$.
Hence $F \vee \<\nint{P}\> \leq \polyn[\Z_{\bullet}^{d}]$ is an FVIP group.
\end{proof}
Recall that an \emph{IP-system} in $\Z^{l}$ is a family $(n_{\alpha})_{\alpha\in\Fin} \subset \Z^{l}$ such that $n_{\alpha\cup\beta}=n_{\alpha}+n_{\beta}$ whenever $\alpha,\beta\in\Fin$ are disjoint.
\begin{theorem}[{\cite[Theorem 2.8]{MR2246589}}]
\label{thm:gen-poly-FVIP}
For every generalized polynomial $p:\Z^{l}\to\Z$ and every FVIP system $(n_{\alpha})_{\alpha}$ in $\Z^{l}$ of degree at most $d$ there exists $n\in\Z$ such that the IP-sequence $(p(n_{\alpha})-n)_{\alpha\in\Fin}$ is wlog FVIP of degree at most $d\deg p$.
If $p$ is admissible, then we may assume $n=0$.
\end{theorem}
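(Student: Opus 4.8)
The plan is to induct on the inductive construction of the generalized polynomial $p$ given in Definition~\ref{def:generalized-poly}, carrying the stronger bookkeeping that for \emph{each} constituent $q\in\mathcal{G}$ there is an integer $n_{q}$ with $\alpha\mapsto q(n_{\alpha})-n_{q}$ wlog FVIP of degree $\le d\deg q$, and that $n_{q}=0$ whenever $q\in\mathcal{G}_{a}$. The constant $n_{q}$ is in fact forced to equal $q(0)$: an FVIP system vanishes at $\emptyset$ and an IP-system has $n_{\emptyset}=0$, so $q(n_{\emptyset})-n_{q}=q(0)-n_{q}=0$. Throughout I track degrees through the rules of Definition~\ref{def:generalized-poly}, so that the three operations below yield systems of degree $\le d(\deg p_{0}+\deg p_{1})$, $\le d\max(\deg p_{0},\deg p_{1})$, and $\le d\max_{i}\deg p_{i}$, matching $d\deg p$ in each case.

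For the base I first observe that each coordinate $\alpha\mapsto n_{\alpha}^{(j)}$ is the image of $(n_{\alpha})$ under the coordinate projection $\Z^{l}\to\Z$, hence is itself FVIP of degree $\le d$; the generators $x_{j}$ are thus handled with $n_{x_{j}}=0$. Integer polynomials are built from these by the $\Z$-algebra operations, so it suffices to treat sums and products. For a sum I take the join of the two FVIP groups supplied by the induction hypothesis, which is again FVIP by Lemma~\ref{lem:fvip-plus-fvip}, with constant $n_{p_{0}}+n_{p_{1}}$. For a product I expand $p_{0}p_{1}(n_{\alpha})-n_{p_{0}}n_{p_{1}}$ as the sum of the pointwise product $(p_{0}(n_{\alpha})-n_{p_{0}})(p_{1}(n_{\alpha})-n_{p_{1}})$ and integer multiples of $p_{0}(n_{\alpha})-n_{p_{0}}$ and $p_{1}(n_{\alpha})-n_{p_{1}}$; the pointwise product generates an FVIP group of the correct degree by Lemma~\ref{lem:fvip-ring-product} and the remaining terms lie in the joins already built. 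When one factor is admissible, membership of the product in the ideal $\mathcal{G}_{a}$ corresponds precisely to the fact that one factor vanishes at $\emptyset$, so every summand vanishes at $\emptyset$ and $n=0$.

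The essential case is the floor operation $p=\floor{\sum_{i=1}^{t}c_{i}p_{i}}$. By the induction hypothesis and Lemma~\ref{lem:fvip-plus-fvip} the maps $p_{i}(n_{\alpha})-n_{p_{i}}$ lie in a common FVIP group, and since only finitely many real scalars $c_{i}$ occur, the maps $\alpha\mapsto q(\alpha):=\sum_{i}c_{i}(p_{i}(n_{\alpha})-n_{p_{i}})$ generate a finitely generated, $\sD$-closed, $\R$-valued FVIP group of degree $\le d\max_{i}\deg p_{i}$. By Lemma~\ref{lem:nearest-integer-FVIP} I may wlog assume $\nint{q}$ is a $\Z$-valued FVIP system of the same degree, and by Corollary~\ref{cor:near-integer} that $\dint{q}<\epsilon$ for a prescribed $\epsilon>0$. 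Writing $\sum_{i}c_{i}p_{i}(n_{\alpha})=q(\alpha)+C$ with $C=\sum_{i}c_{i}n_{p_{i}}$ and splitting off the integer part of $C$, I reduce to $\floor{q(\alpha)+\{C\}}=\nint{q(\alpha)}+\floor{\theta(\alpha)+\{C\}}$, where $\theta(\alpha)=q(\alpha)-\nint{q(\alpha)}$ satisfies $|\theta(\alpha)|<\epsilon$. The function $\alpha\mapsto\floor{\theta(\alpha)+\{C\}}$ takes only finitely many integer values, so by Hindman's theorem~\ref{thm:hindman} it is wlog constant; absorbing this constant together with $\floor{C}$ into $n$ exhibits $p(n_{\alpha})-n=\nint{q(\alpha)}$ as FVIP of degree $\le d\max_{i}\deg p_{i}=d\deg p$.

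It remains to see how admissibility forces $n=0$ here. When $p=\floor{\sum_{i}c_{i}p_{i}+k}$ with $0<k<1$ and each $p_{i}\in\mathcal{G}_{a}$, every $p_{i}(n_{\alpha})$ already vanishes at $\emptyset$, so $q$ does too and $q(\emptyset)=0$. Choosing $\epsilon<\min(k,1-k)$ in Corollary~\ref{cor:near-integer} forces $\theta(\alpha)+k\in(0,1)$, whence $\floor{q(\alpha)+k}=\nint{q(\alpha)}$ \emph{identically}, with no appeal to Hindman and no additive constant; thus $p(n_{\alpha})=\nint{q(\alpha)}$ is FVIP and vanishes at $\emptyset$. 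The shift $0<k<1$ in the definition of $\mathcal{G}_{a}$ is exactly what keeps the argument of the floor away from the integer boundary, and this is the point where the admissible and the general cases diverge. The only genuinely delicate step in the whole argument is this floor case, namely controlling the discontinuity of $\floor{\cdot}$; once the degree bookkeeping is fixed, I expect the algebraic operations to be routine given Lemmas~\ref{lem:fvip-plus-fvip} and~\ref{lem:fvip-ring-product}.
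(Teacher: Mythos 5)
Your proposal is correct and takes essentially the same route as the paper: structural induction on the construction of $p$, with Lemma~\ref{lem:fvip-plus-fvip} handling sums, Lemma~\ref{lem:fvip-ring-product} handling products, and the floor step handled by passing to a real-valued FVIP system, applying Hindman's theorem~\ref{thm:hindman} to make the integer discrepancy between $\floor{P(n_{\alpha})}$ and the nearest-integer system constant, invoking Lemma~\ref{lem:nearest-integer-FVIP}, and using Corollary~\ref{cor:near-integer} with $\dint{\cdot}<\min(k,1-k)$ to force $n=0$ in the admissible case. The differences are cosmetic (you run one induction carrying both statements where the paper runs two, and you split $C=\floor{C}+\{C\}$ explicitly); your incidental claim that $n_{q}=q(0)$ is never actually used and is not quite justified, since FVIP systems are regarded as functions on $\Fin$ and the wlog sub-IP-rings never contain $\emptyset$.
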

\begin{proof}
We begin with the first part.
The class of maps that satisfy the conclusion is closed under $\Z$-linear combinations by Lemma~\ref{lem:fvip-plus-fvip} and under multiplication by Lemma~\ref{lem:fvip-ring-product}.
This class clearly contains the polynomials $1,x_{1},\dots,x_{l}$.
Thus it remains to show that, whenever $p_{1},\dots,p_{t}\in\calG$ satisfy the conclusion and $c_{1},\dots,c_{t}\in\R$, the map $\floor{P}$ with $P=\sum_{i=1}^{t}c_{i}p_{i}$ also satisfies the conclusion.

By the assumption we have wlog that $(P(n_{\alpha})-C)_{\alpha}$ is an $\R$-valued FVIP system for some $C\in\R$.
By Hindman's theorem~\ref{thm:hindman} we may wlog assume that $\floor{P(n_{\alpha})}=\nint{P(n_{\alpha})-C}+n$ for some integer $n$ with $\abs{n-C}<2$ and all $\alpha\in\Fin$.
The conclusion follows from Lemma~\ref{lem:nearest-integer-FVIP}.

Now we consider admissible generalized polynomials $p$.
The conclusion clearly holds for $x_{1},\dots,x_{l}$, passes to linear combinations and passes to products with arbitrary generalized polynomials by Lemma~\ref{lem:fvip-ring-product} and the first part of the statement.
Assume now that $p_{1},\dots,p_{t}\in\calG_{a}$ satisfy the conclusion and $c_{1},\dots,c_{t}\in\R$, $0<k<1$.
Then $(P(n_{\alpha}))_{\alpha}$ with $P:=\sum_{i=1}^{t}c_{i}p_{i}$ is wlog an $\R$-valued FVIP system, and by Corollary~\ref{cor:near-integer} we have wlog $\dint{P} < \min(k,1-k)$.
This implies $\floor{P(n_{\alpha})+k}=\nint{P(n_{\alpha})}$ and this is wlog an FVIP system by Lemma~\ref{lem:nearest-integer-FVIP}.
\end{proof}
As an aside, consider the set of real-valued generalized polynomials $\mathcal{RG}$ \cite[Definition 3.1]{MR2747062} that is defined similarly to $\calG$, except that it is required to be an $\R$-algebra.
Following the proof of Theorem~\ref{thm:gen-poly-FVIP} we see that for every $p\in\mathcal{RG}$ and every FVIP system $(n_{\alpha})_{\alpha}\subset\Z^{l}$ wlog there exists a constant $C\in\R$ such that $(p(n_{\alpha})-C)_{\alpha}$ is an FVIP system.
Clearly, if $p$ is of the form $\floor{q}$ then $C\in\Z$ and if $p\in\R[x_{1},\dots,x_{l}]$ with zero constant term then $C=0$.
This, together with Corollary~\ref{cor:near-integer}, implies (an FVIP* version of) \cite[Theorem D]{MR2318563}.

Our main example (that also leads to Theorem~\ref{thm:SZ-intro}) are maps induced by admissible generalized polynomial sequences in finitely generated nilpotent groups.
\begin{lemma}
\label{lem:poly-fvip}
Let $G$ be a finitely generated nilpotent group with a filtration $\Gb$.
Let $p:\Z^{l}\to\Z$ be an admissible generalized polynomial, $(n_{\alpha})_{\alpha} \subset \Z^{l}$ be an FVIP system of degree at most $d$ and $g\in G_{d\deg p}$.
Then wlog $(g^{p(n_{\alpha})})_{\alpha}$ is an element of $\polyn$ and an FVIP system.
\end{lemma}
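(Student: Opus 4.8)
The plan is to reduce the exponent to an honest integer FVIP system and then transport it into $G$ by the power map, separating a clean ``polynomial'' induction from the more delicate finite generation bookkeeping. By Theorem~\ref{thm:gen-poly-FVIP}, since $p$ is admissible we may wlog assume that $(m_{\alpha}):=(p(n_{\alpha}))$ lies in a finitely generated VIP group $E\leq\polyn[\Z_\bullet^{D}]$ with $D=d\deg p$. Because the powers of a fixed element commute, the assignment $\phi_{g}\from w\mapsto(g^{w_{\alpha}})_{\alpha}$ is a homomorphism from $E$ into the group of maps $\Fin\to G$, and it intertwines symmetric derivatives: $\sD_{\beta}(g^{w_{\alpha}})=g^{-w_{\alpha}}g^{w_{\alpha\cup\beta}}g^{-w_{\beta}}=g^{\sD_{\beta}w(\alpha)}=\phi_{g}(\sD_{\beta}w)(\alpha)$.

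First I would prove the purely polynomial statement in the refined form: \emph{if $g\in G_{s+D}$ and $(w_{\alpha})$ is an integer FVIP system of degree at most $D$, then $(g^{w_{\alpha}})\in\polyn[{\Gb[+s]}]$}, by induction on $D$. For $D=0$ the system is constant and vanishes, so $(g^{w_{\alpha}})\equiv 1_{G}$. For $D\geq 1$ the values $g^{w_{\alpha}}\in G_{s+D}\subseteq G_{s+1}=(\Gb[+s])_{1}$ sit at the right level, while $\sD_{\beta}w$ has degree at most $D-1$ and $g\in G_{s+D}=G_{(s+1)+(D-1)}$, so $\sD_{\beta}(g^{w_{\alpha}})=\phi_{g}(\sD_{\beta}w)\in\polyn[{\Gb[+(s+1)]}]$ by the induction hypothesis; this is exactly the recursive criterion for membership in $\polyn[{\Gb[+s]}]$. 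Taking $s=0$ gives $(g^{p(n_{\alpha})})\in\polyn$, and it also shows that the $\sD$-closure of $\langle(g^{m_{\alpha}})\rangle$ is contained in the finitely generated group $\phi_{g}(E)$.

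It remains to exhibit a finitely generated VIP group containing $v:=(g^{m_{\alpha}})$; the only property not already supplied by $\phi_{g}(E)$ is closure under conjugation by constants, and this is the main obstacle, since as $c$ ranges over the infinite group $G$ the conjugates ${}^{c}v=(({}^{c}g)^{m_{\alpha}})$ need not return to $\phi_{g}(E)$. I would resolve this by a downward induction on the depth of $g$, that is, on $\bar s+1-(s+D)$ where $\bar s$ is the length of $\Gb$; the case $s+D>\bar s$ is trivial as $g=1_{G}$. If $g$ lies in the deepest nontrivial filtration group, then $[g,c]\in G_{\bar s+1}=\{1_{G}\}$, so $g$ is central, conjugation by constants fixes it, and $\phi_{g}(E)$ is already a VIP group. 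Otherwise write ${}^{c}g=g\,z_{c}$ with $z_{c}\in G_{s+D+1}$ (a commutator of $g$ and $c$, using $[G_{s+D},G_{1}]\subseteq G_{s+D+1}$) and expand $({}^{c}g)^{m_{\alpha}}=(g z_{c})^{m_{\alpha}}$ by the collection process underlying Proposition~\ref{prop:mono-poly} into $g^{m_{\alpha}}$ times a product of powers $u^{q(m_{\alpha})}$ of iterated commutators $u\in\langle g,z_{c}\rangle$ lying strictly deeper in $\Gb$, with integer exponents $q(m_{\alpha})$ that are ordinary polynomials in $m_{\alpha}$ vanishing at $0$ (binomial coefficients) and hence, by Theorem~\ref{thm:gen-poly-FVIP}, again FVIP systems.

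Each correction $u^{q(m_{\alpha})}$ has base strictly deeper than $g$ while keeping the base level above the exponent degree, so it lies in some FVIP group by the induction hypothesis; and because $G$ is finitely generated nilpotent --- so that every subgroup is finitely generated and the normal closure of $g$ is generated by finitely many of its conjugates --- only finitely many such FVIP groups arise as $c$ ranges over $G$. Joining them to $\phi_{g}(E)$ by Lemma~\ref{lem:fvip-plus-fvip} yields a finitely generated VIP group that contains $v$ and is closed under conjugation by constants, completing the induction and hence the proof. The hard part, as indicated, is precisely this last step: controlling the infinitely many conjugates through commutator collection and the finite generation of the subgroups of $G$ that it produces.
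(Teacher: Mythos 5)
Your first two steps are correct and are exactly the paper's: the reduction via Theorem~\ref{thm:gen-poly-FVIP}, and the transport of polynomiality through the power map, which the paper compresses into the remark that $n\mapsto g^{n}$ gives a (family of) prefiltration homomorphism(s) $\Z_{\bullet}^{d\deg p}\to\Gb$; your induction on $D$ just unwinds that. The gap is in the conjugation step, exactly where you say the hard part is. The claim that ``only finitely many such FVIP groups arise as $c$ ranges over $G$'' does not follow from what you cite and is false as stated: your collection expansion is performed separately for each $c$, on $(gz_{c})^{m_{\alpha}}$, and as $c$ runs through the infinite group $G$ the elements $z_{c}$, hence the iterated commutators $u\in\langle g,z_{c}\rangle$ serving as bases of your corrections, run through an infinite subset of $G$ in general. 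Finite generation of the normal closure of $g$ only says that each $cgc\inv$ is a word in finitely many fixed conjugates; rewriting $(cgc\inv)^{m_{\alpha}}$ as a product of powers of those fixed elements requires a second collection, whose exponents have $c$-dependent coefficients, and controlling those forces closure of the exponent group under pointwise products and binomial substitutions (Lemma~\ref{lem:fvip-ring-product}, Vandermonde identities) --- none of which is in your argument. Without finiteness you cannot finish: Lemma~\ref{lem:fvip-plus-fvip} joins only finitely many FVIP groups, and an infinite join of the groups $F_{u}$, each produced abstractly by your induction hypothesis (after its own ``wlog'' passage to a sub-IP-ring), has no reason to be finitely generated.

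The paper's third step avoids collection entirely by one observation you are missing: a conjugate $c\,\phi_{g}(w)\,c\inv$ is a product of constant functions and elements of $F_{0}:=\phi_{g}(E)$, so the closure of $F_{0}$ under conjugation by all constants is a subgroup of the group of functions generated by $F_{0}$ together with the constant functions taking values in a finite generating set $B$ of $G$. That ambient group is a finitely generated nilpotent group (of functions $\Fin\to G$), hence Noetherian, so the conjugation closure is automatically finitely generated --- no induction on depth, no Hall--Petrescu formula, no counting of conjugates. It is moreover a VIP group: it is closed under conjugation by constants by construction; its generators satisfy $\sD_{\beta}(cac\inv)=c(\sD_{\beta}a)c\inv$ with $\sD_{\beta}a\in F_{0}$; and closure of the whole group under $\sD$ follows from the identity \eqref{eq:sD-Leibniz}, whose right-hand side involves only group elements, their derivatives, and conjugation by the constants $g(m)$. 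If you replace your finiteness claim by this Noetherian argument, your steps one and two complete the proof.
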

\begin{proof}
By Theorem~\ref{thm:gen-poly-FVIP} we can wlog assume that $(p(n_{\alpha}))_{\alpha}$ is a $\Z$-valued FVIP system of degree $\leq d\deg p$.
Using the (family of) homomorphism(s) $\Z_{\bullet}^{d\deg p}\to\Gb$, $1\mapsto g$, we see that $(g^{p(n_{\alpha})})_{\alpha}$ is contained in a finitely generated subgroup $F_{0}\leq\polyn$ that is closed under $\sD$.

Let $A\subset F_{0}$ and $B\subset G$ be finite generating sets.
Then the group generated by $bab\inv$, $a\in A$, $b\in B$, is FVIP in view of the identity \eqref{eq:sD-Leibniz}.
\end{proof}

\section{Measurable multiple recurrence}
\label{sec:primitive}
Following the general scheme of Furstenberg's proof, we will obtain our multiple recurrence theorem by (in general transfinite) induction on a suitable chain of factors of the given measure-preserving system.
For the whole section we fix a nilpotent group $G$ with a filtration $\Gb$ and an FVIP group $F\leq\polyn$.

In the induction step we pass from a factor to a ``primitive extension'' that enjoys a dichotomy: each element of $F$ acts on it either relatively compactly or relatively mixingly.
Since the reasoning largely parallels the commutative case here, we are able to refer to the article of Bergelson and McCutcheon \cite{MR1692634} for many proofs.
The parts of the argument that do require substantial changes are given in full detail.

Whenever we talk about measure spaces $(X,\calA,\mu)$, $(Y,\calB,\nu)$, or $(Z,\calC,\gamma)$ we suppose that they are regular and that $G$ acts on them on the right by measure-preserving transformations.
This induces a left action on the corresponding $L^{2}$ spaces.
Recall that to every factor map $(Z,\calC,\gamma)\to (Y,\calB,\nu)$ there is associated an essentially unique measure disintegration
\[
\gamma = \int_{y\in Y} \gamma_{y} \dif\nu(y),
\]
see \cite[\textsection 5.4]{MR603625}.
We write $\norm{\cdot}_{y}$ for the norm on $L^{2}(Z,\gamma_{y})$.
Recall also that the fiber product $Z\times_{Y}Z$ is the space $Z\times Z$ with the measure $\int_{y\in Y} \gamma_{y} \otimes \gamma_{y} \dif\nu(y)$.

\subsection{Compact extensions}
We begin with the appropriate notion of relative compactness.
Heuristically, an extension is relatively compact if it is generated by the image of a relatively Hilbert--Schmidt operator.
\begin{definition}[{\cite[Definition 3.4]{MR1692634}}]
Let $Z\to Y$ be a factor.
A \emph{$Z|Y$-kernel} is a function $H \in L^{\infty}(Z \times_{Y} Z)$ such that
\[
\int H(z_{1},z_{2}) \dif\gamma_{z_{2}}(z_{1}) = 0
\]
for a.e. $z_{2}\in Z$.
If $H$ is a $Z|Y$-kernel and $\phi\in L^{2}(Z)$ then
\[
H*\phi(z_{1}) := \int H(z_{1},z_{2}) \phi(z_{2}) \dif\gamma_{z_{1}}(z_{2}).
\]
\end{definition}
The map $\phi\mapsto H*\phi$ is a Hilbert--Schmidt operator on almost every fiber over $Y$ with uniformly bounded Hilbert--Schmidt norm.
These operators are self-adjoint provided that $H(z_{1},z_{2})=\overline{H(z_{2},z_{1})}$ a.e.

\begin{definition}[{\cite[Definition 3.6]{MR1692634}}]
Suppose that $X \to Z \to Y$ is a chain of factors, $K\leq F$ is a subgroup and $H$ is a non-trivial self-adjoint $X|Y$-kernel that is $K$-invariant in the sense that
\[
\IPlim_{\alpha} g(\alpha)H = H
\]
for every $g\in K$.
The extension $Z\to Y$ is called \emph{$K$-compact} if it is generated by functions of the form $H*\phi$, $\phi\in L^{2}(X)$.
\end{definition}

\begin{lemma}[{\cite[Remark 3.7(i)]{MR1692634}}]
Let $X \to Z \to Y$ be a chain of factors in which $Z\to Y$ is a $K$-compact extension generated by a $X|Y$-kernel $H$.
Then $H$ is in fact a $Z|Y$-kernel and $Z$ is generated by functions of the form $H*\phi$, $\phi\in L^{2}(Z)$.
\end{lemma}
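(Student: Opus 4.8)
The plan is to establish the first assertion—that $H$ is $Z\times_{Y}Z$-measurable—since the second will then follow almost for free. Throughout, let $\pi\from X\to Z$ denote the factor map and let $T_{H}$ denote the fiberwise operator $\phi\mapsto H*\phi$ on $L^{2}(X)$, which is self-adjoint (because $H$ is) and Hilbert-Schmidt on almost every fiber over $Y$ with uniformly bounded Hilbert-Schmidt norm. By the very definition of a $K$-compact extension each function $H*\phi$ with $\phi\in L^{2}(X)$ is $\mathcal{C}$-measurable, so the range of $T_{H}$ is contained in $L^{2}(Z)$. I write $\E(\cdot\mid Z)$ for the orthogonal projection $L^{2}(X)\to L^{2}(Z)$.

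First I would exploit self-adjointness. For a self-adjoint bounded operator one has $\ker T_{H}=(\overline{\im T_{H}})^{\perp}$, and since $\im T_{H}\subset L^{2}(Z)$ this gives $L^{2}(Z)^{\perp}\subset\ker T_{H}$. Equivalently, $T_{H}\phi=T_{H}\,\E(\phi\mid Z)$ for every $\phi$, so $T_{H}$ factors as $S\circ\E(\cdot\mid Z)$, where $S:=T_{H}|_{L^{2}(Z)}$ is a fiberwise self-adjoint Hilbert-Schmidt operator on $L^{2}(Z)$ (the subspace $L^{2}(Z)$ is invariant and the restriction of a self-adjoint Hilbert-Schmidt operator stays self-adjoint Hilbert-Schmidt). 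By the fiberwise Hilbert-Schmidt kernel representation over $Y$—with the usual measurable selection of fiber kernels so that the result depends measurably on $y\in Y$—the operator $S$ is convolution against some $\tilde H\in L^{2}(Z\times_{Y}Z)$.

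The key point is that the pullback $\tilde H\circ(\pi\times\pi)$ represents the same operator $T_{H}$ on $L^{2}(X)$. Using that the pushforward under $\pi$ of the conditional measure $\mu_{y}$ on the $X$-fiber is the conditional measure $\gamma_{y}$ on the $Z$-fiber, one computes for $\phi\in L^{2}(X)$ that $H*\phi=\tilde H*\E(\phi\mid Z)=(\tilde H\circ(\pi\times\pi))*\phi$. Since an $L^{\infty}$ (hence $L^{2}$) kernel over a fiber product is determined almost everywhere by the operator it induces, this forces $H=\tilde H\circ(\pi\times\pi)$ a.e.; in particular $H$ is $Z\times_{Y}Z$-measurable. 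The $Z|Y$-kernel condition transfers from the $X|Y$-kernel condition by the same pushforward identity: for a.e.\ fiber, $\int H(z_{1},z_{2})\dif\gamma_{z_{2}}(z_{1})$ equals $\int H\dif\mu_{z_{2}}$, which vanishes, and since the value depends only on $z_{2}=\pi(x_{2})$ it vanishes for a.e.\ $z_{2}$. Thus $H$ is a $Z|Y$-kernel.

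Granting this, the second assertion is immediate: the identity $H*\phi=\tilde H*\E(\phi\mid Z)$ shows that the families $\{H*\phi:\phi\in L^{2}(X)\}$ and $\{H*\psi:\psi\in L^{2}(Z)\}$ coincide as sets of functions, because $\E(\cdot\mid Z)$ maps $L^{2}(X)$ onto $L^{2}(Z)$. Hence the two families generate the same sub-$\sigma$-algebra, namely $\mathcal{C}$, and $Z$ is generated by the functions $H*\psi$ with $\psi\in L^{2}(Z)$. I expect the only genuine obstacle to be the measurable dependence on $y$ in the fiberwise Hilbert-Schmidt representation and in identifying $\ker T_{H}$ fiberwise; these are routine once one argues on almost every fiber and invokes a measurable selection, and the whole conceptual content lies in the self-adjointness-driven factorization of $T_{H}$ through $\E(\cdot\mid Z)$.
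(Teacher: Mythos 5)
Your proof is correct in substance, but it takes a genuinely different route from the paper's, and the difference is instructive. Both arguments rest on the same two inputs: the range of $T_{H}$ lies in $L^{2}(Z)$ (because the compact extension is generated by the functions $H*\phi$), $H$ is self-adjoint, and a kernel on a fiber product is determined a.e.\ by the operator it induces. The paper, however, never constructs a kernel abstractly: it writes $H*\phi=\E(H*\phi|Z)$ and interchanges the order of integration over the fibers to get $H*\phi=\E(H|Z\times_{Y}X)*\phi$ for all $\phi$, hence $H=\E(H|Z\times_{Y}X)$; self-adjointness then converts $Z$-measurability in the first variable into joint $Z\times_{Y}Z$-measurability, and $H*\phi=H*\E(\phi|Z)$ follows. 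You instead spend self-adjointness at the start, via $\ker T_{H}=(\overline{\im T_{H}})^{\perp}\supset L^{2}(Z)^{\perp}$, to factor $T_{H}$ through $\E(\cdot|Z)$ --- a clean observation --- but you then have to manufacture a kernel $\tilde H\in L^{2}(Z\times_{Y}Z)$ representing the restricted operator $S$, and this is where you appeal to a measurable field of Hilbert--Schmidt kernels with measurable selection. That step is true in the paper's setting (regular, separable spaces, where decomposable operators and direct-integral arguments are available), but it is precisely the technical overhead the paper's computation avoids; note also that the fiberwise invariance of the subspaces $L^{2}(Z,\gamma_{y})\subset L^{2}(X,\mu_{y})$, which you need before restricting $T_{H}$ fiberwise, itself requires a countable-dense-family argument rather than being automatic from the global range inclusion. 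You can excise all of this machinery from your own argument at no cost: take $\tilde H:=\E(H|Z\times_{Y}Z)$, which is measurable by construction; the same Fubini computation as in the paper shows $H*\psi=\E(H*\psi|Z)=\tilde H*\psi$ for $\psi\in L^{2}(Z)$, after which your pullback-and-compare-kernels step, and your transfer of the kernel condition and of the generating property, go through verbatim. With that substitution the two proofs essentially coincide; as written, yours buys a tidier operator-theoretic skeleton at the price of invoking direct-integral theory, while the paper's stays within elementary manipulations of disintegrations and conditional expectations.
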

\begin{proof}
Call the projection maps $\pi:X\to Z$, $\theta:X\to Y$.
Let $\phi\in L^{2}(X)$.
Since $H*\phi$ is $Z$-measurable we have
\begin{align*}
H*\phi(x)
&= \int H*\phi(x_{1}) \dif\mu_{\pi(x)}(x_{1})\\
&= \int \int H(x_{1},x_{2})\phi(w_{2}) \dif\mu_{\theta(x_{1})}(x_{2}) \dif\mu_{\pi(x)}(x_{1})\\
&= \int \int H(x_{1},x_{2})\phi(x_{2}) \dif\mu_{\theta(x)}(x_{2}) \dif\mu_{\pi(x)}(x_{1})\\
&= \int \int H(x_{1},x_{2}) \dif\mu_{\pi(x)}(x_{1}) \phi(x_{2}) \dif\mu_{\theta(x)}(x_{2})\\
&= \E(H|Z\times_{Y} X)*\phi(w).
\end{align*}
Since this holds for all $\phi$ we obtain $H=\E(H | Z\times_{Y} X)$.
Since $H$ is self-adjoint this implies that $H$ is $Z\times_{Y}Z$-measurable.
This in turn implies that $H*\phi = H*\E(\phi | Z)$ for all $\phi\in L^{2}(X)$.
\end{proof}
In view of this lemma the reference to the ambient space $X$ is not necessary in the definition of a $K$-compact extension.
Just like in the commutative case, compactness is preserved upon taking fiber products (this is only used in the part of the proof of Theorem~\ref{thm:wm} that we do not write out).
\begin{lemma}[{\cite[Remark 3.7(ii)]{MR1692634}}]
Let $Z\to Y$ be a $K$-compact extension.
Then $Z\times_{Y}Z \to Y$ is also a $K$-compact extension.
\end{lemma}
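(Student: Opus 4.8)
The plan is to realize $Z\times_{Y}Z\to Y$ as a $K$-compact extension by writing down an explicit kernel manufactured from $H$. Write a generic point of $(Z\times_{Y}Z)\times_{Y}(Z\times_{Y}Z)$ as $((z_{1},z_{1}'),(z_{2},z_{2}'))$ with all four coordinates lying over a common $y\in Y$, and recall that $G$ acts diagonally, so the projections $\pi_{1}:((z_{1},z_{1}'),(z_{2},z_{2}'))\mapsto(z_{1},z_{2})$ and $\pi_{2}:((z_{1},z_{1}'),(z_{2},z_{2}'))\mapsto(z_{1}',z_{2}')$ onto $Z\times_{Y}Z$ are $G$-equivariant measure-preserving factor maps. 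The naive guess $\tilde H=H\otimes_{Y}H$, i.e.\ $H(z_{1},z_{2})H(z_{1}',z_{2}')$, satisfies all the kernel axioms but, as explained below, fails to generate; I would therefore take the \emph{additive} kernel
\[
\tilde H((z_{1},z_{1}'),(z_{2},z_{2}')) := H(z_{1},z_{2})+H(z_{1}',z_{2}') = (H\circ\pi_{1})+(H\circ\pi_{2}),
\]
regarded as a $(Z\times_{Y}Z)|Y$-kernel.

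Verifying that $\tilde H$ is an admissible kernel is routine. It lies in $L^{\infty}$ since $H$ does; it is a kernel because integrating $H\circ\pi_{1}$ against $\gamma_{y}\otimes\gamma_{y}$ in the variables $(z_{1},z_{1}')$ leaves $\int H(z_{1},z_{2})\,\dif\gamma_{y}(z_{1})=0$ (the $z_{1}'$-integration contributing a factor $1$), and symmetrically for $H\circ\pi_{2}$; it is self-adjoint because each summand is, using $\overline{H(z_{2},z_{1})}=H(z_{1},z_{2})$; and it is non-trivial (this will be visible from the convolution computation below). For $K$-invariance I would use that $\pi_{1},\pi_{2}$ are $G$-equivariant, so $g(\alpha)(H\circ\pi_{i})=(g(\alpha)H)\circ\pi_{i}$, and that pullback along the measure-preserving $\pi_{i}$ is an $L^{2}$-isometry; since $\IPlim_{\alpha}g(\alpha)H=H$ (a norm limit, the operators $g(\alpha)$ being isometric), linearity of the IP-limit gives $\IPlim_{\alpha}g(\alpha)\tilde H=\tilde H$ for every $g\in K$.

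The decisive step is generation, and here the choice of kernel matters. I would compute the convolution on product test functions: for $\phi\in L^{2}(Z)$ one has $\phi\otimes 1\in L^{2}(Z\times_{Y}Z)$, and the cross term vanishes by the kernel property of $H$ (and self-adjointness):
\[
\tilde H*(\phi\otimes 1)(z_{1},z_{1}') = (H*\phi)(z_{1}) + \Big(\int\phi\,\dif\gamma_{y}\Big)\int H(z_{1}',z_{2}')\,\dif\gamma_{y}(z_{2}') = (H*\phi)(z_{1}),
\]
so $\tilde H*(\phi\otimes 1)=(H*\phi)\otimes 1$, and symmetrically $\tilde H*(1\otimes\psi)=1\otimes(H*\psi)$. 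Thus the functions $\tilde H*\Phi$ include $(H*\phi)\otimes 1$ and $1\otimes(H*\psi)$ for all $\phi,\psi\in L^{2}(Z)$. Since $Z\to Y$ is $K$-compact, the functions $H*\phi$ generate $\mathcal{C}$ over $\mathcal{B}$, so these generate, over $\mathcal{B}$, each of the two coordinate copies of $\mathcal{C}$ inside $Z\times_{Y}Z$; and the join of the two coordinate $\sigma$-algebras is precisely the fiber-product $\sigma$-algebra $\mathcal{C}\otimes_{Y}\mathcal{C}$. Hence $\tilde H*\Phi$, $\Phi\in L^{2}(Z\times_{Y}Z)$, generate $Z\times_{Y}Z$ over $Y$, as compactness demands (and $\tilde H$ is non-trivial, since some $(H*\phi)\otimes 1\neq 0$).

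The main obstacle I anticipate is exactly this generation step, and the reason the tensor kernel must be abandoned. The kernel $H\otimes_{Y}H$ only produces functions $(H*\phi)\otimes(H*\psi)$, whose fiberwise mean vanishes in each variable, and the $\sigma$-algebra they generate can be strictly smaller than $\mathcal{C}\otimes_{Y}\mathcal{C}$: already when the compact module $L^{2}(Z)\ominus L^{2}(Y)$ is one-dimensional over $L^{\infty}(Y)$, spanned by a real $f_{0}$, one sees only $f_{0}(z_{1})f_{0}(z_{2})$ and loses the relative sign of $f_{0}(z_{1})$ and $f_{0}(z_{2})$. The additive kernel circumvents this by generating each coordinate separately, after which no genuinely two-variable piece is needed. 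The remaining points --- measurability of $\tilde H$, the Fubini interchange of fiber integrals used above, and the fact that the diagonal $G$-action is measure-preserving on the fiber square --- are standard and run parallel to the commutative treatment in \cite{MR1692634}.
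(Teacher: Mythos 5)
Your proof is correct, and it supplies more than the paper does: the paper states this lemma with only a citation to \cite{MR1692634}*{Remark 3.7(ii)} and gives no argument of its own, so your write-up has to stand on its own merits, and it does. The additive kernel $\tilde H=(H\circ\pi_{1})+(H\circ\pi_{2})$ passes all the checks: the kernel axiom and self-adjointness are inherited termwise; $K$-invariance holds because $\pi_{1},\pi_{2}$ are $G$-equivariant and measure-preserving, so precomposition with them is an $L^{2}$-isometry intertwining the actions, and norm IP-limits add; the convolution identities $\tilde H*(\phi\otimes 1)=(H*\phi)\otimes 1$ and $\tilde H*(1\otimes\psi)=1\otimes(H*\psi)$ are correct, the cross terms vanishing because $H$ has zero fiberwise mean in its \emph{second} variable --- which, as you note, uses self-adjointness, since the kernel axiom as stated only gives vanishing in the first variable; and these identities yield both non-triviality of $\tilde H$ and generation, since the two coordinate copies of $\mathcal{C}$ are each generated over $\mathcal{B}$ and their join is the full $\sigma$-algebra of $Z\times_{Y}Z$. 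Most importantly, your reason for discarding the tensor kernel $H\otimes_{Y}H$ is a genuine obstruction and not a matter of taste: that kernel is a perfectly good invariant self-adjoint kernel, but its fiber operators only produce functions of the form $(H*\phi)\otimes(H*\psi)$, and already for two-point fibers with $H=f_{0}\otimes f_{0}$, $f_{0}=\pm 1$, the products $f_{0}(z_{1})f_{0}(z_{1}')$ generate a strictly smaller $\sigma$-algebra (the relative sign of the two coordinates is lost), so the tensor kernel can fail to witness compactness even when the fiber square \emph{is} a compact extension. Passing to a kernel that acts separately in each coordinate is exactly what makes the generation step work, and this is the one non-routine idea in the proof.
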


\subsection{Mixing and primitive extensions}
Now we define what we mean by relative mixing and the dichotomy between relative compactness and relative mixing.
\begin{definition}[{\cite[Definition 3.5]{MR1692634}}]
Let $Z\to Y$ be an extension.
A map $g\in F$ is called \emph{mixing on $Z$ relatively to $Y$} if for every $H\in L^{2}(Z\times_{Y} Z)$ with $\E(H | Y)=0$ one has $\wIPlim_{\alpha}g(\alpha) H = 0$.
An extension $Z\to Y$ is called \emph{$K$-primitive} if it is $K$-compact and each $g\in F\setminus K$ is mixing on $Z$ relative to $Y$.
\end{definition}

The above notion of mixing might be more appropriately called ``mild mixing'', but we choose a shorter name since there will be no danger of confusion.

The next lemma is used in the suppressed part of the proof of Theorem~\ref{thm:wm}.
\begin{lemma}[{\cite[Proposition 3.8]{MR1692634}}]
\label{lem:fiber-prod-prim}
Let $Z\to Y$ be a $K$-primitive extension.
Then $Z\times_{Y}Z \to Y$ is also a $K$-primitive extension.
\end{lemma}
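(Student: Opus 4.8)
The statement to prove is Lemma~\ref{lem:fiber-prod-prim}: if $Z\to Y$ is $K$-primitive, then so is $Z\times_{Y}Z \to Y$. Recall that $K$-primitivity means the extension is $K$-compact and every $g\in F\setminus K$ is mixing on $Z$ relative to $Y$. The plan is to verify these two conditions separately for the fiber square, leaning on results already established for the individual factor.

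First I would dispatch the compactness half. Since $Z\to Y$ is $K$-compact, the preceding lemma (\cite{MR1692634}*{Remark 3.7(ii)}) already tells us that $Z\times_{Y}Z \to Y$ is $K$-compact, so there is nothing new to prove there. The substance of the lemma therefore lies entirely in the mixing half: one must show that every $g\in F\setminus K$ is mixing on $Z\times_{Y}Z$ relative to $Y$. So the real task is to take an arbitrary $H\in L^{2}((Z\times_{Y}Z)\times_{Y}(Z\times_{Y}Z))$ with $\E(H\mid Y)=0$ and show $\wIPlim_{\alpha} g(\alpha)H = 0$.

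The key step is to reduce the fiber-product mixing to the mixing on $Z$ that we already have by hypothesis. The standard device (following the commutative treatment in \cite{MR1692634}) is to observe that $L^{2}$ of the iterated fiber product is generated, as a $G$-invariant family, by tensor products of functions pulled back from the $Z$-coordinates, so it suffices to verify the mixing condition on a dense set of such elementary tensors $H = \phi_{1}\otimes\phi_{2}\otimes\phi_{3}\otimes\phi_{4}$. The condition $\E(H\mid Y)=0$ lets us split off the part orthogonal to $Y$ in at least one coordinate; after subtracting the conditional expectations one arranges that some tensor factor has vanishing fiberwise $Y$-expectation, and the hypothesis that $g$ is mixing on $Z$ relative to $Y$ then forces the relevant inner products to IP-converge to zero. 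The bounded-operator bound on the remaining factors (each $g(\alpha)$ acts isometrically, so the $L^{\infty}$ or $L^{2}$ norms of the other tensor slots stay controlled) lets one pass from the elementary tensors to all of $L^{2}$ by density and a routine approximation. The weak IP-limit being zero then follows by testing against a spanning set and using the uniform boundedness of the family $(g(\alpha))_{\alpha}$.

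The main obstacle I anticipate is organizing the reduction to elementary tensors together with the correct bookkeeping of conditional expectations over $Y$. One must be careful that the fiber product is taken over $Y$ (not over the base point set), so the disintegration $\gamma = \int_{Y}\gamma_{y}\,\dif\nu(y)$ and the associated fiberwise norms $\|\cdot\|_{y}$ are the right tool for splitting $H$ into a $Y$-measurable piece and a complement, and that the mixing hypothesis is applied fiberwise in a way compatible with the IP-limit. Since the entire argument parallels the commutative case treated in \cite{MR1692634}*{Proposition 3.8}, and nilpotency of $G$ enters only through the already-established structural results, I expect this to be a case where the proof can legitimately be delegated to the reference rather than written out in full, exactly as the surrounding exposition does for the other statements cited from \cite{MR1692634}.
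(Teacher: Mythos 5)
Your plan coincides with what the paper actually does: Lemma~\ref{lem:fiber-prod-prim} is stated with no proof at all, the citation \cite{MR1692634}*{Proposition 3.8} serving as the proof, in line with the paper's announced policy of delegating to Bergelson--McCutcheon every argument that carries over unchanged from the commutative case; your treatment of the compactness half (quote the preceding lemma) is also exactly right.

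However, the sketch you give for the mixing half has one soft spot that would make it fail if executed literally, and it is precisely the point that makes the statement nontrivial. After reducing to elementary tensors $H=\phi_{1}\otimes\phi_{2}\otimes\phi_{3}\otimes\phi_{4}$ of functions of the individual $Z$-coordinates, with some slot satisfying $\E(\phi_{i_{0}}|Y)=0$, and testing against another bounded elementary tensor, the pairing does \emph{not} split into a product of global inner products; it is the fiber integral
\[
\<g(\alpha)H,\psi_{1}\otimes\psi_{2}\otimes\psi_{3}\otimes\psi_{4}\>
=\int_{Y}\prod_{i=1}^{4}\E\big((g(\alpha)\phi_{i})\overline{\psi_{i}}\,\big|\,Y\big)\,\dif\nu ,
\]
in which \emph{every} factor depends on $\alpha$. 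Knowing that the $i_{0}$-th factor integrates to zero against any fixed function on $Y$ --- which is all that weak IP-convergence can give --- is therefore not enough: weak convergence of fiberwise correlations does not survive multiplication by the other, $\alpha$-dependent, bounded factors. What one needs is norm decay, $\IPlim_{\alpha}\|\E((g(\alpha)\phi_{i_{0}})\overline{\psi_{i_{0}}}|Y)\|_{L^{2}(Y)}=0$, and the way to get it is the tensor-square identity
\[
\big\|\E\big((g(\alpha)\phi_{i_{0}})\overline{\psi_{i_{0}}}\,\big|\,Y\big)\big\|_{L^{2}(Y)}^{2}
=\<g(\alpha)\big(\phi_{i_{0}}\otimes\overline{\phi_{i_{0}}}\big),\ \psi_{i_{0}}\otimes\overline{\psi_{i_{0}}}\>_{L^{2}(Z\times_{Y}Z)},
\]
where $\phi_{i_{0}}\otimes\overline{\phi_{i_{0}}}$ is now a function on $Z\times_{Y}Z$ with $\E(\phi_{i_{0}}\otimes\overline{\phi_{i_{0}}}|Y)=|\E(\phi_{i_{0}}|Y)|^{2}=0$, so the mixing hypothesis applies to it and kills the right-hand side in the IP-limit. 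Note that the hypothesis is never applied to $\phi_{i_{0}}$ itself (mixing says nothing about functions on $Z$), and note also that this is why the reduction must go all the way down to tensors over the individual $Z$-coordinates: if one decomposes only into $H_{1}\otimes H_{2}$ with $H_{i}\in L^{2}(Z\times_{Y}Z)$, the analogous norm quantity is an inner product on the four-fold fiber product --- exactly the statement being proved --- and the argument becomes circular. With this tensor-square upgrade inserted, your outline closes.
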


Like in the commutative setting \cite[Lemma 2.8]{MR2151599} the compact part of a primitive extension is wlog closed under taking derivatives, but there is also a new aspect, namely that it is also closed under conjugation by constants.
\begin{lemma}
\label{lem:K-conj-inv}
Let $Z\to Y$ be a $K$-primitive extension.
Then $K$ is closed under conjugation by constant functions.
Moreover wlog $K$ is an FVIP group.
\end{lemma}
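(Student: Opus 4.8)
The plan is to treat the two assertions separately, exploiting throughout the rigid compact/mixing dichotomy furnished by $K$-primitivity. Fix the non-trivial self-adjoint $K$-invariant kernel $H$ that generates the extension $Z\to Y$; the kernel condition gives $\E(H\mid Y)=0$, $K$-invariance gives $\IPlim_{\alpha}g(\alpha)H=H$ for every $g\in K$, and every $g\in F\setminus K$ is mixing, so in particular $\wIPlim_{\alpha}g(\alpha)H=0$. The whole argument will consist in deciding, for various elements of $F$, which side of this dichotomy they fall on.

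For closure under conjugation by constants, let $c\in G$ and $g\in K$. Since $F$ is a VIP group it is already closed under conjugation by constants, so $\tilde g:=c\inv gc\in F$, and by the dichotomy it suffices to produce a non-trivial kernel with vanishing conditional expectation over $Y$ that $\tilde g$ preserves. I would take the translate $H':=c\inv H$. Because $G$ acts on $Z\to Y$ through a factor map, $c\inv$ is a measure-preserving automorphism carrying $Z$ to $Z$ and $Y$ to $Y$; hence $H'$ is again a non-trivial self-adjoint kernel with $\E(H'\mid Y)=0$, and from $H'*\phi=c\inv\big(H*(c\phi)\big)$ one reads off that it generates the same factor $Z$. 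The key point is the identity $\tilde g(\alpha)H'=(c\inv g(\alpha)c)(c\inv H)=c\inv g(\alpha)H$, whence $\IPlim_{\alpha}\tilde g(\alpha)H'=c\inv\IPlim_{\alpha}g(\alpha)H=c\inv H=H'\neq 0$. Thus $\tilde g$ is not mixing, so $\tilde g\in K$; applying this with both $c$ and $c\inv$ yields $c\inv Kc=K$.

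Turning to the FVIP assertion, finite generation is automatic: since $\Gb$ has finite length, $\polyn$ is nilpotent, so $F\leq\polyn$ is a finitely generated nilpotent group, and its subgroup $K$ is finitely generated because finitely generated nilpotent groups satisfy the maximal condition. It therefore only remains to arrange, wlog, that $K$ is closed under $\sD$. Fix $g\in K$ and a non-empty $\beta$, and use $\sD_{\beta}g(\alpha)=g(\alpha)\inv g(\alpha\cup\beta)g(\beta)\inv$. Applying the isometries $g(\alpha)$ and $g(\alpha\cup\beta)\inv$ and the triangle inequality, and then letting $\alpha\to\infty$ (where $g(\alpha)H\to H$ and $g(\alpha\cup\beta)\inv H\to H$ in norm by Lemma~\ref{lem:norm}), gives
\[
\limsup_{\alpha}\|\sD_{\beta}g(\alpha)H-H\|\leq\|g(\beta)\inv H-H\|.
\]
Since $\IPlim_{\beta}g(\beta)H=H$, the right-hand side tends to $0$ as $\beta\to\infty$, so for all large $\beta$ it is strictly below $\|H\|$; by weak lower semicontinuity of the norm this rules out $\wIPlim_{\alpha}\sD_{\beta}g(\alpha)H=0$, i.e.\ $\sD_{\beta}g$ is not mixing and must lie in $K$.

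Passing to a sub-IP-ring $\{\beta:\beta>\beta_{0}\}$ (wlog), with $\beta_{0}$ chosen uniformly over a finite generating set, I may assume $\sD_{\beta}g^{(i)}\in K$ for every generator $g^{(i)}$ and every $\beta$. To propagate this to all of $K$ I would use the Leibniz rule \eqref{eq:sD-Leibniz}, rewritten as
\[
\sD_{m}(gh)=h\inv\cdot\sD_{m}g\cdot(g(m)h\,g(m)\inv)\cdot(g(m)\sD_{m}h\,g(m)\inv),
\]
each factor of which lies in $K$ once $\sD_{m}g,\sD_{m}h\in K$, precisely because $K$ is closed under conjugation by the constants $g(m)$ by the first part (the identity for inverses is handled the same way). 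This yields closure under $\sD$, so $K$ is a VIP group and, being finitely generated, an FVIP group. The main obstacle is exactly this derivative closure: the computation above shows that $\sD_{\beta}g$ genuinely need not preserve $H$ for small $\beta$, the obstruction being $\|g(\beta)\inv H-H\|$, so the conclusion can only hold wlog, and the conversion of the quantitative near-invariance $\|g(\beta)\inv H-H\|\to 0$ into the qualitative statement $\sD_{\beta}g\in K$ is what forces one to lean on the dichotomy — and, through the Leibniz induction, on the conjugation-invariance established first, so that the two halves of the lemma are genuinely intertwined.
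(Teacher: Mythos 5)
Your first half is fine: showing that $\tilde g=c\inv gc$ fails to be mixing because it preserves the translated kernel $c\inv H$ (using $\E(c\inv H|Y)=c\inv\E(H|Y)=0$ and $(c\inv gc)(\alpha)\,c\inv H=c\inv g(\alpha)H$), and then invoking the primitivity dichotomy, is correct; it is the mirror image of the paper's argument, which instead conjugates an arbitrary test function to show that $F\setminus K$ is closed under conjugation. The finite-generation step (Noetherianity of the finitely generated nilpotent group $F$) also matches the paper.

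The closure of $K$ under $\sD$, however, has a genuine gap. Your key inequality rests on the claim that $g(\alpha\cup\beta)\inv H\to H$ in norm as $\alpha\to\infty$ \emph{with $\beta$ fixed}, citing Lemma~\ref{lem:norm}. That lemma converts a weak IP-limit into a norm IP-limit, but it gives no information here, because the hypothesis $\IPlim_{\gamma}g(\gamma)H=H$ only controls $g(\gamma)$ for $\gamma>\gamma_{0}$ with $\gamma_{0}$ arbitrarily large, whereas the sets $\alpha\cup\beta$ have $\min(\alpha\cup\beta)=\min\beta$ pinned, so they never enter such a tail. Indeed the claim is false already for abelian IP-systems: there $g(\alpha\cup\beta)H=g(\beta)g(\alpha)H\to g(\beta)H$, which need not equal $H$ for small fixed $\beta$ (take an irrational rotation with $\IPlim_{\gamma}T^{n_{\gamma}}=\mathrm{Id}$ strongly but $T^{n_{\beta}}\neq\mathrm{Id}$). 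In the nilpotent case the identity $g(\alpha\cup\beta)=g(\alpha)\,\sD_{\beta}g(\alpha)\,g(\beta)$ shows that determining $\lim_{\alpha}g(\alpha\cup\beta)H$ is equivalent to determining how $\sD_{\beta}g(\alpha)$ acts on $g(\beta)H$ --- exactly the question you are trying to answer, so the argument is circular. In the ``bad'' scenario to be excluded ($\sD_{\beta}g$ mixing for all $\beta$), weak lower semicontinuity gives $\limsup_{\alpha}\|\sD_{\beta}g(\alpha)H-H\|\geq\|H\|$, so your inequality would simply be false there rather than yield a contradiction. The paper avoids this by never fixing $\beta$: after using Hindman's theorem to arrange, for each generator, that either $\sD_{\alpha}g\in K$ for all $\alpha$ or for none, it takes the \emph{joint} IP-limit over pairs $(\beta,\alpha)$ tending to infinity together, where $g(\beta)\inv H\to H$, $g(\alpha\cup\beta)(\cdot)\to H$ and $g(\alpha)\inv(\cdot)\to H$ are all legitimate, so that $\wIPlim_{\alpha,\beta}\sD_{\beta}g(\alpha)H=H\neq 0$, contradicting the value $0$ forced by mixing. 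Your reduction to generators and the Leibniz/conjugation step extending closure under $\sD$ from generators to all of $K$ are fine, but the per-generator dichotomy needs the paper's double-limit argument (or an equivalent), not the fixed-$\beta$ estimate.
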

\begin{proof}
Let $g\in F\setminus K$, $h\in G$ and $H\in L^{2}(Z\times_{Y}Z)$ be such that $\E(H | Y)=0$.
Then
\[
\wIPlim_{\alpha} (h\inv gh)(\alpha) H
=
h\inv \wIPlim_{\alpha} g(\alpha) (hH)
=
0,
\]
so that $F\setminus K$ is closed under conjugation by constant functions, so that $K$ is also closed under conjugation by constant functions.

Since $F$ is Noetherian, the subgroup $K$ is finitely generated as a semigroup.
Fix a finite set of generators for $K$.
By Hindman's Theorem~\ref{thm:hindman} we may wlog assume that for every generator $g$ we have either $\sD_{\alpha}g\in K$ for all $\alpha\in\Fin$ or $\sD_{\alpha}g \not\in K$ for all $\alpha\in\Fin$.
In the latter case we obtain
\[
0 = \wIPlim_{\alpha,\beta} \sD_{\beta}g(\alpha) H
= \IPlim_{\alpha,\beta} g(\alpha)\inv g(\alpha\cup\beta) g(\beta)\inv H
= H,
\]
a contradiction.
Thus we may assume that all derivatives of the generators lie in $K$.
This extends to the whole group $K$ by \eqref{eq:sD-Leibniz} and invariance of $K$ under conjugation by constants.
\end{proof}

\subsection{Existence of primitive extensions}
Since our proof proceeds by induction over primitive extensions we need to know that such extensions exist.
First, we need a tool to locate non-trivial kernels.
\begin{lemma}[{\cite[Lemma 3.12]{MR1692634}}]
Let $X\to Y$ be an extension.
Suppose that $0\neq H \in L^{2}(X\times_{Y}X)$ satisfies $\E(H | Y)=0$ and that there exists $g\in F$ such that $\IPlim_{\alpha}g(\alpha)H=H$.

Then there exists a non-trivial self-adjoint non-negative definite $X|Y$-kernel $H'$ such that $\IPlim_{\alpha}g(\alpha)H'=H'$.
\end{lemma}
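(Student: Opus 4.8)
The plan is to exhibit $H'$ as the kernel of a fiberwise non-negative self-adjoint operator manufactured from $H$. Writing $T_{G}\phi := G*\phi$ for the fiberwise integral operator attached to a function $G$ on $X\times_{Y}X$, I would take $H'$ to be the kernel of $T_{\tilde H}T_{\tilde H}^{*}$ for a suitable bounded modification $\tilde H$ of $H$, that is,
\[
H'(z_{1},z_{2}) = \int \tilde H(z_{1},w)\overline{\tilde H(z_{2},w)}\dif\gamma_{z_{1}}(w).
\]
The virtue of the $T T^{*}$ form is that $H'$ is automatically Hermitian in $(z_{1},z_{2})$, hence self-adjoint, and non-negative definite as a fiberwise operator kernel, and that it is non-trivial precisely when $\tilde H\neq 0$.

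First I would record two structural reductions. Since the $g(\alpha)$ act by unitary Koopman operators and $\IPlim_{\alpha}g(\alpha)H=H$, Lemma~\ref{lem:norm} lets me upgrade this to norm convergence. Let $\E_{1}$ denote integration over the first coordinate (producing a function of the second); it is an orthogonal projection on $L^{2}(X\times_{Y}X)$ that commutes with every $g(\alpha)$ because $g$ acts fiberwise measure-preservingly, so $H-\E_{1}H$ is again invariant. If $H-\E_{1}H=0$, then $H$ is a function $m(z_{2})$ of the second coordinate alone with $\E(m|Y)=\E(H|Y)=0$ and $\IPlim_{\alpha}g(\alpha)m=m$; in that degenerate case a centered truncation $m_{n}$ of $m$ yields the rank-one kernel $H'(z_{1},z_{2})=m_{n}(z_{1})\overline{m_{n}(z_{2})}$, which is self-adjoint, non-negative, invariant, and an $X|Y$-kernel since $\E(m_{n}|Y)=0$. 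Hence I may assume $\E_{1}H=0$ with $H\neq 0$; this is exactly the condition making the displayed $H'$ satisfy the $X|Y$-kernel identity $\int H'(z_{1},z_{2})\dif\gamma_{z_{2}}(z_{1})=0$, because that integral equals $\int(\E_{1}\tilde H)(w)\overline{\tilde H(z_{2},w)}\dif\gamma_{z_{2}}(w)$.

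The main obstacle is \emph{boundedness}: a kernel must lie in $L^{\infty}$, whereas $T_{H}T_{H}^{*}$ has only an $L^{1}$ (trace-class) kernel when $H\in L^{2}$. I would resolve this by a $g$-equivariant truncation. Set $\tilde H := c_{n}(H)-\E_{1}(c_{n}(H))$, where $c_{n}$ is pointwise clipping at height $n$; then $\tilde H$ is bounded, satisfies $\E_{1}\tilde H=0$, and $\tilde H\to H-\E_{1}H=H$ in $L^{2}$ as $n\to\infty$, so $\tilde H\neq 0$ for $n$ large. Since the fibers carry probability measures, the kernel above satisfies $\dint{H'}_{\infty}\leq\dint{\tilde H}_{\infty}^{2}$, so $H'\in L^{\infty}$; it is self-adjoint, non-negative definite, non-trivial, and an $X|Y$-kernel by the centering.

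Finally, invariance follows from equivariance and continuity without any further limit. The assignment $\Psi\from G\mapsto(\text{kernel of }T_{\tilde H(G)}T_{\tilde H(G)}^{*})$, with $\tilde H(G)=c_{n}(G)-\E_{1}c_{n}(G)$, is continuous from $L^{2}$ to $L^{1}$ (clipping is $1$-Lipschitz, $\E_{1}$ is a contraction, and $G\mapsto T_{G}T_{G}^{*}$ is locally Lipschitz from Hilbert–Schmidt to trace class), and it commutes with each $g(\alpha)$ because clipping, $\E_{1}$, and conjugation of $T_{G}$ by the fiberwise Koopman operator are all $g$-equivariant. Thus $g(\alpha)H'=\Psi(g(\alpha)H)\to\Psi(H)=H'$ in $L^{1}$ as $g(\alpha)H\to H$; since the translates $g(\alpha)H'$ are uniformly bounded, this is also weak $L^{2}$ convergence, giving $\IPlim_{\alpha}g(\alpha)H'=H'$. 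The delicate point throughout is that the truncation must be arranged to preserve simultaneously boundedness, the centering needed for the kernel identity, non-triviality, and exact invariance.
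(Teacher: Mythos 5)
The paper gives no proof of this lemma at all --- it is quoted directly from Bergelson--McCutcheon --- but your argument is correct and follows the same route as the cited proof: make $H$ bounded and fiberwise centered, then compose the resulting kernel with its adjoint, so that self-adjointness and non-negative definiteness are automatic, while invariance follows from $g$-equivariance and $L^{2}\to L^{1}$ continuity of the whole construction together with the norm convergence $g(\alpha)H\to H$ supplied by Lemma~\ref{lem:norm}. The two points where you had to be careful --- using Lipschitz clipping (rather than sharp truncation) so that boundedness, the centering, and exact invariance on the original IP-ring survive simultaneously, and splitting off the degenerate case where $H$ depends only on the second coordinate so that the composed kernel is non-trivial --- are precisely the delicate points of the cited argument, and you handle both correctly.
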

Second, we have to make sure that we cannot accidentally trivialize them.
\begin{lemma}[{\cite[Lemma 3.14]{MR1692634}}]
Let $Z\to Y$ be a $K$-compact extension.
Suppose that for some $g\in K$ and self-adjoint non-negative definite $Z|Y$-kernel $H$ we have
\[
\IPlim_{\alpha} \int (g(\alpha)H)(f' \otimes \bar f') \dif\tilde\gamma = 0
\]
for all $f'\in L^{\infty}(Z)$.
Then $H=0$.
\end{lemma}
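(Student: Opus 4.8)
The plan is to rewrite the hypothesis as the vanishing of the quadratic form of the non-negative operator attached to $H$, and then to use the relative compactness of the extension to strip off the twist by $g(\alpha)$. Write $\tilde\gamma$ for the fiber product measure on $Z\times_Y Z$ and set $Q(\psi):=\int H(\psi\otimes\bar{\psi})\dif\tilde\gamma$; since $H$ is self-adjoint and non-negative definite we have $Q(\psi)\ge 0$ for every $\psi\in L^2(Z)$, and $H=0$ if and only if $Q\equiv 0$. Because each $g(\alpha)$ acts unitarily on $L^2(Z\times_Y Z)$ with adjoint $g(\alpha)\inv$, a change of variables gives
\[
\int (g(\alpha)H)(f'\otimes\bar{f'})\dif\tilde\gamma
= \int H\big((g(\alpha)\inv f')\otimes\overline{g(\alpha)\inv f'}\big)\dif\tilde\gamma
= Q\big(g(\alpha)\inv f'\big) \ge 0,
\]
so the hypothesis states exactly that $\IPlim_\alpha Q(g(\alpha)\inv f')=0$ for every $f'\in L^\infty(Z)$, and (all terms being non-negative) this is honest convergence to zero. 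By polarization it will suffice to produce a subset on which $Q$ vanishes whose closed span is all of $L^2(Z)$.

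The main tool is $K$-compactness. Writing $H_0$ for the $K$-invariant kernel generating the extension, the equivariance of convolution gives $g(\alpha)\inv(H_0*\phi)=(g(\alpha)\inv H_0)*(g(\alpha)\inv\phi)$, and since $g\in K$ implies $g\inv\in K$ we have $\IPlim_\alpha g(\alpha)\inv H_0=H_0$, which by Lemma~\ref{lem:norm} holds in the Hilbert--Schmidt norm. As $H_0*$ is a compact operator and $g(\alpha)\inv\phi$ stays bounded, the orbit $\{g(\alpha)\inv f'\}_\alpha$ is therefore relatively compact in $L^2(Z)$ for every $f'$ in the algebra $\mathcal A$ generated by the bounded functions $H_0*\phi$. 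Passing wlog to a sub-IP-ring on which the $g(\alpha)$-action restricted to the module generated by $H_0$ converges strongly to a surjective isometry $U$, we obtain $g(\alpha)\inv f'\to Uf'$ in norm, and continuity of the bounded form $Q$ then yields $Q(Uf')=0$ for every $f'\in\mathcal A$.

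It remains to propagate this to all of $L^2(Z)$. Since $\mathcal A$ is a $g$-invariant linear space and $U$ is a surjective isometry of the module generated by $H_0$, the vectors $Uf'$, $f'\in\mathcal A$, span a dense subspace of that module; polarization then forces $Q$ to vanish there, i.e.\ $H*\psi=0$ for all $\psi\in\mathcal A$. Finally, $H$ being a $Z|Y$-kernel, the operator $H*$ annihilates $L^2(Y)$ and is $L^\infty(Y)$-linear, so $H*(\psi\eta)=\eta\,(H*\psi)=0$ for $\psi\in\mathcal A$ and $\eta\in L^\infty(Y)$; as the extension is generated by the $H_0*\phi$ over $Y$, such products $\psi\eta$ are dense in $L^2(Z)$, whence $H*=0$ and therefore $H=0$.

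I expect the crux to be the relative compactness step: verifying that the orbits $\{g(\alpha)\inv f'\}$ are genuinely precompact relative to $Y$ and that the IP-limit of the unitaries $g(\alpha)$ restricts on the module generated by $H_0$ to a surjective isometry, so that the limits $Uf'$ really do sweep out a total subset. Once this module statement is secured, the passage to all of $L^2(Z)$ through the $L^\infty(Y)$-linearity of $H*$ is routine, as is the bookkeeping with conjugates in the opening identity.
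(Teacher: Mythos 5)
The step you yourself flag as the crux is indeed where the argument breaks, and it cannot be repaired: on a $K$-compact \emph{extension} the operator $\phi\mapsto H_{0}*\phi$ is \emph{not} a compact operator on $L^{2}(Z)$, and the orbits $\{g(\alpha)\inv f'\}$ are \emph{not} relatively compact in the norm of $L^{2}(Z)$. The operator $H_{0}*$ is Hilbert--Schmidt only on almost every fiber $L^{2}(Z,\gamma_{y})$; globally it is a direct integral of such operators, which is non-compact whenever $Y$ is non-atomic. Concretely, let $Y$ be a Bernoulli system with transformation $S$, let $Z=Y\times(\R/\Z)$ with the action generated by $(y,u)T=(Sy,u+\theta)$, let $g(\alpha)=T^{n_{\alpha}}$ for an IP-system $n_{\alpha}\to\infty$, and let $H_{0}((y,u),(y,v))=h_{0}(u-v)$ with $h_{0}$ real, even, $\hat h_{0}(0)=0$, $\hat h_{0}(k)\neq 0$ for $k\neq 0$. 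This is a $K$-compact extension with $g\in K$ (indeed $g(\alpha)H_{0}=H_{0}$ exactly), but $H_{0}*$ is the identity of $L^{2}(Y)$ tensored with convolution by $h_{0}$, so each nonzero eigenvalue has infinite multiplicity. Moreover, for $f'=H_{0}*\phi$ with $\phi(y,v)=e(y)\psi(v)$, $e$ bounded real with $\int e\,\dif\nu=0$, mixing of $S$ gives $g(\alpha)\inv f'\to 0$ weakly while $\|g(\alpha)\inv f'\|\equiv\|f'\|\neq 0$, so this orbit has no norm-convergent sub-IP-limit at all and your isometry $U$ does not exist. Independently of this, the limit object the theory provides is the orthogonal projection $P_{g}$ of Theorem~\ref{thm:fvip-proj}, never a surjective isometry --- for VIP systems there is no group law in $\alpha$ forcing invertibility of a limit --- and surjectivity of $U$ is exactly what your final density step consumes.

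The same example shows that any correct proof must use non-negative definiteness far more seriously than in your closing ``$Q$ vanishes on a total set'' step: on this very extension the kernel $H((y,u),(y,v))=e(y)h(u-v)$, with $h$ real, even, nonzero, $\hat h(0)=0$, is a nonzero bounded self-adjoint $Z|Y$-kernel satisfying $\IPlim_{\alpha}\int(g(\alpha)H)(f'\otimes\bar f')\dif\tilde\gamma=0$ for every $f'\in L^{\infty}(Z)$ (again by mixing of $S$), and only its failure to be non-negative definite saves the lemma. What positivity really buys is a \emph{fiberwise} hypothesis: since $g(\alpha)H$ is again non-negative definite, the conditional expectation $\E\big((g(\alpha)H)(f'\otimes\bar f')\big|Y\big)$ is non-negative a.e., so the hypothesis says that these conditional expectations tend to $0$ in $L^{1}(\nu)$, not merely that their integrals tend to $0$; by polarization and approximation the same holds with $f'\otimes\bar f'$ replaced by any fixed element of $L^{2}(Z\times_{Y}Z)$. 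It is this conditional statement that interacts with the fiberwise compactness of the extension: $Z\times_{Y}Z\to Y$ is again $K$-compact, so wlog $\AP(Z\times_{Y}Z,Y,K)$ is dense in $L^{2}(Z\times_{Y}Z)$ by Lemma~\ref{lem:ap-dense}, and for an almost periodic approximant $\tilde H$ of $H$ the finitely many comparison functions $g_{1},\dots,g_{l}$ in the definition of $\AP$ act as witnesses, fiber by fiber outside the small exceptional sets, forcing $\sum_{j}\|\E((g(\alpha)\tilde H)\bar g_{j}|Y)\|_{L^{1}(\nu)}\gtrsim\|\tilde H\|^{2}-o(1)$, which is incompatible with the hypothesis unless $\|\tilde H\|$ is small; letting the approximation improve gives $H=0$. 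This fiberwise witness argument --- not orbit precompactness in $L^{2}(Z)$ --- is how the proof of \cite{MR1692634}*{Lemma 3.14} runs, which is the argument the paper points to (it does not reprove the lemma itself).
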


The next theorem that provides existence of primitive extensions can be proved in the same way as in the commutative case \cite[Theorem 3.15]{MR1692634}.
The only change is that Theorem~\ref{thm:fvip-proj} is used instead of \cite[Theorem 2.17]{MR1692634} (note that $F$ is Noetherian, since it is a finitely generated nilpotent group).
\begin{theorem}
\label{thm:primitive-extension}
Let $X\to Y$ be a proper factor.
Then there exists a subgroup $K\leq F$ and a factor $X\to Z\to Y$ such that the extension $Z\to Y$ is proper and wlog $K$-primitive.
\end{theorem}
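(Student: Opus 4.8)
The plan is to run the Furstenberg-style compact/mixing dichotomy in the IP-polynomial setting, using Theorem~\ref{thm:fvip-proj} as the only analytic input and the Noetherianity of $F$ as the only new algebraic input. First I would pass to a sub-IP-ring (wlog) on which all the relevant IP-limits exist; since $F$ is finitely generated, hence countable, only countably many limits are needed and this is justified by the Milliken--Taylor theorem~\ref{thm:milliken-taylor} and a diagonal argument. I then work in the Hilbert space $\mathcal{H}$ of kernels $H\in L^{2}(X\times_{Y}X)$ with $\E(H|Y)=0$, on which $G_{0}$, and in particular $F$, acts by unitary operators. By Theorem~\ref{thm:fvip-proj} the operators $P_{g}=\wIPlim_{\alpha}g(\alpha)$, $g\in F$, are commuting orthogonal projections, and a map $g$ is mixing on $X$ relative to $Y$ precisely when $P_{g}=0$. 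If every $P_{g}$ vanishes, then every nontrivial $g$ mixes and $X\to Y$ is itself $\{1_{G}\}$-primitive: it is $\{1_{G}\}$-compact because relatively Hilbert--Schmidt operators span $L^{2}(X)\ominus L^{2}(Y)$, and the mixing clause is satisfied by hypothesis; I take $Z=X$, $K=\{1_{G}\}$.

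Otherwise the compact part $\mathcal{H}_{c}=\overline{\lin}\bigcup_{g}\im P_{g}$ is nonzero, and the next step is to select the subgroup $K$. For nonzero $H\in\mathcal{H}$ the stabilizer $K_{H}=\{g\in F:P_{g}H=H\}$ is a subgroup: if $H$ is fixed by $P_{g}$ and $P_{h}$, then exactly as in the proof of Lemma~\ref{lem:gp} it is fixed by $P_{gh\inv}$. I would choose $H\in\mathcal{H}_{c}$ nonzero with $K_{H}$ maximal, which is possible because $F$, being finitely generated nilpotent, satisfies the ascending chain condition on subgroups. Set $K:=K_{H}$. Maximality forces $P_{g}H=0$ for every $g\notin K$: if instead $H':=P_{g}H\neq 0$ for some such $g$, then commutativity of the projections yields $K_{H}\subseteq K_{H'}$ together with $g\in K_{H'}\setminus K_{H}$, contradicting maximality. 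Thus $K$ captures \emph{all} of the compactness directions of $H$.

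To build the extension I would replace $H$ by a self-adjoint non-negative definite kernel furnished by \cite{MR1692634}*{Lemma 3.12}; this kernel is again $K$-invariant and, being constructed from $H$ inside the same $F$-submodule, is still annihilated by $P_{g}$ for $g\notin K$. Let $Z$ be the factor of $X$ generated over $Y$ by the functions $H*\phi$. By construction $Z\to Y$ is $K$-compact. It is proper because $H\neq 0$: applying \cite{MR1692634}*{Lemma 3.14} with $g=1_{G}\in K$ shows that $\int (H)(f'\otimes\bar{f'})\dif\tilde\gamma$ cannot vanish for all $f'\in L^{\infty}(Z)$, so some $H*\phi$ fails to be $Y$-measurable and $Z\supsetneq Y$.

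The remaining point, which I expect to be the main obstacle, is to verify that every $g\in F\setminus K$ is mixing on $Z$ relative to $Y$, that is, $\wIPlim_{\alpha}g(\alpha)H''=0$ for all $H''\in L^{2}(Z\times_{Y}Z)$ with $\E(H''|Y)=0$. The algebra up to this point is handled cleanly by the projection formalism and Noetherianity, but converting the spectral fact $P_{g}H=0$ into genuine mixing on the entire relative product is the crux: one must show that $L^{2}(Z\times_{Y}Z)\ominus L^{2}(Y)$ lies in the $F$-submodule generated by the single kernel $H$, on which $P_{g}$ vanishes. This step requires controlling the relatively Hilbert--Schmidt functions on $Z\times_{Y}Z$ in terms of $H$ and is carried out exactly as in the commutative case \cite{MR1692634}*{Theorem 3.15}, the only substitution being the use of Theorem~\ref{thm:fvip-proj} in place of the commutative projection theorem.
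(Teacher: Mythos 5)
Your proposal is correct and takes essentially the same route as the paper: the paper's own proof \emph{is} the Bergelson--McCutcheon argument for \cite{MR1692634}*{Theorem 3.15} with Theorem~\ref{thm:fvip-proj} replacing their projection theorem and Noetherianity of $F$ supplying the maximal subgroup, which is exactly the skeleton you reconstruct (commuting projections $P_{g}$ on the space of kernels with $\E(H|Y)=0$, a maximal stabilizer $K$ via the ascending chain condition, the non-negative definite kernel from \cite{MR1692634}*{Lemma 3.12}, properness via \cite{MR1692634}*{Lemma 3.14}, and the mixing verification deferred to the same source the paper cites). The only blemish is your opening case: taking $Z=X$, $K=\{1_{G}\}$ when all $P_{g}$ vanish is not justified as stated, since $\{1_{G}\}$-compactness requires a \emph{single} kernel $H$ whose products $H*\phi$ generate $Z$ over $Y$, not merely that the ranges of all admissible kernels span $L^{2}(X)\ominus L^{2}(Y)$; this case is anyway redundant, because your general maximal-stabilizer construction (which yields $K=\{1_{G}\}$ there, with $Z$ generated by one kernel) already covers it.
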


\subsection{Almost periodic functions}
For the rest of Section~\ref{sec:primitive} we fix a good group $\FE\leq\PE{\polyn}{\omega}$.
We will describe what we mean by ``good'' in Definition~\ref{def:good}, for the moment it suffices to say that $\FE$ is countable.
\begin{definition}[{\cite[Definition 3.1]{MR1692634}}]
Suppose that $(Z,\calC,\gamma)\to (Y,\calB,\nu)$ is a factor and $K\leq F$ a subgroup.
A function $f\in L^{2}(Z)$ is called \emph{$K$-almost periodic} if for every $\epsilon>0$ there exist $g_{1},\dots,g_{l}\in L^{2}(Z)$ and $D\in\calB$ with $\nu(D)<\epsilon$ such that for every $\delta>0$ and $R\in\PE{K}{\omega}\cap\FE$ there exists $\alpha_{0}$ such that for every $\alpha_{0}<\vec\alpha\in \Fin^{\omega}_{<}$ there exists a set $E=E(\vec\alpha)\in\calB$ with $\nu(E)<\delta$ such that for all $y\in Y\setminus(D\cup E)$ there exists $1\leq j\leq l$ such that
\[
\norm{ R(\alpha)f - g_{j} }_{y} < \epsilon.
\]
The set of $K$-almost periodic functions is denoted by $\AP(Z,Y,K)$.
\end{definition}
The next lemma says that a characteristic function that can be approximated by almost periodic functions can be replaced by an almost periodic function right away.
\begin{lemma}[{\cite[Theorem 3.3]{MR1692634}}]
\label{lem:indicator-ap}
Let $A\in\calC$ be such that $1_{A} \in\overline{\AP(Z,Y,K)}$ and $\delta>0$.
Then there exists a set $A'\subset A$ such that $\gamma(A\setminus A')<\delta$ and $1_{A'}\in \AP(Z,Y,K)$.
\end{lemma}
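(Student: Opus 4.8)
The plan is to follow the strategy of \cite{MR1692634}*{Theorem 3.3}: approximate $1_{A}$ in $L^{2}(Z)$ by a genuinely almost periodic function, pass to the indicator of a suitable super-level set, and control all relevant measures by Chebyshev's inequality. Since $1_{A}\in\overline{\AP(Z,Y,K)}$, for any $\kappa>0$ I can choose $h\in\AP(Z,Y,K)$ with $\|1_{A}-h\|_{2}<\kappa$; truncating $h$ to $[0,1]$ only decreases this distance, so I may assume $0\leq h\leq 1$.

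First I would record the stability of almost periodicity under Lipschitz functional calculus: if $h\in\AP(Z,Y,K)$ is bounded and $\psi\colon\R\to\R$ is Lipschitz, then $\psi\circ h\in\AP(Z,Y,K)$. This is immediate from the definition, because each $R(\alpha)$ acts on $L^{2}(Z)$ by composition with a measure-preserving transformation, so $R(\alpha)(\psi\circ h)=\psi\circ(R(\alpha)h)$, while $\|\psi\circ u-\psi\circ v\|_{y}\leq\mathrm{Lip}(\psi)\|u-v\|_{y}$ on every fiber. Thus the finitely many approximants $g_{1},\dots,g_{l}$ and the exceptional set $D$ witnessing almost periodicity of $h$ are carried to $\psi\circ g_{1},\dots,\psi\circ g_{l}$ and the same $D$ for $\psi\circ h$, after rescaling $\epsilon$ by $\mathrm{Lip}(\psi)$.

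Next, for $0<t<1$ I write $A_{t}=\{z:h(z)>t\}$ and approximate $1_{A_{t}}$ by the functions $\psi_{t,\rho}(h)$, where $\psi_{t,\rho}$ rises linearly from $0$ to $1$ on $[t-\rho,t]$. For all but countably many thresholds $t$ the distribution of $h$ carries no fiberwise mass at level $t$, so $\psi_{t,\rho}(h)\to 1_{A_{t}}$ in $L^{2}(Z)$ as $\rho\to0$; combined with the previous step this already shows $1_{A_{t}}\in\overline{\AP(Z,Y,K)}$. Choosing such a $t\in(1/2,1)$ and defining $A'$ by removing from $A_{t}$ the small set $A_{t}\setminus A$, Chebyshev's inequality gives $\gamma(A_{t}\setminus A)\leq\kappa^{2}/t^{2}$ and $\gamma(A\setminus A')=\gamma(A\setminus A_{t})\leq\kappa^{2}/(1-t)^{2}$, so for $\kappa$ small enough $A'\subset A$ and $\gamma(A\setminus A')<\delta$ as required.

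The main obstacle is to upgrade $1_{A'}$ from membership in $\overline{\AP(Z,Y,K)}$ to genuine membership in $\AP(Z,Y,K)$, while respecting the constraint $A'\subset A$: intersecting a level set with the arbitrary set $A$ a priori only produces an $L^{2}$-limit of almost periodic functions. The point is that the precompactness encoded in the definition survives a sharp cutoff for a suitable threshold $t$. Since for each $R\in\PE{K}{\omega}\cap\FE$ the orbit $\{R(\alpha)h\}$ is fiberwise approximated, off a fixed small exceptional set $D$ and a variable small set $E(\vec\alpha)$, by the finite list $g_{1},\dots,g_{l}$, the cut functions $1_{\{R(\alpha)h>t\}}$ are fiberwise approximated by the finite list $1_{\{g_{j}>t\}}$ once $t$ avoids the levels carrying fiberwise mass; the discrepancy between $A_{t}$ and $A'$ is absorbed into the exceptional sets. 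A uniform pigeonhole over these finitely many limit indicators, exactly as in \cite{MR1692634}*{Theorem 3.3}, then exhibits the required finite approximating family for $1_{A'}$ and completes the argument.
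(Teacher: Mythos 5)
Your preparatory steps are sound: the Lipschitz functional calculus does preserve $\AP(Z,Y,K)$ for exactly the reason you give, the level sets $A_{t}=\{h>t\}$ do satisfy $1_{A_{t}}\in\overline{\AP(Z,Y,K)}$, and the Chebyshev bounds are correct. The gap is at the step you yourself flag as the main obstacle: the phrase ``the discrepancy between $A_{t}$ and $A'$ is absorbed into the exceptional sets'' cannot be implemented with the quantifier structure of the definition of $\AP$. Unwind what has to be absorbed: with $A'=A\cap A_{t}$ and approximants $1_{\{g_{j}>t\}}$ you need, for $y$ outside the allowed exceptional sets,
\[
\|R(\vec\alpha)1_{A'}-1_{\{g_{j}>t\}}\|_{y}
\le
\|R(\vec\alpha)1_{A_{t}\setminus A}\|_{y}
+\|1_{\{R(\vec\alpha)h>t\}}-1_{\{g_{j}>t\}}\|_{y},
\]
and the first term equals $\gamma_{yR(\vec\alpha)}(A_{t}\setminus A)^{1/2}$. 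Smallness of $\gamma(A_{t}\setminus A)$ only yields a fixed base set $B=\{y'\in Y:\gamma_{y'}(A_{t}\setminus A)>\epsilon^{2}/4\}$ of small but in general \emph{positive} measure off which the fiber measure is small, so the set you must discard at stage $\vec\alpha$ is $BR(\vec\alpha)\inv$. This set moves with $\vec\alpha$, so it cannot go into $D$ (which is fixed before $R$ and $\vec\alpha$ are quantified), and it has constant measure $\nu(B)$, so it cannot go into $E(\vec\alpha)$ either, since $\nu(E(\vec\alpha))$ must eventually be smaller than \emph{every} $\delta>0$. As $\epsilon$ is arbitrary while $A'$ is fixed, your argument would need $\gamma_{y}(A_{t}\setminus A)=0$ for a.e.\ $y$, which there is no reason to have: intersecting a level set with the unstructured set $A$ produces an $L^{2}$-small error, and $L^{2}$-small errors are precisely what the definition does not tolerate (this is why $\overline{\AP}\neq\AP$ and why the lemma has content). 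A secondary, fixable, problem is the circular choice of parameters in your indicator approximation: you need $\epsilon'\ll\rho$, but the admissible $\rho$ depends on the fiberwise distributions of the $g_{j}$, which are only produced after $\epsilon'$ is fixed.

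The proof the paper is invoking (it gives none of its own, citing Bergelson--McCutcheon, Theorem 3.3) avoids all of this by deleting whole fibers instead of cutting at a level, and this is the idea your argument is missing. Take $h_{n}\in\AP(Z,Y,K)$ with $\|1_{A}-h_{n}\|^{2}<\delta 8^{-n}$, let $D_{n}=\{y:\|1_{A}-h_{n}\|_{y}^{2}>4^{-n}\}$, so that $\nu(D_{n})<\delta 2^{-n}$, and set $A'=A\setminus\pi\inv(B)$ with $B=\bigcup_{n}D_{n}$, where $\pi\from Z\to Y$ is the factor map; then $\gamma(A\setminus A')<\delta$. Two observations conclude. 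First, multiplying a member of $\AP$ by $1_{\pi\inv(B^{\complement})}$ keeps it in $\AP$: on each fiber, $R(\vec\alpha)\bigl(h_{n}1_{\pi\inv(B^{\complement})}\bigr)$ equals either $R(\vec\alpha)h_{n}$ or $0$, so it suffices to add the zero function to the approximating family. Second, with $h_{n}'':=h_{n}1_{\pi\inv(B^{\complement})}$ one has $\|1_{A'}-h_{n}''\|_{y}\le 2^{-n}$ for a.e.\ $y$ (both functions vanish on fibers over $B$, and elsewhere $y\notin D_{n}$), and this \emph{uniform} fiberwise bound is preserved by the action, since $\|R(\vec\alpha)(1_{A'}-h_{n}'')\|_{y}=\|1_{A'}-h_{n}''\|_{yR(\vec\alpha)}$. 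Given $\epsilon>0$, pick $n$ with $2^{-n}<\epsilon/2$ and run the triangle inequality against the $\AP$ data of $h_{n}''$ at scale $\epsilon/2$: no moving exceptional sets arise, and no level sets are needed at all.
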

In the following lemma we have to restrict ourselves to $\PE{K}{\omega}\cap\FE$ since $\PE{K}{\omega}$ need not be countable.
\begin{lemma}[{\cite[Proposition 3.9]{MR1692634}}]
Let $X\to Y$ be an extension, $K\leq F$ a subgroup and $H$ a $X|Y$-kernel that is $K$-invariant.
Then wlog for all $R\in\PE{K}{\omega}\cap\FE$ and $\epsilon>0$ there exists $\alpha_{0}$ such that for all $\alpha_{0}<\vec\alpha$ we have
\[
\norm{ R(\vec\alpha)H - H } < \epsilon.
\]
\end{lemma}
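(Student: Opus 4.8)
The plan is to reduce everything to the single-variable invariance $\IPlim_{\alpha} g(\alpha)H = H$ for $g\in K$ supplied by the hypothesis, and to bootstrap it up to polynomial expressions by induction on the number of variables $m$ for which $R\in\PE{K}{m}$. Throughout I would use that every $g(\alpha)\in G$ acts isometrically on $L^{2}(X\times_{Y}X)$, so all quantities $R(\vec\alpha)H$ have norm $\dint{H}$. Since $\FE$ is countable, $\PE{K}{\omega}\cap\FE$ is countable; I would enlarge it to a still countable family $\mathcal{R}$ that, for every $R\in\PE{K}{\omega}\cap\FE$, also contains all its iterated truncations $S$ and all the associated leading factors (the expressions obtained by replacing the lower-order part $S$ by $1_{G}$). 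Each such object is again a polynomial expression, and only finitely many arise from a given $R$, so $\mathcal{R}$ is countable. I would then invoke the Milliken--Taylor theorem~\ref{thm:milliken-taylor} together with the diagonal argument from the IP-limits subsection (cf.\ \cite{MR833409}*{Lemma 1.4}) to pass to a sub-IP-ring on which the real-valued IP-limit $\IPlim_{\vec\alpha}\dint{R'(\vec\alpha)H-H}$ exists for every $R'\in\mathcal{R}$; this is legitimate because these quantities take values in the compact interval $[0,2\dint{H}]$. It then suffices to show each such limit equals $0$.

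To do so I would prove $\IPlim_{\vec\alpha}\dint{R'(\vec\alpha)H-H}=0$ for every $R'\in\mathcal{R}\cap\PE{K}{m}$ by induction on $m$. The case $m=0$ is trivial since then $R'\equiv 1_{G}$. For the inductive step I write $R'(\alpha_{1},\dots,\alpha_{m+1})=W^{\alpha_{1},\dots,\alpha_{m}}(\alpha_{m+1})S(\alpha_{1},\dots,\alpha_{m})$ with $S\in\PE{K}{m}$ and $W^{\vec\alpha}\in K$, and use the isometry of $W^{\vec\alpha}(\alpha_{m+1})$ to split
\[
\dint{R'(\vec\alpha,\alpha_{m+1})H-H} \le \dint{S(\vec\alpha)H-H} + \dint{W^{\vec\alpha}(\alpha_{m+1})H-H}.
\]
The first summand is constant in $\alpha_{m+1}$ and has IP-limit $0$ by the induction hypothesis applied to the truncation $S\in\mathcal{R}$, so the whole difficulty is concentrated in the second summand, that is, in the leading factor $(\vec\alpha,\alpha_{m+1})\mapsto W^{\vec\alpha}(\alpha_{m+1})$, which by construction also lies in $\mathcal{R}$.

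The main obstacle is exactly this second term: the element $W^{\vec\alpha}$ depends on $\alpha_{1},\dots,\alpha_{m}$, so the hypothesis cannot be applied uniformly, and a naive estimate fails. The resolution I expect to use is a freezing argument. By the first paragraph the limit $c:=\IPlim_{(\vec\alpha,\alpha_{m+1})}\dint{W^{\vec\alpha}(\alpha_{m+1})H-H}$ already exists on the current sub-IP-ring; I claim $c=0$. Given $\epsilon>0$, choose $\alpha_{0}$ witnessing this IP-limit, fix any $\alpha_{1}<\dots<\alpha_{m}$ with $\alpha_{1}>\alpha_{0}$, and observe that for this single fixed tuple $W^{\vec\alpha}\in K$, so that $K$-invariance (which persists on the sub-IP-ring) gives $\dint{W^{\vec\alpha}(\alpha_{m+1})H-H}\to 0$ as $\alpha_{m+1}\to\infty$. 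Picking $\alpha_{m+1}>\alpha_{m}$ with this quantity $<\epsilon$ and noting that the full tuple exceeds $\alpha_{0}$ yields $c<2\epsilon$, hence $c=0$. Combining the two summands closes the induction, and specializing to $R\in\PE{K}{\omega}\cap\FE$ gives the assertion. I anticipate no real trouble in the countability bookkeeping; the only genuinely essential step is the freezing argument, which is precisely the device that converts the given single-variable invariance into the required multivariable statement.
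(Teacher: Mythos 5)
Your proof is correct, and it reconstructs essentially the argument behind the paper's citation: the paper does not spell out a proof here but defers to Bergelson--McCutcheon (Proposition 3.9), whose proof runs along the same lines as yours --- induction on the number of variables, the isometry/triangle-inequality splitting $\| R'(\vec\alpha,\alpha_{m+1})H-H\| \le \|S(\vec\alpha)H-H\| + \|W^{\vec\alpha}(\alpha_{m+1})H-H\|$, Milliken--Taylor plus a diagonal argument to make the countably many bounded real-valued IP-limits exist wlog, and the freezing device to show the limit of the leading term is $0$. The countability bookkeeping (enlarging to $\mathcal{R}$ with iterated truncations and leading factors, which need not lie in $\FE$) is handled correctly, so there is nothing to add.
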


With help of the above lemma we can show that in fact wlog every characteristic function can be approximated by almost periodic functions.
In view of Lemma~\ref{lem:indicator-ap} this allows us to reduce the question of multiple recurrence in a primitive extension to multiple recurrence for (relatively) almost periodic functions.
\begin{lemma}[{\cite[Theorem 3.11]{MR1692634}}]
\label{lem:ap-dense}
Let $Z\to Y$ be a $K$-compact extension.
Then wlog $\AP(Z,Y,K)$ is dense in $L^{2}(Z)$.
\end{lemma}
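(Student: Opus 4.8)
The plan is to follow the scheme of \cite{MR1692634}*{Theorem 3.11}, the essential inputs being relative Hilbert--Schmidt compactness and the stable invariance of the generating kernel supplied by the preceding lemma. Since $Z\to Y$ is $K$-compact it is generated by functions of the form $H*\phi$, where $H$ is a fixed non-trivial self-adjoint $K$-invariant $Z|Y$-kernel and $\phi\in L^{2}(Z)$. It therefore suffices to show that each such generator is $K$-almost periodic and that $\overline{\AP(Z,Y,K)}$ is closed under the operations (sums, products, and $L^{2}$-closure) needed to recover the $\sigma$-algebra $\mathcal{C}$; density of $\AP(Z,Y,K)$ in $L^{2}(Z)$ will then follow.

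First I would dispose of the ``wlog'' bookkeeping. The set $\PE{K}{\omega}\cap\FE$ is countable because $\FE$ is, so by the preceding lemma (the analogue of \cite{MR1692634}*{Proposition 3.9}) together with a diagonal argument over this countable family and over the scales $\epsilon=1/k$, we may pass once and for all to a sub-IP-ring on which $\|R(\vec\alpha)H-H\|\to 0$ as $\vec\alpha\to\infty$ for every $R\in\PE{K}{\omega}\cap\FE$. With this stable invariance secured, the central step is to verify almost periodicity of a single generator $H*\phi$. Here I would use the equivariance identity $R(\vec\alpha)(H*\phi)=(R(\vec\alpha)H)*(R(\vec\alpha)\phi)$, which follows from measure-preservation by a change of variables in the fiber integral defining $*$. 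Combined with the fiberwise bound $\|J*\psi\|_{y}\le\|J\|_{\mathrm{HS},y}\,\|\psi\|_{y}$ valid for any $Z|Y$-kernel $J$, this gives
\[
\| R(\vec\alpha)(H*\phi) - H*(R(\vec\alpha)\phi) \|_{y} \le \| R(\vec\alpha)H - H \|_{\mathrm{HS},y}\,\| R(\vec\alpha)\phi \|_{y},
\]
where $\|R(\vec\alpha)\phi\|_{y}$ is controlled on average since $R(\vec\alpha)$ is unitary.

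The stable invariance from the first step makes the kernel factor on the right small in $L^{2}(Z\times_{Y}Z)$, so by Chebyshev it is small off an exceptional set $E=E(\vec\alpha)$ of small $\nu$-measure. Thus, off $E$, the function $R(\vec\alpha)(H*\phi)$ is fiberwise close to the single family $\{H*\psi : \|\psi\|\le\|\phi\|\}$; since $\phi\mapsto H*\phi$ is Hilbert--Schmidt on a.e.\ fiber with uniformly bounded norm, this family is relatively compact in the fibers uniformly off a set $D$ of small measure, and covering it by finitely many $\epsilon$-balls produces the functions $g_{1},\dots,g_{l}$ demanded by the definition. The hard part will be exactly this organization of the two kinds of exceptional sets---the $\vec\alpha$-independent set $D$ coming from the uniform fiberwise compactness of the Hilbert--Schmidt image, and the $\vec\alpha$-dependent set $E$ coming from the stable-invariance estimate---so that they match the precise quantifier structure in the definition of $K$-almost periodicity. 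Upgrading the relative compactness from merely fiberwise to genuinely uniform over fibers is the point at which the uniform bound on the Hilbert--Schmidt norm noted after the definition of a $Z|Y$-kernel is indispensable.

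Finally I would check that $\overline{\AP(Z,Y,K)}$ is a subalgebra of $L^{\infty}$: sums of almost periodic functions are almost periodic, and products are handled because a product of two relatively compact fiber families is again relatively compact, the exceptional sets simply being unioned. This algebra contains the constants and the generators $H*\phi$, hence generates $\mathcal{C}$; invoking Lemma~\ref{lem:indicator-ap} to replace approximable characteristic functions by genuinely almost periodic ones, we conclude that $\AP(Z,Y,K)$ is dense in $L^{2}(Z)$.
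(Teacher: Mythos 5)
Your plan follows the same route as the proof the paper actually relies on (the paper does not reprove this lemma; it cites \cite{MR1692634}*{Theorem 3.11}): stabilized invariance of $H$ from the preceding lemma, the equivariance identity $R(\vec\alpha)(H*\phi)=(R(\vec\alpha)H)*(R(\vec\alpha)\phi)$, a fiberwise Hilbert--Schmidt estimate, and a compactness argument producing $g_{1},\dots,g_{l}$. However, there is a genuine gap in the compactness step: you take $\phi\in L^{2}(Z)$, control $\|R(\vec\alpha)\phi\|_{y}$ only ``on average'', and propose to cover the family $\{H*\psi:\|\psi\|\leq\|\phi\|\}$, with a \emph{global} norm bound, by finitely many $\epsilon$-balls. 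This fails on two counts. First, the definition of $\AP$ demands fiberwise closeness $\|R(\vec\alpha)f-g_{j}\|_{y}<\epsilon$, so the relevant family is the fiberwise one $\{H*_{y}\psi:\|\psi\|_{y}\leq C\}$, and you need a pointwise-in-$y$ bound on $\|R(\vec\alpha)\phi\|_{y}$; a global bound does not help, since the operator $\phi\mapsto H*\phi$ is only fiberwise Hilbert--Schmidt and is in general not a compact operator on $L^{2}(Z)$, so $\{H*\psi:\|\psi\|\leq\|\phi\|\}$ need not be relatively compact globally. Second, average control cannot be upgraded to fiberwise control compatibly with the quantifier structure: $g_{1},\dots,g_{l}$ and $D$ must be fixed \emph{before} $\delta$ and $R$, and then $\nu(E(\vec\alpha))<\delta$ is required for \emph{every} $\delta>0$; but the set of fibers where $\|R(\vec\alpha)\phi\|_{y}>C$ has measure bounded only by $\|\phi\|^{2}/C^{2}$, a constant that does not decay as $\vec\alpha$ grows, so this set fits neither into $D$ (it depends on $R,\vec\alpha$) nor into $E(\vec\alpha)$ (its measure does not drop below $\delta$ once $\delta<\|\phi\|^{2}/C^{2}$). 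The standard repair, which is what the cited proof exploits, is to work with $\phi\in L^{\infty}(Z)$ only --- such $H*\phi$ remain dense among the generators because $H*$ is bounded on $L^{2}$ --- whence $\|R(\vec\alpha)\phi\|_{y}\leq\|\phi\|_{\infty}$ for a.e.\ $y$, uniformly in $R$ and $\vec\alpha$, and the only $\vec\alpha$-dependent exceptional set is the Chebyshev set for $\|R(\vec\alpha)H-H\|_{\mathrm{HS},y}$, whose measure is indeed $<\delta$ for $\vec\alpha>\alpha_{0}$ by stable invariance.

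Relatedly, your appeal to ``the uniform bound on the Hilbert--Schmidt norm'' is not the mechanism that makes the net uniform over fibers: what does is that $H\in L^{2}(Z\times_{Y}Z)$ can be approximated by a finite-rank kernel $\sum_{i\leq n}u_{i}\otimes v_{i}$ with $u_{i},v_{i}\in L^{\infty}(Z)$. Chebyshev applied to $\|H-\sum_{i}u_{i}\otimes v_{i}\|_{\mathrm{HS},y}$ produces the $\vec\alpha$-independent set $D$, and off $D$ one has $H*_{y}\psi\approx\sum_{i}\<\psi,\bar v_{i}\>_{y}u_{i}$ with coefficients bounded by $\|v_{i}\|_{\infty}\|\psi\|_{y}$, so a finite net of the coefficient cube yields $g_{1},\dots,g_{l}$ valid simultaneously for all $y\notin D$. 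Two smaller points about your closing paragraph: the algebra must also contain $L^{\infty}(Y)$ (lifted functions are trivially almost periodic, being fiberwise constant, hence coverable by finitely many constants), since $\mathcal{C}$ is generated by the functions $H*\phi$ together with $\mathcal{B}$; and Lemma~\ref{lem:indicator-ap} is not an ingredient of the density proof --- density follows from the algebra and monotone-class argument alone, while Lemma~\ref{lem:indicator-ap} is the downstream companion used later, in the proof of Theorem~\ref{thm:SZ}.
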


\subsection{Multiple mixing}
Under sufficiently strong relative mixing assumptions the limit behavior of a multicorrelation sequence $\prod_{i}S_{i}(\alpha)f_{i}$ on a primitive extension only depends on the expectations of the functions on the base space.
The appropriate conditions on the set $\{S_{i}\}_{i}$ are as follows.
\begin{definition}
\label{def:K-mixing}
Let $K\leq F$ be a subgroup.
A subset $A \subset F$ is called \emph{$K$-mixing} if $1_{G}\in A$ and $g\inv h\in F\setminus K$ whenever $g\neq h\in A$.
\end{definition}
The requirement $1_{G}\in A$ is not essential, but it is convenient in inductive arguments.
In order to apply PET induction we will need the next lemma.

We say that a subgroup $K\leq F$ is \emph{invariant under equality of tails} if whenever $S\in K$ and $T\in F$ are such that there exists $\beta\in\Fin$ with $S_{\alpha}=T_{\alpha}$ for all $\alpha>\beta$ we have $T\in K$.
Every group $K\leq F$ that is the compact part of some primitive extension has this property.
\begin{lemma}
\label{lem:notinK}
Let $K\leq F$ be a subgroup that is invariant under equality of tails.
Let $S,T\in F$ be such that $S\inv T \not\in K$.
Then wlog
\[
(S \sD_{\beta}S)\inv (T\sD_{\beta}T) \not\in K
\quad\text{and}\quad
S\inv (T\sD_{\beta} T) \not\in K
\]
for all $\beta\in\Fin$.
\end{lemma}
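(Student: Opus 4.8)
The plan is to realise both maps $\beta \mapsto (S\sD_\beta S)\inv(T\sD_\beta T)$ and $\beta\mapsto S\inv(T\sD_\beta T)$ as $\Gb$-polynomial maps in the variable $\beta$ that take the common value $U:=S\inv T$ at $\beta=\emptyset$, and then to run the iterated derivative argument from the proof of Lemma~\ref{lem:deri-poly}, invoking invariance of $K$ under equality of tails at the very last step. Writing $h_{1}(\beta):=(S\sD_\beta S)\inv(T\sD_\beta T)$ and $h_{2}(\beta):=S\inv(T\sD_\beta T)$, a direct computation gives $(S\sD_\beta S)(\alpha)=S(\alpha\cup\beta)S(\beta)\inv$ (and likewise for $T$), so that $h_{1}(\beta)(\alpha)=S(\beta)U(\alpha\cup\beta)T(\beta)\inv$ and $h_{2}(\beta)(\alpha)=U(\alpha)\sD_\beta T(\alpha)$. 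Since $\sD_\beta S,\sD_\beta T\in F_{1}$ and $F$ is a VIP group, each $h_{i}(\beta)$ lies in $F$; and since $\sD_\emptyset S$ and $\sD_\emptyset T$ are trivial, $h_{1}(\emptyset)=h_{2}(\emptyset)=U$.

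The structural point I will need is that, for each fixed $\alpha$, the map $\beta\mapsto h_{i}(\beta)(\alpha)$ is $\Gb$-polynomial of length at most $d$. Indeed, by Theorem~\ref{thm:poly-group} it is obtained by pointwise products, inverses and conjugations from the $\Gb$-polynomial maps $\beta\mapsto S(\beta)$, $\beta\mapsto T(\beta)$ together with the shifted maps $\beta\mapsto S(\alpha\cup\beta)=(S\cdot D_{\alpha}S)(\beta)$ and $\beta\mapsto T(\alpha\cup\beta)=(T\cdot D_{\alpha}T)(\beta)$, which are $\Gb$-polynomial as products of $\Gb$-polynomial maps with maps of strictly smaller length. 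Consequently every $(d+1)$-fold derivative of $\beta\mapsto h_{i}(\beta)(\alpha)$ in the $\beta$-variable is trivial wherever it is defined.

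Next I reduce to a dichotomy by Hindman's theorem~\ref{thm:hindman}: colouring $\beta\in\Fin$ according to whether $h_{i}(\beta)\in K$, I may wlog assume that for each $i$ either $h_{i}(\beta)\in K$ for all $\beta$ or $h_{i}(\beta)\notin K$ for all $\beta$. In the latter case the conclusion holds, so it suffices to rule out the former. Assume then that $h:=h_{i}$ satisfies $h(\beta)\in K$ for all $\beta\neq\emptyset$ while $h(\emptyset)=U$. Fix pairwise disjoint nonempty $\gamma_{1},\dots,\gamma_{d+1}$ and let $w:=D_{\gamma_{d+1}}\cdots D_{\gamma_{1}}h(\emptyset)\in F$ be the iterated derivative of $\beta\mapsto h(\beta)$ in the $\beta$-variable, evaluated at $\beta=\emptyset$. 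Expanding $w$ recursively exactly as in the proof of Lemma~\ref{lem:deri-poly}, every factor is a value $h(\bigcup_{j\in\sigma}\gamma_{j})$; the unique factor with $\sigma=\emptyset$ equals $h(\emptyset)=U$, and every partial iterated derivative based at a nonempty set takes values in the subgroup $K$, so by induction on the number of derivatives $w\in KU^{(-1)^{d+1}}K$.

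On the other hand, by the length bound of the second paragraph we have $w(\alpha)=1_{G}$ for every $\alpha$ disjoint from $\gamma_{1}\cup\dots\cup\gamma_{d+1}$; thus $w$ coincides with $1_{F}$ on a tail $\{\alpha:\alpha>\gamma_{0}\}$. Since $1_{F}\in K$ and $K$ is invariant under equality of tails, this forces $w\in K$, and hence $U^{(-1)^{d+1}}\in K$, i.e.\ $U\in K$, contradicting the hypothesis $S\inv T\notin K$. The main obstacle is the bookkeeping of the first two paragraphs: verifying that each $h_{i}$ is genuinely $\Gb$-polynomial in $\beta$ of length at most $d$ and that in the iterated derivative the value $U$ occurs exactly once (the sign recorded by the parity $(-1)^{d+1}$ being immaterial, since $U^{\pm1}\in K$ forces $U\in K$). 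Tail-invariance of $K$ is precisely the ingredient that upgrades the vanishing of $w$ on a tail to the membership $w\in K$, which is why this hypothesis, rather than mere subgrouphood, is required.
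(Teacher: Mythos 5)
Your proof is correct and takes essentially the same route as the paper: the paper's own proof consists precisely of the Hindman dichotomy (wlog $h(\beta)\in K$ for all $\beta$ while $h(\emptyset)=S\inv T\notin K$) followed by the instruction ``proceed as in the proof of Lemma~\ref{lem:deri-poly}'', which is exactly your iterated-derivative argument, with tail-invariance of $K$ upgrading ``$w$ vanishes on a tail'' to $w\in K$ and hence yielding the contradiction $S\inv T\in K$. Your write-up just fills in the polynomiality bookkeeping (using $(d+1)$-fold derivatives of the $\Gb$-polynomial maps $\beta\mapsto h_i(\beta)(\alpha)$ where the paper uses $d$-fold derivatives via $\Gb[+1]$-polynomiality), which is a cosmetic, not substantive, difference.
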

\begin{proof}
If the first conclusion fails then by Hindman's theorem~\ref{thm:hindman} wlog
\[
h(\alpha) := (S \sD_{\alpha}S)\inv (T\sD_{\alpha}T) \in K \text{ for all }\alpha\in\Fin
\]
and $h(\emptyset)\not\in K$.
Proceed as in the proof of Lemma~\ref{lem:deri-poly}.
Analogously for the second conclusion.
\end{proof}

The next lemma is a manifestation of the principle that compact orbits can be thought of as being constant.
\begin{lemma}[{\cite[Proposition 4.2]{MR1692634}}]
\label{lem:wm-comp}
Let $Z\to Y$ be a $K$-primitive extension, $R^{\beta}\in K$ for each $\beta\in\Fin$ and $W\in F\setminus K$.
Let also $f,f'\in L^{\infty}(Z)$ be such that either $\E(f | Y)=0$ or $\E(f' | Y)=0$.
Then wlog
\[
\IPlim_{\beta,\alpha} \norm{\E(R^{\beta}(\alpha)f W(\beta)f' | Y) } = 0.
\]
\end{lemma}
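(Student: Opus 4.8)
The plan is to reduce the statement to the mixing of $W$ on the fiber product $Z\times_{Y}Z$, after using almost periodicity to ``freeze'' the compact orbit $\alpha\mapsto R^{\beta}(\alpha)f$, so that the frozen functions no longer depend on $\alpha$ (nor on $\beta$). First I would rewrite the quantity fibrewise: for $u,v\in L^{\infty}(Z)$ one has $\|\E(uv|Y)\|^{2}=\langle u\otimes\bar u,\,\overline{v\otimes\bar v}\rangle_{Z\times_{Y}Z}$, and since the diagonal action satisfies $g(u\otimes\bar u)=(gu)\otimes\overline{(gu)}$, setting $\Phi:=f\otimes\bar f$ and $\Psi:=\bar f'\otimes f'$ gives
\[
\|\E(R^{\beta}(\alpha)f\,W(\beta)f'|Y)\|^{2}=\langle R^{\beta}(\alpha)\Phi,\,W(\beta)\Psi\rangle_{Z\times_{Y}Z}.
\]
Because $\E(u\otimes\bar u|Y)=|\E(u|Y)|^{2}$, the hypothesis reads $\E(\Phi|Y)=0$ or $\E(\Psi|Y)=0$. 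Moreover $Z\times_{Y}Z\to Y$ is again $K$-primitive by Lemma~\ref{lem:fiber-prod-prim}, so both $W$ and its pointwise inverse $W\inv\in F\setminus K$ are mixing on it.

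Next I would reduce to $f$ almost periodic. Since $\|\E(\cdot|Y)\|\le\|\cdot\|_{2}$ and $\|R^{\beta}(\alpha)(f-\tilde f)\,W(\beta)f'\|_{2}\le\|f-\tilde f\|_{2}\|f'\|_{\infty}$, replacing $f$ by a bounded $K$-almost periodic $\tilde f$ (dense by Lemma~\ref{lem:ap-dense}) changes the left side by at most $\|f-\tilde f\|_{2}\|f'\|_{\infty}$; in the case $\E(f|Y)=0$ I first subtract $\E(\tilde f|Y)$, which is itself almost periodic, to retain $\E(\tilde f|Y)=0$. The defining property of almost periodicity, applied to the system $R^{\beta}\in\PE{K}{\omega}\cap\FE$, then lets me approximate $R^{\beta}(\alpha)\tilde f$ fibrewise, off a set of small $\nu$-measure, by finitely many \emph{fixed} functions $g_{1},\dots,g_{l}$ once $\alpha$ is large. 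Expanding $\E(R^{\beta}(\alpha)\tilde f\,W(\beta)f'|Y)(y)=\langle R^{\beta}(\alpha)\tilde f,\overline{W(\beta)f'}\rangle_{y}$ and bounding the $y$-dependent index by the full sum over $j$ yields, for all $\alpha$ past some $\alpha_{0}(\beta)$,
\[
\|\E(R^{\beta}(\alpha)\tilde f\,W(\beta)f'|Y)\|^{2}\le C(\epsilon+\delta)+2\sum_{j=1}^{l}\langle g_{j}\otimes\bar g_{j},\,W(\beta)\Psi\rangle,
\]
with $C$ depending only on $\|f\|_{\infty}$ and $\|f'\|_{\infty}$.

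Finally I would let $\beta\to\infty$ and invoke mixing on these $\beta$-independent functions. If $\E(f'|Y)=0$ then $\E(\Psi|Y)=0$, so $\wIPlim_{\beta}W(\beta)\Psi=0$ and each summand IP-converges to $0$. If instead $\E(f|Y)=0$, then $\E(R^{\beta}(\alpha)\tilde f|Y)=R^{\beta}(\alpha)\E(\tilde f|Y)=0$, so the approximants actually used satisfy $|\E(g_{j}|Y)|<\epsilon$; replacing $g_{j}$ by $g_{j}-\E(g_{j}|Y)$ (at the cost of doubling the approximation error) arranges $\E(g_{j}\otimes\bar g_{j}|Y)=0$, and rewriting $\langle g_{j}\otimes\bar g_{j},W(\beta)\Psi\rangle=\langle W\inv(\beta)(g_{j}\otimes\bar g_{j}),\Psi\rangle$ forces each summand to $0$ by mixing of $W\inv$. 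In both cases $\limsup_{\beta}\limsup_{\alpha}\|\E(\cdots|Y)\|^{2}\le C(\epsilon+\delta)$; as $\epsilon,\delta$ are arbitrary, a diagonal argument over sub-IP-rings (cf.\ \cite{MR833409}*{Lemma 1.4}) upgrades this to $\IPlim_{\beta,\alpha}\|\E(\cdots|Y)\|=0$.

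The step I expect to be the main obstacle is the case $\E(f|Y)=0$: there the mixing hypothesis concerns $W$, which is attached to $f'$, yet the corresponding fibre kernel $\Psi$ does \emph{not} have zero conditional expectation, so mixing cannot be applied to it directly. The remedy is precisely to transfer the vanishing conditional expectation onto the frozen functions $g_{j}$ and only then move $W\inv$ across the inner product, which is what makes the almost-periodic freezing indispensable (a naive weak-limit-in-$\alpha$ argument fails because the norm is not weakly continuous). A secondary technical point is guaranteeing that the systems $R^{\beta}$ lie in $\PE{K}{\omega}\cap\FE$, so that the almost-periodicity estimate is genuinely available for each of them.
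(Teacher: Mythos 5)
Your proof is correct, and its engine --- the fibre-square identity $\|\E(uv|Y)\|^{2}=\int_{Z\times_{Y}Z}(u\otimes\bar u)(v\otimes\bar v)\,\dif\tilde\gamma$, freezing the $K$-orbit $R^{\beta}(\alpha)f$ by almost periodicity, and then killing the frozen, $(\beta,\alpha)$-independent test functions with relative mixing --- is exactly the mechanism of the proof the paper points to: the paper offers no argument of its own here but cites \cite{MR1692634}*{Proposition 4.2}, which runs on the same machinery (note, incidentally, that mixing is \emph{defined} in this paper as a statement about $L^{2}(Z\times_{Y}Z)$, so you do not even need Lemma~\ref{lem:fiber-prod-prim}). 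Where you genuinely diverge is the case $\E(f|Y)=0$, which you single out as the main obstacle and resolve by transferring the vanishing conditional expectation onto the frozen functions and invoking mixing of $W\inv$ (legitimate, since $K$ is a subgroup, so $W\inv\in F\setminus K$). This works, but it is heavier than necessary: writing $f'=\E(f'|Y)+(f'-\E(f'|Y))$, the $Y$-measurable part contributes nothing at all, because
\[
\E\big(R^{\beta}(\alpha)f\cdot W(\beta)\E(f'|Y)\big|Y\big)=W(\beta)\E(f'|Y)\cdot R^{\beta}(\alpha)\E(f|Y)=0,
\]
while the remaining part has zero conditional expectation; thus the whole lemma reduces to the single case $\E(f'|Y)=0$, with no $W\inv$, no modification of the $g_{j}$, and no need to arrange $\E(\tilde f|Y)=0$ for the almost periodic approximant. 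Two technicalities you should still nail down: the approximants $g_{j}$ and $\tilde f$ must be taken bounded so that $g_{j}\otimes\bar g_{j}\in L^{2}(Z\times_{Y}Z)$ (harmless, since truncation commutes with the $G$-action and is a fibrewise contraction, so it preserves almost periodicity and only improves the approximation); and, as you note yourself, the paper's definition of $\AP$ only quantifies over $R\in\PE{K}{\omega}\cap\FE$, so either the $R^{\beta}$ must lie in the good group $\FE$ or Lemma~\ref{lem:ap-dense} must be rerun with the countable family $\{R^{\beta}\}_{\beta\in\Fin}$ adjoined --- a looseness that the paper's own statement of the lemma inherits from its framework rather than a defect of your argument.
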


We come to the central result on multiple mixing.
\begin{theorem}[{cf.\ \cite[Theorem 4.10]{MR1692634}}]
\label{thm:wm}
Let $K\leq F$ be a subgroup.
For every $K$-mixing set $\{S_{0}\equiv 1_{G},S_{1},\dots,S_{t}\}\subset F$ the following statements hold.
\begin{enumerate}
\item For every $K$-primitive extension $Z\to Y$ and any $f_{0},\dots,f_{t}\in L^{\infty}(Z)$ we have wlog
\[
\wIPlim_{\alpha} \prod_{i=1}^{t} S_{i}(\alpha)f_{i} - \prod_{i=1}^{t} S_{i}(\alpha)\E(f_{i} | Y) = 0.
\]
\item
For every $K$-primitive extension $Z\to Y$ and any $f_{0},\dots,f_{t}\in L^{\infty}(Z)$ we have wlog
\[
\IPlim_{\alpha} \norm[\Big]{ \E \big(\prod_{i=0}^{t} S_{i}(\alpha)f_{i} \big| Y \big) - \prod_{i=0}^{t} S_{i}(\alpha)\E(f_{i} | Y) } = 0.
\]
\item
For every $K$-primitive extension $Z\to Y$, any $U_{i,j}\in K$, and any $f_{i,j}\in L^{\infty}(Z)$ we have wlog
\[
\IPlim_{\alpha} \norm[\Big]{ \E \big( \prod_{i=0}^{t} S_{i}(\alpha) \big( \prod_{j=0}^{s}U_{i,j}(\alpha)f_{i,j} \big) \big| Y \big)- \prod_{i=0}^{t} S_{i}(\alpha) \E \big( \prod_{j=0}^{s}U_{i,j}(\alpha)f_{i,j} \big| Y \big) } = 0.
\]
\end{enumerate}
\end{theorem}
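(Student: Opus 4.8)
The plan is to prove the three assertions \emph{simultaneously} by PET induction on the weight vector of the system $\{S_0,S_1,\dots,S_t\}$, using Proposition~\ref{prop:PET} to measure the decrease in complexity. First I note that the three statements are linked: part (2) is the special case $s=0$, $U_{i,0}\equiv 1_{G}$ of part (3), so it suffices to carry (1) and (3) through the induction with (2) riding along. The base case is $t=0$, where both sides agree trivially. For the inductive step I would pick $h\in\{S_1,\dots,S_t\}$ of maximal level and reduce by multilinearity (telescoping each $f_i=\E(f_i\mid Y)+(f_i-\E(f_i\mid Y))$): for (1) it then suffices to show $\wIPlim_{\alpha}\prod_{i=1}^{t}S_i(\alpha)f_i=0$ whenever some distinguished $f_{i_0}$ satisfies $\E(f_{i_0}\mid Y)=0$, and analogously for (3).

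The engine is the van der Corput estimate, Lemma~\ref{lem:vdC}. Applying it to $x_{\alpha}=\prod_{i=1}^{t}S_i(\alpha)f_i$ reduces (1) to control of $\IPlim_{\beta}\IPlim_{\alpha}\langle x_{\alpha},x_{\alpha\cup\beta}\rangle$. Here I would expand $S_i(\alpha\cup\beta)=S_i(\alpha)\,\sD_{\beta}S_i(\alpha)\,S_i(\beta)$ and use that each $S_i(\alpha)$ acts unitarily to cancel and rearrange factors. Because $G$ is noncommutative, commuting the $S_i(\alpha)$ past one another and past the derivatives $\sD_{\beta}S_i$ leaves behind commutator and conjugation corrections; the conjugation-invariance of $K$ supplied by Lemma~\ref{lem:K-conj-inv} keeps the compact directions inside $K$, so these corrections land in inner blocks of the shape $\prod_{j}U_{i,j}(\alpha)f_{i,j}$ with $U_{i,j}\in K$, which is exactly the form handled by part (3). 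The non-compact directions of the reduced correlation are of the form $(S_i\sD_{\beta}S_i)\inv(S_j\sD_{\beta}S_j)$ and $S_i\inv(S_j\sD_{\beta}S_j)$, and by Proposition~\ref{prop:PET} the associated system has strictly smaller weight vector.

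To close the induction I need the reduced system to remain $K$-mixing, which is precisely the content of Lemma~\ref{lem:notinK}: it guarantees wlog that the reduced differences above stay in $F\setminus K$. The induction hypothesis (statements (1) and (3) at the smaller weight) then applies, while the $\beta$-dependent compact blocks and the distinguished zero-expectation factor are treated as effectively constant by Lemma~\ref{lem:wm-comp}, forcing the double IP-limit of the inner products to vanish. Feeding this back into Lemma~\ref{lem:vdC} yields (1); running the same reduction while retaining the inner $K$-products $\prod_{j}U_{i,j}(\alpha)f_{i,j}$ intact — so that only the mixing $S_i$-directions are differentiated — yields (3), and hence (2).

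The main obstacle will be the bookkeeping in the van der Corput step in the noncommutative setting. One must verify that every commutator correction produced by rearranging the $S_i(\alpha)$ and the $\sD_{\beta}S_i$ either is absorbed into a compact ($K$) block or contributes a non-compact direction of strictly smaller weight, and that $K$-mixing of the reduced system genuinely survives. This is exactly where Lemma~\ref{lem:notinK} and the conjugation-invariance from Lemma~\ref{lem:K-conj-inv} carry out what commutativity did for free in Bergelson and McCutcheon \cite{MR1692634}, and it is the only part of the argument that requires changes beyond those already made in the commutative proof.
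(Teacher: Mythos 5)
Your main induction loop is essentially the paper's proof of the implication ``statement (3) for lower weight vectors implies statement (1)'': PET induction on the weight vector, telescoping to reduce to $\E(f_{i_{0}}|Y)=0$, van der Corput applied to $x_{\alpha}=\prod_{i}S_{i}(\alpha)f_{i}$, the expansion $S_{i}(\alpha\cup\beta)=S_{i}(\alpha)\sD_{\beta}S_{i}(\alpha)S_{i}(\beta)$, Lemma~\ref{lem:notinK} to keep the reduced differences mixing, grouping the factors with $\sD_{\beta}S_{i}\in K$ into inner blocks of the form handled by (3) at lower weight, and Lemma~\ref{lem:wm-comp} for the remaining compact case. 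One inaccuracy in your bookkeeping: no commutator or conjugation corrections actually arise in this step. One passes to the reduced system $\{S_{j}\inv S_{i}\}\cup\{S_{j}\inv T_{i,\beta}\}$, where $T_{i,\beta}=S_{i}\sD_{\beta}S_{i}$, simply by composing everything under the integral with the measure-preserving transformation $S_{j}(\alpha)\inv$; all pairwise differences of the reduced system cancel the $S_{j}\inv$, so its $K$-mixing follows from Lemma~\ref{lem:notinK} alone, and Lemma~\ref{lem:K-conj-inv} plays no role here.

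The genuine gap is in how you close the loop, namely how statements (2) and (3) themselves are obtained. You assert that (3) follows by ``running the same reduction while retaining the inner $K$-products intact,'' but the van der Corput lemma (Lemma~\ref{lem:vdC}) produces \emph{weak} convergence to zero, whereas (2) and (3) assert \emph{norm} convergence of differences of conditional expectations; weak convergence of $\E\big(\prod_{i}S_{i}(\alpha)f_{i}\big|Y\big)-\prod_{i}S_{i}(\alpha)\E(f_{i}|Y)$ to zero is strictly weaker than what is claimed, and no rearrangement of the same argument upgrades it. The intended mechanism is the fiber product trick: since $\|\E(F|Y)\|^{2}=\int_{Z\times_{Y}Z}F\otimes\bar{F}$, the norms in (2) and (3) are correlation integrals on the extension $Z\times_{Y}Z\to Y$, which is again $K$-primitive by Lemma~\ref{lem:fiber-prod-prim}, so one applies statement (1) \emph{to the fiber product extension}. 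This is precisely why the theorem must be proved simultaneously for all $K$-primitive extensions, as the paper emphasizes when it notes that the step from weak to strong convergence involves a fiber product via Lemma~\ref{lem:fiber-prod-prim} --- a lemma your proposal never invokes. (The further passage from (2) to (3), where the inner functions $\prod_{j}U_{i,j}(\alpha)f_{i,j}$ depend on $\alpha$, also requires an argument beyond ``keeping the blocks intact''; in the commutative source it exploits the structure of compact extensions.) Without the fiber product step your induction cannot deliver (2) and (3), and then the induction hypothesis you rely on --- statement (3) at lower weight --- is unavailable, so the loop does not close.
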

We point out that the main induction loop is on the mixing set.
It is essential that, given $K\leq F$, all statements are proved simultaneously for all $K$-compact extensions since the step from weak convergence to strong convergence involves a fiber product via Lemma~\ref{lem:fiber-prod-prim}.
\begin{proof}
The proof is by PET-induction on the mixing set.
We only prove that the last statement for mixing sets with lower weight vector implies the first, the proofs of other implications are the same as in the commutative case.

By the telescope identity it suffices to consider the case $\E(f_{i_{0}} | Y)=0$ for some $i_{0}$.
By the van der Corput Lemma~\ref{lem:vdC} it suffices to show that wlog
\[
\IPlim_{\beta,\alpha} \int_{Z} \prod_{i=1}^{t} S_{i}(\alpha)f_{i} \prod_{i=1}^{t} S_{i}(\alpha\cup\beta)\bar f_{i} = 0.
\]
This limit can be written as
\[
\IPlim_{\beta,\alpha} \int_{Z} \prod_{i=1}^{t} S_{i}(\alpha)f_{i} \prod_{i=1}^{t} \underbrace{S_{i}(\alpha) \sD_{\beta}S_{i}(\alpha)}_{=:T_{i,\beta}(\alpha)} (S_{i}(\beta)\bar f_{i}).
\]
By Lemma~\ref{lem:notinK} we may wlog assume that $T_{i,\beta}\inv T_{j,\beta}$ and $S_{i}\inv T_{j,\beta}$ are mixing for all $\beta\in\Fin$ provided that $i\neq j$.
Re-indexing if necessary and using Hindman's theorem~\ref{thm:hindman} we may wlog assume $S_{i}\inv T_{i,\beta} \in K$ for all $\beta\in\Fin$ and $i\leq w$ and $S_{i}\inv T_{i,\beta} \not\in K$ for all $\beta\in\Fin$ and $i>w$ for some $w=0,\dots,t$.
Thus
\begin{equation}
\label{eq:larger-mixing-set}
S_{0},S_{1},\dots,S_{t},T_{w+1,\beta},\dots,T_{t,\beta}
\end{equation}
is a $K$-mixing set for every $\beta\neq\emptyset$.
Moreover it has the same weight vector as $\{S_{1},\dots,S_{t}\}$ since $T_{i,\beta} \sim S_{i}$.
Assume that $S_{j}$, $j\neq 0$, has the maximal level in \eqref{eq:larger-mixing-set}.
We have to show
\begin{multline*}
\IPlim_{\beta,\alpha} \int_{Z} \prod_{i=1}^{w} S_{j}\inv(\alpha)S_{i}(\alpha)(f_{i} \sD_{\beta}S_{i}(\alpha) (S_{i}(\beta)\bar f_{i}))\\
\cdot \prod_{i=w+1}^{t} S_{j}\inv(\alpha)S_{i}(\alpha)f_{i} S_{j}\inv(\alpha)T_{i,\beta}(\alpha) (S_{i}(\beta)\bar f_{i}) = 0.
\end{multline*}
For each fixed $\beta\in\Fin$ the limit along $\alpha$ comes from the $K$-mixing set
\[
S_{j}\inv S_{1},\dots,S_{j}\inv S_{t},S_{j}\inv T_{w+1,\beta},\dots,S_{j}\inv T_{t,\beta}
\]
that has lower weight vector.
Hence we can apply the induction hypothesis, thereby obtaining that the limit equals
\begin{multline*}
\IPlim_{\beta,\alpha} \int_{Z} \prod_{i=1}^{w} S_{j}\inv(\alpha)S_{i}(\alpha)\E(f_{i} \sD_{\beta}S_{i}(\alpha) (S_{i}(\beta)\bar f_{i}) | Y)\\
\cdot\prod_{i=w+1}^{t} S_{j}\inv(\alpha)S_{i}(\alpha)\E(f_{i} | Y) S_{j}\inv(\alpha)T_{i,\beta}(\alpha) \E(S_{i}(\beta)\bar f_{i} | Y)
\end{multline*}
This clearly vanishes if $i_{0}>w$, while in the case $i_{0}\leq w$ this vanishes by Lemma~\ref{lem:wm-comp}.
\end{proof}

\subsection{Multiparameter multiple mixing}
In fact we need some information about relative polynomial mixing in several variables.
First we need to say what we understand under a mixing system of polynomial expressions.
Recall that by definition each $S\in\PE{F}{m}$ can be written in the form
\begin{equation}
\label{eq:PE-dec}
S(\alpha_{1},\dots,\alpha_{m}) = W^{(\alpha_{1},\dots,\alpha_{m-1})}(\alpha_{m}) \dots W^{\alpha_{1}}(\alpha_{2}) W(\alpha_{1}).
\end{equation}
\begin{definition}
\label{def:K-mixing-PE}
Let $K\leq F$ be a subgroup and $m\in\N$.
A set $\{S_{i}\}_{i=0}^{t} \subset\PE{F}{m}$ is called \emph{$K$-mixing} if $S_{0}\equiv 1_{G}$, the polynomial expressions $\{S_{i}\}$ are pairwise distinct, and for all $r$ and $i\neq j$ we have either $\forall\vec\alpha\in\Fin^{r}_{<}$ $W_{i}^{\vec\alpha}=W_{j}^{\vec\alpha}$ or $\forall\vec\alpha\in\Fin^{r}_{<}$ $(W_{i}^{\vec\alpha})\inv W_{j}^{\vec\alpha} \not\in K$.
\end{definition}
For $m=1$ this coincides with Definition~\ref{def:K-mixing}.
However, in general, this definition requires more than $\{W_{i}^{\vec\alpha}\}_{i}$ being (up to multiplicity) a $K$-mixing set in the sense of Definition~\ref{def:K-mixing} for every $\vec\alpha$.

\begin{theorem}[{cf.\ \cite[Theorem 4.12]{MR1692634}}]
\label{thm:mwm}
Let $Z\to Y$ be a $K$-primitive extension.
Then for every $m\geq 1$, every $K$-mixing set $\{S_{0},\dots,S_{t}\}\subset\PE{F}{m}$ and any $f_{0},\dots,f_{t}\in L^{\infty}(Z)$ we have wlog
\[
\IPlim_{\alpha_{1},\dots,\alpha_{m}} \norm[\Big]{ \E \big( \prod_{i=0}^{t} S_{i}(\alpha_{1},\dots,\alpha_{m})f_{i} \big| Y \big) - \prod_{i=0}^{t} S_{i}(\alpha_{1},\dots,\alpha_{m})\E(f_{i} | Y) } = 0.
\]
\end{theorem}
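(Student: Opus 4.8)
The plan is to induct on $m$, the case $m=1$ being precisely the second statement of Theorem~\ref{thm:wm}. For the inductive step I would peel off the last variable $\alpha_m$ by the one-parameter theorem and feed the result into the $(m-1)$-variable hypothesis. Writing $\vec\alpha=(\alpha_1,\dots,\alpha_{m-1})$ and using the decomposition \eqref{eq:PE-dec}, each $S_i$ factors as $S_i(\vec\alpha,\alpha_m)=W_i^{\vec\alpha}(\alpha_m)\,S_i'(\vec\alpha)$ with $W_i^{\vec\alpha}\in F$ and $S_i'\in\PE{F}{m-1}$. As $\prod_i S_i(\vec\alpha,\alpha_m)f_i$ is a \emph{pointwise} product of functions and each $S_i'(\vec\alpha)$ is constant in $\alpha_m$, I set $\psi_i^{\vec\alpha}:=S_i'(\vec\alpha)f_i$ and rewrite the integrand as $\prod_i W_i^{\vec\alpha}(\alpha_m)\psi_i^{\vec\alpha}$. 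The top-level clause ($r=m-1$) of Definition~\ref{def:K-mixing-PE} partitions $\{0,\dots,t\}$ into blocks $B_1,\dots,B_p$ on which the leading layers $W_i^{\vec\alpha}$ agree (a partition independent of $\vec\alpha$), and the distinct values $V_k^{\vec\alpha}$ form a genuine $K$-mixing set in $F$ in the sense of Definition~\ref{def:K-mixing}: they are pairwise distinct, contain $1_G$ (the block of $S_0$), and satisfy $(V_k^{\vec\alpha})\inv V_{k'}^{\vec\alpha}\notin K$ for $k\neq k'$. Since the action distributes over pointwise products, $\prod_{i\in B_k}W_i^{\vec\alpha}(\alpha_m)\psi_i^{\vec\alpha}=V_k^{\vec\alpha}(\alpha_m)\Phi_k^{\vec\alpha}$ with $\Phi_k^{\vec\alpha}:=\prod_{i\in B_k}\psi_i^{\vec\alpha}$, so the integrand equals $\prod_k V_k^{\vec\alpha}(\alpha_m)\Phi_k^{\vec\alpha}$.

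For fixed $\vec\alpha$ I would apply part (2) of Theorem~\ref{thm:wm} to the $K$-mixing set $\{V_k^{\vec\alpha}\}$ and the functions $\Phi_k^{\vec\alpha}$ to get
\[
\IPlim_{\alpha_m}\Big\|\E\big(\textstyle\prod_k V_k^{\vec\alpha}(\alpha_m)\Phi_k^{\vec\alpha}\mid Y\big)-\textstyle\prod_k V_k^{\vec\alpha}(\alpha_m)\E(\Phi_k^{\vec\alpha}\mid Y)\Big\|=0 .
\]
Next I would study $\E(\Phi_k^{\vec\alpha}\mid Y)$ as $\vec\alpha\to\infty$. Within a block the $S_i'$ ($i\in B_k$) are pairwise distinct (else $S_i=S_j$), and at every level $r\le m-2$ they inherit the dichotomy of Definition~\ref{def:K-mixing-PE} from $\{S_i\}$, since the level-$r$ layer of $S_i'$ is that of $S_i$; adjoining the harmless factor $1_G$ acting on the constant function (the inclusion of $1_G$ being inessential by the remark after Definition~\ref{def:K-mixing}), $\{S_i':i\in B_k\}$ is a $K$-mixing set in $\PE{F}{m-1}$. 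The inductive hypothesis then yields
\[
\IPlim_{\vec\alpha}\Big\|\E(\Phi_k^{\vec\alpha}\mid Y)-\textstyle\prod_{i\in B_k}S_i'(\vec\alpha)\E(f_i\mid Y)\Big\|=0 .
\]
Reinserting the leading layers $V_k^{\vec\alpha}(\alpha_m)$, which act isometrically on $L^2$ and preserve $L^\infty$ bounds, a telescoping estimate bounds the difference of the two products by $\sum_k\|\E(\Phi_k^{\vec\alpha}\mid Y)-\prod_{i\in B_k}S_i'(\vec\alpha)\E(f_i\mid Y)\|$ times a constant fixed by $\max_i\|f_i\|_\infty$, and recombining the blocks recovers $\prod_i S_i(\vec\alpha,\alpha_m)\E(f_i\mid Y)$. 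Combining the two displayed limits through the triangle inequality gives the claim.

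The main obstacle I anticipate is organizing these two limits coherently inside the ``wlog'' (sub-IP-ring) formalism: the one-parameter Theorem~\ref{thm:wm} supplies the inner $\alpha_m$-limit only after passing to a sub-IP-ring that a priori depends on $\vec\alpha$ and on the countably many functions $\Phi_k^{\vec\alpha}$. I would resolve this exactly as elsewhere in the paper — exploiting the countability of $\FE$ together with the Milliken-Taylor theorem~\ref{thm:milliken-taylor} and the diagonal argument of \cite{MR833409}*{Lemma 1.4} to pass once and for all to a single sub-IP-ring of $\Fin^m_<$ on which all relevant IP-limits exist, so that the joint IP-limit over $\Fin^m_<$ agrees with the iterated limit $\IPlim_{\vec\alpha}\IPlim_{\alpha_m}$. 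The remaining difficulties are purely bookkeeping: confirming the two mixing-set verifications above and the correct handling of the $1_G$-normalization, both of which follow directly from Definition~\ref{def:K-mixing-PE} and the inessentiality of the identity term.
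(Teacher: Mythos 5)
Your proposal is correct and takes essentially the same route as the paper's proof: induction on $m$, the decomposition $S_{i}(\vec\alpha,\alpha_{m})=W_{i}^{\vec\alpha}(\alpha_{m})S_{i}(\vec\alpha,\emptyset)$, grouping of the indices into blocks via the $\vec\alpha$-independent dichotomy of Definition~\ref{def:K-mixing-PE} so that the distinct leading layers $\{V_{k}^{\vec\alpha}\}$ form a $K$-mixing set in $F$, then Theorem~\ref{thm:wm} in the last variable together with the induction hypothesis in the remaining variables, with Milliken--Taylor and the diagonal argument of \cite{MR833409}*{Lemma 1.4} handling the sub-IP-ring bookkeeping exactly as you anticipate. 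The only organizational difference is that the paper first reduces by telescoping to the case $\E(f_{i_{0}}|Y)=0$, so it needs the induction hypothesis only for the single block containing $f_{i_{0}}$ (the other blocks being merely bounded), whereas you prove the full approximation statement directly and invoke the induction hypothesis for every block; both versions are valid.
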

\begin{proof}
We induct on $m$.
The case $m=0$ is trivial since the product then consists only of one term.
Assume that the conclusion holds for $m$ and consider a $K$-mixing set of polynomial expressions in $m+1$ variables.
For brevity we write $\vec\alpha=(\alpha_{1},\dots,\alpha_{m})$ and $\alpha=\alpha_{m+1}$.
We may assume that $\norm{f_{i}}_{\infty}\leq 1$ for all $i$ and $\E(f_{i_{0}} | Y)=0$ for some $i_{0}$.

By Definition~\ref{def:K-mixing-PE} and with notation from \eqref{eq:PE-dec}, for every $\vec\alpha$ there exists a $K$-mixing set $\{V_{j}^{\vec\alpha}\} \subset F$ such that $W_{i}^{\vec\alpha}=V_{j_{i}}^{\vec\alpha}$, where the assignment $i\to j_{i}$ does not depend on $\vec\alpha$.
Let also
\[
A_{j} = \{ S(\cdot)=S_{i}(\cdot,\emptyset) : j_{i}=j\}.
\]
In view of the Milliken--Taylor theorem~\ref{thm:milliken-taylor} and by a diagonal argument, cf.\ \cite[Lemma 1.4]{MR833409}, it suffices to show that for every $\delta>0$ there exist $\vec\alpha<\alpha$ such that
\[
\norm[\Big]{ \E\big( \prod_{j}V_{j}^{\vec\alpha}(\alpha) (\prod_{S\in A_{j}}S(\vec\alpha)f_{S,j}) \big| Y \big) } \leq \delta
\]
provided that $\E(f_{S_{0},j_{0}} | Y)=0$ for some $j_{0},S_{0}$.
By the induction hypothesis there exists $\vec\alpha$ such that
\[
\norm[\Big]{ \E\big( \prod_{S\in A_{j_{0}}}S(\vec\alpha)f_{S,j_{0}} \big| Y \big) }
< \delta,
\]
since $A_{j_{0}}$ is a $K$-mixing set.
This implies
\[
\norm[\Big]{ \prod_{j}V_{j}^{\vec\alpha}(\alpha) \E\big(\prod_{S\in A_{j}}S(\vec\alpha)f_{S,j} \big| Y \big) } < \delta
\]
for all $\alpha>\vec\alpha$.
Since $\{V_{j}^{\vec\alpha}\}_{j}$ is a $K$-mixing set, Theorem~\ref{thm:wm} implies
\[
\IPlim_{\alpha} \norm[\Big]{ \E\big( \prod_{j}V_{j}^{\vec\alpha}(\alpha) \prod_{S\in A_{j}}S(\vec\alpha)f_{S,j} | Y \big) } \leq \delta.
\qedhere
\]
\end{proof}

\subsection{Lifting multiple recurrence to a primitive extension}
We are nearing our main result, a multiple recurrence theorem for polynomial expressions.
In order to guarantee the existence of the limits that we will encounter during its proof we have to restrict ourselves to a certain good subgroup of the group of polynomial expressions.
It will be shown later that this restriction can be removed, cf. Corollary~\ref{cor:finite-good}.
\begin{definition}
\label{def:good}
We call a group $\FE\leq\PE{\polyn}{\omega}$ \emph{good} if it has the following properties.
\begin{enumerate}
\item (Cardinality) $\FE$ is countable.
\item (Substitution) If $m\in\N$, $g\in \PE{\polyn}{m}\cap\FE$, and $\vec\beta\in\Fin^{m}_{<}$, then $g[\vec\beta]\in\FE$.
\item (Decomposition) If $K\leq F$ is a subgroup invariant under conjugation by constants and $\{S_{i}\}_{i=0}^{t} \subset\PE{F}{m}\cap\FE$ is a finite set with $S_{0}\equiv 1_{G}$,
then there exist finite sets $\{T_{k}\}_{k=0}^{v-1}\subset \PE{F}{m}\cap\FE$ and $\{R_{i}\}_{i=0}^{t}\subset \PE{K}{m}\cap\FE$ with $R_{0}=T_{0}\equiv 1_{G}$ such that $S_{i}=T_{k_{i}}R_{i}$ and for every sub-IP-ring $\Fin'\subset\Fin$ the set $\{T_{k}\}_{k=0}^{v-1}$ is wlog $K$-mixing.
\end{enumerate}
\end{definition}
The property of being good is hereditary in the sense that a group that is good with respect to some IP-ring is also good with respect to any sub-IP-ring.

Let $(X,\calA,\mu)$ be a regular measure space with a right action of $G$ by measure-preserving transformations.
Let also $\FE\leq\PE{F}{\omega}$ be a good group.
By Hindman's theorem~\ref{thm:hindman} we may wlog assume that
\[
\wIPlim_{\alpha} g(\alpha)f
\]
exists for every $g\in F$ and $f\in L^{2}(X)$.
By the Milliken--Taylor theorem~\ref{thm:milliken-taylor} we may wlog assume that the limit
\[
\IPlim_{\vec\alpha \in \Fin^{m}_{<}} \mu\left( \cap_{i=0}^{t} A S_{i}(\vec\alpha)\inv \right)
\]
exists for every $m\in\N$, every $A\in\calA$, and every finite set $\{S_{0},\dots,S_{t}\}\subset \PE{F}{m}\cap\FE$.
\begin{definition}
A factor $(X,\calA,\mu)\to (Y,\calB,\nu)$ is said to have the \emph{SZ (Szemer\'{e}di) property} if for every $B\in\calB$ with $\nu(B)>0$ and every set of polynomial expressions $\{S_{0}\equiv 1_{G},S_{1},\dots,S_{t}\} \subset\PE{F}{m}\cap\FE$ one has
\[
\IPlim_{\vec\alpha \in \Fin^{m}_{<}} \mu\left( \cap_{i=0}^{t} B S_{i}(\vec\alpha)\inv \right)
> a(B,m,\{S_{i}\}_{i}) > 0.
\]
\end{definition}
Note that unlike in \cite[Definition 5.1]{MR1692634} the constant depends not only on the number of polynomial expressions.
We cannot obtain more uniform results due to the lack of control on the number $w$ provided by Corollary~\ref{cor:pair-color}.

\begin{lemma}[{\cite[Proposition 5.2]{MR1692634}}]
\label{lem:maximal-SZ-factor}
For every separable regular measure-preserving system $X$ there exists a maximal factor that has the SZ property.
\end{lemma}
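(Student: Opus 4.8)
The plan is to verify the hypotheses of Zorn's lemma for the collection of SZ factors of $X$, partially ordered by inclusion of the associated complete $G$-invariant sub-$\sigma$-algebras. Since $X$ is separable, $L^{2}(X)$ is separable, and factors correspond to such sub-$\sigma$-algebras; in particular the supremum of any chain of factors coincides with the supremum of a countable cofinal subchain (each element of a countable dense subset of $L^{2}(X)$ has its conditional expectations stabilizing along a countable subchain). Thus it suffices to establish two points: (i) there is at least one SZ factor, and (ii) the supremum of an increasing sequence of SZ factors is again SZ. For (i) the trivial factor $\mathcal{B}=\{\emptyset,X\}$ works: the only $B\in\mathcal{B}$ with $\nu(B)>0$ is $B=X$ modulo null sets, and since each $S_{i}(\vec\alpha)$ is measure-preserving we have $\mu(\cap_{i=0}^{t} B S_{i}(\vec\alpha)\inv)=\mu(X)=1$, so the IP-limit equals $1>0$.

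For (ii), let $\mathcal{B}_{1}\subset\mathcal{B}_{2}\subset\dots$ be SZ factors with supremum $\mathcal{B}_{\infty}=\overline{\bigcup_{n}\mathcal{B}_{n}}$. Fix $B\in\mathcal{B}_{\infty}$ with $\nu(B)>0$, an $m\in\N$, and a set $\{S_{0}\equiv 1_{G},\dots,S_{t}\}\subset\PE{F}{m}\cap\FE$; recall that the relevant IP-limit exists by our standing wlog assumptions, for $B$ as well as for any $B'\in\mathcal{B}_{n}\subset\mathcal{A}$. The key elementary estimate is that for every $\vec\alpha$ and every $B'$,
\[
\Big| \mu\big( \cap_{i=0}^{t} B S_{i}(\vec\alpha)\inv \big) - \mu\big( \cap_{i=0}^{t} B' S_{i}(\vec\alpha)\inv \big) \Big| \le (t+1)\,\nu(B\triangle B'),
\]
which follows by telescoping over the sets $B S_{i}\inv$ and using $\mu((B\triangle B')S_{i}(\vec\alpha)\inv)=\nu(B\triangle B')$. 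This bound survives passage to the IP-limit.

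Now I would use separability: since $1_{B}$ lies in the $L^{2}$-closure of $\bigcup_{n}L^{2}(\mathcal{B}_{n})$, one may choose $n$ and $B'\in\mathcal{B}_{n}$ with $\nu(B\triangle B')$ as small as desired, in particular $<\nu(B)/2$, so that $\nu(B')>\nu(B)/2$. Applying the SZ property of $\mathcal{B}_{n}$ to $B'$ furnishes a positive lower bound for $\IPlim_{\vec\alpha}\mu(\cap_{i}B'S_{i}(\vec\alpha)\inv)$, and combining this with the displayed estimate gives a positive lower bound for the IP-limit attached to $B$ once $\nu(B\triangle B')$ is chosen small enough compared with that bound. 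This shows $\mathcal{B}_{\infty}$ is SZ, and Zorn's lemma then yields a maximal SZ factor.

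The main obstacle is precisely the order of quantifiers in the last step: the SZ lower bound produced for $\mathcal{B}_{n}$ a priori depends on the approximating set $B'$, whereas $B'$ depends on how fine an approximation we demand. To close the loop one must know that this lower bound is controlled from below by a quantity depending only on a lower bound for $\nu(B')$, together with the fixed data $m$ and $\{S_{i}\}$ — so that, for all $B'$ with $\nu(B')\ge\nu(B)/2$, a single positive constant $c=c(\nu(B)/2,m,\{S_{i}\})$ lies beneath all the lower bounds. This uniformity in the \emph{measure} of the test set, rather than in the set itself, is what permits the approximation to be sharpened \emph{after} $c$ is known; it is the sole point requiring care beyond the commutative treatment of Bergelson and McCutcheon \cite{MR1692634}*{Proposition 5.2}, whose argument otherwise transfers verbatim.
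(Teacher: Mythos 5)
Your overall skeleton --- Zorn's lemma, reduction to countable increasing chains via separability of $L^{2}(X)$, the trivial factor as a base point, and the Lipschitz estimate $|\mu(\cap_{i} B S_{i}(\vec\alpha)\inv)-\mu(\cap_{i} B' S_{i}(\vec\alpha)\inv)|\le (t+1)\mu(B\triangle B')$ --- is sound, but the step you yourself flag as ``the sole point requiring care'' is a genuine gap, and it cannot be closed in the way you propose. You need the SZ lower bound of $\mathcal{B}_{n}$ to depend on the test set $B'$ only through $\mu(B')$, uniformly over all $B'$ with $\mu(B')\ge\mu(B)/2$. In this paper that uniformity is simply not available: the SZ property is defined with a constant $a(B,m,\{S_{i}\}_{i})$ depending on the \emph{set} $B$ itself, and the remark immediately after the definition stresses that more uniform constants cannot be obtained here --- indeed the constant produced in the proof of Theorem~\ref{thm:SZ} involves the almost-periodicity data of $1_{A}$ (the number $l$ of approximating functions and, through Corollary~\ref{cor:pair-color}, the numbers $w,Q,N$), which depend on the set and not merely on its measure. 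Nor can the uniformity be recovered a posteriori: positivity of the Lipschitz function $B\mapsto\IPlim_{\vec\alpha}\mu(\cap_{i}BS_{i}(\vec\alpha)\inv)$ on the set $\{B:\mu(B)\ge s\}$, which is closed but not compact in the measure algebra, does not yield a positive infimum. So your additive approximation argument stalls exactly at the circularity you describe.

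The argument the paper is citing (Bergelson--McCutcheon, going back to Furstenberg--Katznelson) avoids any uniformity by replacing your additive estimate with a multiplicative one, obtained from conditional expectations rather than symmetric differences. Given $B\in\mathcal{B}_{\infty}$ with $\mu(B)>0$, set $\eta:=1/(2(t+1))$ and $B':=\{\E(1_{B}|\mathcal{B}_{n})>1-\eta\}\in\mathcal{B}_{n}$; by martingale convergence $\E(1_{B}|\mathcal{B}_{n})\to 1_{B}$ in $L^{2}$, so $\mu(B')>0$ once $n$ is large. Since $\mathcal{B}_{n}$ is $G$-invariant, conditional expectation commutes with the action, so $\E(1_{BS_{i}(\vec\alpha)\inv}|\mathcal{B}_{n})=S_{i}(\vec\alpha)\E(1_{B}|\mathcal{B}_{n})>1-\eta$ on $B'S_{i}(\vec\alpha)\inv$; hence on $\cap_{i=0}^{t}B'S_{i}(\vec\alpha)\inv$ one has $\E(1_{\cap_{i}BS_{i}(\vec\alpha)\inv}|\mathcal{B}_{n})\ge 1-(t+1)\eta=\tfrac12$, and integrating gives
\[
\mu\Big(\bigcap_{i=0}^{t} B S_{i}(\vec\alpha)\inv\Big)
\ge
\frac12\,\mu\Big(\bigcap_{i=0}^{t} B' S_{i}(\vec\alpha)\inv\Big)
\quad\text{for every }\vec\alpha\in\Fin^{m}_{<}.
\]
Taking IP-limits and applying the SZ property of $\mathcal{B}_{n}$ to $B'$ --- with whatever constant $a(B',m,\{S_{i}\}_{i})$ it happens to produce --- yields $\IPlim_{\vec\alpha}\mu(\cap_{i}BS_{i}(\vec\alpha)\inv)\ge\tfrac12 a(B',m,\{S_{i}\}_{i})>0$. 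Here the quality of approximation ($\eta$, depending only on $t$) is fixed \emph{before} $B'$ is chosen, and no lower bound for the base factor ever needs to be known in advance; this is precisely what dissolves the quantifier circularity, and it is the point your proposal is missing.
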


\begin{theorem}
\label{thm:SZ}
The identity factor $X\to X$ has the SZ property.
\end{theorem}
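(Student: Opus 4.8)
The plan is to run Furstenberg's structure induction, using the maximal SZ factor as a base point and a $K$-primitive extension as the inductive step. First I would reduce to the separable case: the SZ property concerns one set $A$ and one finite family $\{S_{i}\}\subset\PE{F}{m}\cap\FE$ at a time, and since $\FE$ is countable (Definition~\ref{def:good}) only countably many transformations are relevant, so we may pass to the separable $G$-invariant factor they generate. By Lemma~\ref{lem:maximal-SZ-factor} this separable system admits a maximal factor $Y$ with the SZ property (the one-point factor is trivially SZ, so the family of SZ factors is non-empty). Assume for contradiction that $Y$ is a proper factor of $X$. By Theorem~\ref{thm:primitive-extension} there is a subgroup $K\leq F$ and an intermediate factor $X\to Z\to Y$ with $Z\to Y$ proper and, wlog, $K$-primitive. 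The entire content of the proof is then to show that $Z$ again has the SZ property, which contradicts the maximality of $Y$; hence $Y=X$ and the identity factor is SZ.

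It remains to lift the SZ property across the primitive extension $Z\to Y$. Fix $B\in\mathcal{C}$ with $\gamma(B)>0$ and $\{S_{0}\equiv 1_{G},S_{1},\dots,S_{t}\}\subset\PE{F}{m}\cap\FE$; writing the correlation as $\gamma(\cap_{i} B S_{i}(\vec\alpha)\inv)=\int_{Z}\prod_{i=0}^{t} S_{i}(\vec\alpha)1_{B}\,\dif\gamma$, I must bound its IP-limit from below. Two reductions prepare the ground. By Lemma~\ref{lem:ap-dense} we may wlog assume $\AP(Z,Y,K)$ is dense in $L^{2}(Z)$, and then Lemma~\ref{lem:indicator-ap} lets us replace $B$ by a subset $B'\subset B$ with $\gamma(B\setminus B')$ as small as desired and $1_{B'}\in\AP(Z,Y,K)$; since $B'\subset B$ gives $\cap_{i} B'S_{i}(\vec\alpha)\inv\subset\cap_{i} BS_{i}(\vec\alpha)\inv$, a lower bound for $B'$ suffices, and I rename $B'$ back to $B$. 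Next, $K$ is closed under conjugation by constants (Lemma~\ref{lem:K-conj-inv}), so the Decomposition axiom of Definition~\ref{def:good} applies and yields $S_{i}=T_{k_{i}}R_{i}$ with $\{T_{k}\}_{k=0}^{v-1}\subset\PE{F}{m}\cap\FE$ wlog $K$-mixing, $\{R_{i}\}\subset\PE{K}{m}\cap\FE$, and $R_{0}=T_{0}\equiv 1_{G}$. Grouping factors by $k$ and using that each Koopman operator is multiplicative, the correlation becomes
\[
\int_{Z} \prod_{k=0}^{v-1} T_{k}(\vec\alpha)\Big(\prod_{i:k_{i}=k} R_{i}(\vec\alpha)1_{B}\Big)\dif\gamma.
\]

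The heart of the argument is to choose $\vec\alpha$ well, and this is where the multiparameter nilpotent Hales--Jewett theorem enters through Corollary~\ref{cor:pair-color}, applied to the families $(R_{i})$ and $(W_{k})=(T_{k})$. The compact directions $R_{i}\in\PE{K}{m}$ move $1_{B}$ only inside a totally bounded set on most fibers over $Y$: by $K$-almost periodicity the orbit functions lie, off a set of small $\nu$-measure, within $\epsilon$ of finitely many prototypes $g_{1},\dots,g_{l}$. I would use exactly this finite data to define the coloring fed into Corollary~\ref{cor:pair-color}: a pair $(L_{a},M_{a})$ is colored by the prototype-indices of the fiberwise functions $L_{a}R_{i}[\vec\beta]1_{B}$ together with the base-level recurrence behaviour attached to $M_{a}T_{k}[\vec\beta]L_{a}\inv$. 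The resulting monochrome configuration $(L_{a}R_{i}[\vec\beta],\,M_{a}T_{k}[\vec\beta]L_{a}\inv)_{i,k}$ simultaneously (i) aligns all the compact parts, so that the inner products $\prod_{i:k_{i}=k}R_{i}(\cdot)1_{B}$ stay uniformly close to fixed functions, and (ii) hands the $T_{k}$-directions, which are $K$-mixing, over to the base. The multiparameter multiple mixing Theorem~\ref{thm:mwm} then replaces the fiber correlation of the $T_{k}(\vec\alpha)$-images by the correlation of their conditional expectations on $Y$, up to an IP-small error, and on $Y$ the SZ property supplies a positive lower bound $a>0$; the aligned compact parts contribute only a controlled $\epsilon$-loss. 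Choosing $\epsilon$ small relative to $a$ produces the required strictly positive $\IPlim$.

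The main obstacle is the bookkeeping that makes these four ingredients cooperate on a single IP-ring. Each of Lemmas~\ref{lem:ap-dense} and~\ref{lem:indicator-ap}, Theorem~\ref{thm:mwm}, and the decomposition holds only after passing to a sub-IP-ring, and the Milliken--Taylor theorem~\ref{thm:milliken-taylor} is needed to extract the limits; one must interleave these passages so that the coloring in Corollary~\ref{cor:pair-color} is defined on a ring where the almost-periodicity prototypes and the base-space recurrence constants are simultaneously valid. The genuinely delicate point is the construction of that coloring: it has to encode the fiberwise almost-periodicity class of the $K$-orbit of $1_{B}$ \emph{and} the base recurrence data for the mixing directions in such a way that a single monochrome cell $(a,\vec\beta)$ controls both kinds of behaviour at once. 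Finally, because Corollary~\ref{cor:pair-color} gives no control over the number $w$ of candidate pairs, the lower bound obtained is allowed to depend on all of $B$, $m$ and $\{S_{i}\}$, which is exactly the weakening already anticipated in the definition of the SZ property.
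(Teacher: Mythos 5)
Your plan reproduces the paper's proof essentially step for step: the maximal SZ factor plus a $K$-primitive extension for the contradiction scheme, Lemmas~\ref{lem:K-conj-inv}, \ref{lem:ap-dense} and~\ref{lem:indicator-ap} to reduce to an almost periodic indicator, the decomposition axiom to write $S_{i}=W_{k_{i}}R_{i}$ with $\{W_{k}\}$ $K$-mixing, Corollary~\ref{cor:pair-color} with a coloring by almost-periodicity prototype indices, Theorem~\ref{thm:mwm} to push the mixing directions down to $Y$, and the SZ property of $Y$ (applied to the family $\{M_{i}W_{k}[\vec\beta]\}$) for the positive lower bound, including the observation that the constant cannot be uniform because $w$ is uncontrolled. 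The one imprecision is your description of the coloring: in the paper the color of a pair $(L_{b},M_{b})$ is just the prototype index of $L_{b}(\vec\alpha)1_{A}$ on the fiber over $yM_{b}(\vec\alpha)$, while the base recurrence data is not encoded in the color but is used beforehand, via the SZ property of $Y$, to select the positive-measure set of points $y$ at which that coloring is formed --- which is exactly the cooperation requirement you flag as the delicate point.
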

This is the central result of the present article and generalizes \cite[Theorem 1.3]{MR1692634}.
\begin{proof}
By Lemma~\ref{lem:maximal-SZ-factor} there exists a maximal factor $X\to Y$ with the SZ property.
Assume that $X\neq Y$, then by Theorem~\ref{thm:primitive-extension} wlog there exists a subgroup $K\leq F$ and a factor $X\to Z$ such that $(Z,\calC,\lambda)\to (Y,\calB,\nu)$ is a proper $K$-primitive extension.
We will show that $Z$ also has the SZ property, thereby contradicting maximality of $Y$.

Let $A\in\calC$ with $\lambda(A)>0$ and $\{S_{i}\}_{i=0}^{t} \subset\PE{F}{m}\cap\FE$ be a finite set with $S_{0}\equiv 1_{G}$.
We have to show
\begin{equation}
\label{eq:SZ-desired}
\IPlim_{\vec\alpha \in \Fin^{m}_{<}} \mu\left( \cap_{i=0}^{t} A S_{i}(\vec\alpha)\inv \right) > 0.
\end{equation}
By Lemma~\ref{lem:K-conj-inv} we may wlog assume that $K$ is an FVIP group and by Lemma~\ref{lem:ap-dense} that $\AP$ is dense in $L^{2}(Z)$.
Note that $\FE$ is still good with respect to the new IP-ring implied in the ``wlog'' notation.
Thus wlog we have a $K$-mixing set $\{W_{k}\}_{k=0}^{v-1}$ and polynomial expressions $R_{i}\in\PE{K}{m}\cap\FE$ with $R_{0}\equiv 1_{G}$ such that $S_{i}=W_{k_{i}}R_{i}$ for some $0\leq k_{i}<v$.

By Lemma~\ref{lem:indicator-ap} we may assume $1_{A}\in \AP$ after passing to a subset of $A$ if necessary.
There exist $c=c(\lambda(A))>0$ and a set $B\in\calB$ such that $\nu(B)>c$ and $\lambda_{y}(A)>c$ for every $y\in B$.
Pick $0 < \epsilon < \min(c/2,c^{v}/(4(t+1)))$.

By Corollary~\ref{cor:pair-color} there exist $N,w\in\N$ and
\[
\{L_{i},M_{i}\}_{i=1}^{w}\subset(\PE{K}{N}\cap\FE)\times(\PE{F}{N}\cap\FE)
\]
such that for every $l$-coloring of $\{L_{i},M_{i}\}$ there exists a number $a$ and sets $\beta_{1}<\dots<\beta_{m}\subset N$ such that the set $\{L_{a}R_{i}[\vec\beta],M_{a}W_{k}[\vec\beta]L_{a}\inv\}_{0\leq i\leq t,0\leq k< v}$ is monochrome (and in particular contained in the set $\{L_{i},M_{i}\}$).

Since $f=1_{A}\in \AP$ there exist functions $g_{1},\dots,g_{l}\in L^{2}(Z)$ and a set $D\in\calB$ such that $\nu(D)<\epsilon$ and for every $\delta>0$ and $T\in\PE{K}{N}\cap\FE$ there exists $\alpha_{0}$ such that for every $\alpha_{0}<\vec\alpha\in\Fin^{N}_{<}$ there exists a set $E=E(\vec\alpha)\in\calB$ with $\nu(E)<\delta$ such that for every $y\in (D\cup E)^{\complement}$ there exists $j$ such that $\norm{T(\vec\alpha)f-g_{j}}_{y}<\epsilon$.
Let $B'=B\cap D^{\complement}$, so that $\nu(B')>c/2$.

Let $Q=\abs{\Fin(N)^{m}_{<}}$ be the number of possible choices of $\vec\beta\in\Fin(N)^{m}_{<}$ and
\[
a_{1}:=a(B',N,\{1_{G}\}\cup \{M_{i}W_{k}[\vec\beta]\}_{1\leq i\leq w, k<v, \vec\beta\in\Fin^{m}_{<}(N)})>0.
\]

Using this with $\delta=a_{1}/2w^{2}$ and $T=L_{1},\dots,L_{w}$ we obtain wlog for every $\vec\alpha\in\Fin^{N}_{<}$ a set $E=E(\vec\alpha)\in\calB$ with $\nu(E)<a_{1}/2w$ such that for every $y\in (D\cup E)^{\complement}$ and every $i=1,\dots,w$ there exists $j=j(y,i)$ such that
\begin{equation}
\label{eq:ap}
\norm{ L_{i}(\vec\alpha)f-g_{j} }_{y} < \epsilon
\text{ for every } 1\leq i\leq w.
\end{equation}
By Theorem~\ref{thm:mwm} we may also wlog assume that for every $\vec\alpha\in\Fin^{N}_{<}$ we have
\begin{equation}
\label{eq:assumption-mixing}
\norm[\big]{ \E(\prod_{k<v}W_{k}(\vec\alpha)f | Y) - \prod_{k<v}W_{k}(\vec\alpha)\E(f | Y) } < c^{v} (a_{1}/2wQ)^{1/2}/4.
\end{equation}
Recall that we have to show \eqref{eq:SZ-desired}.
To this end it suffices to find $a(A,m,\{S_{i}\}_{0\leq i\leq t})$ such that for an arbitrary sub-IP-ring there exists $\vec\gamma\in\Fin^{m}_{<}$ with
\[
\mu\left( \cap_{i=0}^{t} A S_{i}(\vec\gamma)\inv \right)
> a(A,m,\{S_{i}\}_{0\leq i\leq t}) > 0,
\]
so fix a sub-IP-ring $\Fin$.
By definition of $a_{1}$ there exists a tuple $\vec\alpha\in\Fin^{N}_{<}$ (that will remain fixed) such that
\[
\nu\left( C_{0} \right) > a_{1}, \text{ where } C_{0}:=\cap_{i=1,\dots,w,k<v,\vec\beta} B' W_{k}[\vec\beta](\vec\alpha)\inv M_{i}(\vec\alpha)\inv.
\]
Let
\[
C := C_{0} \setminus \cup_{i=1}^{w} E(\vec\alpha) M_{i}(\vec\alpha)\inv,
\]
so that $\nu(C)>a_{1}/2$.
For every $y\in C$ consider an $l$-coloring of $\{L_{i},M_{i}\}_{i}$ given by $i\in [1,w] \mapsto j(yM_{i}(\vec\alpha),i)$ determined by \eqref{eq:ap}.
By the assumptions on $\{L_{i},M_{i}\}$ there exist $j(y)$, $a\in [1,w]$ and $\beta_{1}<\dots<\beta_{m}\subset N$ such that
\[
\norm{ L_{a}(\vec\alpha)R_{i}[\vec\beta](\vec\alpha)f-g_{j(y)} }_{yM_{a}(\vec\alpha)W_{k}(\vec\beta)L_{a}(\vec\alpha)\inv} < \epsilon
\text{ for every } 0\leq i\leq t,0\leq k<v.
\]
This can also be written as
\[
\norm{ W_{k}[\vec\beta](\vec\alpha)R_{i}[\vec\beta](\vec\alpha)f - W_{k}[\vec\beta](\vec\alpha)L_{a}(\vec\alpha)\inv g_{j(y)} }_{yM_{a}(\vec\alpha)} < \epsilon
\text{ for every } 0\leq i\leq t,0\leq k<v.
\]
Since this holds for every $i,k$ and we have $R_{0}\equiv 1_{G}$, this implies
\[
\norm{ (W_{k}R_{i})[\vec\beta](\vec\alpha)f-W_{k}[\vec\beta](\vec\alpha)f }_{y M_{a}(\vec\alpha)} < 2\epsilon.
\]
Passing to a subset $C'\subset C$ with measure at least $a_{1}/2wQ$, we may assume that $a$ and $\vec\beta$ do not depend on $y$.
Thus we obtain a set $B'':=C' M_{a}(\vec\alpha)$ of measure at least $a_{1}/2wQ$ and a tuple $(\gamma_{j}=\cup_{i\in\beta_{j}}\alpha_{i})_{j=1}^{m}$ such that
\[
\norm{ W_{k}R_{i}(\vec\gamma)f-W_{k}(\vec\gamma)f }_{y} < 2\epsilon
\]
for every $y\in B''$, $i$ and $k$.
Recall that $f$ is $\{0,1\}$-valued, so that
\[
\norm[\big]{ \prod_{i=0}^{t}S_{i}(\vec\gamma)f - \prod_{k<v}W_{k}(\vec\gamma)f }_{y}\\
=
\norm[\big]{ \prod_{i=0}^{t}W_{k_{i}}R_{i}(\vec\gamma)f - \prod_{i=0}^{t}W_{k_{i}}(\vec\gamma)f }_{y}\\
< 2(t+1)\epsilon
\]
for all $y\in B''$.
Moreover, since $B'' \subset \cap_{j} B' W_{j}(\vec\gamma)\inv$, one has
\[
\abs[\big]{ \prod_{k<v} W_{k}(\vec\gamma) \E(f | Y)(y) } \geq c^{v}
\]
for every $y\in B''$.
Therefore and by \eqref{eq:assumption-mixing} we obtain
\begin{align*}
\norm[\big]{ \prod_{i=0}^{t}S_{i}(\vec\gamma)f }
&\geq \norm[\big]{ \prod_{i=0}^{t}S_{i}(\vec\gamma)f }_{L^{2}(B'')}\\
&> \norm[\big]{ \prod_{k<v}W_{k}(\vec\gamma)f }_{L^{2}(B'')} - 2(t+1)\epsilon\nu(B'')^{1/2}\\
&\geq \norm[\big]{ \E(\prod_{k<v}W_{k}(\vec\gamma)f | Y) }_{L^{2}(B'')} - 2(t+1)\epsilon\nu(B'')^{1/2}\\
&\geq
\norm[\big]{ \prod_{k<v}W_{k}(\vec\gamma)\E(f | Y) }_{L^{2}(B'')}
-
\norm[\big]{ \E(\prod_{k<v}W_{k}(\vec\gamma)f | Y) - \prod_{k<v}W_{k}(\vec\gamma)\E(f | Y) }\\
&\qquad - 2(t+1)\epsilon\nu(B'')^{1/2}\\
&> c^{v} (a_{1}/2wQ)^{1/2}/4 =: a(A,m,\{S_{i}\}_{i})^{1/2}.
\qedhere
\end{align*}
\end{proof}

\subsection{Good groups of polynomial expressions}
As we have already mentioned, good groups are just technical vehicles.
The point is that we can perform all operations that we are interested in within a countable set of polynomial expressions, so that we can wlog assume the existence of all IP-limits that we encounter.

The only non-trivial property of good groups is the decomposition property.
However, the following lemma essentially shows that it is always satisfied.
\begin{proposition}
\label{prop:decomposition-mixing-compact}
Let $K\leq F$ be a subgroup that is invariant under conjugation by constants, $m\in\N$ and $\{S_{i}\}_{i=0}^{t} \subset\PE{F}{m}$ be any finite set with $S_{0}\equiv 1_{G}$.
Then there exists a set $\{T_{k}\}_{k=0}^{v}\subset\PE{F}{m}$ that is wlog $K$-mixing and decompositions $S_{i}=T_{k_{i}}R_{i}$ such that $R_{i}\in\PE{K}{m}$.
\end{proposition}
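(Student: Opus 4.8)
The plan is to argue by induction on $m$, peeling off the last variable $\alpha_{m+1}$ at each step so that the level‑by‑level alternative in Definition~\ref{def:K-mixing-PE} is established one level at a time. The base case $m=0$ is trivial since $\PE{F}{0}=\{1_{G}\}$. For the inductive step write each
\[
S_{i}(\alpha_{1},\dots,\alpha_{m+1}) = W_{i}^{\vec\alpha}(\alpha_{m+1})\, S_{i}'(\vec\alpha),
\quad \vec\alpha=(\alpha_{1},\dots,\alpha_{m}),
\]
with top coefficient $W_{i}^{\vec\alpha}\in F$ and lower part $S_{i}'\in\PE{F}{m}$, noting that $S_{0}'\equiv 1_{G}$.

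First I would treat the top level. For each pair $i\neq j$ I color $\vec\alpha\in\Fin^{m}_{<}$ by which of the three alternatives $(W_{i}^{\vec\alpha})\inv W_{j}^{\vec\alpha}=1_{G}$, $(W_{i}^{\vec\alpha})\inv W_{j}^{\vec\alpha}\in K\setminus\{1_{G}\}$, or $(W_{i}^{\vec\alpha})\inv W_{j}^{\vec\alpha}\notin K$ holds; by the Milliken--Taylor theorem~\ref{thm:milliken-taylor} we may wlog assume every such coloring is monochrome. The relation $i\sim_{\mathrm{top}}j :\iff (W_{i}^{\vec\alpha})\inv W_{j}^{\vec\alpha}\in K$ is then an equivalence relation, because $K$ is a subgroup, and by monochromaticity the top coefficients are mixing across its classes and differ by an element of $K$ within each class. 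For each $\sim_{\mathrm{top}}$-class $c$ I fix \emph{one} top coefficient $V_{c}$ borrowed from a member of $c$, taking $V_{c}\equiv 1_{G}$ for the class of $S_{0}$.

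Next I would apply the induction hypothesis to $\{S_{i}'\}_{i=0}^{t}$, obtaining, after passing to a common further sub-IP-ring, a $K$-mixing set $\{U_{l}\}\subset\PE{F}{m}$ with $U_{0}\equiv 1_{G}$ and decompositions $S_{i}'=U_{l_{i}}Q_{i}$, $Q_{i}\in\PE{K}{m}$. Declaring $i\equiv j$ to mean $i\sim_{\mathrm{top}}j$ and $l_{i}=l_{j}$, I pick for each $\equiv$-class the representative
\[
T_{k}(\vec\alpha,\alpha_{m+1}) := V_{c(k)}^{\vec\alpha}(\alpha_{m+1})\, U_{l(k)}(\vec\alpha),
\]
with $T_{0}\equiv 1_{G}$. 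The $K$-mixing of $\{T_{k}\}$ is checked levelwise for every pair $k\neq k'$: at level $m$ the representatives sharing a $\sim_{\mathrm{top}}$-class have \emph{identical} top coefficient $V_{c}$, while those in different classes have mixing top coefficients; at each level $r<m$ the coefficients of $T_{k}$ coincide with those of $U_{l(k)}$, so the alternative is inherited from the $K$-mixing of $\{U_{l}\}$. It remains to produce $S_{i}=T_{k_{i}}R_{i}$ with $R_{i}\in\PE{K}{m+1}$; a direct computation with the recursive product, for fixed lower variables, gives
\[
R_{i}(\vec\alpha,\alpha_{m+1}) = U_{l_{i}}(\vec\alpha)\inv \big((V_{c(i)}^{\vec\alpha})\inv W_{i}^{\vec\alpha}\big)(\alpha_{m+1})\, U_{l_{i}}(\vec\alpha)\, Q_{i}(\vec\alpha),
\]
whose lower part is $Q_{i}\in\PE{K}{m}$ and whose top coefficient is the conjugate by the constant $U_{l_{i}}(\vec\alpha)$ of $(V_{c(i)}^{\vec\alpha})\inv W_{i}^{\vec\alpha}\in K$. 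Since $K$ is invariant under conjugation by constants this top coefficient again lies in $K$, so $R_{i}\in\PE{K}{m+1}$ by definition.

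I expect the main obstacle to be exactly the point that forces the representatives to share a common top coefficient $V_{c}$ within each $\sim_{\mathrm{top}}$-class. The naive approach of grouping the $S_{i}$ by the single global condition $S_{i}\inv S_{j}\in\PE{K}{m+1}$ fails, because two expressions lying in the same left coset of $\PE{K}{m+1}$ may nonetheless differ at some intermediate level by a nonidentity element of $K$, which is neither the ``equal'' nor the ``mixing'' alternative demanded by Definition~\ref{def:K-mixing-PE}. The resolution is to absorb \emph{all} such $K$-valued coefficient discrepancies into the $R_{i}$ in a hierarchical fashion, which is precisely what the per-level peeling combined with the per-class choice of $V_{c}$ achieves; the attendant conjugation bookkeeping is then routine thanks to the conjugation-invariance of $K$.
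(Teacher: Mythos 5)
Your proof is correct and takes essentially the same route as the paper's: induction on the number of variables, Milliken--Taylor monochromatization of the pairwise alternative for the top coefficients, the induction hypothesis applied to the lower parts $S_{i}(\cdot,\emptyset)$, and absorption of the in-$K$ top-coefficient discrepancies into $R_{i}$ after conjugation by the constant value of the lower representative, which is exactly where invariance of $K$ under conjugation by constants is used in the paper as well. The differences are purely cosmetic (you apply Milliken--Taylor before rather than after the induction hypothesis, and you spell out the equivalence-relation and levelwise verification of Definition~\ref{def:K-mixing-PE} that the paper leaves implicit).
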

\begin{proof}
We argue by induction on $m$.
The claim is trivial for $m=0$.
Assume that it holds for $m$, we show its validity for $m+1$.
For brevity we write $\vec\alpha=(\alpha_{1},\dots,\alpha_{m})$, $\alpha=\alpha_{m+1}$.

Consider the maps $\tilde S_{i}(\vec\alpha) = S_{i}(\vec\alpha,\emptyset)$.
By the induction hypothesis there exists a set $\{\tilde T_{k}\}_{k=0}^{\tilde v}\subset\PE{F}{m}$ that is wlog $K$-mixing and decompositions $\tilde S_{i}=\tilde T_{k_{i}} \tilde R_{i}$ such that $\tilde R_{i}\in\PE{K}{m}$.
Then $S_{i}(\vec\alpha,\alpha)=W_{i}^{\vec\alpha}(\alpha) \tilde T_{k_{i}}(\vec\alpha) \tilde R_{i}(\vec\alpha)$.

Let $i<j$.
By the Milliken--Taylor Theorem~\ref{thm:milliken-taylor} we may wlog assume that either $(W_{i}^{\vec\alpha})\inv W_{j}^{\vec\alpha} \not\in K$ for all $\vec\alpha\in\Fin^{m}_{<}$ (in which case we do nothing) or $(W_{i}^{\vec\alpha})\inv W_{j}^{\vec\alpha} \in K$ for all $\vec\alpha\in\Fin^{m}_{<}$.
In the latter case we have $W_{j}^{\vec\alpha} = W_{i}^{\vec\alpha} R^{\vec\alpha}$ with some $R^{\vec\alpha} \in K$ and we can write
\[
S_{j}(\vec\alpha,\alpha) = W_{i}^{\vec\alpha}(\alpha) \tilde T_{k_{j}}(\vec\alpha)
\underbrace{(\tilde T_{k_{j}}(\vec\alpha)\inv R^{\vec\alpha} \tilde T_{k_{j}}(\vec\alpha))}_{\in K}(\alpha) \tilde R_{j}(\vec\alpha),
\quad
\vec\alpha\in\Fin^{m}_{<}.
\]
Doing this for all pairs $i<j$ we obtain the requested decomposition with the set $\{T_{k}\}$ consisting of all products $W_{i}^{\vec\alpha}\tilde T_{k_{j}}(\vec\alpha)$ that occur above.
\end{proof}

\begin{corollary}
\label{cor:finite-good}
Every finite subset of $\PE{F}{\omega}$ is wlog contained in a good subgroup of $\PE{\polyn}{\omega}$.
\end{corollary}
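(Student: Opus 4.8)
The plan is to build the desired good group by closing the given finite set under all the operations required by Definition~\ref{def:good}, keeping everything countable, and then passing via a diagonal argument to a single sub-IP-ring on which all the required decompositions become $K$-mixing. I would begin by recording the facts that make this closure countable. Since $\PE{\polyn}{\omega}$ is a group under pointwise operations, the subgroup generated by a countable set is again countable. The substitution operation $g\mapsto g[\vec\beta]$ is indexed by the countably many tuples $\vec\beta\in\bigcup_{m}\Fin^{m}_{<}$, and by Proposition~\ref{prop:substitution-poly} (applied with the VIP group $\polyn$) each $g[\vec\beta]$ again lies in $\PE{\polyn}{\omega}$; hence closing a countable set under substitutions keeps it countable. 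Finally, since $F$ is a finitely generated nilpotent group it is polycyclic and therefore Noetherian, so every subgroup of $F$ is finitely generated; as $F$ is itself countable, it has only countably many subgroups, and in particular only countably many that are invariant under conjugation by constants.

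Next I would run the closure in $\omega$ stages. Start with $\FE_{0}$ the subgroup generated by the given finite set and closed under substitutions, together with the ambient IP-ring $\Fin_{0}=\Fin$. Fix an enumeration of all pairs $(K,\sigma)$, where $K$ ranges over the countably many conjugation-invariant subgroups of $F$ and $\sigma$ over finite tuples of positive integers used to select elements of the group as they are created. At stage $n$ handle the first not-yet-treated task whose selected set $\{S_{i}\}$ (with $S_{0}\equiv 1_{G}$ adjoined) already lies in some $\PE{F}{m}\cap\FE_{n}$. For such a task, Proposition~\ref{prop:decomposition-mixing-compact} produces, after passing to a sub-IP-ring $\Fin_{n+1}\subset\Fin_{n}$, a set $\{T_{k}\}\subset\PE{F}{m}$ that is $K$-mixing on $\Fin_{n+1}$ together with decompositions $S_{i}=T_{k_{i}}R_{i}$ with $R_{i}\in\PE{K}{m}$. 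I would adjoin all these $T_{k}$ and $R_{i}$ and re-close under group operations and substitutions to obtain a countable $\FE_{n+1}$. A standard bookkeeping argument guarantees that every task is eventually handled, since each element is created at a finite stage and each task selects only finitely many elements. Set $\FE=\bigcup_{n}\FE_{n}$; this is countable, contains the original finite set, and is closed under group operations and substitutions.

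The sub-IP-rings form a nested chain $\Fin=\Fin_{0}\supset\Fin_{1}\supset\cdots$, and I would pass to a diagonal sub-IP-ring $\Fin'$ refining all of them (cf.\ \cite{MR833409}*{Lemma 1.4}). The point that makes everything cohere is that the $K$-mixing condition of Definition~\ref{def:K-mixing-PE} is of the form ``for all $\vec\alpha$'', so once a decomposition $\{T_{k}\}$ is $K$-mixing on $\Fin_{n+1}$ it remains $K$-mixing on every sub-IP-ring, in particular on $\Fin'$ and on every sub-IP-ring thereof. Thus, working over $\Fin'$, the group $\FE$ satisfies the decomposition property of Definition~\ref{def:good} for every conjugation-invariant $K\leq F$ and every finite subset of $\PE{F}{m}\cap\FE$, and by the same persistence it continues to satisfy it on all further sub-IP-rings, as demanded. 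Since $\FE$ is also countable and closed under substitutions, it is good with respect to $\Fin'$, which is exactly the asserted ``wlog'' conclusion.

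The only real subtlety, and the step I expect to require the most care, is the bookkeeping: the decomposition tasks refer to the very group being constructed, so the enumeration must be arranged so that each task is scheduled only after all of its input elements have appeared, yet every task is reached in countably many stages. The persistence of $K$-mixing under passage to sub-IP-rings is what removes any need to revisit a task after it has been handled, so that a single diagonal IP-ring suffices and the argument closes off cleanly after $\omega$ steps.
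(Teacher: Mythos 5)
Your proposal is correct and is essentially the paper's own argument: countability of the subgroups of the Noetherian group $F$ and of the finite tuples involved, one application of Proposition~\ref{prop:decomposition-mixing-compact} per decomposition task yielding a countable descending chain of sub-IP-rings, and a final diagonal procedure as in \cite{MR833409}*{Lemma 1.4}; the paper merely leaves implicit the iterative closure and bookkeeping of the group construction that you spell out. One small imprecision: the diagonal ring $\Fin'$ is not a sub-IP-ring of each $\Fin_{n+1}$ (only a tail of it is), so $K$-mixing persists only to tails of $\Fin'$ and of its sub-IP-rings rather than to $\Fin'$ itself, which is precisely what the ``wlog'' in the decomposition property of Definition~\ref{def:good} is there to absorb.
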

\begin{proof}
Since $F$ is a countable Noetherian group, it has at most countably many subgroups.
Moreover, each $\Fin^{m}_{<}$ is countable, and there are only countably many finite tuples in any countable set.
Hence we can use Proposition~\ref{prop:decomposition-mixing-compact} to obtain a countable descending chain of sub-IP-rings such that the decomposition property holds for each tuple for one of these sub-IP-rings.
The required sub-IP-ring is then obtained by a diagonal procedure, cf.\ \cite[Lemma 1.4]{MR833409}.
\end{proof}
Thus the good group is not really relevant for our multiple recurrence theorem, which we can now formulate as follows.
\begin{theorem}
\label{thm:SZ-general}
Let $G$ be a nilpotent group and $F\leq\polyn$ an FVIP group.
Consider a right measure-preserving action of $G$ on an arbitrary (not necessarily regular) probability space $(X,\calA,\mu)$.
Let $S_{0},\dots,S_{t}\in\PE{F}{m}$ be arbitrary polynomial expressions and $A\in\calA$ with $\mu(A)>0$.
Then there exists a sub-IP-ring $\Fin' \subset \Fin$ such that
\[
\IPlim_{\vec\alpha \in (\Fin')^{m}_{<}} \mu\left( \cap_{i=0}^{t} A S_{i}(\vec\alpha)\inv \right) > 0.
\]
\end{theorem}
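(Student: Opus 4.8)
The plan is to deduce the general statement from Theorem~\ref{thm:SZ}, whose proof was carried out under three standing restrictions that are absent here: the acting group was countable and the space was separable and regular, the polynomial expressions were assumed to lie in a fixed good group $\FE$, and the first expression satisfied $S_0\equiv 1_G$. I will remove these restrictions one at a time, each time passing (where necessary) to a sub-IP-ring; since a sub-IP-ring of a sub-IP-ring is again a sub-IP-ring, the successive passages compose into the single $\Fin'$ asserted in the statement.

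First I would reduce to a countable group acting on a separable regular system. Because $F$ is finitely generated, each of its finitely many generators is a map $\Fin\to G$ and hence takes only countably many values; therefore the subgroup $G'\leq G$ generated by all the values $S_i(\vec\alpha)$, $0\leq i\leq t$, $\vec\alpha\in\Fin^m_<$, is countable and nilpotent, and it carries the restricted filtration $\Gb'$ given by $G'_i:=G'\cap G_i$. Every transformation occurring in $\mu\big(\cap_{i=0}^t A S_i(\vec\alpha)\inv\big)$ lies in $G'$, so I may replace $(G,\Gb)$ by $(G',\Gb')$ and regard $F$ and the $S_i$ as taking values in $G'$; countability of $G'$ is precisely what makes the topological input Corollary~\ref{cor:pair-color} (via Corollary~\ref{cor:color-multi-nil-hj}) applicable. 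Next, let $\mathcal{A}_0$ be the $G'$-invariant sub-$\sigma$-algebra of $\mathcal{A}$ generated by $A$; it is generated by the countable family $\{Ag:g\in G'\}$, hence countably generated, so the restriction of the system to $\mathcal{A}_0$ is separable and admits a separable regular model. Since every set $A S_i(\vec\alpha)\inv$ lies in $\mathcal{A}_0$, the measures $\mu\big(\cap_i A S_i(\vec\alpha)\inv\big)$ are unchanged upon passing to this model. From now on I may therefore assume $G$ countable and $(X,\mathcal{A},\mu)$ separable and regular.

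It remains to normalize and to enter a good group. As $F$ is a VIP group it is closed under conjugation by constants, so $\PE{F}{m}$ is a group under pointwise operations and $S_0\inv S_i\in\PE{F}{m}$; using that $G$ acts by measure-preserving transformations,
\[
\mu\big(\cap_{i=0}^t A S_i(\vec\alpha)\inv\big)
= \mu\big(\big(\cap_{i=0}^t A S_i(\vec\alpha)\inv\big)S_0(\vec\alpha)\big)
= \mu\big(\cap_{i=0}^t A (S_0\inv S_i)(\vec\alpha)\inv\big),
\]
so replacing each $S_i$ by $S_0\inv S_i$ leaves the quantity of interest untouched and arranges $S_0\equiv 1_G$. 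By Corollary~\ref{cor:finite-good} the finite set $\{S_0,\dots,S_t\}\subset\PE{F}{\omega}$ is, after passing to a sub-IP-ring, contained in a good group $\FE\leq\PE{\polyn}{\omega}$, and since each $S_i$ lies in $\PE{F}{m}$ we obtain $S_i\in\PE{F}{m}\cap\FE$. All hypotheses of Theorem~\ref{thm:SZ} are now met, so the identity factor $X\to X$ has the SZ property; applying it with $B=A$ to the set $\{S_i\}$ gives, on the sub-IP-ring $\Fin'$ implicit in that theorem,
\[
\IPlim_{\vec\alpha\in(\Fin')^m_<} \mu\big(\cap_{i=0}^t A S_i(\vec\alpha)\inv\big) > 0,
\]
and tracing back through the two preceding reductions yields the conclusion.

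I expect the only genuinely delicate point to be the first reduction: one must check that restricting to the countable group $G'$ and to the countably generated invariant $\sigma$-algebra $\mathcal{A}_0$ yields a separable regular system on which the relevant intersections---and hence their measures and the value of the IP-limit---are faithfully represented, and that $F$ remains an FVIP group when reread inside $\polyn[{\Gb'}]$. The normalization and the good-group step are then purely formal, the substance of the theorem having already been established in Theorem~\ref{thm:SZ}.
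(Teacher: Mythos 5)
Your overall strategy---normalize $S_0\equiv 1_G$, enter a good group via Corollary~\ref{cor:finite-good}, reduce to a countable group acting on a separable regular system, and then invoke Theorem~\ref{thm:SZ}---is the same as the paper's, and your normalization step and good-group step are carried out correctly. The gap lies in your first reduction, and it is tied precisely to the point where your order of operations differs from the paper's.

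The group $G'$ generated by the values $S_i(\vec\alpha)$, $\vec\alpha\in\Fin^m_<$, is too small. Theorem~\ref{thm:SZ} is not a statement about the transformations $S_i(\vec\alpha)$ alone: the standing assumptions preceding it require the weak IP-limits $\wIPlim_\alpha g(\alpha)f$ to exist for \emph{every} $g\in F$, and its proof uses the action of the values of all elements of $F$ (construction of primitive extensions in Theorem~\ref{thm:primitive-extension}, the mixing/compact dichotomy for every $g\in F$) as well as of all elements of the good group $\FE$: the decomposition factors $W_k$, $R_i$ with $S_i=W_{k_i}R_i$, their substitutions $W_k[\vec\beta]$, $R_i[\vec\beta]$, the elements $R\in\PE{K}{\omega}\cap\FE$ in the definition of $\AP$, and the coloring data $L_a$, $M_a$ from Corollary~\ref{cor:pair-color}, all of which are applied to sets and functions on the space (e.g.\ $B'W_k[\vec\beta](\vec\alpha)\inv M_i(\vec\alpha)\inv$, $B''=C'M_a(\vec\alpha)$). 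None of these values need lie in your $G'$; indeed for $m\geq 2$ not even the values of the constituents $W^{\alpha_1,\dots,\alpha_k}$ of the $S_i$ are recoverable from the values of the $S_i$, since only the products $W^{\vec\alpha}(\alpha_m)S(\vec\alpha)$ with all coordinates nonempty occur. Consequently $F$ does \emph{not} remain an FVIP group when reread inside $\polyn[{\Gb'}]$ (its elements need not even map into $G'$), so the delicate point you flag at the end genuinely fails, and your $\sigma$-algebra $\mathcal{A}_0$ is not invariant under the transformations the proof of Theorem~\ref{thm:SZ} actually applies. This is why the paper performs the reductions in the opposite order: it first normalizes and fixes the countable good group $\FE$, and only then replaces $G$ by the countable group generated by the union of the ranges of elements of $\FE$, and $\mathcal{A}$ by a separable $\sigma$-algebra invariant under that group. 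Your ordering can be repaired, but only by enlarging $G'$: since $F$ is finitely generated, hence countable, the subgroup of $G$ generated by the ranges of \emph{all} elements of $F$ is countable, contains the values of every element of $\PE{F}{\omega}$ (hence of the $S_i$, of a good group chosen inside $\PE{F}{\omega}$, and of everything built from them in the proof of Theorem~\ref{thm:SZ}), and $F$ genuinely restricts to an FVIP group over it; with that choice the rest of your argument goes through.
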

\begin{proof}
We can assume $S_{0}\equiv 1_{G}$.
By Corollary~\ref{cor:finite-good} we may assume that $S_{0},\dots,S_{t}\in\FE$ for some good subgroup $\FE\leq\PE{\polyn}{m}$.
Then we can replace $G$ by a countable group that is generated by the union of ranges of elements of $\FE$.
Next, we can replace $\calA$ by a separable $G$-invariant $\sigma$-algebra generated by $A$.
Finally, we can assume that $X$ is regular and apply Theorem~\ref{thm:SZ}.
\end{proof}
Theorem~\ref{thm:SZ-intro} follows from Theorem~\ref{thm:SZ-general} and Lemma~\ref{lem:poly-fvip} with the filtration \eqref{eq:scalar-poly-filtration-2}, $d$ being the maximal degree of the generalized polynomials $p_{i,j}$.
By the Furstenberg correspondence principle we obtain the following combinatorial corollary.
\begin{corollary}
\label{cor:SZ-combinatorial}
Let $G$ be a finitely generated nilpotent group, $T_{1},\dots,T_{t}\in G$ and $p_{i,j}:\Z^{m}\to\Z$, $i=1,\dots,t$, $j=1,\dots,s$, be admissible generalized polynomials.
Then for every subset $E\subset G$ with positive upper Banach density the set
\[
\Big\{ \vec n\in\Z^{m} : \exists g\in G : g\prod_{i=1}^{t}T_{i}^{p_{i,j}(\vec n)}\in E, j=1,\dots,s \Big\}
\]
is FVIP* in $\Z^{m}$.
\end{corollary}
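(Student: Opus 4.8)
The plan is to deduce the statement from Theorem~\ref{thm:SZ-intro} by means of the Furstenberg correspondence principle, using that a finitely generated nilpotent group is amenable. First I would fix a F\o{}lner sequence for $G$ witnessing the upper Banach density $d^*(E)>0$ and run the standard correspondence construction adapted to right actions: embed $E$ as the point $1_E\in\{0,1\}^G$, let $G$ act by the shift on the right, and pass to a weak-$*$ limit of the empirical measures along the F\o{}lner sequence to obtain a $G$-invariant probability measure $\mu$ on $X=\{0,1\}^G$. Setting $A=\{x\in X:x(e_G)=1\}$ this yields a regular probability space with a right measure-preserving $G$-action, $\mu(A)=d^*(E)>0$, and the key inequality
\[
d^*\Big(\bigcap_{j=1}^s E\gamma_j\inv\Big) \geq \mu\Big(\bigcap_{j=1}^s A\gamma_j\inv\Big)
\]
for every finite family $\gamma_1,\dots,\gamma_s\in G$, where on the right $\gamma_j$ denotes the transformation it induces on $X$.

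Next I would specialize this inequality to $\gamma_j=\gamma_j(\vec n):=\prod_{i=1}^t T_i^{p_{i,j}(\vec n)}$. Whenever the right-hand measure is positive the set $\bigcap_{j=1}^s E\gamma_j(\vec n)\inv$ is nonempty, and any element $g$ of this intersection satisfies $g\gamma_j(\vec n)\in E$ for all $j$; hence such $\vec n$ lies in the set displayed in the statement. In other words, the combinatorial return set contains the measurable recurrence set
\[
R:=\Big\{\vec n\in\Z^m : \mu\Big(\bigcap_{j=1}^s A\big(\textstyle\prod_{i=1}^t T_i^{p_{i,j}(\vec n)}\big)\inv\Big)>0\Big\}.
\]

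To control $R$ I would apply Theorem~\ref{thm:SZ-intro} to the images of $T_1,\dots,T_t$ in the group of measure-preserving transformations of $X$. These images generate a nilpotent group, being a homomorphic image of the subgroup of $G$ that they generate, and the $p_{i,j}$ are admissible generalized polynomials by hypothesis, so Theorem~\ref{thm:SZ-intro} guarantees that $R$ is FVIP* in $\Z^m$. Since the class of FVIP* sets is upward closed --- any superset of a set meeting every FVIP-system again meets every FVIP-system --- the larger combinatorial return set is FVIP* as well, which is the assertion. (Equivalently one may invoke Theorem~\ref{thm:SZ-general} together with Lemma~\ref{lem:poly-fvip} in place of Theorem~\ref{thm:SZ-intro}.)

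The only point requiring genuine care is the first step: setting up the correspondence principle in the non-commutative amenable setting with the paper's right-action conventions, so that the density/measure inequality comes out in the direction displayed above and so that $A$ has measure exactly $d^*(E)$. Once this is in place, the remainder is a formal comparison of sets together with an appeal to the already-established recurrence theorem, so I expect no further obstacles.
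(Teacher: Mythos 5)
Your proposal is correct and takes essentially the same route as the paper: the paper's entire proof is the one-line remark that the corollary follows from Theorem~\ref{thm:SZ-intro} (itself obtained from Theorem~\ref{thm:SZ-general} and Lemma~\ref{lem:poly-fvip}) via the Furstenberg correspondence principle. Your write-up simply makes explicit the standard amenable-group correspondence construction (valid since a finitely generated nilpotent group is countable and amenable) and the upward closure of FVIP* sets, both of which are routine.
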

Observe that in Theorem~\ref{thm:gen-poly-FVIP} for (not necessarily admissible) generalized polynomials we can choose $n$ from a finite set that only depends on the generalized polynomial.
In view of this fact we have the following variant of Corollary~\ref{cor:SZ-combinatorial} for generalized polynomials.
\begin{corollary}
\label{cor:SZ-combinatorial-non-admissible}
Let $G$ be a finitely generated nilpotent group, $T_{1},\dots,T_{t}\in G$ and $p_{i,j}:\Z^{m}\to\Z$, $i=1,\dots,t$, $j=1,\dots,s$, be generalized polynomials.
Then there exist finite sets $S_{j}$, $j=1,\dots,s$, such that for every subset $E\subset G$ with positive upper Banach density the set
\[
\Big\{ \vec n\in\Z^{m} : \exists g\in G, \exists s_{j}\in S_{j} : gs_{j}\prod_{i=1}^{t}T_{i}^{p_{i,j}(\vec n)}\in E, j=1,\dots,s \Big\}
\]
is FVIP* in $\Z^{m}$.
\end{corollary}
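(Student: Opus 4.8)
The plan is to imitate the deduction of Corollary~\ref{cor:SZ-combinatorial}, pushing the only genuinely new phenomenon---the non-vanishing constant terms produced by general (inadmissible) generalized polynomials---into the finite sets $S_j$. Throughout I equip $G$ with the stretched lower central series filtration \eqref{eq:scalar-poly-filtration-2} used in the proof of Theorem~\ref{thm:SZ-intro}, with $d$ the maximal degree of the $p_{i,j}$. To prove the FVIP* property directly it suffices, given an arbitrary FVIP system $(\vec n_{\alpha})_{\alpha\in\Fin}$ in $\Z^{m}$, to exhibit one $\alpha$ with $\vec n_{\alpha}$ in the displayed set; passing to a sub-IP-ring along the way is harmless, since the range of $(\vec n_{\alpha})$ over any sub-IP-ring is contained in its range over $\Fin$.

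The algebraic heart is the separation of the constant shift. For each pair $(i,j)$, Theorem~\ref{thm:gen-poly-FVIP} together with the observation preceding this corollary lets me write, wlog, $p_{i,j}(\vec n_{\alpha}) = c_{i,j} + q_{i,j}(\alpha)$, where $(q_{i,j}(\alpha))_{\alpha}$ is a $\Z$-valued FVIP system and $c_{i,j}$ lies in a finite set $C_{i,j}\subset\Z$ that depends only on $p_{i,j}$. Hence $T_{i}^{p_{i,j}(\vec n_{\alpha})} = T_{i}^{c_{i,j}} T_{i}^{q_{i,j}(\alpha)}$, and, exactly as in the proof of Lemma~\ref{lem:poly-fvip} (composing the FVIP system $q_{i,j}$ with the filtration-preserving homomorphism $1\mapsto T_{i}$), each $(T_{i}^{q_{i,j}(\alpha)})_{\alpha}$ generates an FVIP group. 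I then pull the constants $T_{i}^{c_{i,j}}$ to the left one at a time, using that a conjugate of an FVIP system by a constant is again FVIP and that joins of FVIP groups are FVIP (Lemma~\ref{lem:fvip-plus-fvip}); the commutators that appear are absorbed into the FVIP factors by nilpotency. This yields $\prod_{i=1}^{t} T_{i}^{p_{i,j}(\vec n_{\alpha})} = \big(\prod_{i=1}^{t} T_{i}^{c_{i,j}}\big) P_{j}(\alpha)$ with $P_{j}$ an FVIP system, so that $P_{0}\equiv 1_{G}, P_{1},\dots,P_{s}\in \PE{F}{1}$ for a single FVIP group $F\leq\polyn$.

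Setting $S_{j} := \{ (\prod_{i} T_{i}^{c_{i}})\inv : c_{i}\in C_{i,j}\}$, a finite set depending only on $G$, the $T_{i}$, and $p_{i,j}$ (hence fixed before $E$ or the FVIP system is seen), and choosing $s_{j} := (\prod_{i} T_{i}^{c_{i,j}})\inv\in S_{j}$, I obtain $s_{j}\prod_{i} T_{i}^{p_{i,j}(\vec n_{\alpha})} = P_{j}(\alpha)$. I now apply Theorem~\ref{thm:SZ-general} with the single IP-parameter $m=1$ to the expressions $P_{0},\dots,P_{s}\in\PE{F}{1}$, using the measure-preserving $G$-system and the set $A$ with $\mu(A)>0$ furnished by the Furstenberg correspondence for $E$, just as in Corollary~\ref{cor:SZ-combinatorial}. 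This produces a sub-IP-ring on which $\IPlim_{\alpha} \mu\big( \cap_{j} A P_{j}(\alpha)\inv \big) > 0$, so $\mu\big( \cap_{j} A P_{j}(\alpha)\inv \big) > 0$ for some $\alpha$; the correspondence then yields $g\in G$ with $g P_{j}(\alpha)\in E$ for all $j$, i.e.\ $g s_{j}\prod_{i} T_{i}^{p_{i,j}(\vec n_{\alpha})}\in E$, placing $\vec n_{\alpha}$ in the set.

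The recurrence input is therefore identical to the admissible case, and the main obstacle is confined to the second paragraph: I must verify that the constant part $c_{i,j}$ can be drawn from a set depending on $p_{i,j}$ \emph{alone} (this is the content of the remark before the corollary, extracted from the $|n-C|<2$ bound in Theorem~\ref{thm:gen-poly-FVIP}), and that pulling these constants through the FVIP factors, with the resulting commutators, leaves a genuine FVIP system. Both rely only on the closure of FVIP groups under conjugation by constants, products, and symmetric derivatives together with nilpotency, so no new recurrence-theoretic ingredient is needed.
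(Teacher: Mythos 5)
Your proof is correct and follows essentially the same route the paper intends: the paper's only stated input for this corollary is the observation that the constant $n$ in Theorem~\ref{thm:gen-poly-FVIP} can be chosen from a finite set depending only on the generalized polynomial, after which the argument for Corollary~\ref{cor:SZ-combinatorial} (the Lemma~\ref{lem:poly-fvip}-style transfer to FVIP systems over the stretched filtration \eqref{eq:scalar-poly-filtration-2}, Theorem~\ref{thm:SZ-general} with $m=1$, and the Furstenberg correspondence principle) is repeated with the constant shifts absorbed into the finite sets $S_{j}$. Your middle paragraphs merely spell out the bookkeeping (pulling constants left via closure of VIP groups under conjugation by constants, joining FVIP groups via Lemma~\ref{lem:fvip-plus-fvip}) that the paper leaves implicit, and they do so correctly.
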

Since every member set of an idempotent ultrafilter contains an IP set this implies a multidimensional version of \cite[Theorem 1.23]{MR2747062} that holds for every idempotent ultrafilter, see \cite[Remark 3.42]{MR2747062}.

\printbibliography
\end{document}